\numberwithin{equation}{subsection}
\renewcommand{\tocsection}[3]{%
  \indentlabel{\@ifnotempty{#2}{\bfseries\ignorespaces#1 #2\quad}}\bfseries#3}
\renewcommand{\tocsubsection}[3]{%
  \indentlabel{\@ifnotempty{#2}{\ignorespaces#1 #2\quad}}#3}
\newcommand\@dotsep{4.5}
\def\@tocline#1#2#3#4#5#6#7{\relax
  \ifnum #1>\c@tocdepth 
  \else
    \par \addpenalty\@secpenalty\addvspace{#2}%
    \begingroup \hyphenpenalty\@M
    \@ifempty{#4}{%
      \@tempdima\csname r@tocindent\number#1\endcsname\relax
    }{%
      \@tempdima#4\relax
    }%
    \parindent\z@ \leftskip#3\relax \advance\leftskip\@tempdima\relax
    \rightskip\@pnumwidth plus1em \parfillskip-\@pnumwidth
    #5\leavevmode\hskip-\@tempdima{#6}\nobreak
    \leaders\hbox{$\m@th\mkern \@dotsep mu\hbox{.}\mkern \@dotsep mu$}\hfill
    \nobreak
    \hbox to\@pnumwidth{\@tocpagenum{\ifnum#1=1\bfseries\fi#7}}\par
    \nobreak
    \endgroup
  \fi}
\renewcommand\csname r@tocindent0\endcsname{0pt}
\def\l@subsection{\@tocline{2}{0pt}{2.5pc}{5pc}{}}
    \def\paragraph{\@startsection{paragraph}{4}%
    \z@\z@{-\fontdimen2\font}%
    {\normalfont\bfseries}}
\def\subsubsection{\@startsection{subsubsection}{3}%
  \z@{.5\linespacing\@plus.7\linespacing}{-.5em}%
  {\normalfont\bfseries}}
    \definecolor{darkblue}{rgb}{0,0,.85} 
    \definecolor{darkred}{rgb}{0.84,0,0}
\newtheorem{thm}{Theorem}[section]
\newtheorem{prop}[thm]{Proposition}
\newtheorem{thmi}{Theorem}
\newtheorem{lem}[thm]{Lemma}
\newtheorem{claim}[thm]{Claim}
\newtheorem{cor}[thm]{Corollary}
\theoremstyle{definition}
\newtheorem{example}[thm]{Example}
\newtheorem{nota}[thm]{Notation}
\newtheorem{conve}[thm]{Convention}
\newtheorem{construction}[thm]{Construction}
\newtheoremstyle{examplestyle}
  {1em}
  {1em}
  {\addtolength{\@totalleftmargin}{1.0em}
   \addtolength{\linewidth}{-1.0em}
   \parshape 1 1.0em \linewidth}
  {}
  {\bfseries}
  {.}
  {.5em}
  {}
\theoremstyle{examplestyle} 
\newtheorem{eg}[thm]{Example}
\newtheorem{rem}[thm]{Remark}
\newtheorem{remi}{Remark}
\newtheorem{defn}[thm]{Definition}
\newtheorem*{defni*}{Definition}
\DeclareMathOperator{\Spec}{Spec}
\DeclareMathOperator{\Gal}{Gal}
\DeclareMathOperator{\id}{id}
\DeclareMathOperator{\Spf}{Spf}
\DeclareMathOperator{\Aut}{Aut} 
\DeclareMathOperator{\Fil}{Fil}
\DeclareMathOperator{\Spa}{Spa}
\DeclareMathOperator{\Sht}{Sht}
\DeclareMathOperator{\Spd}{Spd}
\DeclareMathOperator{\Gr}{Gr}
\DeclareMathOperator{\GL}{GL}
\DeclareMathOperator{\Int}{Int}
\newcommand{\sht}{\mathrm{sht}}
\newcommand{\colim@}[2]{%
  \vtop{\m@th\ialign{##\cr
    \hfil$#1\operator@font colim$\hfil\cr
    \noalign{\nointerlineskip\kern1.5\ex@}#2\cr
    \noalign{\nointerlineskip\kern-\ex@}\cr}}%
}
\newcommand{\colim}{%
  \mathop{\mathpalette\colim@{}}\nmlimits@
}
\renewcommand{\sp}{\mathrm{sp}}
\newcommand{\Fal}{\mathrm{Fal}}
\newcommand{\Gmu}{\mathcal{G}\text{-}\mu}
\newcommand{\GDisp}{\mathcal{G}\text{-}\mathbf{Disp}}
\newcommand{\A}{\mathbb{A}}
\newcommand{\F}{\mathbb{F}}
\newcommand{\bG}{\mathbb{G}}
\newcommand{\bb}[1]{\mathbb{#1}}
\newcommand{\mc}[1]{\mathcal{#1}}
\newcommand{\mbb}[1]{\mathbb{#1}}
\newcommand{\mf}[1]{\mathfrak{#1}}
\newcommand{\wh}{\widehat}
\newcommand{\wt}{\widetilde}
\newcommand{\Gm}{\mathbb{G}_{{m}}}
\newcommand{\GVect}{\mathcal{G}\text{-}\mathbf{Vect}}
\newcommand{\GLoc}{\mathcal{G}\text{-}\mathbf{Loc}}
\newcommand{\hyphen}{\mathchar`-}
\newcommand{\FGauge}{F\text{-}\mathbf{Gauge}^\mr{vect}}
\newcommand{\GFGauge}{\mathcal{G}\text{-}F\text{-}\mathbf{Gauge}^\mr{vect}}
\newcommand{\mb}[1]{\mathbf{#1}}
\newcommand{\Z}{\mathbb{Z}}
\newcommand{\Q}{\mathbb{Q}}
\newcommand{\C}{\mathbb{C}}
\newcommand{\univ}{\mathrm{univ}}
\renewcommand{\ll}{\llbracket}
\newcommand{\rr}{\rrbracket}
\renewcommand{\j}{\jmath}
\newcommand{\syn}{{\mr{syn}}}
\renewcommand{\email}[2][]{%
  \ifx\emails\@empty\relax\else{\g@addto@macro\emails{,\space}}\fi%
  \@ifnotempty{#1}{\g@addto@macro\emails{\textrm{(#1)}\space}}%
  \g@addto@macro\emails{#2}%
}
\newcommand{\triv}{\mathrm{triv}}
\newcommand{\der}{\mathrm{der}}
\newcommand{\stacks}[1]{\cite[\href{https://stacks.math.columbia.edu/tag/#1}{Tag~#1}]{StacksProject}}
\newcommand{\an}{\mathrm{an}}
\newcommand{\cat}[1]{\mathbf{#1}}
\newcommand{\Sh}{\mathrm{Sh}}
\DeclareMathOperator{\Isom}{Isom}
\newcommand{\ms}[1]{\mathscr{#1}}
\newcommand{\fl}{\mathrm{fl}}
\newcommand{\proet}{\mathrm{pro\acute{e}t}}
\newcommand{\GSp}{\mathrm{GSp}}
\newcommand{\R}{\mathbb{R}}
\newcommand{\msr}{\mathscr}
\def\Item(#1){\item[\llap{(}\refstepcounter{enumi}$\bullet$] #1)}
\DeclareMathOperator*{\twolim}{2-lim}
\DeclareMathOperator*{\twocolim}{2-colim}
\DeclareMathAccent{\wtilde}{\mathord}{largesymbols}{"65}
\newcommand*\isomto{%
        \xrightarrow{\raisebox{-0.2 em}{\smash{\ensuremath{\sim}}}}%
    }
    \newcommand*\isomfrom{%
        \xleftarrow{\raisebox{-0.2 em}{\smash{\ensuremath{\sim}}}}%
    }
    \newcommand{\ov}[1]{\overline{#1}}
    \newcommand{\et}{\mathrm{\acute{e}t}}
    \newcommand{\ac}{\mathrm{ac}}
    \newcommand{\qsyn}{\mathrm{qsyn}}
    \newcommand{\qrsp}{\mathrm{qrsp}}
    \newcommand{\Ainf}{\mathrm{A}_\mathrm{inf}}
   \newcommand{\defeq}{\vcentcolon=}
    \newcommand{\efdeq}{=\vcentcolon}
    \newcommand{\be}{\begin{equation*}}
    \newcommand{\ee}{\end{equation*}}
    \newcommand{\bx}{\begin{equation*}\xymatrix}
    \newcommand{\ex}{\end{equation*}}
    \newcommand{\lbb}{\ll}
    \newcommand{\rbb}{\rr}
\DeclareSymbolFontAlphabet{\mathbbl}{bbold}
\newcommand{\Prism}{{\mathlarger{\mathbbl{\Delta}}}}
\newcommand{\prism}{{\mathlarger{\mathbbl{\Delta}}}}
\newcommand{\crys}{\mathrm{crys}}
\newcommand{\strcrys}{{{\mathsmaller{\Prism}}\text{-}\mathrm{gr}}}
\newcommand{\smallprism}{{{\mathsmaller{\Prism}}}}
\newcommand{\smallN}{{{\mathsmaller{\mc{N}}}}}
\newcommand{\mr}{\mathrm}
\newcommand{\ad}{\mathrm{ad}}
\newcommand{\dR}{\mathrm{dR}}
\title{The prismatic realization functor for Shimura varieties of abelian type}
\author{Naoki Imai$^{(1)}$}
\address[1]{\scriptsize Graduate School of Mathematical Sciences, The University of Tokyo,
    3-8-1 Komaba, Meguro-ku, Tokyo, 153-8914, Japan}
\email[1]{\scriptsize naoki@ms.u-tokyo.ac.jp}
\author{Hiroki Kato$^{(2)}$}
\address[2]{\scriptsize 
Institut des Hautes Études
Scientifiques, 35 route de Chartres, 91440 Bures-sur-Yvette, France
}
\address[3]{\scriptsize Department of Mathematics
National University of Singapore, Level 4, Block S17, 10 Lower Kent Ridge Road, Singapore 119076}
\email[2]{\scriptsize hiroki@ihes.fr}
\author{Alex Youcis$^{(3)}$}
\email[3]{\scriptsize alex.youcis@gmail.com}
\date{\today}
\begin{document}
\begin{abstract}
    For the integral canonical model $\ms{S}_{\mathsf{K}^p}$ of a Shimura variety $\Sh_{\mathsf{K}_0\mathsf{K}^p}(\mb{G},\mb{X})$ of abelian type at hyperspecial level $K_0=\mc{G}(\Z_p)$, we construct a prismatic $F$-gauge model for the `universal' $\mc{G}(\Z_p)$-local system on $\Sh_{\mathsf{K}_0\mathsf{K}^p}(\mb{G},\mb{X})$. We use this to obtain several new results about the $p$-adic geometry of Shimura varieties, notably an abelian-type analogue of the Serre--Tate deformation theorem (realizing an expectation of Drinfeld in the abelian-type case) and a prismatic characterization of these models at individual level.
\end{abstract}

\maketitle

\tableofcontents

\newpage

\section*{Introduction}

Shimura varieties $\Sh_{\mathsf{K}}(\mb{G},\mb{X})$ are a class of varieties over a number field $\mb{E}=\mb{E}(\mb{G},\mb{X})$ associated to a reductive $\Q$-group $\mb{G}$, and a piece of ancillary Hodge-theoretic data $\mb{X}$, which sit at the intersection of differential geometry, algebraic geometry, and number theory. A guiding principle concerning Shimura varieties is that they should be moduli spaces of $\mb{G}$-motives.\footnote{For general $\mb{G}$ this is not quite correct, and one should instead consider $\mathbf{G}^c$-motives for a certain modified group $\mb{G}^c$. See \cite[\S3]{LanStrohII} and \S\ref{ss:etale-fiber-functor-def} for details. For the sake of simplicity, we ignore this subtlety in the introduction and assume $\mb{G}=\mb{G}^c$, but do not make this assumption in the main body of the article. \label{footnote:G^c-intro}} In particular, there ought to be a universal $\mb{G}$-motive $\omega_\mathrm{mot}$. While even the precise formulation of this is conjectural, for more well-behaved categories $\ms{C}$ which approximate the theory of motives one might still write down \emph{$\ms{C}$-realization functors}. In other words, $\mb{G}$-objects $\omega_\ms{C}$ in $\ms{C}$ which serve the role of $R_\ms{C}\circ\omega_\mr{mot}$ for $R_\ms{C}$ the $\ms{C}$-realization functor from motives to $\ms{C}$.

Over $\C$ this idea has been mostly realized. Namely, one can construct a functor
\begin{equation}\label{eq:MHM-realization}\tag{I.1}
\omega_{\mathsf{K},\mr{MHM}}\colon \cat{Rep}_{\Q}(\mb{G})\to \cat{MHM}(\Sh_{\mathsf{K}\C}(\mb{G},\mb{X})),
\end{equation}
called the \emph{MHM realization functor} (see \cite[\S2]{BurgosWildehaus}). Here for a smooth complex variety $X$ we denote by $\cat{MHM}(X)$ Saito's category of \emph{mixed Hodge modules} on $X$ (see \cite{Saito}), which serves as a very close approximation to the theory of $\Q$-motives over $X$. This is a remarkably powerful tool in studying the geometry of $\Sh_{\mathsf{K}}(\mb{G},\mb{X})_\C$ (e.g., see \cite{BurgosWildehaus}) but, up to non-trivial foundational issues, is easy to construct by the very definition of Shimura varieties. Indeed, Shimura varieties start out life as complex analytic spaces of a very Hodge-theoretic flavor, and only after quite sophisticated and inexplicit arguments obtain the structure of algebraic varieties over $\mb{E}$.

For applications of Shimura varieties to number theory one must understand Shimura varieties not just over $\C$, but over $p$-adic fields and their integer rings. Until recently, finding an analogue of \eqref{eq:MHM-realization} in this setting seemed completely out of reach since there was no good analogue for $\cat{MHM}(X)$. But, recent deep work of Drinfeld and Bhatt--Lurie on integral $p$-adic Hodge theory (e.g., see \cite{BhattNotes}) has provided a good analogue of $\cat{MHM}(X)$ over a $p$-adic formal scheme $\mf{X}$. Specifically, they construct the category $\mc{D}_\mr{qc}(\mf{X}^\mr{syn})$ of \emph{prismatic $F$-gauges on $\mf{X}$}, closely related to the \emph{syntomic cohomology} from \cite{BMS-THH}, and built upon work of Fontaine and Messing. This category promises to form a very close approximation to the category of $\Z_p$-motives over $\mf{X}$. 

However, finding a \emph{syntomic analogue} of \eqref{eq:MHM-realization} remains challenging. Now the analytic origins of Shimura varieties instead of helpful are a hindrance, as their Hodge-theoretic nature over $\C$ does not lend itself well to the $p$-adic Hodge-theoretic setting. This is further complicated by the inexplicit descent from $\C$ to $\mb{E}$. So, except in the rare cases that one can unconditionally get at the `true $\mb{G}$-motive' (e.g., if the Shimura datum is of so-called PEL type) it's not at all clear where to start. This makes the following theorem in the abelian-type setting significant.

\begin{thmi}\label{thmi:main} Let $(\mb{G},\mb{X})$ be of abelian type and $p$ be an odd prime. Set $K_0=\mc{G}(\Z_p)$ where $\mc{G}$ is a reductive $\Z_p$-model of $\mb{G}_{\Q_p}$. Then, for the integral canonical model $\ms{S}_{\mathsf{K}_0\mathsf{K}^p}(\mb{G},\mb{X})$ at a $p$-adic place of $\mb{E}$, there is a \emph{syntomic realization functor} on $\wh{\ms{S}}_{\mathsf{K}_0\mathsf{K}^p}(\mb{G},\mb{X})$, i.e., an exact $\Z_p$-linear $\otimes$-functor
\begin{equation*}
    \omega_{\mathsf{K}_0\mathsf{K}^p,\mr{syn}}\colon\cat{Rep}_{\Z_p}(\mc{G})\to \cat{Vect}(\wh{\ms{S}}_{\mathsf{K}_0\mathsf{K}^p}(\mb{G},\mb{X})^\mr{syn})\subseteq \mc{D}_\mr{qc}(\wh{\ms{S}}_{\mathsf{K}_0\mathsf{K}^p}(\mb{G},\mb{X})^\mr{syn}), 
\end{equation*}
which recovers the universal $\mc{G}(\Z_p)$-local system $\omega_{\mathsf{K}_0\mathsf{K}^p,\et}$ on the generic fiber.
\end{thmi}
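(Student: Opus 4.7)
The plan is to follow the well-trodden Deligne--Kisin strategy of first handling the Hodge-type case and then descending to abelian-type via Deligne's connected components trick, but with the construction upgraded at every step to the level of prismatic $F$-gauges. In the Hodge-type case, the starting point is the universal abelian scheme $\pi\colon A\to \ms{S}_{\mathsf{K}_0\mathsf{K}^p}$ obtained from a fixed symplectic embedding $(\mb{G},\mb{X})\hookrightarrow (\mathrm{GSp},\mc{H}^{\pm})$. Its $p$-divisible group (or equivalently, its relative prismatic cohomology after base change to the formal completion) gives a natural prismatic $F$-gauge $\mathcal{M}$ of rank $2g$ on $\wh{\ms{S}}_{\mathsf{K}_0\mathsf{K}^p}^\mr{syn}$, whose underlying étale local system is the dual of the Tate module $T_p(A)$. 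This handles the standard representation of the Siegel group, and so also furnishes the syntomic realization of that standard representation restricted to $\mathcal{G}\hookrightarrow \mathrm{GL}(\Lambda)$.

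To obtain a functor out of all of $\cat{Rep}_{\Z_p}(\mathcal{G})$, one would work as in Kisin's original construction: fix Hodge tensors $(s_\alpha)\subset \Lambda^\otimes$ cutting out $\mathcal{G}$, and show that the corresponding étale tensors on $T_p(A)$ (which are known to be $\mathcal{G}$-structure maps on the universal local system via Kisin--Pappas and subsequent work) lift canonically to tensors $(\widetilde{s}_\alpha)$ of the prismatic $F$-gauge $\mathcal{M}$. This promotion is where the integral $D_\crys$ advertised in the abstract is decisive: it provides a fully faithful embedding of crystalline $\Z_p$-local systems into prismatic $F$-gauges, with the property that tensors on the étale side canonically lift. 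Once the $(\widetilde{s}_\alpha)$ are in hand, the isomorphism scheme $\uIsom_{(\widetilde{s}_\alpha)}(\Lambda\otimes\mathcal{O}^\mr{syn},\mathcal{M})$ is a $\mathcal{G}$-torsor on the syntomic site, and the associated construction $V\mapsto V\times^{\mathcal{G}}\uIsom_{(\widetilde{s}_\alpha)}(-,-)$ yields the desired exact tensor functor $\omega_{\mathsf{K}_0\mathsf{K}^p,\mr{syn}}$ valued in syntomic vector bundles. Compatibility with the étale realization on the generic fiber is built into the construction because both sides arise from the same tensors on $T_p(A)$.

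The reduction from abelian type to Hodge type would then follow Kisin--Deligne. Choose a Hodge-type auxiliary datum $(\mb{G}_1,\mb{X}_1)$ and a central torus $E$ such that the identity components of the corresponding integral canonical models are related by a quotient $\ms{S}_{\mathsf{K}_{1,0}\mathsf{K}_1^p}(\mb{G}_1,\mb{X}_1)^+\to \ms{S}_{\mathsf{K}_0\mathsf{K}^p}(\mb{G},\mb{X})^+$ after twisting by an $E(\Af)$-action. On syntomic/prismatic cohomology this action extends (torus-equivariance being essentially automatic at the prismatic level), and the Hodge-type functor on the cover is $E$-equivariant, so it descends. Since $\cat{Rep}_{\Z_p}(\mathcal{G})$ is, in effect, the $E$-equivariant subcategory of $\cat{Rep}_{\Z_p}(\mathcal{G}_1)$, one obtains the desired functor on the identity component of the abelian-type model. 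One then promotes this to the full integral canonical model by the usual Hecke-translates/connected-component gluing, the key point being that both $\omega_{\mathsf{K}_0\mathsf{K}^p,\mr{syn}}$ and its étale counterpart are compatible with prime-to-$p$ Hecke operators.

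The main obstacle is the lifting of Hodge tensors in Step 2 of the Hodge-type construction: classically this uses the Berthelot--Ogus isomorphism and Blasius--Wintenberger to promote Betti tensors to étale and crystalline tensors, but we need an integral statement at the level of prismatic $F$-gauges, which requires both the integral $D_\crys$ developed in the paper and sufficient control over its essential image (e.g., that the universal lattice on $\ms{S}_{\mathsf{K}_0\mathsf{K}^p}$ really lies in it). A secondary subtlety is to ensure that the descent from Hodge to abelian type is genuinely functorial in $\mathcal{G}$-representations rather than only at the level of individual local systems; this amounts to checking that the $E$-action interacts well with the tensor structure on $F$-gauges, which should follow from the fact that prismatic realization is naturally a tensor functor on crystalline local systems.
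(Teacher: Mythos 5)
Your overall skeleton (Hodge type first, then descent to abelian type via an auxiliary datum) matches the paper, and the use of the universal abelian scheme's $p$-divisible group to handle the standard representation in the Siegel case is the right starting point. But there is a genuine gap: you have conflated the construction of a prismatic \emph{$F$-crystal} with $\mc{G}$-structure and the construction of a prismatic \emph{$F$-gauge} with $\mc{G}$-structure, and the passage between them is the core new technical difficulty that your proposal does not address.

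The paper's route is in two separable steps. First, one constructs a prismatic $F$-crystal realization $\omega_{\mathsf{K}^p,\smallprism}\colon \cat{Rep}_{\Z_p}(\mc{G}^c)\to\cat{Vect}^\varphi((\wh{\ms{S}}_{\mathsf{K}^p})_\Prism)$. Here the tensors lift from the \'etale side not because of $\bb{D}_\crys$ but because the \'etale realization $T_\et$ is fully faithful with explicit essential image (Guo--Reinecke / Bhatt--Scholze), and the Tannakian framework of the companion paper ensures the resulting pseudo-torsor is actually a $\mc{G}$-torsor (i.e.\ the functor is exact). Second—and this is where the real work lies—one must show that $\omega_{\mathsf{K}^p,\smallprism}$ takes values in the subcategory $\cat{Vect}^{\varphi,\mr{lff}}$ of locally filtered free prismatic $F$-crystals (equivalently, is of type $\mu_h$), because the equivalence $\cat{Vect}(\mf{X}^\mr{syn})\simeq\cat{Vect}^{\varphi,\mr{lff}}(\mf{X}_\Prism)$ is what turns an $F$-crystal model into an $F$-gauge model. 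Your proposal jumps straight to ``a $\mc{G}$-torsor on the syntomic site'' without confronting this condition, but lff-ness is not automatic: it is a nontrivial structural statement about the Nygaard filtration.

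The paper verifies the lff/type-$\mu_h$ condition by comparing the restriction of $\omega_{\mathsf{K}^p,\smallprism}$ to completed local rings at $\ov{\bb{F}}_p$-points with Ito's universal deformations of $\mc{G}$-torsors with $F$-structure of type $\mu$, and then spreads out by Artin approximation. It is in this comparison—not in lifting tensors—that $\bb{D}_\crys$ is used essentially, as the bridge to Faltings' deformation theory of $p$-divisible groups with Tate tensors and to Kim's (quasi-)Kisin module description. So your remark that $\bb{D}_\crys$ ``provides a fully faithful embedding of crystalline $\Z_p$-local systems into prismatic $F$-gauges'' misdescribes both the direction of the functor and its role. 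Finally, for the abelian-type descent, the paper does not use the classical Deligne quotient trick but instead Lovering's device producing a closed embedding into a product $(\mb{G}_1\times\mb{T},\mb{X}_1\times\{h\},\mc{G}_1\times\mc{T})$ of Hodge type times special type, together with a finite \'etale morphism down to the target; this avoids having to make sense of an $E(\Af)$-quotient of prismatic $F$-gauges. Your descent might also be made to work, but the lff issue would still be the sticking point even in the Hodge-type case.
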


There are many applications of Theorem \ref{thmi:main} discussed below. We highlight two here.
\begin{enumerate}[leftmargin=.5cm]
    \item \textbf{Serre--Tate theorem:} if $R\to R/I$ is a nilpotent thickening of $p$-nilpotent rings, then for an $R/I$-point $x$ of $\ms{S}_{\mathsf{K}_0\mathsf{K}^p}(\mb{G},\mb{X})$ the deformations of $x$ to an $R$-point of $\ms{S}_{\mathsf{K}_0\mathsf{K}^p}(\mb{G},\mb{X})$ are naturally in bijection with the deformations of the prismatic $F$-gauge with $\mc{G}$-structure $(\omega_{\mathsf{K}^p,\mr{syn}})_x$.
    \item \textbf{A characterization of $\ms{S}_{\mathsf{K}_0\mathsf{K}^p}(\mb{G},\mb{X})$ at individual level:} $\ms{S}_{\mathsf{K}_0\mathsf{K}^p}(\mb{G},\mb{X})$ is the unique smooth and separated model $\ms{X}$ of $\Sh_{\mathsf{K}_0\mathsf{K}^p}(\mb{G},\mb{X})$ which has a syntomic model of $\omega_{\mathsf{K}_0\mathsf{K}^p,\et}$ such that (1) holds, and such that $\wh{\ms{X}}_\eta$ is the (potentially) crystalline locus of $\omega_{\mathsf{K}_0\mathsf{K}^p,\et}$. We emphasize that this characterization works at individual level $\mathsf{K}_0\mathsf{K}^p$, not requiring consideration of the full prime-to-$p$ Hecke tower (i.e., letting $\mathsf{K}^p$ vary).
\end{enumerate}
For (1) the best previously-known versions of the Serre--Tate theorem (in this generality of $R$) hold in the PEL case, where (unlike the abelian-type case) one may leverage the existence of a $\mb{G}$-motive. With respect to (2), previously the only characterization of integral canonical models was for the entire family $\{\ms{S}_{\mathsf{K}_0\mathsf{K}^p}(\mb{G},\mb{X})\}_{\mathsf{K}^p}$ simultaneously, and involve a `(strong) extension criterion' which is less motivic in nature. We feel that both (1) and (2) make substantial progress towards understanding Shimura varieties of abelian type as parameterizing motivic objects.

\medskip

\paragraph*{Syntomic realization functor}\label{ss:intro-shim-var}

Let $(\mb{G},\mb{X})$ be a Shimura datum of abelian type with reflex field $\mb{E}$. Fix a prime $p>2$ and let $E$ be the completion of $\mb{E}$ at a $p$-adic place. Set $G=\mb{G}_{\Q_p}$, and fix a reductive $\Z_p$-model $\mc{G}$ of $G$, letting $\mathsf{K}_0=\mc{G}(\Z_p)$ be the associated hyperspecial subgroup. For $\mathsf{K}=\mathsf{K}_p\mathsf{K}^p\subseteq\mb{G}(\A_f)$ a compact open subgroup, write $\Sh_{\mathsf{K}}$ for $\Sh_{\mathsf{K}}(\mb{G},\mb{X})_E$. Then,
\begin{equation*}
\varprojlim_{\scriptscriptstyle \mathsf{K}_p\subseteq \mathsf{K}_0} \Sh_{\mathsf{K}_p\mathsf{K}^p}\to\Sh_{\mathsf{K}_0\mathsf{K}^p},
\end{equation*}
is a $\underline{\mathsf{K}_0}$-torsor on the pro-\'etale site of $\Sh_{\mathsf{K}_0\mathsf{K}^p}$, and we let 
\begin{equation*}
\omega_{\mathsf{K}^p,\et}\colon \cat{Rep}_{\Z_p}(\mc{G})\to \cat{Loc}_{\Z_p}(\Sh_{\mathsf{K}_0\mathsf{K}^p}),
\end{equation*}
be the associated exact $\Z_p$-linear $\otimes$-functor, an object of $\GLoc_{\Z_p}(\Sh_{\mathsf{K}_0\mathsf{K}^p})$. Here, for an exact $\Z_p$-linear $\otimes$-category $\mc{C}$ we write $\mc{G}\text{-}\mc{C}$ for the category of exact $\Z_p$-linear $\otimes$-functors $\omega\colon \cat{Rep}_{\Z_p}(\mc{G})\to \mc{C}$.

Let $\ms{S}_{\mathsf{K}^p}$ be the integral canonical model of $\Sh_{\mathsf{K}_0\mathsf{K}^p}$ over $\mc{O}_E$ as in \cite{KisIntShab} and $\wh{\ms{S}}_{\mathsf{K}^p}$ its $p$-adic completion. One may then consider the open subspace $(\wh{\ms{S}}_{\mathsf{K}^p})_\eta\subseteq \Sh_{\mathsf{K}_0\mathsf{K}^p}^\an$, and define 
\begin{equation*}
\omega_{\mathsf{K}^p,\an}\colon\cat{Rep}_{\Z_p}(\mc{G})\to\cat{Loc}_{\mathbb{Z}_p}((\wh{\ms{S}}_{\mathsf{K}^p})_\eta),\qquad \xi\mapsto \omega_{\mathsf{K}^p,\an}(\xi)\defeq \omega_{\mathsf{K}^p,\et}(\xi)^\an|_{(\wh{\ms{S}}_{\mathsf{K}^p})_\eta},
\end{equation*}
a $\mc{G}(\Z_p)$-local system on $(\wh{\ms{S}}_{\mathsf{K}^p})_\eta$, i.e., an object of the category $\GLoc_{\Z_p}((\wh{\ms{S}}_{\mathsf{K}^p})_\eta)$. 

In \cite[Definition 2.27]{IKY1}, we define when a $\mc{G}(\mathbb{Z}_p)$-local system on a smooth rigid analytic variety $X$ has \emph{prismatically good reduction} relative to smooth formal model $\mf{X}$ of $X$. While it requires non-trivial technical input, we show that $\omega_{\mathsf{K}^p,\mr{an}}$ has prismatically good reduction relative to $\wh{\ms{S}}_{\mathsf{K}^p}$ by reducing to the Siegel case (see Theorem \ref{thm:main-Shimura-theorem-abelian-type-case}). Using the main results of \cite{IKY1} we are then able to deduce the existence of a \emph{prismatic realization functor}
\begin{equation*}
\omega_{\mathsf{K}^p,\smallprism}\colon \cat{Rep}_{\Z_p}(\mc{G})\to \cat{Vect}^\varphi((\wh{\ms{S}}_{\mathsf{K}^p})_\smallprism),
\end{equation*}
where the target is the category of prismatic $F$-crystals on $\wh{\ms{S}}_{\mathsf{K}^p}$ (see \cite{BhattScholzeCrystals}), unique with respect to the property that $T_\et\circ \omega_{\mathsf{K}^p,\smallprism}\simeq \omega_{\mathsf{K}^p,\mr{an}}$, where $T_\et$ is the \'etale realization functor from \cite{GuoReinecke}. But, it is initially quite unclear whether $\omega_{\mathsf{K}^p,\smallprism}$ can be upgraded to take values in $\cat{Vect}((\wh{\ms{S}}_{\mathsf{K}^p})^\mr{syn})$.

\begin{remi} Our construction of $\omega_{\mathsf{K}^p,\smallprism}$ utilizes abstract $p$-adic Hodge theory from \cite{IKY1}, ultimately relying on \cite{GuoReinecke} or \cite{DLMS}. In \cite{NieThesis}, Nie constructs `absolute Hodge cycles' in the prismatic cohomology of good-reduction abelian varieties over $p$-adic fields. This method currently only works over a point and with inexplicit restrictions on $p$. If these conditions were removed one might alternatively attempt to construct $\omega_{\mathsf{K}^p,\smallprism}$ via this method (at least in the Hodge-type case). This would yield an approach closer in spirit to that used in \cite{KisIntShab}.
\end{remi}

In \S\ref{s:G-objects-prismatic-crystals} we show that for a smooth formal $\mc{O}_E$-scheme $\mf{X}$ there is a bi-exact $\Z_p$-linear $\otimes$-equivalence
\begin{equation}\label{eq:intro-lff-gauge-equiv}\tag{I.2}
    \cat{Vect}(\mf{X}^\mr{syn})\isomto \cat{Vect}^{\varphi,\mr{lff}}(\mf{X}_\smallprism), 
\end{equation}
(see Proposition \ref{prop:F-gauge-lff-equiv}), which is proven to be bi-exact in \cite{IKY3} using our integral analogue $\bb{D}_\mr{crys}$ of $D_\mr{crys}$ from op.\@ cit. Here $\cat{Vect}^{\varphi,\mr{lff}}(\mf{X}_\smallprism)$ is the category of prismatic $F$-crystals $(\mc{E},\varphi_\mc{E})$ on $\mf{X}$ which are \emph{locally filtered free (lff)}: the Nygaard filtration on the Frobenius pullback $\phi^\ast\mc{E}$ is locally free (see Definition \ref{defn:lff}). 

Thus, to upgrade $\omega_{\mathsf{K}^p,\smallprism}$ to an object of $\GVect((\wh{\ms{S}}_{\mathsf{K}^p})^{\mr{syn}})$, it suffices to show that $\omega_{\mathsf{K}^p,\smallprism}$ takes values in $\cat{Vect}^{\varphi,\mr{lff}}((\wh{\ms{S}}_{\mathsf{K}^p})_\smallprism)$. One can even hope for a stronger property: $\omega_{\mathsf{K}^p,\smallprism}$ is of type $-\mu_h$. Here $\mu_h$ is the Hodge cocharacter of $(\mb{G},\mb{X})$ and being of type $-\mu_h$ means that locally the Frobenius $\varphi_\mc{E}$ is in the double coset defined by $-\mu_h$ (see \cite[Definition 3.12]{IKY1}). By an Artin-approximation-like argument, it suffices to show $\omega_{\mathsf{K}^p,\smallprism}$ is of type $-\mu_h$ over the complete local rings $\wh{\mc{O}}_{\ms{S}_{\mathsf{K}^p},x}$ for each $\overline{\mathbb{F}}_p$-point $x$ of $\ms{S}_{\mathsf{K}^p}$. We achieve this by comparing the pullback of $\omega_{\mathsf{K}^p,\smallprism}$ to $\wh{\mc{O}}_{\ms{S}_{\mathsf{K}^p},x}$ and a construction of Ito from \cite{Ito2} using our integral analogue $\bb{D}_\mr{crys}$ of $D_\mr{crys}$. 

Thus, by the equivalence in \eqref{eq:intro-lff-gauge-equiv} we obtain a \emph{syntomic realization functor} as in Theorem \ref{thmi:main}. In fact, one may use an enhancement of \eqref{eq:intro-lff-gauge-equiv} (see Proposition \ref{prop: F gauge type mu equals F crystal type mu}) to show this syntomic realization functor is of type $-\mu_h$ in an appropriate sense (see Definition \ref{defn: F-gauge of G mu structure}).

\begin{thmi}[{see Theorem \ref{thm:prismatic-F-gauge-realization}}]\label{thmi:main-refinement} There exists a unique $\mc{G}$-object $\omega_{\mathsf{K}^p,\mr{syn}}$ of $\cat{Vect}((\wh{\ms{S}}_{\mathsf{K}^p})^\mr{syn})$ of type $-\mu_h$ such that $T_\et\circ\omega_{\mathsf{K}^p,\syn}\simeq \omega_{\mathsf{K}^p,\an}$.
\end{thmi}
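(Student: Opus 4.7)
The plan is to start from the prismatic realization functor $\omega_{\mathsf{K}^p,\smallprism}$ available from Theorem \ref{thmi:strcrys} and \cite{IKY1}, to show that it is of type $\mu_h$ globally on $\wh{\ms{S}}_{\mathsf{K}^p}$, and then to apply Proposition \ref{prop: F gauge type mu equals F crystal type mu} (the equivalence between $\mc{G}$-F-gauges of type $\mu$ and $\mc{G}$-prismatic $F$-crystals of type $\mu$) to lift it canonically to a $\mc{G}$-object in $\cat{Vect}((\wh{\ms{S}}_{\mathsf{K}^p})_\mr{syn})$ of type $\mu_h$. The compatibility $T_\et\circ \omega_{\mathsf{K}^p,\syn}\simeq \omega_{\mathsf{K}^p,\an}$ would be inherited automatically, since the \'etale realization factors through the forgetful functor from F-gauges to F-crystals.

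The main step is thus to verify the type $\mu_h$ condition globally. Theorem \ref{thmi:ito-shim-comp} supplies it pointwise: for each $\ol{\bb{F}}_p$-point $x$ of $\ms{S}_{\mathsf{K}^p}$, the restriction of $\omega_{\mathsf{K}^p,\smallprism}$ to $\wh{\mc{O}}_{\ms{S}_{\mathsf{K}^p},x}$ agrees, under Ito's isomorphism, with the universal deformation $\omega_{b_x}^\mr{univ}$, which is of type $\mu_h$ by construction. The main obstacle is bootstrapping this formal-local statement to all of $\wh{\ms{S}}_{\mathsf{K}^p}$: being of type $\mu_h$ should be a locally closed condition on a suitable moduli of $\mc{G}$-F-crystals, and I would handle the descent via an Artin approximation argument, using that the closed points are dense in the special fibre and that $\wh{\ms{S}}_{\mathsf{K}^p}$ is formally smooth over $\mc{O}_E$. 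Once this is accomplished, Proposition \ref{prop: F gauge type mu equals F crystal type mu} yields existence of $\omega_{\mathsf{K}^p,\syn}$.

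For uniqueness, suppose $\omega'$ is another $\mc{G}$-F-gauge of type $\mu_h$ on $\wh{\ms{S}}_{\mathsf{K}^p}$ with $T_\et\circ\omega'\simeq \omega_{\mathsf{K}^p,\an}$. By Proposition \ref{prop: F gauge type mu equals F crystal type mu}, $\omega'$ is determined by its underlying $\mc{G}$-prismatic $F$-crystal of type $\mu_h$, so uniqueness reduces to showing that such a $\mc{G}$-prismatic $F$-crystal inducing $\omega_{\mathsf{K}^p,\an}$ is unique. This in turn follows from the fully faithfulness of the \'etale realization on prismatic $F$-crystals of type $\mu_h$ established in \cite{IKY1}, applied Tannakianly to the $\mc{G}$-objects, giving the desired uniqueness.
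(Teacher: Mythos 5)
Your proposal follows essentially the same route as the paper: the paper also deduces the global type-$\mu_h$ property of $\omega_{\mathsf{K}^p,\smallprism}$ from the pointwise statement of Theorem \ref{thmi:ito-shim-comp} via an Artin approximation argument on the (limit-preserving) functor of trivializations placing the Frobenius in the $\mu_h$-double coset over the excellent ring $\mf{S}_R$ (Corollary \ref{cor:prismatic-realization-lff}), and then invokes Proposition \ref{prop: F gauge type mu equals F crystal type mu} together with the full faithfulness of $\mathrm{R}_{\mf X}$ and of $T_\et$ for existence and uniqueness of the syntomic lift. Your outline is correct and matches the paper's proof.
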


\begin{remi} In \cite{ShendeRham}, Shen constructs a \emph{de Rham $F$-gauge} on $(\ms{S}_{\mathsf{K}^p}
)_{\ov{\bb{F}}_p}$ when $(\mb{G},\mb{X})$ is of Hodge type. In the language of the next section, this is equivalent to a map $(\ms{S}_{\mathsf{K}^p
})_{\ov{\bb{F}}_p}\to \mr{BT}^{\mc{G},-\mu_h}_1$, whereas our syntomic realization functor is a map $\wh{\ms{S}}_{\mathsf{K}^p}\to \mr{BT}^{\mc{G},-\mu_h}_\infty$. 
That said, Shen is able to describe this object in more down-to-earth terms using the de Rham cohomology of abelian varieties and the BGG complex. Our construction recovers Shen's after base changing along $\Spec(\overline{\mathbb{F}}_p)\to \Spf(\mc{O}_E)$ and truncating from an $\infty$-truncated (i.e., untruncated) object to a $1$-truncated one, i.e., base changing the composition $\wh{\ms{S}}_{\mathsf{K}^p}\to \mr{BT}^{\mc{G},-\mu_h}_\infty\to \mr{BT}^{\mc{G},-\mu_h}_1$ to $\ov{\bb{F}}_p$.
\end{remi}

\paragraph*{Syntomic integral canonical models and the Serre--Tate theorem}

The family of integral canonical models $\{\ms{S}_{\mathsf{K}^p}\}_{\mathsf{K}^p}$ are uniquely characterized \emph{as a system} by a \emph{strong extension property}: for every regular formally smooth $\mc{O}_E$-algebra $R$, one has\begin{equation*}
\varprojlim_{\scriptscriptstyle\mathsf{K}^p}\ms{S}_{\mathsf{K}^p}(R)= \varprojlim_{\scriptscriptstyle\mathsf{K}^p}\Sh_{\mathsf{K}_0\mathsf{K}^p}(R[\nicefrac{1}{p}]).
\end{equation*}
While this characterization is sufficient for many applications, it is incapable of characterizing the models $\ms{S}_{\mathsf{K}^p}$ for individual levels $\mathsf{K}^p$, and is far from a direct moduli-theoretic characterization.

Given our guiding principle for $\Sh_{\mathsf{K}_0\mathsf{K}^p}$, it is natural to expect that a canonical model $\ms{S}_{\mathsf{K}^p}$ of $\Sh_{\mathsf{K}^p\mathsf{K}_0}$ should be a moduli space of $\mc{G}$-motives in some sense. If one thinks of prismatic $F$-gauges as being the `syntomic realization' of (and good approximation to) the category of $\mathbb{Z}_p$-motives, it seems not unreasonable to expect a characterization of such models in syntomic terms.

To make this precise, we use the moduli stack $\mr{BT}^{\mc{G},-\mu_h}_\infty$ of prismatic $F$-gauges with $\mc{G}$-structure of type $-\mu_h$ suggested by Drinfeld, and developed by Gardner--Madapusi in \cite{GMM}.

\begin{defni*}[{see Definition \ref{defn:syntomic-integral-canonical-model}}]\label{defni:prismatic-integral-canonical-model} A \emph{syntomic integral canonical model} of $\Sh_{\mathsf{K}_0\mathsf{K}^p}$ is a smooth and separated $\mc{O}_E$-model $\ms{X}_{\mathsf{K}^p}$ of $\Sh_{\mathsf{K}_0\mathsf{K}^p}$ such that 
\begin{enumerate}
\item $(\wh{\ms{X}}_{\mathsf{K}^p})_\eta$ is the potentially crystalline locus of $\omega_{\mathsf{K}^p,\et}$,
\item there exists a syntomic model of $\omega_{\mathsf{K}^p,\an}$ of type $-\mu_h$ such that the resulting map
\begin{equation}\label{eq:Drinfeld-conj-map}\tag{I.3}
    \rho_{\mathsf{K}^p}\colon \wh{\ms{X}}_{\mathsf{K}^p}\to \mr{BT}^{\mc{G},-\mu_h}_\infty
\end{equation}
is formally \'etale.
\end{enumerate}
\end{defni*}

Our `motivic' characterization of $\ms{S}_{\mathsf{K}^p}$ is then the following which additionally realizes an expectation of Drinfeld (see \cite[\S4.3.3]{DrinfeldTowardsShimurian}) for abelian-type Shimura varieties.

\begin{thmi}[{see Theorem \ref{thm:Drinfeld-conjecture}\label{thmi:intro-characterization} and Theorem \ref{thm:syntomic-characterization}}]\label{thmi:prismatic-integral-canonical-model} The integral canonical model $\ms{S}_{\mathsf{K}^p}$ is a syntomic integral canonical model, and it is the unique such model.
\end{thmi}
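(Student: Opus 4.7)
The plan is to take $\rho_{\mathsf{K}^p}$ to be the map classified by $\omega_{\mathsf{K}^p,\mr{syn}}$ from Theorem~\ref{thmi:main-refinement}, verify the two defining conditions from results already at hand, and deduce uniqueness via a formally \'etale fiber-product argument using the strong extension property of $\ms{S}_{\mathsf{K}^p}$.

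\textbf{Existence.} The syntomic realization $\omega_{\mathsf{K}^p,\mr{syn}}$ of Theorem~\ref{thmi:main-refinement} is a $\mc{G}$-object of $\cat{Vect}((\wh{\ms{S}}_{\mathsf{K}^p})^\mr{syn})$ of type $\mu_h$, tautologically the data of a map $\rho_{\mathsf{K}^p}\colon \wh{\ms{S}}_{\mathsf{K}^p}\to \mr{BT}^{\mc{G},\mu_h}_\infty$, so condition~(2) reduces to formal \'etaleness of $\rho_{\mathsf{K}^p}$. Both source and target are formally smooth over $\Spf\mc{O}_E$ of the same relative dimension (the target by \cite{GMM}), so it suffices to check $\rho_{\mathsf{K}^p}$ induces isomorphisms on completed local rings at $\ov{\bb{F}}_p$-points $x$ --- this is exactly Theorem~\ref{thmi:ito-shim-comp}: the identification $i_x\colon R_{\mc{G},\mu_h}\isomto \wh{\mc{O}}_{\ms{S}_{\mathsf{K}^p},x}$ matches $\omega_{\mathsf{K}^p,\smallprism}|_x$ with $\omega^\mr{univ}_{b_x}$, and $R_{\mc{G},\mu_h}$ prorepresents the formal neighborhood of $\mr{BT}^{\mc{G},\mu_h}_\infty$ at $[\omega_{b_x}]$. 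For condition~(1), the containment $(\wh{\ms{S}}_{\mathsf{K}^p})_\eta\subseteq\{\text{potentially crystalline locus}\}$ follows from Theorem~\ref{thmi:strcrys} (prismatically good reduction implies potential crystallinity at classical points), while the reverse containment is a standard N\'eron-type extension using the strong extension property of $\ms{S}_{\mathsf{K}^p}$.

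\textbf{Uniqueness.} Given a second syntomic integral canonical model $(\ms{X}_{\mathsf{K}^p},\zeta'_{\mathsf{K}^p})$, I consider the fiber product $\mc{Z}\defeq \wh{\ms{X}}_{\mathsf{K}^p}\times_{\mr{BT}^{\mc{G},\mu_h}_\infty}\wh{\ms{S}}_{\mathsf{K}^p}$, which is formally \'etale over each factor. Since $\omega_{\mathsf{K}^p,\mr{syn}}$ and $\zeta'_{\mathsf{K}^p}$ both lift $\omega_{\mathsf{K}^p,\an}$ on the common generic fiber $\Sh_{\mathsf{K}_0\mathsf{K}^p}^\an$, and syntomic lifts of a given \'etale local system are unique (by Proposition~\ref{prop:F-gauge-lff-equiv} together with the full faithfulness of the \'etale realization on prismatic $F$-crystals), the two classifying maps to $\mr{BT}^{\mc{G},\mu_h}_\infty$ agree on generic fibers, producing a diagonal section of $\mc{Z}_\eta\to (\wh{\ms{S}}_{\mathsf{K}^p})_\eta$. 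I would then extend this section to all of $\mc{Z}\to \wh{\ms{S}}_{\mathsf{K}^p}$ using the strong extension property of $\ms{S}_{\mathsf{K}^p}$ applied along the formally \'etale projection; composing with $\mc{Z}\to \wh{\ms{X}}_{\mathsf{K}^p}$ then yields the desired isomorphism $\ms{S}_{\mathsf{K}^p}\simeq \ms{X}_{\mathsf{K}^p}$. The hardest step is this last extension of the diagonal, which amounts to an integral, $\mc{G}$-equivariant version of Kisin's crystalline extension for Shimura varieties, and should follow from the integral $D_\mr{crys}$ functor developed earlier in the paper in concert with Proposition~\ref{prop:F-gauge-lff-equiv}.
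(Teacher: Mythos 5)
Your strategy has the right starting point (identify $\rho_{\mathsf{K}^p}$ with the classifying map of $\omega_{\mathsf{K}^p,\mr{syn}}$, and use Theorem~\ref{thmi:ito-shim-comp} together with the prorepresentability of Ito's deformation functor to match completed local rings), but both halves of your argument contain gaps that cannot be repaired as written.

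\textbf{Existence.} Your reduction of formal \'etaleness to ``isomorphisms on completed local rings'' is not justified by the dimension comparison you invoke. The target $\mr{BT}^{\mc{G},\mu_h}_\infty$ is \emph{not} a formal stack over $\Spf(\mc{O}_E)$: by the remark after the pushout diagram defining $\mf{X}^\mr{syn}$, the syntomification of an $\mc{O}_E$-formal scheme lives only over $\Spf(\Z_p)$, and consequently so does $\mr{BT}^{\mc{G},\mu_h}_\infty$. Moreover, by Theorem~\ref{thm:GM-main}(1) each truncation $\mr{BT}^{\mc{G},\mu}_n$ is an Artin stack of \emph{dimension zero}, whereas $\wh{\ms{S}}_{\mathsf{K}^p}$ has positive relative dimension over $\mc{O}_E$. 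So ``formally smooth of the same relative dimension'' fails on both counts, and the proposed shortcut does not yield formal \'etaleness. In the paper this reduction is precisely the hard content of Theorem~\ref{thm:Drinfeld-conjecture}: one first reduces to $\breve{\Z}_p$-algebras and then (via a colimit-commutation lemma, Lemma~\ref{lem:BT-commutes-with-colimts}) to finitely generated $\breve{\Z}_p$-algebras, and from there argues case by case --- Hodge type via the classical Serre--Tate theorem and a closed-embedding lifting lemma, special type by a direct computation, abelian type by devissage. Lemma~\ref{lem:def-isom} (which is the local-ring statement you are pointing at) is an \emph{input} to this devissage, not a substitute for it. The condition-(1) identification of $U_{\mathsf{K}^p}$ with $(\wh{\ms{S}}_{\mathsf{K}^p})_\eta$ is Proposition~\ref{prop:crystalline-locus}, whose proof does not invoke the extension property but rather reduces to the known Siegel case via the normalization construction of integral canonical models and pullback stability of potential crystallinity.

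\textbf{Uniqueness.} Your fiber-product strategy over $\mr{BT}^{\mc{G},\mu_h}_\infty$ has two problems. First, the extension property of $\ms{S}_{\mathsf{K}^p}$ characterizes the entire pro-system $\{\ms{S}_{\mathsf{K}^p}\}$ --- as the introduction emphasizes, ``it is incapable of characterizing the models $\ms{S}_{\mathsf{K}^p}$ for individual levels $\mathsf{K}^p$'' --- so invoking it at a fixed level requires the regular, formally smooth $\mc{O}_E$-algebra and a map to $\Sh_{\mathsf{K}_0}$, which your $\mc{Z}$ does not obviously supply. Second, $\mc{Z}$ is a fiber product over a pro-Artin formal stack and need not be a formal scheme at all, let alone a regular one. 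The paper's Theorem~\ref{thm:prismatic-characterization-completion} sidesteps both issues: it forms the normalization of the fiber product $\mf{X}_{\mathsf{K}^p}\times_{\Spf(\mc{O}_E)}\mf{X}'_{\mathsf{K}^p}$ in $U_{\mathsf{K}^p}$ (over $\mc{O}_E$, not over $\mr{BT}^{\mc{G},\mu_h}_\infty$), passes through a strictly semi-stable alteration at each $\ov{\bb{F}}_p$-point, and uses the extended full-faithfulness of $T_\et$ on semistable Breuil--Kisin-type rings (Proposition~\ref{prop: full faithfulness for semistable rings}) together with the universal property of Ito's deformation ring $R_{\mc{G}^c,\mu_h^c}$ to force commutativity of the diagram \eqref{eq:first-charcterization-eq} and conclude the projections are isomorphisms via Lemma~\ref{lem:formal-scheme-isom}. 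Your appeal to ``an integral, $\mc{G}$-equivariant version of Kisin's crystalline extension'' gestures at the right circle of ideas, but the actual mechanism that makes uniqueness work is the semistable full-faithfulness plus Artin-approximation-style normalization, neither of which is present in your sketch.
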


Of course, the syntomic model of $\omega_{\mathsf{K}^p,\an}$ on $\wh{\ms{S}}_{\mathsf{K}^p}$ realizing the map $\rho_{\mathsf{K}^p}$ for the integral canonical model $\ms{S}_{\mathsf{K}^p}$ is the syntomic realization functor $\omega_{\mathsf{K}^p,\mr{syn}}$. And the formal \'etaleness of this map $\rho_{\mathsf{K}^p}$ can be reinterpeted in terms of a \emph{Serre--Tate theorem}.

\begin{thmi}[{Serre--Tate theorem for abelian-type Shimura varieties, see Theorem \ref{thm:Drinfeld-conjecture}}]\label{thmi:Serre--Tate} Let $R$ be a $p$-nilpotent ring and $R\to R/I$ a nilpotent thickening. Then, the diagram \begin{equation*}
    \begin{tikzcd}[sep=2.25em]
	{\ms{S}_{\mathsf{K}^p}(R)} & {\mr{BT}^{\mc{G},-\mu_h}_\infty(R)} \\
	{\ms{S}_{\mathsf{K}^p}(R/I)} & {\mr{BT}^{\mc{G},-\mu_h}_\infty(R/I)}
	\arrow["{\rho_{\mathsf{K}^p}}", from=1-1, to=1-2]
	\arrow[from=1-1, to=2-1]
	\arrow[from=1-2, to=2-2]
	\arrow["{\rho_{\mathsf{K}^p}}"', from=2-1, to=2-2]
\end{tikzcd}
\end{equation*}
is Cartesian. In other words, the deformations of an $R/I$-point $x$ of $\ms{S}_{\mathsf{K}^p}$ to an $R$-point of $\ms{S}_{\mathsf{K}^p}$ are canonically in bijection with the deformations of $\rho_{\mathsf{K}^p}(x)$ to an $R$-point of $\mr{BT}^{\mc{G},-\mu_h}_\infty$.
\end{thmi}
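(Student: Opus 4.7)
The assertion that the square is Cartesian for all $p$-nilpotent rings $R$ and nilpotent thickenings $R \to R/I$ is tautologically equivalent to $\rho_{\mathsf{K}^p}$ being formally \'etale as a morphism of formal stacks on $p$-nilpotent $\mc{O}_E$-algebras. My plan is therefore to establish this formal \'etaleness property, which is in turn exactly condition (2) in Definition \ref{defni:prismatic-integral-canonical-model} for $\ms{S}_{\mathsf{K}^p}$. So the task reduces to producing the syntomic realization $\omega_{\mathsf{K}^p,\mr{syn}}$ (supplied by Theorem \ref{thmi:main-refinement}) and verifying that the associated classifying map $\rho_{\mathsf{K}^p}$ is formally \'etale.

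The first step is to reduce the formal \'etaleness of $\rho_{\mathsf{K}^p}$ to a pointwise statement on completed local rings at closed points. Both source and target are formally smooth over $\Spf(\mc{O}_E)$: $\wh{\ms{S}}_{\mathsf{K}^p}$ by the construction of the integral canonical model, and $\mr{BT}^{\mc{G},\mu_h}_\infty$ by the work of Gardner--Madapusi--Mathew, with matching relative dimensions (both computed by the partial flag variety cut out by $\mu_h$). For a morphism between two such formally smooth formal stacks it is enough, by standard deformation theory, to check that the induced map on completed local rings at every $\ov{\bb{F}}_p$-point of $\wh{\ms{S}}_{\mathsf{K}^p}$ is an isomorphism.

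The pointwise check is then carried out via Theorem \ref{thmi:ito-shim-comp}. Fix an $\ov{\bb{F}}_p$-point $x$ of $\ms{S}_{\mathsf{K}^p}$ and let $\omega_{b_x}$ be the associated $\mc{G}$-prismatic $F$-gauge of type $\mu_h$. Theorem \ref{thmi:ito-shim-comp} provides an isomorphism $\wh{\mc{O}}_{\ms{S}_{\mathsf{K}^p},x} \simeq R_{\mc{G},\mu_h}$ intertwining the restriction of $\omega_{\mathsf{K}^p,\mr{syn}}$ with Ito's universal deformation $\omega_{b_x}^{\mr{univ}}$. By the defining property of $R_{\mc{G},\mu_h}$ upgraded in Proposition \ref{prop:prorepresentable}, the classifying map $\Spf(R_{\mc{G},\mu_h}) \to \mr{BT}^{\mc{G},\mu_h}_\infty$ cut out by $\omega_{b_x}^{\mr{univ}}$ is formally \'etale at the closed point. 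Transporting this across Theorem \ref{thmi:ito-shim-comp} yields that $\rho_{\mathsf{K}^p}$ induces an isomorphism on completed local rings at $x$, which by the previous paragraph is enough.

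The main obstacle I anticipate is the invocation of universality at the last step. As emphasized in the discussion preceding Theorem \ref{thmi:ito-shim-comp}, Ito's ``universality'' of $R_{\mc{G},\mu_h}$ directly applies only to regular local $W(k)$-algebras with residue field $k$, whereas formal \'etaleness must be tested against arbitrary Artinian $W(k)$-algebras such as $k[\varepsilon]/\varepsilon^2$ and its square-zero iterates. Bridging this mismatch is precisely the content of Proposition \ref{prop:prorepresentable}, and I would expect its proof to carry the bulk of the technical weight: it must promote Ito's construction to a genuine pro-representability statement for the functor of $\mc{G}$-prismatic $F$-gauges of type $\mu_h$ deforming $\omega_{b_x}$ on the category of Artinian local $W(k)$-algebras with residue field $k$. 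Once that is in hand, the pointwise formal \'etaleness follows mechanically, and the Cartesian square asserted in the theorem drops out by unraveling definitions.
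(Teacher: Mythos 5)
Your reduction of the theorem to formal \'etaleness of $\rho_{\mathsf{K}^p}$ is the right starting point, and your use of Theorem \ref{thmi:ito-shim-comp} together with Proposition \ref{prop:prorepresentable} to obtain an isomorphism of deformation functors on $\mc{C}_W$ at each $\ov{\bb{F}}_p$-point agrees with the endpoint of the paper's argument (this is precisely Lemma \ref{lem:def-isom}). However, the step where you assert ``For a morphism between two such formally smooth formal stacks it is enough, by standard deformation theory, to check that the induced map on completed local rings at every $\ov{\bb{F}}_p$-point is an isomorphism'' is a genuine gap, and it hides most of the actual work. Formal \'etaleness in Theorem \ref{thmi:Serre--Tate} must be tested against \emph{arbitrary} $p$-nilpotent rings $R$ with nilpotent $I$, whereas Lemma \ref{lem:def-isom} only gives information about the restriction to $\mc{C}_W$ (complete Noetherian local $W$-algebras with residue field $\ov{\bb{F}}_p$). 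The target $\mr{BT}^{\mc{G},\mu_h}_\infty$ is not locally Noetherian or of finite type --- it is a pro-(smooth quasi-compact $p$-adic Artin stack) whose finite truncations have dimension $0$, so the parenthetical claim about ``matching relative dimensions'' is also off, and the usual ``\'etale iff isomorphism on completed local rings'' heuristic does not apply without further argument.

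The paper's proof of Theorem \ref{thm:Drinfeld-conjecture} fills this gap by a careful chain of reductions before the pointwise check becomes available: first a descent argument reducing to $\breve{\Z}_p$-algebras (Lemma \ref{lem:formally-etale-triangle}); then a limit-preservation argument for $\mr{BT}^{\mc{G},\mu}_\infty$ with respect to filtered colimits along square-zero extensions (Lemma \ref{lem:BT-commutes-with-colimts}, whose proof uses the Grothendieck--Messing Cartesian square \eqref{eq:GM-reintrepret} in an essential way) reducing to \emph{finitely generated} $\breve{\Z}_p$-algebras $R$; then, in the Hodge-type case, the integral Hodge embedding into a Siegel model combined with classical Serre--Tate theory for abelian varieties to produce a candidate lift to $R$, after which Lemma \ref{lem:closed-emb-lifting} is what finally reduces the verification that the candidate lift lands in the closed subscheme $\ms{S}_{\mathsf{K}^p}$ to a check on completed local rings at maximal ideals --- and only there does Lemma \ref{lem:def-isom} enter. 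The abelian-type case is then handled by devissage via Lemma \ref{lem:Lovering-lem}. In short: your pointwise criterion is the correct \emph{last} step, but the bulk of the technical weight lies not in Proposition \ref{prop:prorepresentable} as you anticipate but in (i) showing $\mr{BT}^{\mc{G},\mu_h}_\infty$ commutes with the relevant filtered colimits, and (ii) producing the lift over a general finitely generated ring via the Hodge embedding and classical Serre--Tate, neither of which your argument supplies.
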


\begin{remi} In \cite{PappasRapoportI}, Pappas--Rapoport conjecture a method to characterize a system of integral models of Shimura varieties in terms of shtukas which would apply even at non-hyperspecial parahoric levels $\mathsf{K}_0$. But, this approach only characterizes the full system (and cannot work at individual level $\mathsf{K}_0\mathsf{K}^p$). Moreover, as the theory of shtukas is based off the theory of $v$-sheaves which cannot distinguish between a ring and its reduced quotient, it is impossible to use the approach of Pappas--Rapoport to obtain a result like Theorem \ref{thmi:Serre--Tate}.

To understand the relationship between our work and that of \cite{PappasRapoportI}, we observe the following. In \cite[\S3.3.4]{IKY1} we construct the shtuka realization of a $\mc{G}$-object in prismatic $F$-crystals (or $F$-gauges) over $\ms{X}$. Together with Theorem \ref{thmi:intro-characterization} one is able to recover this `universal shtuka' at hyperspecial level, giving a verification of the Pappas--Rapoport conjecture in this case. 
\end{remi}

\medskip

\paragraph*{Cohomological application} At the end of this introduction, we list some further applications of Theorems \ref{thmi:main-refinement}, \ref{thmi:intro-characterization}, and \ref{thmi:Serre--Tate}. But, we wish to highlight here an immediate \emph{cohomological consequence}.

It is a well-established conjecture that the cohomology spaces $H^i_\et((\Sh_{\mathsf{K}_0\mathsf{K}^p})_{\ov{\Q}},\omega_{\mathsf{K}^p,\et}(\xi)[\nicefrac{1}{p}])$ are meant to realize the global Langlands correspondence for $G$ (e.g., see \cite{KottwitzAnnArbor}). Thus, information about these cohomology spaces should have implications for the local and global Langlands programs. To this end, Theorem \ref{thmi:main-refinement} recovers a result of Lovering (see Proposition \ref{prop:coholomogy-crystalline}) that says that these cohomology spaces are crystalline when the Shimura variety is proper. 

That said, in recent years much attention has been given to richer refinements of the classical local Langlands program, which concerns $\ell$-adic representations of $p$-adic Galois groups, allowing instead $p$-adic or even mod-$p$ representations. For these purposes $\mathbb{Z}_p$-refinements of the cohomology of Shimura varieties are required, and Theorem \ref{thmi:main-refinement} allows us to prove results in that direction.

To this end, let $n$ be an element of $\bb{N}\cup\{\infty\}$. We say that an object $\Lambda$ of $\cat{Rep}_{\Z/p^n}(\Gal(\ov{E}/E))$ has \emph{syntomically good reduction} if there exists an $F$-gauge $\mc{V}$ in $\cat{Vect}(\mc{O}_E^\mr{syn}/p^n)$ such that $T_\et(\mc{V})\simeq \Lambda$. This is a refinement of the notion of being crystalline (cf.\@ \cite[Theorem 6.6.13]{BhattNotes}).\footnote{In general the \'etale realization functor is \emph{not fully faithful} on $\cat{Vect}(\mc{O}_E^\mr{syn}/p^n)$, and so one should really view being syntomically good reduction as a piece of data instead of a condition. Thus, Theorem \ref{thm:gauge-propert-coh} should perhaps be viewed as giving a `canonical' syntomic model for the \'etale cohomology of various $\mathbb{Z}/p^n$-local systems on $\Sh_{\mathsf{K}_0\mathsf{K}^p}$.\label{footnote:syntomically-good-reduction}}

\begin{thmi}[{see Theorem \ref{thm:gauge-propert-coh}}]\label{thmi:coh-syntomically-good-reduction} Suppose that $\mr{Sh}_{\mathsf{K}_0\mathsf{K}^p}$ is proper. Then, for any object $\xi$ of $\cat{Rep}_{\Z_p}(\mc{G})$ such that the ${\mu}_h$-weights of $\xi[\nicefrac{1}{p}]$ are in $[0,p-3-i]$, the $\Gal(\ov{E}/E)$-representation $H^i_\et((\Sh_{\mathsf{K}_0\mathsf{K}^p})_{\ov{\Q}_p},\omega_{\mathsf{K}^p,\et}(\xi)/p^n)$ has syntomically good reduction. In fact, there is an isomorphism
 \begin{equation*}
     H^i_\et((\Sh_{\mathsf{K}_0\mathsf{K}^p})_{\ov{\Q}_p},\omega_{\mathsf{K}^p,\et}(\xi)/p^n)\simeq T_\et\left(\mc{H}^i_\mr{syn}(\wh{\ms{S}}_{\mathsf{K}^p}/\mc{O}_E,\omega_{\mathsf{K}^p,\mr{syn}}(\xi))/p^n)\right).
 \end{equation*}
\end{thmi}

\medskip

\paragraph*{Enhancement of Lovering's crystalline realization functor} We return now to the setting of abelian-type Shimura varieties. In \cite{LoveringFCrystals}, Lovering constructs a functor
\begin{equation*}
\omega_{\mathsf{K}^p,\crys}\colon\cat{Rep}_{\Z_p}(\mc{G})\to \cat{VectF}^{\varphi,\mr{div}}((\wh{\ms{S}}_{\mathsf{K}^p})_\crys)
\end{equation*}
called the \emph{crystalline realization functor}, where the target is the category of strongly divisible filtered $F$-crystals on $\wh{\ms{S}}_{\mathsf{K}^p}$. There is a natural identification
\begin{equation*}
D_\crys\circ \omega_{\mathsf{K}^p,\et}[\nicefrac{1}{p}]\isomto\omega_{\mathsf{K}^p,\crys}[\nicefrac{1}{p}].
\end{equation*}
Moreover, he shows that the lattices $\omega_{\mathsf{K}^p,\et}(\xi)$ and $\omega_{\mathsf{K}^p,\crys}(\xi)$ are matched by Fontaine--Laffaille theory when it applies (i.e.\@, when $\omega_{\mathsf{K}^p,\et}(\xi)$ has Hodge--Tate weights in $[0,p-2]$). Lovering's functor has found multiple applications (e.g., in \cite{Lee} and \cite{ShenZhang}). 

One main motivation for the construction of $\omega_{\mathsf{K}^p,\mr{syn}}$ was to refine this construction to the prismatic/syntomic setting and to remove the weight restrictions on the lattice-comparison aspects of Lovering's results. We can make precise the fact that $\omega_{\mathsf{K}^p,\mr{syn}}$ recovers $\omega_{\mathsf{K}^p,\mr{crys}}$ and allows one to remove the weight restrictions on Lovering's results using the integral analogue $\mathbb{D}_\mr{crys}$ of $D_\crys$ from \cite{IKY3}.

\begin{thmi}[{see Theorem \ref{thm:prismatic-crystalline-comparison}}]\label{thmi:Dcrys-matching} There are canonical identifications 
\begin{equation*}
\bb{D}_\crys\circ \omega_{\mathsf{K}^p,\mr{syn}}=\bb{D}_\crys\circ \omega_{\mathsf{K}^p,\smallprism}\isomto \omega_{\mathsf{K}^p,\crys}.
\end{equation*}
In particular for all $\xi$, the lattices $\omega_{\mathsf{K}^p,\et}(\xi)$ and $\omega_{\mathsf{K}^p,\crys}(\xi)$ are matched by $\bb{D}_\crys$.
\end{thmi}

\begin{remi} The functor $\bb{D}_\crys\colon \cat{Vect}((\wh{\ms{S}}_{\mathsf{K}_p})^\mr{syn})\to \cat{VectF}^{\varphi,\mr{div}}((\wh{\ms{S}}_{\mathsf{K}^p})_\crys)$ from \cite{IKY3} is an equivalence on the Fontaine--Laffaille range, i.e. when restricted to objects with `weights' in $[0,p-2]$. But, it fails to be fully faithful outside of that range. As $\omega_{\mathsf{K}^p,\mr{crys}}$ (being a tensor functor) necessarily takes values outside of the Fontaine--Laffaille range, the existence of $\omega_{\mathsf{K}^p,\mr{syn}}$ satisfying Theorem \ref{thmi:Dcrys-matching} is far from formal.
\end{remi}

\paragraph*{Further applications} Finally, we mention some further applications of the above results.

\begin{itemize}[leftmargin=.18in]
\item In the forthcoming work \cite{MadapusiDerivedCycles}, Theorem \ref{thmi:main-refinement} is applied to understand derived cycles on Shimura varieties with applications to special values of $L$-functions/automorphic forms.
\item In the forthcoming work \cite{MadapusiLee}, Theorem \ref{thmi:main-refinement}  is used to produce $p$-integral Hecke operators on Shimura varieties of abelian type. This is used to solve a conjecture of Fakhruddin--Pilloni and to give a conceptual construction of the fiber product diagram conjectured by Scholze, and studied in \cite{ZhangThesis} and \cite{DvHKZ2}.
\item Theorem \ref{thmi:main-refinement} gives rise to a smooth morphism $\zeta_{\mathsf{K}^p}\colon \ms{S}_{\mathsf{K}^p}(\mb{G},\mb{X})_k\to \mathcal{G}\text{-}\mathsf{Zip}^{-\mu_h}$, the zip period map, where $k$ is the residue field $E$ and the target is the category of $(\mc{G},\mu_h)$-zips (see Theorem \ref{thm:GZip-map}). This is applied in \cite{Reppen} to study the coherent cohomology of Shimura varieties.
\item In \cite{Inoue}, the ideas developed here are applied to construct a prismatic realization functor for toroidal compactifications of integral canonical models of Shimura varieties of Hodge type.
\item In \cite{Yan}, Theorem \ref{thmi:main-refinement} is further applied to obtain a refinement of the zip period map.
\end{itemize}

\medskip

\paragraph*{Acknowledgments} The authors would like to heartily thank Bhargav Bhatt, Alexander Bertoloni Meli, Patrick Daniels, Ian Gleason, Haoyang Guo, Pol van Hoften, Kazuhiro Ito, Wansu Kim, Teruhisa Koshikawa, Akhil Mathew, Emanuel Reinecke, Peter Scholze, Sug Woo Shin, Takeshi Tsuji, and Qixiang Wang, for helpful discussions. The authors would particularly like to thank Keerthi Madapusi for pointing out several subtleties concerning derived algebraic geometry in an earlier version of this article. Part of this work was conducted during a visit to the Hausdorff Research Institute for Mathematics, funded by the Deutsche Forschungsgemeinschaft (DFG, German Research Foundation) under Germany's Excellence Strategy – EXC-2047/1 – 390685813. This work was supported by JSPS KAKENHI Grant Numbers 22KF0109, 22H00093 and 23K17650, the European Research Council (ERC) under the European Union’s Horizon 2020 research and innovation programme (grant agreement No. 851146), and funding through the Max Planck Institute for Mathematics in Bonn, Germany (report numbers MPIM-Bonn-2022, MPIM-Bonn-2023, MPIM-Bonn-2024). 

\medskip

\paragraph*{Notation and conventions}

\begin{itemize}[leftmargin=.18in,label=$\diamond$]
    \item The symbol $p$ will always denote a (rational) prime.
    \item The functor $\bb{D}_\mr{crys}$ is the integral analogue of $D_\mr{crys}$ from \cite{IKY3}, and we use the notation and conventions concerning various categories of (crystalline) $F$-crystals as in \cite[\S2.1]{IKY3}.
     \item By a \emph{(derived) formal stack over $\Z_p$} we mean a stack $\mc{X}$ on the big fpqc site of $p$-nilpotent (animated) rings $R$, which we may view as a stack on the adic flat site $\Spf(\Z_p)_\mr{fl}^\mr{adic}$ (see \cite[\S A.4]{IKY1}) by declaring $\mc{X}(R)\defeq \lim \mc{X}(R/p^n)$. For a formal stack $\mc{X}$ over $\Z_p$ we have the induced stack $\mc{X}_n$ on the fpqc site of $\Spec(\Z/p^n)$, and we say that $\mc{X}$ is a \emph{$p$-adic formal Artin stack} if each $\mc{X}_n$ is an Artin stack over the fpqc site of $\Spec(\Z/p^n)$.
     \item For a property $P$ of morphisms of schemes (resp.\@ Artin stacks), an adic morphism of formal schemes $\mf{X}\to\mf{Y}$ where $\mf{Y}$ has an ideal sheaf of definition $\mc{I}$ (resp.\@ a morphism of $p$-adic formal Artin stacks $\mc{X}\to \mc{Y}$), is \emph{adically $P$ (or $\mc{I}$-adically $P$)} if the reduction modulo $\mc{I}^n$ (resp.\@ the morphism $\mc{X}_n\to\mc{Y}_n$) is $P$ for all $n$. If $A\to B$ is an adic morphism of rings with the $I$-adic topology, for $I\subseteq A$ an ideal, then we make a similar definition.
     \item For an Artin stack $\ms{X}$ on the fpqc site of $\Spec(\Z_p)$ and a subset $T\subseteq |\ms{X}|$ we define the \emph{formal completion along $T$} to be the substack on the fpqc site of $\Spec(\Z_p)$ to be
     \begin{equation*}
         \wh{\ms{X}}_T(R)\defeq \left\{f\colon \Spec(R)\to \ms{X}:f(|\Spec(R)|)\subseteq T\right\}.
     \end{equation*}
    If $T$ is clear from context we shall often omit it from the notation. 
    \item For a morphism $f\colon X\to Y$ we denote $R^if_\ast\underline{\Z_p}$ and $R^if_\ast\underline{\Q_p}$ by $\mc{H}^i_{\Z_p}(X/Y)$ and $\mc{H}^i_{\Q_p}(X/Y)$, respectively. Similar notation will be applied for other cohomology theories.
    \item Our notation and conventions concerning derived algebraic geometry are as in \cite{GMM}.
    \item For a non-archimedean field $K$, a \emph{rigid $K$-space} $X$ is an adic space locally of finite type over $K$. We denote the set of \emph{classical points} by $|X|^\mathrm{cl}:=\left\{x\in X: [k(x):K]<\infty\right\}$.
    \item For an $R$-module $M$ and an ideal $I\subseteq R$ (resp.\@ principal ideal $(a)\subseteq R$) we often write $M/I$ (resp.\@ $M/a$) as shorthand for $M/IM$ (resp.\@ $M/aM$).
    \item A filtration always means a decreasing and exhaustive $\Z$-filtration (i.e.,  $\bigcup_{i\in \Z}\Fil^i=M$).
    \item For a ring $A$ and an element $a$ of $A$, denote by $\Fil^\bullet_a$ the filtration with $\Fil^r_a=a^r A$ for $r> 0$, and $\Fil^r_a=A$ for $r\leqslant 0$. Define $\Fil^\bullet_\triv\defeq \Fil^\bullet_0$.
    \item A filtration of (sheaves of) modules is \emph{locally split} if its graded pieces are locally free.
    \item For an $\bb{F}_p$-algebra $R$ (resp.\@ $\bb{F}_p$-scheme $X$), we denote by $F_R$ (resp.\@ $F_X$) its absolute Frobenius.
\end{itemize}

\section{The Tannakian framework for prismatic \texorpdfstring{$F$-gauges}{F-gauges}}\label{s:G-objects-prismatic-crystals}

In this section we discuss the Tannakian aspects of Drinfeld and Bhatt--Lurie's theory of prismatic $F$-gauges and compare it with the Tannakian theory of prismatic $F$-crystals as developed in \cite{IKY1}. We refer the reader to \cite[\S1]{IKY1}, \cite{BhattNotes}, and \cite[Appendix A]{IKY1} for preliminary discussions of prismatic $F$-crystals, Tannakian theory, and stack-theoretic notions, respectively.

\subsection{Some preliminaries on graded and filtered algebra} The extra structure present in a prismatic $F$-gauge versus a prismatic $F$-crystal is that of a filtration with good properties. So, we first describe some general results in the algebra of filtered rings/modules.

\subsubsection{Basic definitions and results} We use standard terminology concerning filtered rings $(R,\Fil^\bullet_R)$ on topoi $\ms{T}$ and filtered modules over them (e.g., see \cite[Definition 10]{Tsu20}), and only comment on two pieces of terminology/notation not explicitly stated there:

\begin{itemize}[leftmargin=.5cm]
    \item For filtered modules $(M,\Fil^\bullet_M)$ and $(N,\Fil^\bullet_N)$ over $(R,\Fil^\bullet_R)$ the \emph{filtered tensor product} 
    \begin{equation*}
    (M,\Fil^\bullet_M)\otimes_{(R,\Fil^\bullet_R)}(N,\Fil^\bullet_N)
\end{equation*}
    is the module $M\otimes_R N$ equipped with the filtration where
\begin{equation}\label{eq:filt-module-tensor-product}
    \Fil^r_{M\otimes_R N}\defeq \sum_{a+b=r}\mathrm{im}(\Fil^a_M\otimes_R \Fil^b_N\to M\otimes_R N).
\end{equation}
When $(N,\Fil^\bullet_N)=(S,\Fil^\bullet_S)$ is a filtered ring with the structure of a filtered module over $(R,\Fil^\bullet_R)$ via a filtered ring map, then this tensor product is a filtered module over $(S,\Fil^\bullet_S)$.
\item A \emph{filtered crystal} over $(R,\Fil^\bullet_R)$ is a filtered module $(M,\Fil^\bullet_M)$ over $(R,\Fil^\bullet_R)$ such that for every morphism $T\to T'$ in $\ms{T}$ the natural morphism
    \begin{equation*}
        (M(T'),\Fil^\bullet_M(T'))\otimes_{(R(T'),\Fil^\bullet_R(T'))} (R(T),\Fil^\bullet_R(T))\to (M(T),\Fil^\bullet_M(T))
    \end{equation*}
    is an isomorphism of filtered modules over $(R(T),\Fil^\bullet_R(T))$.\footnote{For a filtered ring $(R,\Fil^\bullet_R)$, we observe that there is a natural equivalence between filtered modules over $(R,\Fil^\bullet_R)$ and filtered crystals for the filtered ring $(\mc{O}_{\Spec(R)},\wt{\Fil^\bullet_R})$ in the Zariski topos on $\Spec(R)$. Thus, hereon out we will treat the theory of filtered modules over a ring as a special case of the theory of filtered crystals.}
\end{itemize}

With the obvious notion of morphisms, denote by $\cat{MF}(R,\Fil^\bullet_R)$ the category of filtered crystals over $(R,\Fil^\bullet_R)$. Note that $\cat{MF}(R,\Fil^\bullet_R)$ has the structure of an exact $R$-linear $\otimes$-category where we define the tensor product by the same formula as in \eqref{eq:filt-module-tensor-product}, and where
    \begin{equation*}
        0\to (M_1,\Fil^\bullet_{M_1})\to (M_2,\Fil^\bullet_{M_2})\to (M_3,\Fil^\bullet_{M_3})\to 0,
    \end{equation*}
    is exact if
    \begin{equation*}
        0\to \Fil^r_{M_1}\to \Fil^r_{M_2}\to \Fil^r_{M_3}\to 0
    \end{equation*}
is an exact sequence of $R$-modules for every $r$ in $\Z$.

The following freeness condition will play an important role in our paper.

\begin{defn}[{cf.\@ \cite[Definition 10]{Tsu20}}]\label{defn: filtered basis} Let $\ms{T}$ be a topos with final object $\ast$, and $(R,\Fil^\bullet_R)$ a filtered ring in $\ms{T}$. A filtered module $(M,\Fil^\bullet_M)$ over $(R,\Fil^\bullet_R)$ is \emph{(finite) free} if there exists a \emph{filtered basis}: a collection $(e_\nu,r_\nu)_{\nu=1}^n$ where $(e_\nu)_{\nu=1}^n$ is a basis of $M$ as an $R$-module, and $r_\nu$ are integers, such that 
\begin{equation}\label{eq:filtered-basis}
    \Fil^r_M=\sum_{\nu=1}^n\Fil^{r-r_\nu}_R\cdot e_\nu.
\end{equation}
We say that $(M,\Fil^\bullet_M)$ is \emph{locally filtered free (lff)} if there exists a cover $\{T_i\to \ast\}$ such that the restriction of $(M,\Fil^\bullet_M)$ to each slice topos $\ms{T}/T_i$ is (finite) free for all $i$. We denote the full subcategory of $\cat{MF}(R,\Fil^\bullet_R)$ consisting of lff objects by $\cat{MF}^\mr{lff}(R,\Fil^\bullet_R)$.
\end{defn}

For a filtered crystal $(M,\Fil^\bullet_M)$ over the filtered ring $(R,\Fil^\bullet_R)$ in a topos $\ms{T}$, we recall that the $r^\text{th}$-graded piece (for $r$ in $\Z$) is defined as the $R$-module
\begin{equation*}
    \mr{Gr}^r(M,\Fil^\bullet_M)=\mr{Fil}^r_M/\Fil^{r+1}_M.
\end{equation*}

\begin{eg} When $\Fil^\bullet_R=\Fil^\bullet_\triv$ a filtered crystal $(M,\Fil^\bullet_M)$ is (locally) filtered free over $(R,\Fil^\bullet_\triv)$ if and only if finitely many graded pieces of $\Fil^\bullet_M$ are non-zero and the graded pieces are (locally) free $R$-modules, i.e.\@, that $\Fil^\bullet_M\subseteq M$ is a locally split filtration.
\end{eg}

It is not hard to show that the category $\cat{MF}^\mr{lff}(R,\Fil^\bullet_R)$ is closed under tensor product, and so inherits the structure of an exact $R$-linear $\otimes$-subcategory from $\cat{MF}(R,\Fil^\bullet_R)$.

We end by making an elementary, but useful, observation about short exact sequences of filtered modules. First recall that a map $f\colon (M_1,\Fil^\bullet_1)\to (M_2,\Fil^\bullet_2)$ of filtered $(R,\Fil^\bullet_R)$-modules is called \emph{strict} if the equality $f(\Fil^j_1)=f(M)\cap \Fil^j_2$ for all $j$ (cf.\@ \stacks{0120} and \stacks{05SI}). 
\begin{lem}\label{lem:filtration-equiv} Let $R$ be a ring, and let 
\begin{equation}\label{eq:exact-seq-filtered-modules}
    0\to (M_1,\Fil_1^\bullet)\to (M_2,\Fil_2^\bullet)\to (M_3,\Fil_3^\bullet)\to 0,
\end{equation}
be a sequence of filtered $(R,\Fil^\bullet_\mr{triv})$-modules, which is a short exact sequence on the underlying $R$-modules. Then, the following are equivalent:
\begin{enumerate}
    \item the maps $(M_1,\Fil_1^\bullet)\to (M_2,\Fil_2^\bullet)$ and $(M_2,\Fil_2^\bullet)\to (M_3,\Fil_3^\bullet)$ are strict,
    \item the sequence 
    \begin{equation}\label{eq:filt-exact-seq-1}
        0\to \Fil^j_1\to\Fil^j_2\to\Fil^j_3\to 0
    \end{equation}
    is exact for all $j$ (i.e., \eqref{eq:exact-seq-filtered-modules} is an exact sequence of filtered $R$-modules).
\end{enumerate}
Suppose further that each of these filtered modules is strictly exhaustive 
(i.e.\@, $\Fil^j=M$ for a small enough $j$), then (1) and (2) are further equivalent to
\begin{enumerate}
\setcounter{enumi}{2}
    \item the sequence 
    \begin{equation}\label{eq:filt-exact-seq-2}
        0\to \mr{Gr}^j(\Fil^\bullet_1)\to\mr{Gr}^j(\Fil^\bullet_2)\to\mr{Gr}^j(\Fil^\bullet_3)\to 0
    \end{equation}
    is exact for all $j$. 
\end{enumerate}
\end{lem}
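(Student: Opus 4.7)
The plan is to establish the two equivalences $(1)\Leftrightarrow(2)$ and $(2)\Leftrightarrow(3)$ separately, as the first is essentially definitional and the second is a consequence of the $3\times 3$-lemma together with the exhaustiveness hypothesis.

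For $(1)\Leftrightarrow(2)$, I would unwind the definition of strictness. Writing $f\colon M_1\to M_2$ and $g\colon M_2\to M_3$ for the two maps, strictness of $f$ means $f(\Fil^j_1)=f(M_1)\cap\Fil^j_2$, and since $f$ is injective on the underlying modules this is equivalent to saying that the kernel of $g|_{\Fil^j_2}$ coincides with the image of $\Fil^j_1\to\Fil^j_2$; strictness of $g$ together with surjectivity of $g$ on underlying modules is equivalent to surjectivity of $\Fil^j_2\to\Fil^j_3$. Injectivity of $\Fil^j_1\to\Fil^j_2$ is automatic from injectivity of $f$. Putting these together gives $(1)\Leftrightarrow(2)$ directly, with no need for any additional hypothesis.

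For $(2)\Leftrightarrow(3)$, the key tool is the commutative diagram of short exact sequences of modules
\begin{equation*}
\begin{array}{ccccccccc}
0 & \to & \Fil^{j+1}_1 & \to & \Fil^{j+1}_2 & \to & \Fil^{j+1}_3 & \to & 0 \\
& & \downarrow & & \downarrow & & \downarrow & & \\
0 & \to & \Fil^{j}_1 & \to & \Fil^{j}_2 & \to & \Fil^{j}_3 & \to & 0 \\
& & \downarrow & & \downarrow & & \downarrow & & \\
0 & \to & \mr{Gr}^{j}_1 & \to & \mr{Gr}^{j}_2 & \to & \mr{Gr}^{j}_3 & \to & 0
\end{array}
\end{equation*}
whose columns are exact by definition. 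The direction $(2)\Rightarrow(3)$ follows immediately from the $3\times 3$-lemma (\stacks{0121} applied in the abelian category of $R$-modules): the top two rows are exact by $(2)$, so the bottom row is exact. For the reverse $(3)\Rightarrow(2)$, I would induct upward on $j$: by strict exhaustiveness there is some $j_0$ with $\Fil^{j_0}_i=M_i$ for $i=1,2,3$, so the middle row is exact for $j=j_0$ as the given short exact sequence. Assuming exactness of the middle row at level $j$ and exactness of the bottom row (given by $(3)$), another application of the $3\times 3$-lemma yields exactness of the top row, which is the filtered sequence at level $j+1$, completing the induction.

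The only potential obstacle is the base case of this induction, which is precisely where strict exhaustiveness is used; without it, $(3)$ does not in general imply $(2)$ (one can add a trivial shifted summand to the graded pieces without detecting it). Since all three modules share a common lower bound by hypothesis, however, the inductive step propagates exactness to every level $j\in\Z$, completing the proof.
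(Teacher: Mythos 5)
Your proof is correct and follows essentially the same route as the paper's: the equivalence $(1)\Leftrightarrow(2)$ is unwound directly from the definition of strictness together with injectivity of $f$ and surjectivity of $g$, and the equivalence $(2)\Leftrightarrow(3)$ is obtained by the snake/nine lemma applied to the $3\times 3$ diagram with exact columns $0\to\Fil^{j+1}_i\to\Fil^j_i\to\Gr^j_i\to 0$, with an upward induction on $j$ whose base case is supplied by strict exhaustiveness. The only cosmetic difference is that you invoke the nine lemma where the paper phrases the same diagram chase as an application of the snake lemma; this is a distinction without a difference, since the relevant instance of the nine lemma is precisely the snake lemma applied to the morphism of short exact sequences $\Fil^j_\bullet\to\Gr^j_\bullet$ (whose kernels are $\Fil^{j+1}_\bullet$ and whose cokernels vanish).
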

\begin{proof}
The equivalence of (1) and (2) follows from the definition of strictness. By the snake lemma, (2) implies (3). We can check the converse by induction on $j$ using the snake lemma.
\end{proof}

\subsubsection{The Rees algebra construction} We now recall the Rees construction for a filtered ring/module, and relate it to the notion of lff filtered modules. 

We begin by reviewing some terminology concerning graded rings.
\begin{itemize}[leftmargin=.5cm]
    \item A \emph{graded ring} is a ring $R$ together with a decomposition $R=\bigoplus_{r\in \Z}R_r$ as abelian groups such that $R_r\cdot R_s\subseteq R_{r+s}$ for all $r$ and $s$ in $\Z$. For a ring $A$, we say that $R$ is a \emph{graded $A$-algebra} if there is a ring map $A\to R$ with image in $R_0$.
     \item A \emph{graded module} $M$  over $R=\bigoplus_{r\in \Z}R_r$ consists of an $R$-module $M$, and a decomposition $M=\bigoplus_{r\in \Z}M_r$ as abelian groups such that $R_r\cdot M_s\subseteq M_{r+s}$ for all $r$ and $s$ in $\Z$.
    \item We say that a graded module $M=\bigoplus_{r\in\Z}M_r$ over $R=\bigoplus_{r\in \Z}R_r$ is \emph{finite projective} if its underlying $R$-module is finite projective.\footnote{See \cite[Lemma 3.0.1]{LauHigher} for why this terminology is unambiguous}
    \item A graded ring map $R\to S$ is a ring map $f\colon R\to S$ with $f(R_r)\subseteq S_r$ for all $r$. 
    \item For graded $R$-modules $M$ and $N$, we define their \emph{graded tensor product} by declaring
\begin{equation*}
   (M\otimes_R N)_n=\left\{\sum_i m_i\otimes n_i \in M\otimes_R N: m_i\in M_r,\, n_i\in N_s,\text{ and }r+s=n\right\}.
\end{equation*}
When $N=S$ is a graded ring equipped with the structure of a graded $R$-module via a graded ring map, then this tensor product is a graded $S$-module.
\end{itemize}

With the obvious notion of morphisms, denote by $\cat{MG}(R)$ the category of graded modules over $R=\bigoplus_{r\in \Z}R_r$. Note that $\cat{MG}(R)$ has the structure of an exact $R$-linear $\otimes$-category where
    \begin{equation*}
        0\to \bigoplus_{r\in\Z}M_{1,r}\to \bigoplus_{r\in\Z}M_{2,r}\to \bigoplus_{r\in\Z}M_{3,r}\to 0,
    \end{equation*}
is said to be exact if
    \begin{equation*}
        0\to M_{1,r}\to M_{2,r}\to M_{3,r}\to 0
    \end{equation*}
is an exact sequence of abelian groups for every $r$ in $\Z$.
    
We denote by $\cat{MG}^\mr{fp}(R)$ the full subcategory of $\cat{MG}(R)$ consisting of finite projective objects. This is clearly closed under tensor products, and therefore $\cat{MG}^\mr{fp}(R)$ inherits the structure of an exact $R$-linear $\otimes$-subcategory from $\cat{MG}(R)$.

For a filtered ring $(R,\Fil^\bullet_R)$ we now wish to relate $\cat{MF}^\mr{lff}(R,\Fil^\bullet_R)$ to $\cat{MG}^\mr{fp}(S)$ for a certain graded ring $S$ associated to $R$ which we now discuss.

\begin{defn}\label{defn: Rees construction} Let $(R,\Fil^\bullet_R)$ be a filtered ring. Then, its \emph{Rees algebra} (e.g., see \cite[Chapter I, \S4.3, Definition 5]{LvO}) is the graded ring
\begin{equation*}
    \mr{Rees}(\Fil^\bullet_R)\defeq \bigoplus_{r\in \Z}\Fil^r_R t^{-r}\subseteq R[t^{\pm 1}].
\end{equation*}
For a filtered $R$-module $(M,\Fil^\bullet_M)$ we define its \emph{Rees module} (see loc.\@ cit.\@) to be
\begin{equation*}
    \mr{Rees}(\Fil^\bullet_M)\defeq \bigoplus_{r\in\Z}\Fil^r_M  t^{-r}\subseteq M\otimes_R R[t^{\pm1}],
\end{equation*}
which is a graded $\mr{Rees}(\Fil^\bullet_R)$-module.
\end{defn}

Suppose that $(R,\Fil^\bullet_R)\to (S,\Fil^\bullet_S)$ is a map of filtered rings, and $(M,\Fil^\bullet_M)$ is a filtered $(R,\Fil^\bullet_R)$-module. Then, by functoriality of the Rees algebra construction we obtain a natural map $\mr{Rees}(\Fil^\bullet_R)\to \mr{Rees}(\Fil^\bullet_S)$ of graded rings. One thus obtains a canonical morphism
\begin{equation}\label{eq:Rees-tensor}
    \mr{Rees}(\Fil^\bullet_M)\otimes_{\mr{Rees}(\Fil^\bullet_R)}\mr{Rees}(\Fil^\bullet_S)\to \mr{Rees}(\Fil^\bullet_{M\otimes_R S}),
\end{equation}
of graded $\mr{Rees}(\Fil^\bullet_S)$-modules. This map is an isomorphism if the source has no non-trivial $t$-torsion 
(e.g., if $\mr{Rees}(\Fil^\bullet_M)$ is a flat module over $\mr{Rees}(\Fil^\bullet_R)$). Indeed, it suffices to verify this map induces an isomorphism after applying the inverse to the equivalence in \cite[Chapter I, \S4.3, Proposition 7]{LvO} (see also \cite[Chapter I, \S4.3, Observation 6 (a)]{LvO}). But, this is trivial.

Observe that as $\mr{Rees}(\Fil^\bullet_R)$ is a graded $R$-algebra, we have a natural action of the group $R$-scheme $\bb{G}_{m,R}=\Spec(R[x^{\pm 1}])$ on $\Spec(\mr{Rees}(\Fil^\bullet_R))$ corresponding to the coaction map 
\begin{equation*}
    \mr{Rees}(\Fil^\bullet_R)\to \mr{Rees}(\Fil^\bullet_R)\otimes_R R[x^{\pm 1}],
\end{equation*}
uniquely specified by declaring that an element $a$ in $\Fil^r_R t^{-r}$ in $\mr{Rees}(\Fil^\bullet_R)$ maps to $a\otimes x^r$. We may then consider the Artin stack over $R$ given by
\begin{equation*}
    \mc{R}(\Fil^\bullet_R)\defeq [\Spec(\mr{Rees}(\Fil^\bullet_R))/\bb{G}_{m,R}],
\end{equation*} 
called the \emph{Rees stack} of $(R,\mr{Fil}^\bullet_R)$. Given a graded $\mr{Rees}(\Fil^\bullet_R)$-module $M=\bigoplus_{r\in\Z}M_r$ there is a natural action of $\bb{G}_{m,R}$ on $M$ corresponding to the coaction map
\begin{equation*}
    M\to M\otimes_R R[x^{\pm 1}]
\end{equation*}
defined in the analogous way. This defines a quasi-coherent sheaf on $\mc{R}(\Fil^\bullet_R)$ by the construction in \stacks{06WT}. We denote this functor by $(-)/\bb{G}_{m,R}$.

Finally, suppose that $R$ is $J$-adically complete with respect to a finitely generated ideal $J\subseteq R$. We define the \emph{completed Rees stack} $\wh{\mc{R}}(\Fil^\bullet_R)$ (leaving the ideal $J$ implicit) to be the completion of $\mc{R}(\Fil^\bullet_R)$ along $\mc{R}(\Fil^\bullet_R)\times_{\Spec(R)}\Spec(R/J)$. We then have a natural pullback functor
\begin{equation*}
    \wh{(-)}\colon \cat{Vect}(\mc{R}(\Fil^\bullet_R))\to\cat{Vect}(\wh{\mc{R}}(\Fil^\bullet_R))=\twolim \cat{Vect}(\mc{R}(\Fil^\bullet_R)\times_{\Spec(R)}\Spec(R/J^n)),
\end{equation*}
which is an $R$-linear $\otimes$-functor. 

\begin{prop}\label{prop:rees-equiv} Suppose that $(R,\Fil^\bullet_R)$ is a filtered ring. Then, the functors
\begin{equation*}
    \cat{MF}^\mr{lff}(R,\Fil^\bullet_R)\to \cat{MG}^\mr{fp}(\mr{Rees}(\Fil^\bullet_R)),\qquad (M,\Fil^\bullet_M)\mapsto \mr{Rees}(\Fil^\bullet_M),
\end{equation*}
and
\begin{equation*}
    (-)/\bb{G}_m\colon \cat{MG}^\mr{fp}(\mr{Rees}(\Fil^\bullet_R))\to\cat{Vect}(\mc{R}(\Fil^\bullet_R))
\end{equation*}
are $2$-funtorial bi-exact $R$-linear $\otimes$-equivalences. Suppose further that $R$ is $J$-adically complete with respect to a finitely generated ideal $J\subseteq R$ and that $\Fil^i_R\subset R$ is closed with respect to the $J$-adic topology, for every $i$. Then, the functor
\begin{equation*}
    \wh{(-)}\colon \cat{Vect}(\mc{R}(\Fil^\bullet_R))\to\cat{Vect}(\wh{\mc{R}}(\Fil^\bullet_R))
\end{equation*}
is a $2$-functorial bi-exact $R$-linear $\otimes$-equivalence. 
\end{prop}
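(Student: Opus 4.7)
The proposition contains three equivalences: the Rees functor, the $\mathbb{G}_m$-quotient, and the $J$-adic completion; I would prove each in turn. For the first, the quasi-inverse sends $N = \bigoplus_r N_r$ in $\cat{MG}^\mr{fp}(\mathrm{Rees}(\Fil^\bullet_R))$ to $(M, \Fil^\bullet_M)$ with $M \defeq (N[t^{-1}])_0$ and $\Fil^r_M$ the image of the multiplication-by-$t^r$ map $N_r \to M$. That this is quasi-inverse to $\mathrm{Rees}$ on the underlying filtered/graded structures is essentially the classical Rees equivalence (cf.\@ \cite[Ch.\@ I, \S 4.3, Proposition 7]{LvO}); the additional content is matching lff with graded-finite-projective on each side. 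On one side, a filtered basis $(e_\nu, r_\nu)$ of $(M, \Fil^\bullet_M)$ translates into a graded basis $(e_\nu t^{-r_\nu})_\nu$ giving
\[ \mathrm{Rees}(\Fil^\bullet_M) \cong \bigoplus_\nu \mathrm{Rees}(\Fil^\bullet_R)(r_\nu), \]
so $\mathrm{Rees}$ of an lff module is graded free Zariski-locally on $\Spec R$, hence finite projective graded. Conversely, given $N$ graded finite projective, $M$ is finite projective over $R$; Zariski-locally on $\Spec R$ one picks a basis $m_\nu$ of $M$, lifts to homogeneous $\wt{m}_\nu \in N_{r_\nu}$, and shows the induced graded map $\bigoplus_\nu \mathrm{Rees}(\Fil^\bullet_R)(r_\nu) \to N$ is an isomorphism, producing a filtered basis of $(M, \Fil^\bullet_M)$. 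Compatibility with tensor products is routine from the explicit formulas, and $2$-functoriality in $(R, \Fil^\bullet_R)$ is automatic. Bi-exactness follows from Lemma \ref{lem:filtration-equiv}: a sequence of lff filtered modules is exact iff each $\Fil^r$ piece is short exact, which translates degreewise to exactness of the associated graded Rees modules.

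The $\mathbb{G}_m$-quotient equivalence is a direct application of the standard correspondence $\cat{QCoh}([\Spec A/\mathbb{G}_m]) \simeq \cat{MG}(A)$ (see \stacks{06WT}), under which finite projective graded modules correspond exactly to vector bundles on the quotient stack; this is manifestly bi-exact, $\otimes$-compatible, and $2$-functorial.

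For the completion equivalence, combining the above two reductions, we need to show
\[ \cat{MF}^\mr{lff}(R, \Fil^\bullet_R) \isomto \twolim_n \cat{MF}^\mr{lff}(R/J^n, \Fil^\bullet_R/J^n). \]
Full faithfulness uses that for an lff $(M, \Fil^\bullet_M)$ the module $M$ is finite projective over the $J$-adically complete $R$, hence $J$-adically complete, and each $\Fil^r_M$, expressed Zariski-locally as $\sum_\nu \Fil^{r-r_\nu}_R \cdot e_\nu$, is a finite sum of closed $R$-submodules of $M$ (by the hypothesis that each $\Fil^i_R \subseteq R$ is closed), hence also $J$-adically complete. Essential surjectivity is obtained by inductively lifting local filtered bases modulo each $J^n$ and passing to the inverse limit, where convergence is guaranteed by the completeness just established.

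The main obstacle is the claim in the first paragraph that a finite projective graded $\mathrm{Rees}(\Fil^\bullet_R)$-module is Zariski-locally on $\Spec R$ graded free; since $\mathrm{Rees}(\Fil^\bullet_R)$ is $\mathbb{Z}$-graded (with components in both positive and negative degrees), this falls outside the usual positively-graded Nakayama setup. The strategy is to choose lifts $\wt{m}_\nu$ so that after inverting $t$ the proposed graded morphism becomes an isomorphism (both sides recovering $M \otimes_R R[t^{\pm 1}]$), making its cokernel $t$-power-torsion; using that $N$ is $t$-torsion free (being flat over $\mathrm{Rees}(\Fil^\bullet_R)$, where $t$ is a non-zero-divisor as seen from the embedding $\mathrm{Rees}(\Fil^\bullet_R) \hookrightarrow R[t^{\pm 1}]$), and choosing the integers $r_\nu$ to be maximal so that the lifts land in the deepest possible graded piece, one applies graded Nakayama to the bounded-below graded pieces of the cokernel to conclude isomorphy.
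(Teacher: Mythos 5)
Your decomposition into the three equivalences matches the paper's, and you correctly isolate the crux: showing that a finite projective graded $\mr{Rees}(\Fil^\bullet_R)$-module is Zariski-locally graded free, equivalently that the corresponding filtered module is lff. This is the paper's Proposition \ref{prop:Rees-lff-proj}, and it is exactly where your argument has a genuine gap. You propose to pick an arbitrary basis $m_\nu$ of $M$, lift to homogeneous elements of maximal possible degrees $r_\nu$, and conclude that the resulting graded map is an isomorphism because it is one after inverting $t$ and $N$ is $t$-torsion-free. This fails: take $R=\Z_p$ with $\Fil^r_R=p^rR$ for $r\geqslant 0$ and $(M,\Fil^\bullet_M)$ filtered free on $(e_1,1),(e_2,0)$. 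The basis $m_1=e_1+e_2$, $m_2=e_2$ has maximal lifting degrees $r_1=r_2=0$, and the induced graded map $\mr{Rees}(\Fil^\bullet_R)^{\oplus 2}\to\mr{Rees}(\Fil^\bullet_M)$ misses $e_1t^{-1}$ in degree $1$, even though it is an isomorphism after inverting $t$. So the cokernel is a nonzero $t$-power-torsion module; $t$-torsion-freeness of $N$ controls the kernel, not the cokernel, and a finitely generated graded $t$-power-torsion module need not vanish (e.g.\ $\mr{Rees}(\Fil^\bullet_R)/(t)\cong\bigoplus_r\Gr^r(\Fil^\bullet_R)$). Nor is the cokernel bounded in the direction a graded Nakayama argument would need, since $\mr{Rees}(\Fil^\bullet_R)$ has nonzero components in every degree $\leqslant 0$ (and in every degree when the filtration does not terminate); in the example above the cokernel is $\Z/p$ in every degree $\geqslant 1$.

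The missing input is a basis of $M$ \emph{adapted to the filtration}, whose existence is essentially the statement being proved. The paper supplies it in three steps: first the case of the trivial filtration, where $\mr{Rees}=R[t]$ is positively graded and specialization at $t=0$ exhibits each $\Gr^r(\Fil^\bullet_M)$ as projective; then an explicit computation of $\mr{Rees}(\Fil^\bullet_M)\otimes_{\mr{Rees}(\Fil^\bullet_R)}\mr{Rees}(\Fil^\bullet_\mr{triv})$ along $R\to R/\Fil^1_R$ (Lemma \ref{lem: Rees tensor}), whose projectivity over $(R/\Fil^1_R)[t]$ yields both a filtered basis of the reduction and the injectivity of multiplication by $t$ on it; and finally an induction on the filtration degree (Claim \ref{claim:basis-lift}) using that injectivity to show a lift of this filtered basis is a filtered basis. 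Your treatment of the remaining pieces — the quotient-stack equivalence via \stacks{06WT}, bi-exactness via Lemma \ref{lem:filtration-equiv}, and the completion equivalence by lifting split surjections from graded free modules and passing to the limit (where your appeal to closedness of $\Fil^i_R$ plays the same role as in the paper's Claim \ref{claim:limit-of-fp}) — is in line with the paper, but the equivalence between lff and graded finite projective must be repaired before any of it stands.
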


Before proving this proposition, we first establish that the Rees construction preserves and reflects locally-free-like conditions. More precisely, we have the following. 

\begin{prop}\label{prop:Rees-lff-proj}
    Let $(R,\Fil_R^\bullet)$ be a filtered ring. Then a filtered module $(M,\Fil_M^\bullet)$ over $(R,\Fil_R^\bullet)$ is lff if and only if its Rees module $\mr{Rees}(\Fil_M^\bullet)$ is finite projective over $\mr{Rees}(\Fil_R^\bullet)$. 
\end{prop}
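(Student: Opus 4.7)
The proof proceeds by establishing both implications separately; set $S := \mr{Rees}(\Fil^\bullet_R)$ and $N := \mr{Rees}(\Fil^\bullet_M)$ throughout.

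For the direction ``lff implies finite projective,'' the filtered free case is a direct computation: a filtered basis $(e_\nu, r_\nu)_{\nu=1}^n$ for $(M, \Fil^\bullet_M)$ unwinds the definition of the Rees module into the graded identification
\begin{equation*}
N \;\simeq\; \bigoplus_{\nu=1}^n S(-r_\nu) \cdot (e_\nu t^{-r_\nu}),
\end{equation*}
so $N$ is finite free (hence finite projective) as a graded $S$-module. To promote this to the lff case, pick a Zariski cover $\{\Spec(R_{f_i}) \to \Spec(R)\}$ over which $(M, \Fil^\bullet_M)$ becomes filtered free. By flatness of $R \to R_{f_i}$, one obtains directly $\mr{Rees}(\Fil^\bullet_{R_{f_i}}) \simeq S \otimes_R R_{f_i} = S[1/f_i]$ and, by the same computation, $\mr{Rees}(\Fil^\bullet_{M_{f_i}}) \simeq N \otimes_R R_{f_i} = N[1/f_i]$; in particular the canonical map \eqref{eq:Rees-tensor} is an isomorphism. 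Since the $f_i$ generate the unit ideal in $S$ (via the inclusion $R = S_0 \hookrightarrow S$), the $\{\Spec(S[1/f_i])\}$ form a Zariski cover of $\Spec(S)$ on which $N$ is finite free, and so $N$ is finite projective.

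For the reverse direction ``finite projective implies lff,'' assume $N$ is finite projective graded over $S$. Since being lff is Zariski-local on $\Spec(R)$, one may reduce to the case where $R$ is local with maximal ideal $\fm$ and residue field $k$. The module $M = N[1/t]_0$ is then finite projective, hence free, of some rank $n$ over $R$. Choose a basis $(\bar e_\nu)_{\nu=1}^n$ of $\bar M := M/\fm M$ adapted to the quotient filtration $\bar\Fil^\bullet_M$, setting $r_\nu := \max\{r \in \Z : \bar e_\nu \in \bar\Fil^r_M\}$ (adopting the convention $r_\nu = 0$ in the pathological case where this maximum does not exist), and lift each $\bar e_\nu$ to $e_\nu \in \Fil^{r_\nu}_M$. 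Nakayama's lemma gives that $(e_\nu)$ is an $R$-basis of $M$, and the easy inclusion $\Fil^r_M \supseteq \sum_\nu \Fil^{r-r_\nu}_R \cdot e_\nu$ in the filtered basis identity \eqref{eq:filtered-basis} is immediate from $e_\nu \in \Fil^{r_\nu}_M$.

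The substantive content is the reverse inclusion, equivalent to the vanishing of the graded cokernel $N/N'$, where $N' \subseteq N$ denotes the graded $S$-submodule generated by the homogeneous elements $\{e_\nu t^{-r_\nu}\}$. Ordinary Nakayama does not apply directly, since $S$ is not itself local; this is the main obstacle. The cleanest resolution is to translate via the equivalence of Proposition \ref{prop:rees-equiv} into a cokernel vanishing problem for a finitely presented quasi-coherent sheaf on the Rees stack $\mc{R}(\Fil^\bullet_R)$, which can be tested on the smooth atlas $\Spec(S) \to \mc{R}(\Fil^\bullet_R)$. For each $\fq \in \Spec(S)$ above $\fm$, ordinary Nakayama applied to the local ring $S_\fq$ and the finitely generated $S_\fq$-module $(N/N')_\fq$ reduces the problem to showing that the fiber $(N/N')_\fq \otimes_{S_\fq} k(\fq)$ vanishes, which one verifies degree-by-degree using the adapted-basis construction on $\bar M$. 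This gives vanishing of $N/N'$ in a Zariski neighborhood of $\fm$, producing a filtered basis of $(M, \Fil^\bullet_M)$ near $\fm$ and establishing the lff property.
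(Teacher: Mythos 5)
Your forward direction (``lff implies finite projective'') is fine and is essentially the paper's argument; the paper dispatches this quickly after localizing, while you spell out the Zariski-gluing explicitly, and that's a legitimate (if slightly longer) way to do it.

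The reverse direction has a genuine gap, and the place where it fails is precisely the step you flagged as the ``main obstacle.'' Two problems with the proposed Nakayama argument:

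First, you reduce modulo $\fm$ and define the integers $r_\nu$ from the image filtration on $\bar M = M/\fm M$, but the correct reduction for this argument is modulo $\Fil^1_R$, not $\fm$. The paper's Lemma~\ref{lem: Rees tensor} computes the scalar extension $\mr{Rees}(\Fil^\bullet_M) \otimes_{\mr{Rees}(\Fil^\bullet_R)} \mr{Rees}(\Fil^\bullet_\mr{triv})$ as $\bigoplus_r \bigl(\Fil^r_M / \sum_{i\geq 1}\Fil^i_R\cdot\Fil^{r-i}_M\bigr) t^{-r}$; projectivity of $N$ makes this scalar extension finite projective over $(R/\Fil^1_R)[t]$, and in particular the multiplication-by-$t$ map $\ov{\Fil}^{r+1}\to\ov{\Fil}^r$ is injective. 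This injectivity is the engine of the entire argument, and nothing in your reduction to $\bar M$ produces it. Working with the naive image filtration on $M/\fm M$ loses the relationship to the Rees module structure: $N \otimes_R k$ need not be the Rees module of a filtration on $\bar M$, so an ``adapted basis of $\bar M$'' does not control the graded fibers of $N/N'$.

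Second, even granting some version of the fiber computation, checking vanishing only at primes $\fq$ of $S$ lying above $\fm$ is not sufficient. The support of $N/N'$ is a closed $\Gm$-invariant subset of $\Spec(S)$, and $\Spec(S)\to\Spec(R)$ is not finite, so a closed nonempty subset of $\Spec(S)$ need not meet the fiber over $\fm$. Passing through the Rees stack via Proposition~\ref{prop:rees-equiv} does not help here: vanishing of a quasi-coherent sheaf on the stack is still tested on the atlas $\Spec(S)$, so you are back to the same problem. What the paper does instead is an explicit induction on the filtration degree (Claim~\ref{claim:basis-lift}): starting from the filtered basis of $(\ov M, \ov{\Fil}^\bullet)$ over $(R/\Fil^1_R, \Fil^\bullet_\mr{triv})$ (produced by the trivial-filtration case, which is itself established via the specialization-to-$t=0$ argument), one lifts and proves the filtered-basis identity for $\Fil^{r+1}_M$ assuming it for $\Fil^r_M$, using the injectivity of multiplication by $t$ to control elements lying in $\sum_{i\geq 1}\Fil^i_R\Fil^{r+1-i}_M$. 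That induction is the content you would need to supply, and it cannot be replaced by a fiber-by-fiber Nakayama check.
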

\begin{proof}
    Since the problem is local on $R$,\footnote{Indeed, this follows from the observation that if $y$ is an element of $R$, then the Rees module for $(R,\Fil^\bullet_R)|_{D(y)}$, where $D(y)$ is the non-vanishing locus of $y$, is canonically identified with $\mr{Rees}(R,\Fil^\bullet_R)[\nicefrac{1}{y}]$.} we may assume that $R$ is a local ring. Then $(M,\Fil_M^\bullet)$ being lff is equivalent to it being finite free, and hence easily implies that $\mr{Rees}(\Fil_M^\bullet)$ is free. 

    We show the converse. When $\Fil_R^\bullet=\Fil_{(1)}^\bullet$, i.e., when $\Fil_R^i=R$ for all $i$ in $\Z$, the assertion is obvious. So, we assume $\Fil^\bullet_R\ne \Fil_{(1)}^\bullet$, i.e.,  that $\Fil^1_R$ is contained in the maximal ideal of the local ring $R$. 
    We first observe that the assertion holds when $\Fil^\bullet_R=\Fil_\mr{triv}^\bullet$, i.e., when $\Fil^1_R=0$. Indeed, note that $\bigoplus_{r\in\Z}\Gr^r_{\Fil_M}$, being the specialization of $\mr{Rees}(\Fil^\bullet_M)$ to $t=0$ (see \cite[Chapter I, \S4.3, Proposition 7]{LvO}), is finite projective over $R$, and hence so is each $\Gr^r_{\Fil_M}$. As $R$ is local, this implies that each $\Gr^r_{\Fil_M}$ is finite free, and hence $(M,\Fil_M^\bullet)$ is filtered free. 

    In general, consider the map $(R,\Fil_R^\bullet)\to (R/\Fil^1_R,\Fil_\mr{triv}^\bullet)$ and the corresponding map 
    \begin{equation*}
        \mr{Rees}(\Fil_R^\bullet)\to \mr{Rees}(\Fil_\mr{triv}^\bullet)\simeq( R/\Fil_R^1)[t].
    \end{equation*} 
    We observe that the scalar extension along this map is described as follows.
    \begin{lem}\label{lem: Rees tensor}
        We have a canonical isomorphism 
        \be
        \mr{Rees}(\Fil_M^\bullet)\otimes_{\mr{Rees}(\Fil_R^\bullet)}\mr{Rees}(\Fil_\mr{triv}^\bullet)
        \simeq\bigoplus_{r\in \Z}\frac{\Fil^r_M}{\sum_{i\geqslant1}\Fil^i_R\cdot\Fil^{r-i}_M} t^{-r}
        \ee
        of graded $\mr{Rees}(\Fil_\mr{triv}^\bullet)$-modules, where the multiplication-by-$t$ map on the right-hand side is defined by the canonical maps induced by $\Fil^{r+1}_M\to \Fil_M^r$. 
    \end{lem}
    \begin{proof}
        This follows by considering the short exact sequence 
        \be
        0\to \bigoplus_{i\geqslant 0}\Fil_R^1 t^i\oplus \bigoplus_{i\leqslant -1}\Fil^{-i}_R t^i
        \to \underbrace{\bigoplus_{i\in\Z}\Fil_R^{-i}t^i}_{\mr{Rees}(\Fil_R^\bullet)}\to \underbrace{\bigoplus_{i\geqslant0}R/\Fil^1_R t^i}_{\mr{Rees}(\Fil_\mr{triv}^\bullet)}\to 0,
        \ee 
        and tensoring it with $\mr{Rees}(\Fil_M^\bullet)$ over $\mr{Rees}(\Fil_R^\bullet)$.
    \end{proof} 
    Now assume that $\mr{Rees}(\Fil^\bullet_M)$ is finite projective over $\mr{Rees}(\Fil^\bullet_R)$. Then the tensor product $\mr{Rees}(\Fil_M^\bullet)\otimes_{\mr{Rees}(\Fil_R^\bullet)}\mr{Rees}(\Fil_\mr{triv})$ is finite projective over $\mr{Rees}(\Fil^\bullet_\mr{triv})$. In particular, the multiplication-by-$t$ map 
    \be
    \ov{\Fil}^{r+1}\defeq \frac{\Fil^{r+1}_M}{\sum_{i\geqslant1}\Fil^i_R\cdot\Fil^{r+1-i}_M}
    \to
    \ov{\Fil}^r\efdeq \frac{\Fil^r_M}{\sum_{i\geqslant1}\Fil^i_R\cdot\Fil^{r-i}_M}
    \ee
    is injective. This implies by \cite[Chapter I, \S4.3, Proposition 7]{LvO} that this graded $\mr{Rees}(\Fil^\bullet_\mr{triv})$-module is the Rees module of some module with filtration of the form $(\ov{M},\ov{\Fil}^\bullet)$ over $(R/\mr{Fil}^1_R,\mr{Fil}^\bullet_\mr{triv})$. By loc.\@ cit.\@, the underlying module $\ov{M}$ is given by $M/\Fil^1_R\cdot M$. Since we have already seen that the assertion holds in the case where $\Fil_R^\bullet=\Fil^\bullet_\mr{triv}$, we deduce that $(\ov{M},\ov{\Fil}^\bullet)$ is actually free. 
    
    Let $(\bar e_\nu,r_\nu)_{\nu=1}^n$ be a filtered basis of $(\ov{M},\ov{\Fil}^\bullet)$ over $(R/\Fil^1_R,\Fil^\bullet_\mr{triv})$, and take a lift $(e_\nu)_{\nu=1}^n$ with $e_\nu$ in $\Fil^{r_\nu}$, which is a basis of the $R$-module $M$. 
    \begin{claim}\label{claim:basis-lift}
        The tuple $(e_\nu,r_\nu)_{\nu=1}^n$ is a filtered basis of $(M,\Fil_M^\bullet)$ over $(R,\Fil_R^\bullet)$. In other words,
        \be\Fil^r_M=\sum_{\nu=1}^n\Fil_R^{r-r_\nu}e_\nu,
        \ee
        for every $r$ in $\Z$. 
    \end{claim}
    \begin{proof}
        When $r\leqslant r_\nu$ for every $\nu$, we have $\Fil^r_M=M$ by Nakayama's lemma, and this is clearly equal to the right-hand side of the claimed equality. 

        We show the equality by induction on $r$. Fix an $r$ in $\Z$ and assume that the equality holds for $r$. We show $\Fil^{r+1}_M=\sum_{\nu=1}^n\Fil_R^{r+1-r_\nu}e_\nu$. The right-hand side is evidently contained in the left. Conversely, consider $x=\sum_\nu a_\nu e_\nu$ in $\mr{Fil}^{r+1}_M$. Since $x$ is in $\Fil^r_M$, we know by induction that $a_\nu$ belongs to $\Fil^{r-r_\nu}_R$. For $\nu$ with $r=r_\nu$, we have that $\bar e_\nu$ is in $\ov{\Fil}^{r}\setminus\ov{\Fil}^{r+1}$, and hence that $a_\nu$ belongs to $\Fil^1_R=\Fil^{r+1-r_\nu}_R$. 
        We now consider $x_1=\sum_{\nu:r>r_\nu}a_\nu e_\nu$, which we know is both in $\Fil^{r+1}_M$ and $\sum_{i\geqslant 1}\Fil_R^i\Fil^{r-i}_M$. Thus, by the injectivity of the map $\ov{\Fil}^{r+1}\to \ov{\Fil}^r$, we get that $x_1$ is in $\sum_{i\geqslant 1}\Fil_R^i\Fil_M^{r+1-i}$, which by induction, implies that $a_\nu$ is in $\Fil^{r+1-r_\nu}_R$ for $\nu$ with $r>r_\nu$ . 
    \end{proof}
    In particular, this claim implies the filtered module $(M,\Fil^\bullet_M)$ is filtered free as desired.
\end{proof}

\begin{rem}\label{rem: Rees-lff-proj}
    The proof of Proposition \ref{prop:Rees-lff-proj} shows the following. Assume that $\Fil^1_R$ is contained in the Jacobson radical of $R$ and that $\mr{Rees}(\Fil^\bullet_M)\otimes_{\mr{Rees}(\Fil^\bullet_R)}\mr{Rees}(\Fil_\mr{triv}^\bullet)$ is free over $\mr{Rees}(\Fil_\mr{triv}^\bullet)\simeq R/\Fil^1_R[t]$. Then $\mr{Rees}(\Fil^\bullet_M)$ is free over $\mr{Rees}(\Fil^\bullet_R)$, or equivalently, the filtered module $(M,\Fil_M^\bullet)$ is filtered free over $(R,\Fil_R^\bullet)$. 
\end{rem}

\begin{proof}[Proof of Proposition \ref{prop:rees-equiv}] Given Proposition \ref{prop:Rees-lff-proj}, the fact that the first functor is an $R$-linear $\otimes$-equivalence follows from \cite[Chapter I, \S4.3, Proposition 7]{LvO} together with the isomorphism given by \eqref{eq:Rees-tensor}. Moreover, an explicit quasi-inverse to the first functor is given by by taking a graded $\mr{Rees}(\Fil^\bullet_R)$-module $N=\bigoplus_{r\in\Z}N_r$ to the module $M=N/(t-1)$ with
\begin{equation*}
    \Fil^r_M\defeq (N_r+(t-1)N)/(t-1)N.
\end{equation*}
The fact that this functor and its quasi-inverse  are exact follows from Lemma \ref{lem:filtration-equiv}, \cite[Chapter I, \S4.3, Proposition 8]{LvO}, together with the observation that an exact sequence of projective $\mr{Rees}(\Fil^\bullet_R)$-modules is split and thus exactness is preserved by any additive functor. 

The fact that the second functor is an exact $R$-linear $\otimes$-equivalence follows from the general theory of stacks (e.g., see again \stacks{06WT}). Its bi-exactness is clear by inspection.

Finally, to show the last functor is a bi-exact $R$-linear $\otimes$-equivalence we begin by defining a functor $\cat{Vect}(\wh{\mc R}(\Fil^\bullet_R))\to \cat{Vect}(\mc R(\Fil^\bullet_R))$. Noting that there is a canonical identification 
\begin{equation*}
    \mc R(\Fil^\bullet_R)\times_{\Spec(R)}\Spec(R/J^n)\simeq \left[\Spec\left(\mr{Rees}(\Fil^\bullet_R)\otimes_RR/J^n\right)/\bb{G}_{m,R}\right],
\end{equation*} 
so giving such a functor is equivalent to giving a functor 
\begin{equation}\label{eq:2-lim-functor}
    \twolim_n\cat{MG}^\mr{fp}(\mr{Rees}(\Fil^\bullet_R)/J^n)\to \cat{MG}^\mr{fp}(\mr{Rees}(\Fil^\bullet_R)),
\end{equation}
which we now do.

In the following, we denote by $\mc O_n(d)$ the graded $\mr{Rees}(\Fil^\bullet_R)/J^n$-module with underlying $\mr{Rees}(\Fil^\bullet_R)/J^n$-module free of rank one, generated by a homogeneous element of degree $d$. 
    \begin{claim}\label{claim:limit-of-fp}
        Let $(M_n=\bigoplus_{i\in\Z}M_{n,i})_{n\geqslant 1}$ be an object of the source of \eqref{eq:2-lim-functor}. Then, the graded $\mr{Rees}(\Fil_R^\bullet)$-module $M\defeq \varprojlim_n M_n\simeq \bigoplus_{i\in \Z}(\varprojlim_n M_{n,i})$ is finite projective.\footnote{Note that this first inverse limit is taken in the category of graded $\mathrm{Rees}(\Fil^\bullet_R)$-modules, which is indeed computed via this last direct sum.} Moreover, the natural map $M\otimes_RR/J^n\to M_n$ is an isomorphism. 
    \end{claim}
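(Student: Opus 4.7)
The plan is to exhibit $M$ as a direct summand of a finite graded free module over $A \defeq \mr{Rees}(\Fil^\bullet_R)$, writing $A_n \defeq A/J^n A$. A preliminary observation is that the hypothesis that each $\Fil^i_R \subseteq R$ is $J$-adically closed ensures that each graded piece $\Fil^r_R t^{-r}$ of $A$ is $J$-adically complete as an $R$-module, so that $A \isomto \varprojlim_n A_n$ in the graded sense (and likewise for any finite graded free $A$-module).

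The first step is to build a graded surjection onto $M$. Choose finitely many homogeneous generators $\bar m_1, \ldots, \bar m_k$ of $M_1$ with degrees $d_1, \ldots, d_k$. Since each transition $M_{n+1} \to M_n$ is surjective---it is the base change along $A_{n+1} \twoheadrightarrow A_n$ of a finite module---one may lift the $\bar m_i$ compatibly to homogeneous elements $m_i \in M$, producing a graded $A$-linear map $\phi \colon F \defeq \bigoplus_{i=1}^k A(-d_i) \to M$. Each reduction $\phi_n \colon F_n \to M_n$ is surjective: its cokernel $C_n$ satisfies $C_n = JA_n \cdot C_n$ (by construction $\phi_n$ is surjective modulo $J$), and $JA_n$ is nilpotent in $A_n$. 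Setting $K_n \defeq \ker(\phi_n)$, the projectivity of $M_{n+1}$ forces the short exact sequence $0 \to K_{n+1} \to F_{n+1} \to M_{n+1} \to 0$ to remain exact upon $\otimes_{A_{n+1}} A_n$, so the transitions $K_{n+1} \to K_n$ are surjective. Hence $\varprojlim{}^1 K_n = 0$, and one obtains a short exact sequence $0 \to K \to F \to M \to 0$ in the limit.

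Next, one constructs compatible splittings $s_n \colon M_n \to F_n$ of $\phi_n$ by induction on $n$. The base case exists by projectivity of $M_1$. For the inductive step, apply $\mr{Hom}_{A_{n+1}\text{-gr}}(M_{n+1}, -)$ to $0 \to J^n F/J^{n+1}F \to F_{n+1} \to F_n \to 0$; projectivity of $M_{n+1}$ kills the obstructing $\mr{Ext}^1$, so the restriction $\mr{Hom}(M_{n+1}, F_{n+1}) \to \mr{Hom}(M_n, F_n)$ is surjective. A lift $\tilde s_{n+1}$ of $s_n$ then satisfies $\phi_{n+1} \circ \tilde s_{n+1} = \mr{id} + \alpha$ with $\alpha$ vanishing modulo $J^n$, hence invertible, and $s_{n+1} \defeq \tilde s_{n+1} \circ (\mr{id}+\alpha)^{-1}$ is the desired compatible splitting. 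Passing to the limit yields a morphism $s \colon M \to F$ splitting $\phi$, exhibiting $M$ as a direct summand of $F$, and hence finite projective over $A$. Finally, since $s$ is a split injection, tensoring with $A_n$ gives $M/J^n M \hookrightarrow F_n$ with image $s_n(M_n)$, yielding $M \otimes_R R/J^n \simeq M_n$.

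The main obstacle is the compatible choice of splittings in the induction, and this is the crucial conceptual step; both the graded adic completeness of the Rees algebra (from the closedness hypothesis on $\Fil^\bullet_R$) and the surjectivity of the kernel transitions (from projectivity of each $M_n$) play essential roles, but are comparatively routine once the setup is in place.
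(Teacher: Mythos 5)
Your proposal is correct and follows the same basic strategy as the paper's proof: build a compatible system of graded surjections from a finite free module onto $(M_n)_n$, use Nakayama plus projectivity to see each is a split surjection, invoke the closedness hypothesis to identify $\varprojlim F_n$ with the graded free module $F$, and conclude $M$ is a direct summand of $F$. However, your writeup actually supplies a step the paper elides. The paper simply asserts ``we have a split surjection $\bigoplus_j \mc O(d_j)\to M$'' after noting each truncated map splits, but the existence of a splitting in the limit does not follow automatically from having a splitting at each finite level. Your inductive construction of \emph{compatible} splittings $s_n$ -- lifting along the $\Ext^1$-vanishing from projectivity of $M_{n+1}$ and then correcting by $(\mr{id}+\alpha)^{-1}$ (noting $\alpha$ is square-zero since $2n\geqslant n+1$) -- is exactly the argument needed to justify that assertion. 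This is a genuine strengthening of the exposition, and the rest of your argument (surjectivity of kernel transitions, Mittag--Leffler, the final identification $M\otimes_R R/J^n \simeq M_n$ via the split injection $s$) is correct and matches what the paper intends.
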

    \begin{proof}
        By taking a finite set of homogeneous generators of $M_1$, we may produce a graded surjection $\bigoplus_{j=1}^r\mc O_1(d_j)\to M_1$ for some $r$ and some $d_1,\ldots,d_r$ which induces generators $m_{1,1},\ldots,m_{1,r}$ of $M_1$. This surjection of graded $\mr{Rees}(\Fil^\bullet_R)/J$-modules is split by the projectivity of $M_1$.

        Choosing compatible homogeneous lifts $m_{n,i}$ of $m_{1,j}$ for each $n\geqslant 1$ and $j=1,\ldots,r$ we obtain a compatible system of homogeneous maps $(\bigoplus_{j=1}^r\mc O_n(d_j)\to M_n)_{n}$ of $\mr{Rees}(\Fil^\bullet_R)/J^n$-modules. As $J$ is nilpotent in $R/J^n$ we further see by Nakayama's lemma that these maps are surjective for all $n$ and thus, using the projectivity of $M_n$, split surjections. By the assumption that $\Fil^i_R$ is closed in $R$ for all $i$, the inverse limit $\lim_n\bigoplus_j\mc O_n(d_j)$ is identified with $\bigoplus_j\mc O(d_j)$ as a graded $\mr{Rees}(\Fil^\bullet_R)$ module, which is finite projective. 
        As we have a split surjection $\bigoplus_j\mc O(d_j)\to M$, the inverse limit $M$ is also finite projective as desired.
        
        Finally, thanks to the system of split surjection constructed above, the natural morphism $M\otimes_RR/J^n\to M_n$ being an isomorphism is reduced to case when $M=\mc{O}(d_j)$, which is clear. 
    \end{proof}
    Claim \ref{claim:limit-of-fp} allows us to produce a functor as in \eqref{eq:2-lim-functor} which we claim is quasi-inverse to $\widehat{(-)}$. On filtered finite free modules (i.e., the modules of the form $\bigoplus_j\mc O(d_j)$), this follows as $\Fil^i_R\subseteq R$ is closed for all $i$. In general, let $M$ be an object of $\cat{MG}^\mr{fp}(\mc R(\Fil^\bullet_R))$. Then showing that $M\to \lim_nM\otimes_RR/J^n$ is an isomorphism can be reduced to the filtered finite free case by taking a split surjection $\bigoplus_j\mc O(d_j)\to M$. That the other composition is naturally isomorphic to the identity follows from Claim \ref{claim:limit-of-fp}. The fact that both of these functors are exact $R$-linear $\otimes$-functors is clear by inspection.
\end{proof}

\subsection{Prismatic \texorpdfstring{$F$-gauges}{F-gauges} in vector bundles} We now recall the two ways of describing prismatic $F$-gauges in vector bundles: in terms of Rees algebras and in terms of formal stacks.

\medskip

\paragraph*{Description in terms of Rees algebras} We begin by recalling the natural filtration on a prism used to construct the operative Rees algebras for prismatic $F$-gauges.

\begin{defn} For a prism $(A,I)$, we define the \emph{Nygaard filtration} on $A$ as follows:
\begin{equation*}
    A\supseteq \Fil^r_\mathrm{Nyg}(A,I)\defeq \begin{cases} \phi_A^{-1}(I^r) & \mbox{if}\quad r\geqslant 0\\ A & \mbox{if}\quad r<0.\end{cases}
\end{equation*}
We often write $\Fil^\bullet_\mr{Nyg}(A)$ or just $\mr{Fil}^\bullet_\mr{Nyg}$ when there is no chance for confusion. 
\end{defn}

\begin{lem}\label{lem:Nygaard-closed} Let $(A,I)$ be a bounded prism. Then, $\mr{Fil}^r_\mr{Nyg}(A,I)\subseteq A$ is closed for all $r$.
\end{lem}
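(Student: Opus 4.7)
The statement is that for a bounded prism $(A,I)$ and any integer $r$, the Nygaard filtration piece $\mr{Fil}^r_\mr{Nyg}(A,I) = \phi_A^{-1}(I^r)$ (or all of $A$, if $r<0$) is closed in the $(p,I)$-adic topology on $A$. The plan is to reduce the claim to two independent assertions: that $\phi_A\colon A\to A$ is continuous for the $(p,I)$-adic topology, and that $I^r\subseteq A$ is closed. Since the case $r<0$ is trivial, I will focus on $r\geqslant 0$, where $\mr{Fil}^r_\mr{Nyg}(A,I)$ is the preimage of a closed set under a continuous map, hence closed.

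For the continuity of $\phi_A$, the key observation is that $\phi_A(p)=p$ and, because $\phi_A$ lifts the Frobenius modulo $p$, for any $x\in I$ one has $\phi_A(x)\equiv x^p\pmod{pA}$, so $\phi_A(I)\subseteq pA+I^p\subseteq (p,I)$. Therefore $\phi_A\bigl((p,I)^n\bigr)\subseteq (p,I)^n$ for all $n$, which is the desired continuity.

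For the closedness of $I^r$, it suffices to show that the quotient $A/I^r$ is $(p,I)$-adically separated. Since $I$ is nilpotent in $A/I^r$, the $(p,I)$-adic topology on $A/I^r$ coincides with the $p$-adic topology, so I only need $A/I^r$ to be $p$-adically separated. Filtering $A/I^r$ by the $I$-adic filtration yields the successive quotients $I^j/I^{j+1}$ for $0\leqslant j\leqslant r-1$, each of which is an invertible $A/I$-module (because $I$ is an invertible ideal in a prism). Since $(A,I)$ is a bounded prism, $A/I$ is classically $p$-adically complete, hence $p$-adically separated; and any invertible $A/I$-module is locally free, so is likewise $p$-adically separated. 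Running an induction with the snake lemma along the exact sequences $0\to I^{j+1}/I^r\to I^j/I^r\to I^j/I^{j+1}\to 0$ then shows that $A/I^r$ is $p$-adically separated, as desired.

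The main (minor) obstacle is cleanly identifying the separatedness of $A/I^r$, but this is immediate from the elementary filtration argument above together with the invertibility of $I$ in a prism and the boundedness hypothesis, which ensures the classical $p$-adic completeness of $A/I$.
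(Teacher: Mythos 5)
Your overall strategy — reduce to (i) continuity of $\phi_A$ for the $(p,I)$-adic topology and (ii) closedness of $I^r\subseteq A$, with (ii) reduced to $p$-adic separatedness of $A/I^r$ because $I$ is nilpotent there — is exactly the paper's approach. Your direct argument for (i), via $\phi_A(p)=p$ and $\phi_A(x)\equiv x^p\pmod{p}$ so that $\phi_A$ stabilizes $(p,I)$, is perfectly fine (the paper cites this from Guo--Reinecke instead). Your observation that the graded pieces $I^j/I^{j+1}$ are invertible $A/I$-modules, hence $p$-adically complete and separated because $A/I$ is (by boundedness), is also correct.

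However, there is a genuine gap in the final step: running the snake lemma along $0\to I^{j+1}/I^r\to I^j/I^r\to I^j/I^{j+1}\to 0$ does \emph{not} show that $p$-adic separatedness propagates. An extension of $p$-adically separated modules need not be separated: if $x\in\bigcap_n p^n M$, writing $x=p^n y_n$ only tells you $\bar y_n\in M''[p^n]$, so unless $M''$ is $p$-torsion-free you cannot conclude $y_n\in M'$ and hence cannot conclude $x\in\bigcap_n p^n M'=0$. Since $A/I$ is only assumed to have \emph{bounded} $p^\infty$-torsion (not to be $p$-torsion-free), the invertible modules $I^j/I^{j+1}$ may well have $p$-torsion, and the naive induction does not close. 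To fix this you must actually use the boundedness hypothesis on the torsion: for instance, bound $M''[p^\infty]$ by some $p^N$, deduce $p^N y_n\in M'$ and hence $p^N x=0$, then use bounded $p^\infty$-torsion of $M$ itself to conclude $x=0$; or, more systematically, argue via derived $p$-completeness (which \emph{is} stable under extensions) together with bounded $p^\infty$-torsion to get classical completeness. The paper side-steps this entirely by citing its own earlier \cite[Lemma 1.2]{IKY1} for the completeness of $A/I^r$, so the subtlety is hidden there.
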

\begin{proof} It suffices to show that the ideals $I^r$ are closed, and that $\phi_A\colon A\to A$ is $(p,I)$-adically continuous. To see the former, it suffices by \cite[\S23.B]{MatsumuraCommAlg} to show that $A/I^r$ is $J$-adically complete with $J=(p,I)A/I^r$. But, as $J^r\subseteq pA/I^r\subseteq J$ we have that the $J$-adic and $p$-adic topologies coincide. Thus, the claim follows from \cite[Lemma 1.2]{IKY1}. For the latter claim, it suffices to observe that $\phi_A$ stabilizes $(p,I)$ (see \cite[Lemma 3.4]{GuoReinecke}).
\end{proof}

Let $R$ be a qrsp ring (see \cite[Definition 4.20]{BMS-THH}), and consider the initial object $(\Prism_R,I_R)$ of $R_\smallprism$ (see \cite[Proposition 7.2]{BhattScholzePrisms}). Associated to $\mr{Rees}(\Fil^\bullet_\mr{Nyg}(\Prism_R))$ are the following two maps:
\begin{enumerate}
    \item the map of $\Prism_R$-algebras
    \begin{equation*}
        \tau\colon\mr{Rees}(\Fil^\bullet_\mr{Nyg}(\Prism_R))\to \Prism_R,\qquad t\mapsto 1, 
    \end{equation*}
    \item the graded homomorphism
    \begin{equation*}
        \sigma\colon\mr{Rees}(\Fil^\bullet_\mr{Nyg}(\Prism_R))\to \bigoplus_{r\in \Z}I_R^r  t^{-r},\qquad \sum_r a_rt^{-r}\mapsto \sum_r \phi(a_r)t^{-r},
    \end{equation*}
    where the target is considered as a graded $\Prism_R$-subalgebra of $\Prism_R[\nicefrac{1}{I_R}][t^{\pm 1}]$.
\end{enumerate}

\begin{rem}\label{rem:0th-part-equiv} 
The functor 
\begin{equation*}
    \cat{MG}\left(\bigoplus_{r\in \Z}I_R^r  t^{-r}\right)\to \prism_R\text{-}\cat{Mod},\qquad M=\bigoplus_{r\in\Z}M_r\mapsto M_0,
\end{equation*}
is a bi-exact $\Prism_R$-linear $\otimes$-equivalence whose quasi-inverse is given by 
\begin{equation*}
    L\mapsto L\otimes_{\Prism_R}\left(\bigoplus_{r\in \Z}I_R^r t^{-r}\right),
\end{equation*}
with the obvious grading. In fact, $[\Spec(\bigoplus_{r\in\Z}I^r_R t^{-r})/\bb{G}_{m,\smallprism_R}]$ is isomorphic to $\Spec(\Prism_R)$.
\end{rem}

We now give the Rees-algebra-theoretic definition of prismatic $F$-gauges over $R$.

\begin{defn}[{Drinfeld and Bhatt--Lurie}]\label{defn:F-gauges-Rees} Let $R$ be a qrsp ring. A \emph{prismatic $F$-gauge (in vector bundles)} over $R$ is a pair $(M,\varphi_M)$ where $M$ is an object of $\cat{MG}^\mr{fp}(\mr{Rees}(\Fil^\bullet_\mr{Nyg}(\Prism_R)))$ and $\varphi_M$ is an isomorphism of $\Prism_R$-modules $ (\sigma^\ast M)_0\isomto \tau^\ast M$.
\end{defn}

A \emph{morphism} $(M_1,\varphi_{M_1})\to (M_2,\varphi_{M_2})$ of prismatic $F$-gauges over $R$ is a morphism $f\colon M_1\to M_2$ of graded $\mr{Rees}(\Fil^\bullet_\mr{Nyg}(\Prism_R))$-modules such that $\varphi_{M_2}\circ \sigma^\ast(f)=\tau^\ast(f)\circ \varphi_{M_1}$. We denote the category of prismatic $F$-gauges in vector bundles over $R$ by $\FGauge(R)$. The category $\FGauge(R)$ is an exact $\Z_p$-linear $\otimes$-category with structure essentially inherited from $\cat{MG}^\mr{fp}(\mr{Rees}(\Fil^\bullet_\mr{Nyg}(\Prism_R))$, but where we decree that
\begin{equation*}
    \varphi_{M_1\otimes_{\mr{Rees}(\Fil^\bullet_\mr{Nyg}(\Prism_R))}M_2}\defeq \varphi_{M_1}\otimes\varphi_{M_2},
\end{equation*}
(which makes sense as $(\sigma^\ast(M_1\otimes_{\Prism_R}M_2))_0\simeq (\sigma^\ast M_1)_0\otimes_{\Prism_R}(\sigma^\ast M_2)_0$ by Remark \ref{rem:0th-part-equiv}).

A morphism of qrsp rings $R\to S$ gives a map $\mr{Rees}(\mr{Fil}^\bullet_\mr{Nyg}(\Prism_R))\to \mr{Rees}(\mr{Fil}^\bullet_\mr{Nyg}(\Prism_S))$ of graded rings compatible with both $\tau$ and $\sigma$. Thus, base extension provides an exact $\bb{Z}_p$-linear $\otimes$-functor $\FGauge(R)\to \FGauge(S)$. So, if $\mf{X}$ is a quasi-syntomic $p$-adic formal scheme, $\FGauge$ forms a natural prestack on $\mf{X}_\qsyn$ which is a stack by \cite[Proposition 2.29]{GuoLi}. 

\begin{defn}[Drinfeld, Bhatt--Lurie]\label{defn:prismatic-F-gauges-Rees-algebras} For a quasi-syntomic $p$-adic formal scheme $\mf{X}$, the category of \emph{prismatic $F$-gauges (in vector bundles)} over $\mf{X}$ is given by the $2$-limit
\begin{equation*}
    \FGauge(\mf{X})=\twolim_{\Spf(R)\in\mf{X}_\qrsp}\FGauge(R),
\end{equation*}
equipped with the structure of an exact $\Z_p$-linear $\otimes$-category defined term-by-term.
\end{defn}

Suppose that $\mc{G}$ is a smooth group $\Z_p$-scheme. One can then make sense of the category $\mc{G}\text{-}\mc{C}$ of $\mc{G}$-objects in a $\Z_p$-linear $\otimes$-category $\mc{C}$ (e.g., see \cite[\S A.5]{IKY1}).

\begin{defn}\label{defn:prismatic-F-gauges-with-G-structure-Rees-algebras} For a quasi-syntomic $p$-adic formal scheme $\mf{X}$, the category $\GFGauge(\mf{X})$ of \emph{prismatic $F$-gauges with $\mc{G}$-structure over $\mf{X}$} is the category of $\mathcal{G}$-objects in $\FGauge(\mf{X})$.
\end{defn}

\medskip

\paragraph*{Prismatic $F$-gauges in terms of formal stacks} We now compare Definition \ref{defn:prismatic-F-gauges-with-G-structure-Rees-algebras} to the notion of a prismatic $F$-gauge with $\mc{G}$-structure which implicitly appears in \cite{BhattNotes}. Throughout this section we fix a bounded $p$-adic formal scheme $\mf{X}$.

Attached to $\mf{X}$ are the following formal stacks over $\mathbb{Z}_p$:
\begin{itemize}[leftmargin=.5cm]
    \item the \emph{prismatization} $\mf{X}^\smallprism$ as in \cite[Construction 7.1]{BhattLuriePrismatization} (cf.\@ \cite[Definition 5.1.6]{BhattNotes}) 
    classifying Cartier--Witt divisors (see \cite[Definition 5.1.3]{BhattNotes}) which is equipped with a Frobenius $F_\mf{X}\colon \mf{X}^\smallprism\to\mf{X}^\smallprism$ (see \cite[Remark 5.1.10]{BhattNotes}),
    \item the \emph{Nygaard filtered prismatization} $\mf{X}^\smallN$ as in \cite[Definition 5.3.10]{BhattNotes} and \cite[6.4]{GMM} (which classifies filtered Cartier--Witt divisors as in \cite[Definition 5.3.1]{BhattNotes}) which has a structure map $\pi_\mf{X}\colon \mf{X}^\smallN\to \mf{X}^\smallprism$,
    \item  the \emph{Hodge embedding} and \emph{de Rham embedding} $j_{\mf{X},\mr{HT}}$ and $j_{\mf{X},\mr{dR}}$ (see loc.\@ cit.\@) which are open embeddings $\mf{X}^\smallprism\hookrightarrow \mf{X}^\smallN$.
\end{itemize}
One has the equalities $\pi_\mf{X}\circ j_{\mf{X},\mr{dR}}=\id_{\mf{X}^\smallprism}$ and $\pi_\mf{X}\circ j_{\mf{X},\mr{HT}}=F_\mf{X}$. Whenever $\mf{X}$ is clear from context, we shall omit the decoration of $\mf{X}$ on these maps.

As in \cite[Definition 6.1.1]{BhattNotes}, we define the formal stack $\mf{X}^\mr{syn}$ over $\mathbb{Z}_p$, the \emph{syntomification} of $\mf{X}$, and the maps $j_\smallN$ and $j_\smallprism$ so that the following diagram is cocartesian:
\begin{equation*}
    \begin{tikzcd}[sep=huge]
	{\mf{X}^\smallprism\sqcup\mf{X}^\smallprism} & {\mf{X}^\smallN} \\
	{\mf{X}^\smallprism} & {\mf{X}^\mr{syn}.}
	\arrow["{\mr{taut.}}"', from=1-1, to=2-1]
	\arrow["{j_\mr{HT}\sqcup j_\mr{dR}}", from=1-1, to=1-2]
	\arrow["{j_\smallprism}"', from=2-1, to=2-2]
	\arrow["{j_\smallN}", from=1-2, to=2-2]
	\arrow["\lrcorner"{anchor=center, pos=0.125, rotate=180}, draw=none, from=2-2, to=1-1]
\end{tikzcd}
\end{equation*}
If $\mf{X}=\Spf(R)$ we shorten the notation of these objects to $R^\smallprism$, $R^\smallN$, and $R^\mr{syn}$. These constructions can be further extended to the case when $R$ is a $p$-complete animated ring (see \cite{GMM}).

The following shows that these formal stacks over $\mathbb{Z}_p$ are more manageable when $\mf{X}$ is quasi-syntomic and, in particular, are classical (i.e., don't have non-trivial derived structure).

\begin{prop}[Bhatt--Lurie]\label{prop:syn-for-qsyn} Suppose that $\mf{X}$ is quasi-syntomic. Then, one has a canonical identification of formal stacks over $\mathbb{Z}_p$:
    \begin{equation*}
        \mf{X}^\smallprism\simeq \twocolim_{\Spf(R)\in\mf{X}_\qrsp}\Spf(\Prism_R),\qquad \mf{X}^\smallN=\twocolim_{\Spf(R)\in\mf{X}_\qrsp}\wh{\mc{R}}(\Fil^\bullet_\mr{Nyg}(\Prism_R)).
    \end{equation*}
 where for each $R$ the topology on $\Prism_R$ and the completion $\wh{\mc{R}}(\Fil^\bullet_\mr{Nyg}(\Prism_R))$ are in terms of the $(p,I_R)$-adic topology. Moreover, under these identifications $j_\dR$ and $j_\mr{HT}$ are obtained by taking the colimit over $R$ of the maps $\tau$ and $\sigma$, respectively.

\end{prop}
\begin{proof} When $\mf{X}=\Spf(R)$ is qrsp, this follows from \cite[Theorem 7.17]{BhattLuriePrismatization}, \cite[Theorem 6.11.5]{GMM} and \cite[Remark 5.5.5]{BhattNotes}. In general, it suffices to observe that if $\mc{S}$ is a quasi-syntomic cover over $\mf{X}$ with each constituent of the form $\Spf(R)$ for a qrsp ring $R$ then
\begin{equation*}
    \mf{X}^\smallprism=\twocolim_{S\in\mc{S}^\bullet}S^\smallprism,\qquad \mf{X}^\smallN=\twocolim_{S\in\mc{S}^\bullet}S^\smallN
\end{equation*}
where $\mc{S}^\bullet$ is the \v{C}ech nerve of $\mc{S}$ in the topos of formal stacks over $\Z_p$, which follows from the covering properties discussed in \cite[Proposition 7.5]{BhattLuriePrismatization} and \cite[Corollary 6.12.8]{GMM} (cf.\@ \cite[Remark 5.5.18]{BhattNotes}). But, as $\Prism_{R_1}\widehat{\otimes}_{\Prism_{R_2}}\Prism_{R_3}\simeq \Prism_{R_1\widehat{\otimes}_{R_2}R_3}$ (see \cite[Proposition 3.30]{AnschutzLeBrasDD}) and a similar tensor-product compatibility
holds for completed Rees stacks, the claim follows.
\end{proof}

Combining this with Proposition \ref{prop:rees-equiv} and Lemma \ref{lem:Nygaard-closed} we obtain the following.

\begin{cor}\label{cor:Rees-ringed-stack-comp} Suppose that $\mf{X}$ is quasi-syntomic and that $\mc{G}$ is a smooth group $\Z_p$-scheme. Then, there are natural bi-exact $\Z_p$-linear $\otimes$-equivalences
\begin{equation*}
    \FGauge(\mf{X})\isomto \cat{Vect}(\mf{X}^\mr{syn})\isomto \twolim_{\Spf(R)\in\mf{X}_\qrsp}\cat{Vect}(R^\syn). 
\end{equation*}
and natural equivalences
\begin{equation*}
    \GFGauge(\mf{X})\isomto\GVect(\mf{X}^\mr{syn})\isomto \twolim_{\Spf(R)\in\mf{X}_\qrsp}\GVect(R^\syn)
\end{equation*}
\end{cor}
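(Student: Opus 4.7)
The plan is to prove the two equivalences separately and then deduce the $\mc{G}$-object version formally. First I would establish the equivalence $\FGauge(R)\isomto \cat{Vect}(R^\syn)$ in the qrsp case, and then globalize via descent using Proposition \ref{prop:syn-for-qsyn}(2) together with the stackiness of $\FGauge$ coming from \cite[Proposition 2.29]{GuoLi}. The $\mc{G}$-object version is then automatic since passing to $\mc{G}$-objects in an exact $\Z_p$-linear $\otimes$-category is functorial along bi-exact $\Z_p$-linear $\otimes$-equivalences.

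For the qrsp case, $R^\syn$ is the pushout in the defining square, so by an elementary descent argument a vector bundle on $R^\syn$ amounts to a vector bundle $\mc{V}$ on $R^\mc{N}$ together with an isomorphism $j_\mr{HT}^\ast\mc{V}\isomto j_\dR^\ast\mc{V}$ of vector bundles on $R^\prism$. Now apply Proposition \ref{prop:syn-for-qsyn}(1) to identify $R^\mc{N}\simeq \wh{\mc{R}}(\Fil^\bullet_\mr{Nyg}(\Prism_R))$ and $R^\prism\simeq \Spf(\Prism_R)$, under which $j_\dR$ and $j_\mr{HT}$ correspond to $\tau$ and $\sigma$ respectively; then apply Proposition \ref{prop:rees-equiv} to identify $\cat{Vect}(\wh{\mc{R}}(\Fil^\bullet_\mr{Nyg}(\Prism_R)))$ with $\cat{MG}^\mr{fp}(\mr{Rees}(\Fil^\bullet_\mr{Nyg}(\Prism_R)))$. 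Under these identifications, the pullback along $j_\dR$ becomes $\tau^\ast$ (as a $\Prism_R$-module) while the pullback along $j_\mr{HT}$ becomes the degree-zero part $(\sigma^\ast(-))_0$ via Remark \ref{rem:0th-part-equiv} applied to the graded ring $\bigoplus_r I_R^r t^{-r}$. Thus a vector bundle on $R^\syn$ corresponds exactly to a pair $(M,\varphi_M)$ as in Definition \ref{defn:F-gauges-Rees}, giving the desired equivalence. The tensor and exactness structures match by construction on both sides.

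To globalize, I would use Proposition \ref{prop:syn-for-qsyn}(2), which presents $\mf{X}^\prism$ and $\mf{X}^\mc{N}$ as $2$-colimits along a qrsp cover; since $\mf{X}^\syn$ is a pushout of these (and pushouts commute with colimits), it too is such a $2$-colimit, and standard fpqc descent for vector bundles on formal stacks gives the limit description $\cat{Vect}(\mf{X}^\syn)\simeq \twolim_{\Spf(R)\in\mf{X}_\qrsp}\cat{Vect}(R^\syn)$. Combining this with the qrsp case above and the definition of $\FGauge(\mf{X})$ as a $2$-limit yields the first equivalence. The $\mc{G}$-structured version is obtained by composing $\mc{G}$-objects with these equivalences. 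The main obstacle I anticipate is the precise matching of pullbacks in the qrsp case: one must correctly translate the stack-theoretic pullback along $j_\mr{HT}$, going through the isomorphism $[\Spec(\bigoplus_r I_R^r t^{-r})/\bb{G}_{m,\smallprism_R}]\simeq\Spec(\Prism_R)$ from Remark \ref{rem:0th-part-equiv}, into the operation $(\sigma^\ast(-))_0$ on graded modules — it is easy to lose the twist by the Breuil--Kisin line at this step.
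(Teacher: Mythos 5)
Your proposal is correct and takes essentially the same route as the paper's (terse) proof, which simply cites Proposition \ref{prop:syn-for-qsyn}, Proposition \ref{prop:rees-equiv}, and Lemma \ref{lem:Nygaard-closed}. The one small thing you omit is an explicit invocation of Lemma \ref{lem:Nygaard-closed}: the final equivalence in Proposition \ref{prop:rees-equiv} (relating vector bundles on $\mc{R}(\Fil^\bullet)$ to those on the \emph{completed} Rees stack $\wh{\mc{R}}(\Fil^\bullet)$) requires the filtration steps to be closed for the $(p,I)$-adic topology, and Lemma \ref{lem:Nygaard-closed} is exactly what supplies this for $\Fil^\bullet_{\mathrm{Nyg}}(\Prism_R)$. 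Since you correctly anticipate the translation issue with the Breuil--Kisin twist via Remark \ref{rem:0th-part-equiv}, and since the globalization and $\mc{G}$-object steps are as the paper intends, the argument is sound once you add that citation.
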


Due to Corollary \ref{cor:Rees-ringed-stack-comp}, we shall use the notation $\FGauge(\mf{X})$ (resp.\@ $\GFGauge(\mf{X})$) and $\cat{Vect}(\mf{X}^\mr{syn})$ (resp.\@ $\GVect(\mf{X}^\mr{syn})$) interchangeably when $\mf{X}$ is quasi-syntomic.

\begin{rem}\label{rem:syn-twists} For a (classical) formal stack $\mc{X}$ over $\Z_p$ a \emph{vector bundle} on $\mc{X}$ is a vector bundle on $(\mc{X}_{\mr{fpqc}},\mc{O}_{\mc{X}})$ where $\mc{X}_\mr{fpqc}$ is as in \stacks{06NU} and $\mc{O}_\mc{X}$ is as in \stacks{06TU}. Formally, one has a bi-exact $\Z_p$-linear $\otimes$-equivalence
\begin{equation*}
    \cat{Vect}(\mc{X})\simeq \twolim_{\Spec(R)\to \mc{X}}\cat{Vect}(R),
\end{equation*}
where $R$ is a (variable) $p$-nilpotent ring (one can also replace this with $\Spf(R)\to\mc{X}$ where now $\Spf(R)$ is an object of $\Spf(\Z_p)^\mr{adic}_\fl$), where the right-hand side is endowed with the term-by-term exact $\Z_p$-linear $\otimes$-structure. So one may formally apply \cite[Theorem A.18]{IKY1} to deduce that if $(\Lambda_0,\mathds{T}_0)$ is a tensor package for $\mc{G}$ (see \cite[\S A.5]{IKY1}) then $\GVect(\mc{X})\simeq \cat{Twist}_{\mc{O}_\mc{X}}(\Lambda_0,\mathds{T}_0)$. In particular, this applies when $\mc{X}=\mf{X}^\mr{syn}$ for $\mf{X}$ a quasi-syntomic $p$-adic formal scheme.
\end{rem}

\subsection{Relationship to prismatic \texorpdfstring{$F$-crystals}{F-crystals}}\label{ss:forgetful-functor} We now clarify the relationship between prismatic $F$-crystals and prismatic $F$-gauges on a base formal $\mc{O}_K$-scheme. We refer the reader to \cite[\S1.1]{IKY1} for standard terminology and notation concerning base formal schemes.

\begin{nota}\label{nota:OK} We fix the following notation:
\begin{itemize}[leftmargin=.1in]
\item $k$ is a perfect extension of $\bb{F}_p$, $W\defeq W(k)$, and $K_0\defeq \mr{Frac}(W)$,
\item $K$ is a finite totally ramified extension of $K_0$, with ring of integers $\mc{O}_K$ and ramification index $e$,
\item $\pi$ is a uniformizer of $K$ and $E=E(u)$ in $W[u]$ is the minimal polynomial for $\pi$ over $K_0$,
\item for a formally framed base $\mc{O}_K$-algebra $R$ we set $(\mf{S}_R,(E))$ to be the Breuil--Kisin prism.
\end{itemize}
\end{nota}

\begin{construction}[{\cite[Remark 6.3.4]{BhattNotes}}]\label{const:forgetful-functor} Let $\mf{X}$ be a quasi-syntomic $p$-adic formal scheme. Then, there is a natural $\Z_p$-linear exact $\otimes$-functor
\begin{equation*}
    \mathrm{R}_\mf{X}\colon \cat{Perf}(\mf{X}^\mr{syn})\to\cat{D}^\varphi_\mr{perf}(\mf{X}_\smallprism), \qquad \mc{V}\mapsto \mathrm{R}_\mf{X}(\mc{V})=(\mc{E},\varphi_\mc{E}),
\end{equation*}
which we imprecisely call the \emph{forgetful functor}, constructed as follows.

\medskip

\noindent\textbf{Step 1:} Set $\mc{E}^\smallprism\defeq j_\mr{dR}^\ast\mc{V}$, which we interpret as an object $\mc{E}$ of $\cat{Perf}(\mf{X}_\smallprism)$ via \cite[Theorem 6.5]{BhattLuriePrismatization} so then $F^\ast\mc{E}^\smallprism$ corresponds to $\phi^\ast\mc{E}$. More explicitly, for an object $(A,I)$ of $\mf{X}_\smallprism$, one can build a morphism $\rho_{(A,I)}\colon \Spf(A)\to\mf{X}^\smallprism$ as in \cite[Construction 3.10]{BhattLuriePrismatization} and then $\mc{E}(A,I)=\rho_{(A,I)}^\ast\mc{E}^\smallprism$.

\medskip

\noindent\textbf{Step 2:} Observe that there are natural morphisms
\begin{equation*}
    j_\mr{HT}^\ast\mc{V}\leftarrow F^\ast\pi_{\ast}\mc{V}\to F^\ast j_\mr{dR}^\ast \mc{V},
\end{equation*}
where the second map is the pullback along $F$ of the natural map $\pi_{\ast}\mc{V}\to j_\mr{dR}^\ast\mc{V}$ coming from the fact that $j_\mr{dR}$ is a section of $\pi$, and the first map is obtained by adjunction from the map $\pi_{\ast}\mc{V}\to F_{\ast}j_\mr{HT}^\ast\mc{V}$ using the fact that $\pi\circ j_\mr{HT}=F$. By \cite[Remark 6.3.4]{BhattNotes}, this induces an isomorphism after inverting the invertible ideal $\mc{I}_{\mf{X}^\smallprism}\subseteq \mc{O}_{\mf{X}^\smallprism}$ (see \cite[Construction 5.1.18]{BhattNotes}). 

\medskip

\noindent\textbf{Step 3:} By the construction of $\mf{X}^\mr{syn}$, we have a canonical identification $j_\mr{HT}^\ast\mc{V}\simeq j_\mr{dR}^\ast\mc{V}$. Thus, altogether, we get an isomorphism
\begin{equation*}
F^\ast\mc{E}^\smallprism[\nicefrac{1}{\mc{I}_{\mf{X}^\smallprism}}]\simeq j^\ast_\mr{HT}\mc{V}[\nicefrac{1}{\mc{I}_{\mf{X}^\smallprism}}]\simeq j_\mr{dR}^\ast\mc{V}[\nicefrac{1}{\mc{I}_{\mf{X}^\smallprism}}]=\mc{E}^\smallprism[\nicefrac{1}{\mc{I}_{\mf{X}^\smallprism}}].
\end{equation*}
which corresponds to an isomorphism $\varphi_\mc{E}\colon \phi^\ast\mc{E}[\nicefrac{1}{\mc{I}_\smallprism}]\to \mc{E}[\nicefrac{1}{\mc{I}_\smallprism}]$.  

\medskip

Finally, we observe by construction that $\mr{R}_\mf{X}$ restricts to give a functor 
\begin{equation}\label{eq:forgetful-functor-vbs}
\mr{R}_\mf{X}\colon \cat{Vect}(\mf{X}^\mr{syn})\to \cat{Vect}^\varphi(\mf{X}_\smallprism).
\end{equation}
\end{construction}

\begin{rem} When $\mf{X}=\Spf(R)$ for a qrsp ring $R$, one may understand $\mr{R}_{\mf{X}}$ as in \eqref{eq:forgetful-functor-vbs} as the functor $\FGauge(R)\to \cat{Vect}^\varphi(\Prism_R)$ explicitly described in \cite[Propsition 8.1.9]{Ito1}.
\end{rem}

In \cite[Corollary 2.31]{GuoLi} (see also \cite[Propsition 8.1.9]{Ito1}), it is shown that $\mr{R}_{\mf{X}}$ is fully faithful. We now wish to describe the essential image when $\mf{X}$ is a base formal $\mc{O}_K$-scheme.

\begin{defn}\label{defn:Nygaard-filtration} We make the following definitions.
\begin{enumerate}[leftmargin=.7cm]
    \item For a prism $(A,I)$, and an object $(M,\varphi_M)$ of $\cat{Vect}^\varphi(A,I)$, set 
\begin{equation*}
    \Fil^r_\mathrm{Nyg}(\phi_A^\ast M)\defeq \left\{x\in\phi_A^\ast M:\varphi_M(x)\in I^rM\right\},
\end{equation*}
which defines a filtration $\Fil^\bullet_\mr{Nyg}(\phi_A^\ast M)$ by $A$-submodules, called the \emph{Nygaard filtration}.
\item For a quasi-syntomic $p$-adic formal scheme $\mf{X}$ and an object $(\mc{E},\varphi_\mc{E})$ of $\cat{Vect}^\varphi(\mf{X}_\smallprism)$, we define the filtration $\Fil^\bullet_\mr{Nyg}(\phi^\ast\mc{E})\subseteq \phi^\ast\mc{E}$ by $\mc{O}_\smallprism$-submodules, called the \emph{Nygaard filtration}, so that $\Fil^\bullet_\mr{Nyg}(\phi^\ast\mc{E})(A,I)=\Fil^{\bullet}_\mr{Nyg}(\phi^\ast\mc{E}(A,I))$, functorially in an object $(A,I)$ of $\mf{X}_\smallprism$.\footnote{See \cite[Remark 1.13]{IKY3} for a remark about the terminology `Nygaard filtration'.} 
\end{enumerate}
\end{defn} 

By design $(\phi^\ast\mc{E},\Fil^\bullet_\mr{Nyg}(\phi^\ast\mc{E}))$ is a filtered module over $(\mc{O}_\smallprism,\Fil^\bullet_{\mc{I}_\smallprism})$.

\begin{defn}\label{defn:lff} Let $\mf{X}$ be a quasi-syntomic $p$-adic formal scheme. We call a prismatic $F$-crystal $(\mc{E},\varphi_\mc{E})$ on $\mf{X}$ \emph{locally filtered free (lff)} if $(\phi^\ast\mc{E},\Fil^\bullet_\mr{Nyg}(\phi^\ast\mc{E}))$ is lff over $(\mc{O}_\smallprism,\Fil^\bullet_{\mc{I}_\smallprism})$. 
\end{defn}

\begin{rem} Note that $(\phi^\ast\mc{E},\Fil^\bullet_\mr{Nyg}(\phi^\ast\mc{E}))$ is, a priori, only a filtered module over $(\mc{O}_\smallprism,\Fil^\bullet_{\mc{I}_\smallprism})$ and not a filtered crystal. But, in the lff case this is true (see \cite[Proposition 3.1.13]{Ito1}).
\end{rem}

Denote the full subcategory of $\cat{Vect}^\varphi(\mf{X}_\smallprism)$ consisting of lff objects by $\cat{Vect}^{\varphi,\mr{lff}}(\mf{X}_\smallprism)$. It is stable under tensor products, and so it inherits the structure of an exact $\Z_p$-linear $\otimes$-category.

\begin{example}\label{ex:mu-lff} Let $\mf{X}$ be a base formal $W$-scheme. If $\omega$ belongs to $\GVect^{\varphi,\mu}(\mf{X}_\smallprism)$ for a cocharacter $\mu\colon \mbb G_{m,W}\to \mc G_W$ (see \cite[Definition 3.12]{IKY1} or Definition \ref{defn:type-mu-crystals} below), then $\omega(\Lambda)$ belongs to $\cat{Vect}^{\varphi,\mr{lff}}(\mf{X}_\smallprism)$ for all objects $\Lambda$ of $\cat{Rep}_{\Z_p}(\mc{G})$, as can be easily checked by hand.
\end{example}

\begin{prop}\label{prop:lff-equiv-conditions}
    Let $\mf X$ be a bounded $p$-adic formal scheme, and $(\mc{E},\varphi_\mc{E})$ a prismatic $F$-crystal on $\mf{X}$. Then, the following are equivalent:
    \begin{enumerate}
        \item the prismatic $F$-crystal $(\mc{E},\varphi_\mc{E})$ is lff,
    \item for any object $(A,I)$ of $\mf X_\smallprism$, the filtered module $(\phi^*\mc E(A,I),\Fil^\bullet_\mr{Nyg}(\phi^*\mc E)(A,I))$ is lff over the filtered ring $(A,\Fil_I^\bullet)$. 
\end{enumerate}
If $\mf X$ is a base formal $\mc O_K$-scheme, the above conditions are additionally equivalent to the following:
    \begin{enumerate}
        \setcounter{enumi}{2}
        \item there exists an open cover $\{\Spf(R_i)\}$ of $\mf{X}$ with each $R_i$ a base (formally framed) $\mc{O}_K$-algebra, such that if $\mf{M}_i\defeq \mc{E}(\mf{S}_{R_i},(E))$ then $(\phi^\ast\mf{M}_i,\Fil^\bullet_\mr{Nyg}(\phi^\ast\mf{M}_i))$ is lff over $(\mf{S}_{R_i},\Fil^\bullet_E)$,
        \item there exists a quasi-syntomic cover $\{\Spf(S_i)\to\mf{X}\}$ with each $S_i$ qrsp, such that if $\mc{M}_i\defeq \mc{E}(\Prism_{S_i},I_{S_i})$ then $(\phi^\ast\mc{M}_i,\Fil^\bullet_\mr{Nyg}(\phi^\ast\mc{M}_i))$ is lff over $(\Prism_{S_i},\Fil^\bullet_{I_{S_i}})$.
    \end{enumerate}
\end{prop}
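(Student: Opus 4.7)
My approach would center on Proposition \ref{prop:Rees-lff-proj}, which identifies lff filtered modules with finite projective Rees modules. This translates the problem from a condition about filtrations to a condition about graded projective modules, where fpqc descent arguments apply naturally. In particular, being lff becomes a local property on the Rees stack, and is thus preserved and reflected by suitably faithfully flat base change.

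For the equivalence (1) $\Leftrightarrow$ (2), I would assemble the Rees modules $\mr{Rees}(\Fil^\bullet_\mr{Nyg}(\phi^\ast\mc E)(A,I))$ at each section $(A,I)$ into a graded sheaf $\mc M$ of modules over the graded sheaf of rings $\mc R \defeq \mr{Rees}(\Fil^\bullet_{\mc I_\Prism})$ on $\mf X_\Prism$. By a topos-theoretic version of Proposition \ref{prop:Rees-lff-proj}, condition (1) becomes the statement that $\mc M$ is locally free of finite rank as a graded $\mc R$-module sheaf on $\mf X_\Prism$, while condition (2) becomes that for each $(A,I)$, $\mc M(A,I)$ is finite projective over $\mc R(A,I) = \mr{Rees}(\Fil^\bullet_I)$. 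The direction (2) $\Rightarrow$ (1) is then immediate by gluing Zariski covers of $\Spec(A)$ which witness finite projectivity at each $(A,I)$, since these are $(p,I)$-completely faithfully flat covers, hence covers in $\mf X_\Prism$. For (1) $\Rightarrow$ (2), I would use that any cover of $(A,I)$ in $\mf X_\Prism$ witnessing local freeness induces a faithfully flat base change on the Rees module, and appeal to the crystal property of $\phi^\ast\mc E$ combined with compatibility of $\Fil^\bullet_\mr{Nyg}$ with base change (as in \cite[Proposition 3.1.11]{Ito1}) to conclude via descent of finite projectivity.

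With (1) $\Leftrightarrow$ (2) in hand, and under the additional assumption that $\mf X$ is a base formal $\mc O_K$-scheme, the remaining equivalences fall out easily. The implications (2) $\Rightarrow$ (3) and (2) $\Rightarrow$ (4) amount to specialising condition (2) to the prisms $(\mf S_{R_i}, (E))$ and $(\Prism_{S_i}, I_{S_i})$ respectively. Conversely, (3) $\Rightarrow$ (1) and (4) $\Rightarrow$ (1) follow because, as $\Spf(R_i)$ (resp.\ $\Spf(S_i)$) ranges over an open (resp.\ quasi-syntomic) cover of $\mf X$, the corresponding family of prisms assembles into a cover of the final object of $\mf X_\Prism$; passing further to Zariski refinements which witness freeness of the Rees module yields (1) directly.

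The hard part, I expect, will be the compatibility of $\Fil^\bullet_\mr{Nyg}$ with base change along a general $(p,I)$-completely faithfully flat map $(A,I) \to (A', I')$ of prisms. For an arbitrary prismatic $F$-crystal this compatibility can fail, and it is essentially the defining feature of the lff condition. The proof must therefore have a bootstrapping structure: one only checks compatibility on the covers over which the filtered module becomes free (where it is trivial), and uses this to transfer finite projectivity back along the cover via fpqc descent of graded modules. This is precisely where the cited result \cite[Proposition 3.1.11]{Ito1} plays an essential role: without it, condition (1) would not \emph{a priori} define a filtered crystal on $\mf X_\Prism$, and the descent step in the argument for (1) $\Rightarrow$ (2) would not close up.
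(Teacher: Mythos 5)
Your proposal follows essentially the same route as the paper's proof: translate lff into finite projectivity of Rees modules via Proposition \ref{prop:Rees-lff-proj}, observe that for a faithfully flat map of prisms $(A,I)\to(B,IB)$ one has $\mr{Rees}(\Fil^\bullet_I A)\otimes_A B\isomto\mr{Rees}(\Fil^\bullet_{IB}B)$ so that finite projectivity of the Rees module can be checked after a flat cover, and deduce (3) and (4) from the fact that the Breuil--Kisin and qrsp prisms cover the final object of $\cat{Shv}(\mf{X}_\Prism)$ (\cite[Propositions 1.11 and 1.16]{IKY1}). One correction to your final paragraph: the base-change compatibility of $\Fil^\bullet_\mr{Nyg}$ that the descent step actually uses concerns only the flat covering maps, where it holds for \emph{any} prismatic $F$-crystal because the Nygaard filtration is a kernel (so no bootstrapping is needed there); the genuine failure of compatibility, and the role of \cite[Proposition 3.1.11]{Ito1}, is for non-flat maps of prisms, i.e., the question of whether $(\phi^\ast\mc{E},\Fil^\bullet_\mr{Nyg})$ is a filtered \emph{crystal}, which is a separate issue flagged in the remark after Definition \ref{defn:lff} and not an input to this proposition.
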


\begin{proof}
    Clearly (2) implies (1). For the converse it suffices to show that the lff condition is flat local on an object of $\mf X_\smallprism$. This follows from the fact that for a $(p,I)$-adically faithfully flat map of prisms $(A,I)\to (B,IB)$ the natural map $\mr{Rees}(\Fil_I^\bullet A)\to \mr{Rees}(\Fil_{IB}^\bullet B)$ is $(p,I)$-adically faithfully flat. Indeed, the natural map $I^n\otimes_AB\to I^nB$ is an isomorphism as $IB$ is an invertible ideal, and hence $\mr{Rees}(\Fil_I^\bullet A)\otimes_AB\isomto \mr{Rees}(\Fil^\bullet_{IB}B)$. The final claims concerning base formal $\mc O_K$-schemes follows from \cite[Propositions 1.11 and 1.16]{IKY1}.
\end{proof}

We now come to the precise relationship between prismatic $F$-crystals and prismatic $F$-gauges (compare with \cite[Corollary 8.2.13]{Ito1}). In the following, let $\mc{G}$ be a smooth group $\Z_p$-scheme.

\begin{prop}\label{prop:F-gauge-lff-equiv}
    Let $\mf X$ be a quasi-syntomic $p$-adic formal scheme. Then the essential image of $\mathrm{R}_\mf{X}$ is contained in $\cat{Vect}^{\varphi,\mr{lff}}(\mf X_\smallprism)$.  
    If $\mf X$ is a base formal $\mc O_K$-scheme, then $\mathrm{R}_\mf{X}$  
    induces a bi-exact $\Z_p$-linear $\otimes$-equivalence 
    \begin{equation*}
        \mathrm{R}_\mf{X}\colon \cat{Vect}(\mf{X}^\mr{syn})\isomto\cat{Vect}^{\varphi,\mr{lff}}(\mf X_\smallprism).
    \end{equation*}
    In particular, $\mathrm{R}_\mf{X}$ induces an equivalence
    \begin{equation*}\GVect(\mf{X}^\syn)\isomto\GVect^{\varphi,\mr{lff}}(\mf{X}_\smallprism).\end{equation*}
\end{prop}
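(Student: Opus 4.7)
The proposition splits into two essentially independent parts: (a) for any quasi-syntomic $\mf X$, the functor $\mathrm{R}_{\mf X}$ lands in $\cat{Vect}^{\varphi,\mr{lff}}(\mf X_\Prism)$; (b) for $\mf X$ a base formal $\mc O_K$-scheme, it induces a bi-exact $\Z_p$-linear $\otimes$-equivalence onto this subcategory. The $\mc G$-equivariant statement will then follow formally from the underlying $\otimes$-equivalence via the framework of tensor packages (see \cite[\S A.5]{IKY1} and Remark \ref{rem:syn-twists}), so I will focus on the non-$\mc G$ case.

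For (a), the claim is quasi-syntomic-local on $\mf X$, so by Proposition \ref{prop:syn-for-qsyn}(2) I will reduce to $\mf X = \Spf(R)$ for a qrsp ring $R$. Combining Corollary \ref{cor:Rees-ringed-stack-comp} with Proposition \ref{prop:rees-equiv}, a prismatic $F$-gauge in vector bundles is the data of an lff filtered module $(M, \Fil^\bullet_M)$ over $(\Prism_R, \Fil^\bullet_\mr{Nyg})$ together with a Frobenius isomorphism $\varphi_N\colon (\sigma^\ast N)_0 \isomto \tau^\ast N$. Unwinding the three-step construction of $\mathrm{R}_{\mf X}$ above and using the explicit form of $\sigma$ (which carries $\Fil^r_\mr{Nyg}$ into $I^r$), I will verify that under the induced Frobenius $\varphi_M\colon \phi^\ast M \to M$ on the associated $F$-crystal, the pullback filtration $\phi^\ast\Fil^\bullet_M$ coincides with the Nygaard filtration $\Fil^\bullet_\mr{Nyg}(\phi^\ast M)$ of Definition \ref{defn:Nygaard-filtration}. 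Pulling back a filtered basis of $(M, \Fil^\bullet_M)$ along $\phi$ (which is a filtered ring map $(\Prism_R, \Fil^\bullet_\mr{Nyg}) \to (\Prism_R, \Fil^\bullet_I)$) will then yield a filtered basis exhibiting $(\phi^\ast M, \Fil^\bullet_\mr{Nyg}(\phi^\ast M))$ as lff over $(\Prism_R, \Fil^\bullet_I)$, so that $\mathrm{R}_{\mf X}(\mc V)$ is lff.

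For (b), full faithfulness is \cite[Corollary 2.31]{GuoLi}, and bi-exactness will be a routine consequence of Lemma \ref{lem:filtration-equiv} applied to lff filtered modules (which are strictly exhaustive with locally free graded pieces, so that exactness on underlying modules detects strict exactness of the filtered sequence). For essential surjectivity, given an lff $F$-crystal $(\mc E, \varphi_\mc E)$, I will use Proposition \ref{prop:lff-equiv-conditions}(3) to choose a Zariski cover by Breuil--Kisin charts $\{\Spf(R_i)\}$ on which $(\phi^\ast \mf M_i, \Fil^\bullet_\mr{Nyg}(\phi^\ast \mf M_i))$ is lff over $(\mf S_{R_i}, \Fil^\bullet_E)$. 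On each chart the task is to \emph{Frobenius-descend} this filtration on $\phi^\ast \mf M_i$ (over the $E$-adic filtration) to an lff filtration on $\mf M_i$ itself over the Nygaard filtration of $\mf S_{R_i}$, essentially reversing the identification of (a). Using that Frobenius on $\mf S_{R_i}$ is faithfully flat together with the filtered-basis lifting mechanism of Claim \ref{claim:basis-lift}, I will construct such a filtration and package it with $\varphi_\mc E$ to obtain a local $F$-gauge mapping to $(\mc E, \varphi_\mc E)|_{\Spf(R_i)}$; full faithfulness will then let these local lifts glue to a global $F$-gauge. The principal obstacle is precisely this Frobenius descent, which crucially exploits the explicit structure of Breuil--Kisin prisms and is the reason for the base-$\mc O_K$ hypothesis.
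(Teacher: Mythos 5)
The main line of the paper's argument is different from yours and, more importantly, your proposal has two genuine gaps.

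\textbf{Gap 1 (essential surjectivity).} The paper does not construct the quasi-inverse "by hand": it cites Guo--Li's functor $\Pi_\mf{X}\colon \cat{Vect}^\varphi(\mf X_\Prism)\to\cat{Perf}(\mf X^\mr{syn})$ from \cite[Theorem 2.31]{GuoLi}, shows (Claim~\ref{claim: lff and vect syn}) that $\Pi_\mf X(\mc E,\varphi_{\mc E})$ is a vector bundle precisely when $(\mc E,\varphi_{\mc E})$ is lff, and then uses $\Pi_\mf X\circ\mr R_\mf X\simeq\mathrm{id}$ (from \cite[Proposition 2.52]{GuoLi}). Your proposed ``Frobenius descent'' replaces this with a direct construction over Breuil--Kisin charts, but the tools you invoke don't do the job. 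Faithful flatness of $\phi\colon\mf S_R\to\mf S_R$ is not a descent situation here: you need to produce a filtration $\Fil^{\prime\bullet}$ on $\mf M$ over $(\mf S_R,\Fil^\bullet_\mr{Nyg})$ whose scalar extension along the filtered map $\phi\colon(\mf S_R,\Fil^\bullet_\mr{Nyg})\to(\mf S_R,\Fil^\bullet_E)$ recovers $\Fil^\bullet_\mr{Nyg}(\phi^\ast\mf M)$, and that is not a flat-descent problem (you are not gluing a module along a faithfully flat cover, you are trying to ``un-twist'' a filtration along an endomorphism). Moreover Claim~\ref{claim:basis-lift} has nothing to do with Frobenius: it lifts a filtered basis from $(R/\Fil^1_R,\Fil^\bullet_\mr{triv})$ to $(R,\Fil^\bullet_R)$ inside a \emph{fixed} filtered ring, and cannot be repurposed as an untwisting device. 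This step is where the entire content of Guo--Li's construction resides, and your sketch leaves it unaddressed.

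\textbf{Gap 2 (bi-exactness).} You characterize bi-exactness as ``a routine consequence of Lemma~\ref{lem:filtration-equiv}.'' It is not. The subtle direction is showing that $\mr R_\mf X$ \emph{reflects} exactness, i.e.\ that a sequence of lff $F$-crystals which is exact on underlying modules is exact on Nygaard filtrations at every level. That requires strictness of the maps, and strictness is not automatic for a short exact sequence of lff filtered modules whose filtrations happen to be compatible---one can easily produce counterexamples even over $\Z$. In the paper this step is deliberately deferred to Proposition~\ref{prop:G_X-bi-exact}, whose proof reduces to the exactness of $\bb D_\crys$ (Proposition~\ref{prop:dcrys-exact}), which in turn invokes the Faltings morphism and \cite[Theorem 2.1 (3)]{Faltings89}. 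So the ingredient you would actually need lives in \S\ref{s:integral-Dcrys} and is far from routine.

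Your Part (a) reduction to qrsp rings and the claimed identity between the pullback filtration and the Nygaard filtration on $\phi^\ast M$ is in the right spirit; the paper achieves the same comparison but reduces further to \emph{perfectoid} covers, where $\phi$ is an isomorphism and Proposition~\ref{prop:Rees-lff-proj} applies cleanly, which avoids the twisting subtleties you would otherwise have to handle. If you want to make your Part (a) precise, the perfectoid reduction is worth adopting.
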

\begin{proof} Let $\Pi_\mf X\colon \cat{Vect}^\varphi(\mf X_\smallprism)\to \cat{Perf}(\mf X^\mr{syn})$ denote the functor from \cite[Theorem 2.31]{GuoLi}.\footnote{While this functor is only constructed in loc.\@ cit.\@ when $\mf{X}$ is smooth over $\mc{O}_K$, the construction goes through, mutatis mutandis, for $\mf{X}$ a base scheme using the cover from \cite[Lemma 1.15]{IKY1}.} 
    Let $(\mc E,\varphi_\mc{E})$ be an object of $\cat{Vect}^{\varphi}(\mf X_\smallprism)$. We first prove the following claim. 
    \begin{claim}\label{claim: lff and vect syn}
        The object $\wt{\mc E}\defeq\Pi_\mf X(\mc E,\varphi_\mc{E})$ is in $\cat{Vect}(\mf X^\mr{syn})$ if and only if $(\mc E,\varphi_\mc{E})$ is in $\cat{Vect}^{\varphi,\mr{lff}}(\mf X_\smallprism)$. 
    \end{claim}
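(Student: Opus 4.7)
The plan is to verify the claim qrsp-locally and then unwind both sides via the Rees algebra formalism. First, I would observe that both conditions are local on the quasi-syntomic site of $\mf{X}$: the lff condition by Proposition \ref{prop:lff-equiv-conditions}(4), and being a vector bundle on $\mf{X}^\syn$ by Proposition \ref{prop:syn-for-qsyn}(2) combined with fpqc descent of vector bundles on $p$-adic formal stacks. So I may assume $\mf{X} = \Spf(R)$ for a qrsp ring $R$, in which case Proposition \ref{prop:syn-for-qsyn}(1) identifies $R^\syn$ as the pushout of $R^\Prism$ and $R^\mc{N} \simeq \wh{\mc{R}}(\Fil^\bullet_\mr{Nyg}(\Prism_R))$ along the two maps $j_\dR, j_{\mr{HT}} \colon R^\Prism \to R^\mc{N}$ corresponding to $\tau$ and $\sigma$.

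Next, a quasi-coherent sheaf on the pushout $R^\syn$ is a vector bundle if and only if both of its pullbacks to $R^\Prism$ and $R^\mc{N}$ are vector bundles. By construction of $\Pi_\mf{X}$, the pullback $j_\dR^\ast \wt{\mc{E}}$ is the underlying prismatic crystal $\mc{E}$, which is a vector bundle on $R^\Prism$ by hypothesis. Thus the question reduces to whether $j_\mc{N}^\ast \wt{\mc{E}}$ is a vector bundle on $R^\mc{N}$. Invoking Proposition \ref{prop:rees-equiv} (with Lemma \ref{lem:Nygaard-closed} supplying the closedness hypothesis), being such a vector bundle is equivalent to the corresponding filtered module over $(\Prism_R, \Fil^\bullet_\mr{Nyg})$ being lff.

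The main obstacle is to match this filtered module with the data $(\phi^\ast \mc{E}(\Prism_R), \Fil^\bullet_\mr{Nyg}(\phi^\ast \mc{E}(\Prism_R)))$ over $(\Prism_R, \Fil^\bullet_{I_R})$ appearing in Definition \ref{defn:lff}. Tracing through the Guo--Li construction shows that these two filtered modules are related by base change along the filtered ring map $\phi\colon (\Prism_R, \Fil^\bullet_\mr{Nyg}) \to (\Prism_R, \Fil^\bullet_{I_R})$; one direction of the lff equivalence is then formal (preservation of lff under filtered base change, via the isomorphism \eqref{eq:Rees-tensor}), while the converse will rely on the faithful flatness of $\phi$ on $\Prism_R$ for qrsp $R$ in order to descend the existence of a filtered basis. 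Once this comparison is established, the two lff conditions are equivalent, which together with the preceding steps completes the proof.
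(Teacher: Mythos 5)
Your overall skeleton---localize, translate through the Rees-algebra dictionary of Proposition \ref{prop:rees-equiv}, and conclude with Proposition \ref{prop:Rees-lff-proj}---is the right one, and the forward implication (vector bundle on $\mf{X}^\syn$ implies lff) does go through as you describe, since base change of a finite projective graded module along the map of Rees algebras induced by $\phi\colon(\Prism_R,\Fil^\bullet_\mr{Nyg})\to(\Prism_R,\Fil^\bullet_{I_R})$ computes $\mr{Rees}(\Fil^\bullet_\mr{Nyg}(\phi^\ast\mc{E}(\Prism_R)))$ via \eqref{eq:Rees-tensor}. The gap is in the converse. You propose to descend finite projectivity along $\phi$ using ``the faithful flatness of $\phi$ on $\Prism_R$ for qrsp $R$,'' but no such fact is available: the Frobenius of $\Prism_R$ is an isomorphism when $R$ is perfectoid, but for a non-perfectoid qrsp ring it is not flat. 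For instance, for qrsp $R$ of characteristic $p$ one has $\Prism_R=\Acrys(R)$, and modulo $p$ the Frobenius annihilates the (nonzero) divided powers, since $\phi(\gamma_n(y))\equiv\gamma_n(y)^p=\tfrac{(pn)!}{(n!)^p}\gamma_{pn}(y)\equiv 0 \pmod{p}$ for $n\geqslant 1$; so $\phi$ is not even injective mod $p$, whereas a faithfully flat map is universally injective. The mixed-characteristic qrsp rings arising as covers of a base formal $\mc{O}_K$-scheme exhibit the same failure. Even granting flatness of $\phi$, you would further need the induced map of Rees algebras (which also involves the two different filtrations) to be faithfully flat, which is an additional unaddressed point.

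The repair is to localize further than qrsp: a base formal $\mc{O}_K$-scheme admits a quasi-syntomic cover by \emph{perfectoid} rings $S$ (\cite[Lemma 1.15]{IKY1}), the lff condition can be checked on the prisms $(\Ainf(S),(\tilde{\xi}))$ of such a cover by Proposition \ref{prop:lff-equiv-conditions}, and the vector-bundle condition on $\mf{X}^\mc{N}$ (equivalently, on $\mf{X}^\syn$, since the pushout presentation reduces everything to the $\mc{N}$-restriction as you note) can likewise be checked after pullback to $S^\mc{N}$ by \cite[Remark 5.5.18]{BhattNotes} together with \cite[Proposition 1.11]{IKY1}. Over a perfectoid $S$ the Frobenius of $\Ainf(S)$ is an isomorphism, so the filtered module underlying $\wt{\mc{E}}|_{S^\mc{N}}$ is literally identified with $(\phi^\ast\mc{E}(\Ainf(S)),\Fil^\bullet_\mr{Nyg})$, and Proposition \ref{prop:Rees-lff-proj} applies directly in both directions. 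This is exactly the paper's argument; with that replacement your proof is complete.
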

    \begin{proof}[Proof of Claim \ref{claim: lff and vect syn}]
        For each perfectoid ring $S$ with a map $\Spf(S)\to \mf X$, consider the conditions:
    \begin{enumerate}
        \item the restriction $\wt{\mc E}|_{S^{\smallN}}$ is a vector bundle on $S^{\smallN}$,
        \item $\Fil_\mr{Nyg}^\bullet(\phi^*\mc E(\Ainf(S)))$ is lff over $(\Ainf(S),\Fil^\bullet_{\tilde{\xi}})$ (where $\tilde\xi$ is as in \cite[\S1.1.1]{IKY1}).     
    \end{enumerate}
        Then $\wt{\mc E}$ being in $\cat{Vect}(\mf X^\mr{syn})$ is equivalent to (1) being satisfied for any such $S$ by \cite[Proposition 1.11 and Lemma 1.15]{IKY1} and \cite[Remark 5.5.18]{BhattNotes}. On the other hand by Proposition \ref{prop:lff-equiv-conditions}, $(\mc E,\varphi_\mc{E})$ being lff is equivalent to (2) being satisfied for any such $S$. By Proposition \ref{prop:Rees-lff-proj}, conditions (1) and (2) are equivalent, as desired.
    \end{proof}

    This together with the construction of $\Pi_\mf X$ and \cite[Proposition 2.52]{GuoLi} gives an isomorphism 
    \begin{eqnarray}\label{eq: lff vect syn}
        (\Pi_\mf X\circ \mr R_\mf X)(\mc F)\simeq \mc F
    \end{eqnarray}
    for any $\mc{F}$ an object of $\cat{Vect}(\mf{X}^\syn)$. Thus, $\mr{R}_\mf{X}$ induces a functor $\cat{Vect}(\mf X^\syn)\to \cat{Vect}^{\varphi,\mr{lff}}(\mf X_\smallprism)$. 
    Again by Claim \ref{claim: lff and vect syn}, the functor $\Pi_\mf X$ induces a functor $\cat{Vect}^{\varphi,\mr{lff}}(\mf X_\smallprism)\to \cat{Vect}(\mf X^\syn)$. These functors are quasi-inverse to each other: for an object $\mc F$ in $\cat{Vect}(\mf X^\syn)$, we have the isomorphism (\ref{eq: lff vect syn}); on the other hand, for an object $(\mc E,\varphi_\mc{E})$ of $\cat{Vect}^{\varphi,\mr{lff}}(\mf X_\smallprism)$, we have a functorial isomorphism $(\mc E,\varphi_\mc{E})\simeq (R_\mf X\circ\Pi_\mf X) (\mc E,\varphi_\mc{E})$ by the constructions of $\mr{R}_\mf{X}$ and $\Pi_\mf{X}$. This proves the claim that $R_\mf{X}$ is an equivalence, and its bi-exactness is \cite[Proposition 2.17]{IKY3}. 
\end{proof}

\subsection{Prismatic \texorpdfstring{$F$-gauges}{F-gauges} with \texorpdfstring{$\mc G$-structure}{G-structure} of type \texorpdfstring{$\mu$}{mu}}\label{ss:F-gauges-of-type-mu}
In this subsection, we introduce the notion of prismatic $F$-gauge with $\mc G$-structure of type $\mu$.

\begin{conve}\label{conve: BGm} We make the following three conventions throughout this section.
\begin{enumerate}[leftmargin=.3in]
    \item The  
\emph{tautological line bundle} $\mc O_{B\bb{G}_m}\{1\}$ on $B\bG_{m}$ corresponds to 
the trivial line bundle on $\Spec(\Z)$ with the $\bb{G}_{m}$-action given by the \emph{inverse} of the natural scalar multiplication of $\mathbb{G}_{m}$. 
So $\mc O_{B\Gm}\{1\}$ corresponds to the graded line bundle $\Z(-1)$ concentrated in degree $-1$. \footnote{This convention agrees with \cite[Construction 2.2.1]{BhattNotes} and \cite{GMM}, but is opposite to that in \cite{Ito1}.} 
\item  We normalize the isomorphism $B\bb{G}_{m}\isomto \cat{Pic}$ of stacks via $\id_{B\Gm}\mapsto \mc O_{B\Gm}\{1\}$.
\footnote{Here $\cat{Pic}$ is the Picard stack, denoted by $\mc{P}ic_{\Z/\Z}$ in \stacks{0372}.} So $L$ in $\cat{Pic}(\Spec(S))$ corresponds to the natural map $\Spec(S)\isomto\Spec(\bigoplus_{i\in \Z}L^{\otimes i}t^i)/\Gm\to B\Gm$, where $t$ sits in degree $1$.
\item We identify $B\Gm$ and the the moduli stack of $\Gm$-torsors in the standard way.
\end{enumerate}
\end{conve}

\begin{nota}\label{nota:1.4} In addition to Notation \ref{nota:OK} we fix the following notation:
\begin{itemize}[leftmargin=.3in]
    \item $\mc{G}$ is a smooth affine group $\Z_p$-scheme,
    \item $\mu\colon \bb{G}_{m,W}\to \mc{G}_W$ is a $1$-bounded cocharacter (see \cite[Definition 6.3.1]{LauHigher}),
    \item for a line bundle $\mc L$ on a stack $\mathscr{X}$ together with a map $a\colon \msr X\to \Spf(W)$, we denote the induced $\mc G$-torsor on $\msr X$ via $\mu$ by $\mu_*(\mc L,a)$ (we omit the $a$ if it is clear from context or write $\mr{can.}$ to emphasize that it is the structure map which is canonical in the given context),
    \item for $b\colon \mc L\isomto\mc L'$ we denote the induced isomorphism by $\mu_\ast(b)\colon \mu_*(\mc L,a)\isomto\mu_*(\mc L',a)$. 
\end{itemize}
\end{nota}

Additionally, we recall the following standard notions of Breuil--Kisin twists on a bounded $p$-adic formal scheme $\mf{X}$. Denote by $\mc{O}_{\mf{X}^\smallprism}\{1\}$ the pullback of $\mc{O}_{\Z_p^\smallprism}\{1\}$ from \cite[Definition 4.9.4]{Drinfeld} (cf.\ \cite[Construction 2.2.11]{BhattLurieAbsolute} and \cite[Remark 5.1.19]{BhattNotes}) along $\mf{X}^\smallprism\to\Z_p^\smallprism$. Similarly, let $\mc{O}_{\mf{X}^\smallN}\{1\}$ be the pullback to $\mf{X}^\smallN$ of $\mc{O}_{\Z_p^\smallN}\{1\}\defeq\pi^*\mc O_\smallprism\{1\}\otimes t^*\mc O_{B\Gm}\{-1\}$ from \cite[Remark 5.5.15]{BhattNotes}.\footnote{The definition of $\mc O_{\smallN}\{1\}$ agrees with that in \cite{GMM}: $\A^1/\Gm$ is formed using the \emph{inverse} of the natural action so the degree of the Rees variable $t$ is $-1$ in \cite{GMM}, and $\mc O_{B\Gm}\{1\}$ corresponds to $\Z(1)$; while we use the natural action so here $\deg(t)=1$, and $\mc O_{B\Gm}\{1\}$ corresponds to $\Z(-1)$, as in \cite{BhattNotes}.}
 If $\mf{X}$ is clear from context we shorten $\mc{O}_{\mf{X}^\smallprism}\{1\}$ and $\mc{O}_{\mf{X}^\smallN}\{1\}$ to $\mc{O}_\smallprism\{1\}$ and $\mc{O}_\smallN\{1\}$, respectively.

Let notation be as in Notation \ref{nota:1.4}. We define 
\begin{equation*}
    \mc{P}_\mu\defeq \mu_\ast^{-1}(\mc{O}_{B\Gm}\{1\}),
\end{equation*}
a $\mc{G}$-torsor on $B\bb{G}_{m,W}$. Equivalently, $\mc{P}_\mu$ corresponds to the trivial $\mc G$-torsor on $\Spec(W)$ with the action of $\bb{G}_{m,W}=\Spec(W[z^{\pm 1}])$ given by left multiplication by $\mu(z)^{-1}$ in $\mc G(W[z^{\pm 1}])$. 
Denote by $\mu^{(-1)}$ the base change of $\mu$ along $\phi_W^{-1}\colon W\to W$ and by 
$\mc P_{\mu,{\smallN}}$ the pullback of $\mc P_{\mu^{(-1)}}$ by the map $W^{\smallN}\to B\bG_{m,W}$ that corresponds to the Breuil--Kisin twist $\mc O_{W^{\smallN}}\{1\}$.

\begin{defn}\label{defn: F-gauge of G mu structure}
    Fix an $n$ in $\mathbb N\cup\{\infty\}$. Let $\mf X$ be a bounded $p$-adic formal $W$-scheme. Write $\mf{X}^\syn_n\defeq \mf{X}^\syn\otimes^\bb{L}_{\Z_p}(\Z/p^n)$, and similarly for $\mf{X}^\smallN_n$.
    \begin{enumerate}[leftmargin=.3in]
        \item An \emph{$n$-truncated {prismatic $F$-gauge with $\mc G$-structure}} on $\mf X$ is a $\mc G$-torsor  $\mc F$ on the (derived) formal stack $\mf X^\mr{syn}_n$ over $\mathbb{Z}_p$ (giving $\mf X^\mr{syn}$ if $n=\infty$, whence we drop the prefix `$\infty$-truncated'). 
    \item An $n$-truncated prismatic $F$-gauge with $\mc G$-structure $\mc F$ is \emph{of type $\mu$} if the restriction to $\mf X^\smallN_n$ (again giving $\mf X^{\smallN}$ itself when $n=\infty$) is of type $\mu$, i.e., flat locally (equiv.\@ quasi-syntomically locally) on $\mf X$, the restriction $\mc F|_{\mf X^\smallN_n}$ is isomorphic to $\mc P_{\mu,{\smallN}}|_{\mf X^\smallN_n}$. 
    \end{enumerate}
Denote by $\cat{Tors}_{\mc{G},n}^\mu(\mf{X}^\mr{syn})$ the $\infty$-groupoid of $n$-truncated prismatic $F$-gauges with $\mc G$-structure of type $\mu$, a full $\infty$-subgroupoid of $\cat{Tors}_\mc{G}(\mf{X}^\syn_n)$. If $n=\infty$ we drop it from the notation.\footnote{When $\mf{X}$ is quasi-syntomic, observe that $\cat{Tors}_{\mc{G}}(\mf{X}^\syn)$ is a groupoid.} 
\end{defn}

\begin{rem}
    The Frobenius twist in the definition of $\mc{P}_{\mu,\smallN}$ is necessary for $\mc P_{\mu,{\smallN}}$ to have filtration of type $\mu$ (cf.\@  Proposition \ref{prop: F gauge type mu equals F crystal type mu}). 
\end{rem}

\begin{rem}\label{rem:GVect-tors-syn} Similarly to Remark \ref{rem:syn-twists}, if $\mf{X}$ is quasi-syntomic, one may identify $\cat{Tors}_\mc{G}(\mf{X}^\mr{syn})$ with $\GVect(\mf{X}^\syn)$, and consequently $\cat{Tors}^\mu_{\mc{G}}(\mf{X}^\mr{syn})$ may be identified with a full subcategory of $\GVect(\mf{X}^\syn)$ which we denote by $\GVect^\mu(\mf{X}^\syn)$. In particular, if $\mf{X}$ is quasi-syntomic, combining Proposition \ref{prop:syn-for-qsyn} and Corollary \ref{cor:Rees-ringed-stack-comp} we have natural identifications
\begin{equation*}
    \cat{Tors}_\mc{G}(\mf{X}^\syn)\simeq \twolim_{\Spf(R)\in\mf{X}_\qrsp}\cat{Tors}_\mc{G}({R}^\syn),\quad \cat{Tors}_\mc{G}(\mf{X}^\smallN)\simeq\twolim_{\Spf(R)\in\mf{X}_\qrsp}\cat{Tors}_\mc{G}(\wh{\mc{R}}(\Fil^\bullet_\mr{Nyg}(\Prism_R))).
\end{equation*}
\end{rem}

We first compare the notion of $n$-truncated prismatic $F$-gauges with $\mc{G}$-structure (of type $\mu$) to the notion of prismatic $\mc{G}$-torsors with $F$-structure of type $\mu$ as in \cite[Definition 3.12]{IKY1}, which we now recall using a slightly different presentation. 

Let $(A,\Fil^\bullet)$ be a filtered ring. Then, we have the natural closed immersion \begin{equation*} \iota\colon B\Gm\times\Spec(A/\Fil^1)\to \mc R(\Fil^\bullet),
\end{equation*}
defined by the surjection of graded rings 
\begin{eqnarray*}
    \mr{Rees}(\Fil^\bullet)\to \mr{Rees}(\Fil^\bullet_\mr{triv}(A/\Fil^1))=A/\Fil^1[t]\xrightarrow{t\mapsto 0} A/\Fil^1.
\end{eqnarray*} 

\begin{defn}
    We define the \emph{punctured Rees stack} $\mc R^\circ(\Fil^\bullet)$ for $(A,\Fil^\bullet)$ to be the open substack of the Rees stack $\mc R(\Fil^\bullet)$ given as the complement of the above closed embedding $\iota$. 
\end{defn}
In the situation that $\Fil^\bullet=\Fil_I^\bullet$ for some invertible ideal $I$, the punctured Rees stack $\mc R^\circ(\Fil^\bullet_I)$ is also obtained as follows. Consider the open embeddings
\begin{equation*}
    \Spec(A)\isomto\left[\Spec\left(\bigoplus_{i\in\Z}I^{-i}t^i\right)/\Gm\right]\to \mc R(\Fil^\bullet_I),\quad \Spec(A)\isomto \{t\ne 0\}\subseteq \mc{R}(\Fil^\bullet_I).
\end{equation*}  
These two open embeddings 
induce an isomorphism 
    \begin{eqnarray*}
        \Spec(A)\sqcup_{\Spec(A[\nicefrac{1}{I}])} \Spec(A)\isomto \mc R^\circ(\Fil_I^\bullet).
    \end{eqnarray*}
In particular, we obtain a $2$-functorial identification of $\cat{Tors}_\mc{G}(\mc{R}^\circ(\Fil^\bullet_I))$ with the category of triples $(Q,P,\psi\colon Q[\nicefrac{1}{I}]\isomto P[\nicefrac{1}{I}])$ where $Q$ and $P$ are $\mc G$-torsors on $A$ and $\psi$ is an isomorphism of $\mc G$-torsors on $A[\nicefrac{1}{I}]$, with the obvious notion of morphisms. We fix the direction of the isomorphism $\psi$ so that the restriction of such a triple to $\{t\ne 0\}$ gives the $\mc G$-torsor $P$.

\begin{defn}\label{defn:type-mu-crystals} Let $(A,I)$ be an object of $W_\smallprism$ (in particular, $A$ is naturally a $W$-algebra) and $\mu\colon \bG_{m,W}\to \mc G_W$ be a cocharacter. 
    \begin{enumerate}[leftmargin=.3in]
        \item We say a $\mc G$-torsor on $\mc R^\circ(\Fil_I^\bullet)$ presented as  $(Q,P,\psi\colon Q[\nicefrac{1}{I}]\isomto P[\nicefrac{1}{I}])$ is \emph{of type} $\mu$ if there exists a $(p,I)$-adically faithfully flat cover $A\to A'$ such that $IA'$ is principal, and there exists trivializations $\theta\colon \mc G_{A'}\isomto Q$ and $\theta'\colon \mc G_{A'}\isomto P$ such that the isomorphism $\theta'^{-1} \circ\psi\circ\theta$ is given by left multiplication by $\mu(d)$ for a generator $d$ of $IA'$.
        \item We say a $\mc G$-torsor $\mc P$ on $\mc R(\Fil_I^\bullet)$ is \emph{of type} $\mu$ if there exists a $(p,I)$-adically flat cover $A\to A'$ such that $\mc P$ restricted to $\mc R(\Fil_{IA'}^\bullet)$ is isomorphic to the \emph{$\mu$-typical $\mc G$-torsor}, i.e., the pullback of $\mc P_{\mu}$ along the natural map $\mc R(\Fil_{IA'}^\bullet)\to B\bG_{m,W}$. 
    \end{enumerate}
We denote these categories by $\cat{Tors}_{\mc{G}}^\mu(\mc{R}^\circ(\Fil^\bullet_I))$ and $\cat{Tors}_\mc{G}^\mu(\mc{R}(\Fil^\bullet_I))$, respectively.
\end{defn}

\begin{lem}[{cf.\ \cite[Proposition 6.6.3]{BhattNotes}}]\label{lem: type mu on punctured Rees stack}
    Let $(A,I)$ be a prism. 
    \begin{enumerate}[leftmargin=.3in]
        \item Restriction along $j\colon \mc R^\circ(\Fil_I^\bullet)\to \mc R(\Fil_I^\bullet)$ induces a fully faithful functor  
        \begin{eqnarray*}
            j^*\colon \cat{Tors}_\mc G(\mc R(\Fil_I^\bullet))\to \cat{Tors}_\mc G(\mc R^\circ(\Fil_I^\bullet)).
        \end{eqnarray*}
    \end{enumerate}
    Fix a $W$-structure on $(A,I)$ (i.e., let it be an object of $W_\smallprism$), and a cocharacter $\mu\colon \bG_{m,W}\to \mc G_W$. 
    \begin{enumerate}[leftmargin=.3in]\setcounter{enumi}{1}
        \item The functor from (1) induces an equivalence 
        \begin{eqnarray}\label{eq:equiv-type-mu}
            \cat{Tors}_{\mc G}^{\mu}(\mc R(\Fil_I^\bullet))\isomto\cat{Tors}_{\mc G}^{\mu}(\mc R^\circ(\Fil_I^\bullet)). 
        \end{eqnarray} 
    \end{enumerate}
\end{lem}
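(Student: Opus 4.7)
The plan is to reduce both assertions to a concrete Hartogs-style computation on the Rees algebra. First, by $(p,I)$-adic faithfully flat descent of $\mc{G}$-torsors — valid since $\mc{G}$ is affine and hence $\mc{G}$-torsors satisfy fpqc descent — I would reduce everything to the case where $I = (d)$ is principal. In that case we have the explicit presentation $\mr{Rees}(\Fil_I^\bullet) = A[s,t]/(ts - d)$ with $|s| = 1$ and $|t| = -1$, so $\mc{R}(\Fil_I^\bullet) = [\Spec(A[s,t]/(ts-d))/\Gm]$ and the complement $\mc{R}(\Fil_I^\bullet) \setminus \mc{R}^\circ(\Fil_I^\bullet) \simeq B\Gm \times \Spec(A/I)$ corresponds upstairs to the $\Gm$-stable closed subscheme $V(s, t)$.

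For part (1), I would observe that a morphism of $\mc{G}$-torsors is automatically an isomorphism, and for affine $\mc{G}$ the sheaf $\Isom_{\mc{G}}(P_1, P_2)$ is either empty or a $\mc{G}$-torsor, hence in either case representable by an affine scheme $\Spec_{\mc{R}}(\mc{A})$ over $\mc{R}$. Via the $(j^*, j_*)$-adjunction,
\begin{equation*}
    \Hom_{\mc{R}}(P_1, P_2) = \Hom_{\mc{O}_{\mc{R}}\text{-alg}}(\mc{A}, \mc{O}_{\mc{R}}), \qquad \Hom_{\mc{R}^\circ}(j^* P_1, j^* P_2) = \Hom_{\mc{O}_{\mc{R}}\text{-alg}}(\mc{A}, j_* \mc{O}_{\mc{R}^\circ}),
\end{equation*}
so full faithfulness of $j^*$ reduces to the claim that the adjunction unit $\mc{O}_{\mc{R}} \to j_* \mc{O}_{\mc{R}^\circ}$ is an isomorphism. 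Pulling back along the smooth cover $\Spec(A[s,t]/(ts-d)) \to \mc{R}(\Fil_I^\bullet)$, this becomes the Hartogs-type assertion that the local cohomology groups $H^0_{V(s,t)}$ and $H^1_{V(s,t)}$ of $A[s,t]/(ts-d)$ vanish. The key input is that, since $(A, I)$ is a prism, $d$ is a non-zero-divisor on $A$; a direct computation in the graded decomposition of $A[s,t]/(ts-d)$ then shows that $(s, t)$ forms a regular sequence on this ring, giving the required depth-$\geq 2$ conclusion.

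For part (2), given (1) it suffices to establish essential surjectivity of $j^*$ on the type-$\mu$ subcategories. Given a type-$\mu$ torsor $(Q, P, \psi)$ on $\mc{R}^\circ(\Fil_I^\bullet)$, by definition there is a $(p, I)$-adically faithfully flat cover $A \to A'$ with $IA' = (d)$ and trivializations after which the triple becomes the model triple $(\mc{G}_{A'}, \mc{G}_{A'}, \mu(d))$. But this model triple is precisely the restriction to $\mc{R}^\circ(\Fil^\bullet_{IA'})$ of the $\mu$-typical torsor $\mc{P}_{\mu}|_{\mc{R}(\Fil^\bullet_{IA'})}$, which is of type $\mu$ on the full Rees stack by construction. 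So the extension exists over the cover $A'$; the full faithfulness from (1), applied over $A' \otimes_A A'$ (and higher \v{C}ech terms), ensures the descent datum on these extensions is uniquely determined by the descent datum on $(Q, P, \psi)$, and fpqc descent for $\mc{G}$-torsors produces the desired extension to $\mc{R}(\Fil_I^\bullet)$ over $A$. The type-$\mu$ condition descends since it is flat-local by definition. I expect the main technical obstacle to be the regular-sequence claim underlying (1), which I would verify by writing out the graded decomposition of $A[s,t]/(ts-d)$ explicitly and using only that $d$ is a non-zero-divisor on $A$, avoiding any Noetherian hypotheses.
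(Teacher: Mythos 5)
Your proposal is correct, and part (2) is essentially the paper's argument verbatim: pass to the $(p,I)$-adically faithfully flat cover where the triple becomes the model triple $(\mc G_{A'},\mc G_{A'},\mu(d))$, extend by the $\mu$-typical torsor on $\mc R(\Fil^\bullet_{IA'})$, and use the full faithfulness of (1) to transport the descent datum. For part (1) you take a mildly different but equivalent route. The paper first reduces to $\mc G=\GL_{n,\Z_p}$ by the Tannakian formalism and then deduces full faithfulness on vector bundles by applying \cite[Lemma 7.2.7]{CesnaviciusScholze} to the $\mc{H}om$-bundles; you instead apply the same principle — sections of an affine scheme extend uniquely over an open immersion whose closed complement has depth $\geq 2$ — directly to the affine $\Isom_{\mc G}(P_1,P_2)$-scheme, which is legitimate precisely because $\mc G$ is affine, and you verify the depth condition by exhibiting the explicit regular sequence $(s,t)$ on $A[s,t]/(st-d)$. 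The purity input is identical in both cases (that is exactly what the cited lemma of \v{C}esnavi\v{c}ius--Scholze provides), so what your version buys is self-containedness: no Tannakian detour, and an explicit verification of the depth hypothesis that the paper dismisses as ``easy to see.'' The cost is your preliminary reduction to principal $I$; invoking ``$(p,I)$-adic faithfully flat descent'' for this is both more than is needed and slightly delicate (such maps need not be honestly faithfully flat on the uncompleted Rees algebras), but the reduction is harmless because the invertible ideal $I$ of a prism is already Zariski-locally principal on $\Spec(A)$, and the full-faithfulness statement is Zariski-local. Finally, you assert without proof that the model triple is the restriction of $\mc P_\mu|_{\mc R(\Fil^\bullet_{IA'})}$ to the punctured Rees stack; this is true but is the one computation the paper carries out explicitly (presenting $j^*\mc P_\mu$ as $(\mu_*(I),\mu_*(A),\mu_*(\id))$ and writing down the trivializations $\theta,\theta'$), and a complete write-up would need to include it.
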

\begin{proof}
To prove Claim (1), we observe that by the Tannakian formalism it suffices to prove the claim for $\mc{G}=\GL_{n,\Z_p}$. Then it suffices to prove that 
\begin{equation*}
    j^\ast\colon \cat{Vect}(\mc{R}(\Fil^\bullet_I))\to\cat{Vect}(\mc{R}^\circ(\Fil^\bullet_I)),
\end{equation*}
is fully faithful. Using \stacks{06WT}, we are reduced to showing that pullback along
\begin{equation*}
    \{t\ne 0\}\cup \Spec\left(\bigoplus_{i\in\Z}I^{-i}t^i\right)\to \Spec(\mr{Rees}(\Fil^\bullet_I))
\end{equation*}
induces a fully faithful functor between the categories of $\bb{G}_m$-equivariant vector bundles on source and target. But, the conditions of \cite[Lemma 7.2.7]{CesnaviciusScholze} are satisfied, and so the claim follows by applying loc.\@ cit.\@ with $Y=\mc{H}om(\mc{V},\mc{V}')$ for vector bundles $\mc{V}$ and $\mc{V}'$ on $\mc{R}(\Fil^\bullet_I)$.

    For Claim (2), first observe that $\mc P_{\mu}|_{\mc R^\circ(\Fil^\bullet_I)}$ is of type $\mu$: using notation \ref{nota:1.4}, it is presented as
    \begin{equation*}(\mu_*(I,\mr{can.}),\mu_*(A,\mr{can.}),\psi\defeq\mu_*(\id\colon I\cdot A[\nicefrac{1}{I}]\isomto A[\nicefrac{1}{I}])).
    \end{equation*}
    Replacing $A$ by a Zariski cover on which $I$ becomes principal, we may pick a generator $d$ of $I$. Then, with the tautological trivialization $\theta\colon \mc G\isomto\mu_*(A,\mr{can.})$ and the trivialization 
    \begin{equation*}\theta'\colon \mc G=\mu_*(A,\mr{can.})\xrightarrow{\mu_*(d\cdot)}\mu_*(I,\mr{can.})
    \end{equation*}
    determined by $d$, the isomorphism ${\theta'}^{-1}\circ \psi\circ \theta$ is given by $\mu(d)$. 
    Thus, the functor $j^*$ does indeed carry objects of type $\mu$ to those of type $\mu$. We then only have to show that \eqref{eq:equiv-type-mu} is essentially surjective. Let $\mc P^\circ=(Q,P,\psi\colon Q[\nicefrac{1}{I}]\isomto P[\nicefrac{1}{I}])$ be an object of $\cat{Tors}_\mc G^\mu(\mc R^\circ(\Fil_I^\bullet))$. 
    By definition, there exists a $(p,I)$-adically faithfully flat cover $A\to A'$ such that $IA'$ is principal and there exist trivializations $\theta\colon \mc G_{A'}\isomto Q_{A'}$ and $\theta'\colon \mc G_{A'}\isomto P_{A'}$ such that $\theta'^{-1} \circ \psi\circ\theta$ is given by left multiplication of $\mu(d)$ for a generator $d$ of $IA'$. We consider the $\mu$-typical $\mc G$-torsor $\mc P'\defeq\mc P_{\mu,\mc R(\Fil_{IA'}^\bullet)}$ on $\mc R(\Fil^\bullet_{IA'})$. Note $\theta$ and $\theta'$ induce an isomorphism $\mc P'|_{\mc R^\circ(\Fil_{IA'}^\bullet)}\isomto \mc P^\circ_{A'}$. Then the descent datum 
    \begin{equation*}
        \sigma\colon \mc P^\circ_{A'}\otimes_{A',\mr{pr}_1^*}(A'\otimes_AA')\isomto \mc P^\circ_{A'}\otimes_{A',\mr{pr}_2^*}(A'\otimes_AA')
    \end{equation*} 
    for $\mc P^\circ$ along $A\to A'$ induces, by the full faithfulness from Claim (1), a descent datum on $\mc P'$, which then gives a $\mc G$-torsor $\mc{P}$ on $\mc R(\Fil_I^\bullet)$ such that $j^\ast\mc{P}\simeq \mc{P}^\circ$ as desired.
\end{proof}

To relate the above consideration to the type $\mu$ condition for prismatic $F$-crystals, we consider the two $A$-lattices associated to an object of $\cat{Tors}^\varphi_\mc{G}(A,I)$. More precisely, we consider the functor 
\begin{eqnarray*}
    \mr{Lat}_\mc G\colon \cat{Tors}_\mc{G}^\varphi(A,I)\to \cat{Tors}_\mc G(\mc R^\circ(\Fil_I^\bullet))
\end{eqnarray*} 
by sending $(\mc{P},\varphi_\mc{P})$ to $(\phi^*\mc{P},\mc{P},\varphi_{\mc{P}}\colon \phi^*\mc{P}[\nicefrac{1}{I}]\isomto \mc{P}[\nicefrac{1}{I}])$. 

\begin{defn}[{cf.\@ \cite[Definition 3.12]{IKY1}}]\label{defn: type mu F crystals}
    Let $\mf X$ be a quasi-syntomic $p$-adic formal scheme and $(\mc P,\varphi_{\mc{P}})$ be an object of $\cat{Tors}^\varphi_\mc{G}(\mf X_\smallprism)$. We say that $(\mc P,\varphi_{\mc{P}})$ is \emph{of type }$\mu$ if for any object $(A,I)$ of the site $\mf X_\smallprism$, the object $\mr{Lat}_\mc G(\mc P,\varphi_{\mc{P}})$ of $\cat{Tors}_\mc{G}(\mc R^\circ(\Fil_I^\bullet))$ is of type $\mu$. Denote the full subcategory of $\cat{Tors}_\mc{G}^\varphi(\mf{X}_\smallprism)$ of objects of type $\mu$ by $\cat{Tors}_\mc{G}^{\varphi,\mu}(\mf{X}_\smallprism)$. 
\end{defn}

Let $\mf{X}$ be a quasi-syntomic $p$-adic formal scheme over $W$ and consider the forgetful functor
\begin{equation*}
    \mathrm{R}_\mf{X}\colon \cat{Tors}_\mc{G}(\mf{X}^\mr{syn})\to\cat{Tors}_\mc{G}^\varphi(\mf{X}_\smallprism),
\end{equation*}
from Construction \ref{const:forgetful-functor}, which can also be interpreted in terms of $\mc{G}$-objects as in Remark \ref{rem:GVect-tors-syn}. The result below says this induces an equivalence on objects of type $\mu$ if $\mf{X}$ is sufficiently regular.

\begin{prop}\label{prop: F gauge type mu equals F crystal type mu}
    Let $\mf X$ be a quasi-syntomic $p$-adic formal scheme over $W$. 
    \begin{enumerate}[leftmargin=.3in]
        \item Let $\mc P$ be an object of $\cat{Tors}_\mc{G}(\mf{X}^\syn)$ of type $\mu$. Then the associated object $(\mc P_\smallprism,\varphi_{\mc{P}_\smallprism})=\mathrm{R}_\mf{X}(\mc{P})$ of $\cat{Tors}_\mc{G}^\varphi(\mf{X}_\smallprism)$ is also of type $\mu$. 
        \item Assume that $\mf X$ is either (a) $\Spf(R)$ for a perfectoid ring $R$ or (b) a base formal scheme over $\Spf(W)$. Then the forgetful functor induces an equivalence of categories 
        \begin{eqnarray*}
            \mr{R}_\mf{X}\colon \cat{Tors}_\mc{G}^\mu(\mf X^\syn)\isomto \cat{Tors}_\mc{G}^{\varphi,\mu}(\mf X_\smallprism).
        \end{eqnarray*}
    \end{enumerate}
\end{prop}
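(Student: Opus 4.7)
Both assertions are local on $\mf X$, so the plan is to reduce to qrsp covers $\Spf(R) \to \mf X$, over which Proposition \ref{prop:syn-for-qsyn} identifies $\mf X^\mc N$ with $\wh{\mc R}(\Fil^\bullet_\mr{Nyg}(\Prism_R))$ and the forgetful functor $\mathrm{R}_{\mf X}$ with the $\tau,\sigma$-pullback formalism. For part (1), I would then perform a direct model computation: trace the pullback of the $\mu$-typical $F$-gauge $\mc P_{\mu,\mc N} = \mu^{(-1)}_\ast \mc O_\mc N\{1\}$ along $\tau$ and $\sigma$, using the canonical identifications $\tau^\ast \mc O_\mc N\{1\} \simeq \mc O_\Prism\{1\}$ and $\sigma^\ast \mc O_\mc N\{1\} \simeq \phi^\ast \mc O_\Prism\{1\} \otimes \mc I_\Prism$ intrinsic to the definition of the Breuil--Kisin twist. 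Choosing a generator $d$ of $I_R$ and computing how $\mu^{(-1)}_\ast$ converts these identifications on the Hodge--Tate locus into a $\mc G$-equivariant isomorphism, one reads off that the associated Frobenius is precisely left multiplication by $\mu(d)$, which matches Definition \ref{defn: type mu F crystals} via Lemma \ref{lem: type mu on punctured Rees stack}. Flat-local triviality of an arbitrary type-$\mu$ object against $\mc P_{\mu,\mc N}$ then transports to a type-$\mu$ structure on its image under $\mathrm{R}_{\mf X}$.

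For part (2b), with $\mf X$ a base formal scheme over $W$, I would combine Proposition \ref{prop:F-gauge-lff-equiv} (the equivalence $\cat{Vect}(\mf X^\mr{syn}) \simeq \cat{Vect}^{\varphi,\mr{lff}}(\mf X_\Prism)$) with Example \ref{ex:mu-lff} (any object of type $\mu$ is lff) to produce a unique preimage in $\cat{Tors}_\mc G(\mf X^\mr{syn})$ for every object of $\cat{Tors}_\mc G^{\varphi,\mu}(\mf X_\Prism)$. The remaining point is to verify the preimage is of type $\mu$; this is local and reduces to the qrsp setting, where a flat-local trivialization $(\mc P_\Prism, \varphi) \simeq (\tau^\ast \mc P_{\mu,\mc N}, \mu(d) \cdot {-})$ furnishes, by the full faithfulness of $\mathrm{R}_{\mf X}$ on the lff category, an isomorphism between the preimage and $\mc P_{\mu,\mc N}$ on an appropriate quasi-syntomic cover. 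Part (2a), the perfectoid case, must be treated separately since Proposition \ref{prop:F-gauge-lff-equiv} is not stated in that generality; however, since $R$ is qrsp (and $\phi$ is bijective on $\Ainf(R) = \Prism_R$), the Nygaard filtration becomes a principal-ideal filtration by powers of $\phi^{-1}(I_R)$, making both $\wh{\mc R}(\Fil^\bullet_\mr{Nyg}(\Prism_R))$ and the punctured Rees stack explicit. Combining Lemma \ref{lem: type mu on punctured Rees stack} with the Step 1 computation yields the equivalence directly in this case.

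The main obstacle will be the careful bookkeeping of the Frobenius twist implicit in $\mc P_{\mu,\mc N} = \mu^{(-1)}_\ast \mc O_\mc N\{1\}$: the cocharacter $\mu^{(-1)}$ appears in the model $F$-gauge, while a type-$\mu$ prismatic $F$-crystal is defined using the untwisted $\mu$ via left multiplication by $\mu(d)$ for a generator $d$ of $I$. The desired cancellation, whereby the Frobenius-pullback factor $\phi^\ast \mc O_\Prism\{1\} \otimes \mc I_\Prism$ in $\sigma^\ast \mc O_\mc N\{1\}$ converts $\mu^{(-1)}$ into $\mu$, must be tracked with care so as to produce $\mu(d)$ rather than $\mu^{(-1)}(d)$; this is where the normalization of the Breuil--Kisin twist (Convention \ref{conve: BGm}) enters decisively, and is the single delicate sign/twist check on which the entire proposition hinges.
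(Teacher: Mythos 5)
Your proposal follows the same overall structure as the paper's proof but organizes part (1) differently. The paper's argument for (1) is categorical rather than computational: it introduces a commutative square of torsor categories
\begin{equation*}
    \begin{tikzcd}
        \cat{Tors}_\mc{G}(R^\syn) \ar[r]\ar[d, "\mc{P}\mapsto \phi^*\mc{P}_\mc{N}"'] & \cat{Tors}_\mc{G}^\varphi(\Prism_R,I_R)\ar[d,"\mr{Lat}_\mc G"] \\
        \cat{Tors}_\mc{G}(\mc R(\Fil_{I_R}^\bullet)) \ar[r,"j^*"'] & \cat{Tors}_\mc{G}(\mc R^\circ(\Fil_{I_R}^\bullet)),
    \end{tikzcd}
\end{equation*}
where the left vertical arrow is pullback along $\mc{R}(\Fil^\bullet_{I_R})\xrightarrow{\phi}\mc R(\Fil_\mr{Nyg}^\bullet)= R^\mc{N}\to R^\syn$, and then invokes Lemma~\ref{lem: type mu on punctured Rees stack}(2) to identify the type-$\mu$ condition across the bottom row. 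The point that you correctly identify as delicate — the cancellation between the $\mu^{(-1)}$ appearing in $\mc P_{\mu,\mc N}$ and the untwisted $\mu$ in Definition~\ref{defn: type mu F crystals} — is absorbed in the paper into a single sentence (``as scalar extension preserves the type $\mu$ condition''), which works precisely because the left vertical arrow is $\phi$-pullback: Frobenius pullback along the Rees stacks simultaneously undoes the $\phi_W^{-1}$-twist in $\mu^{(-1)}$ and converts the $\mc O_\mc N\{1\}$-induced $B\Gm$-structure into the canonical one on $\mc R(\Fil^\bullet_{I_R})$, so the type-$\mu$ condition is literally preserved under scalar extension. Your direct model computation should reach the same conclusion, but it shifts the twist-bookkeeping from a one-line functoriality observation to an explicit chase through the Breuil--Kisin identifications, which is more error-prone (note that your intermediate formula $\sigma^\ast\mc O_\mc N\{1\}\simeq\phi^\ast\mc O_\Prism\{1\}\otimes\mc I_\Prism$ needs care with the sign of the degree shift in Convention~\ref{conve: BGm}; the precise formulation actually enters through Lemma~\ref{lem: global section of G mu} in the paper). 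For part (2) your approach coincides with the paper's: Example~\ref{ex:mu-lff} plus Proposition~\ref{prop:F-gauge-lff-equiv} produce a candidate preimage, and the type-$\mu$ verification reduces to the perfectoid case where $\phi$ is an isomorphism and the left vertical arrow above becomes invertible, inverting the implication from part (1); the paper handles (2a) and (2b) uniformly by passing to a perfectoid cover in case (b), rather than treating (2a) as a genuinely separate case.
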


\begin{proof} 
For Claim (1), by the definitions of the type $\mu$ conditions, 
we may assume that $\mf X=\Spf(R)$ for a qrsp ring $R$. We consider the following commutative diagram of categories
    \bx{
        \cat{Tors}_\mc{G}(R^\syn) \ar[r]^-{\mc P\mapsto \mc P_\smallprism}\ar[d]_-{\mc{P}\mapsto \phi^*\mc{P}_\smallN}
        & \cat{Tors}_\mc{G}^\varphi(\Prism_R,I_R)\ar[d]^-{\mr{Lat}_\mc G}
        \\ \cat{Tors}_\mc{G}(\mc R(\Fil_{I_R}^\bullet)) \ar[r]_-{j^*}
        & \cat{Tors}_\mc{G}(\mc R^\circ(\Fil_{I_R}^\bullet)),
    }\ex
    where the left vertical arrow is the natural pullback functor along the composition
    \begin{equation*}
        \mc{R}(\Fil^\bullet_{I_R})\xrightarrow{\phi}\mc R(\Fil_\mr{Nyg}^\bullet)= R^\smallN\to R^\syn. 
    \end{equation*}
    Recall that $\mc P_\smallprism$ being of type $\mu$ means that $\mr{Lat}_\mc G(\mc P_\smallprism)$ is of type $\mu$, which, by the commutative diagram, happens if and only if $j^*\phi^*\mc P_{\smallN}$ is of type $\mu$. Now, by Lemma \ref{lem: type mu on punctured Rees stack}, this is equivalent to $\phi^*\mc P_{\smallN}$ being of type $\mu$. On the other hand, $\mc P$ being of type $\mu$ means that $\mc P_{\smallN}$ is of type $\mu$. 
    As scalar extension preserves the type $\mu$ condition, Claim (1) follows. 

    To prove Claim (2), observe that as in Example \ref{ex:mu-lff} any object of $\cat{Tors}_\mc{G}^{\varphi,\mu}(\mf{X}_\smallprism)$ corresponds to an object of $\GVect^{\varphi,\mr{lff}}(\mf{X}_\smallprism)$, and so, by Proposition \ref{prop:F-gauge-lff-equiv} and its proof, we only have to show that the converse of Claim (1) also holds under the assumption of Claim (2). We may now assume that $\mf X=\Spf(R)$ for a perfectoid ring $R$ (in case (b), take a perfectoid cover as in \cite[Lemma 1.15]{IKY1}). Then the assertion follows since $\phi$ is now an isomorphism. 
\end{proof}

\subsection{Relationship to the theory of displays}

To utilize the works \cite{Ito1} and \cite{Ito2}, we need to compare the notion of prismatic $F$-gauge with $\mc G$-structure of type $\mu$ to that of prismatic $\mc G$-$F$-gauge of type $\mu$ introduced by Ito in \cite[Definition 8.2.5]{Ito1}. We maintain the notation from Notation \ref{nota:1.4} throughout.

\subsubsection{Prismatic \texorpdfstring{$\mc{G}\text{-}\mu$}{Gmu}-displays}
Let $R$ be a quasi-syntomic $W$-algebra. Following \cite{Ito1}, for an object $(A,I)$ of $R_\smallprism$, and a generator $d$ of $I$, we define the sheaf
\begin{equation*}
    \mc{G}_{\mu,(A,I)}\colon \Spec(A)_\et\to\cat{Grp},\qquad B\mapsto \left\{g\in\mc{G}(B):\mu(d)g\mu(d)^{-1}\in \mc{G}(B)\subseteq \mc{G}(B[\nicefrac{1}{d}])\right\},
\end{equation*}
which does not depend on $d$. For $d$ generating $I$, define an action of $\mc{G}_{\mu,(A,I)}$ on $\mc{G}_d\defeq \mc{G}$ by
\begin{equation*}
    \mc{G}_d\times \mc{G}_{\mu,(A,I)}\to\mc{G}_d,\qquad (x,g)\mapsto g^{-1}x\phi(\mu(d)g\mu(d)^{-1}).
\end{equation*}
For another generator $d'$ of $I$ there exists a unique unit $u$ of $A$ with $d=ud'$ and the morphism $\mc{G}_d\to\mc{G}_{d'}$ given by sending $x$ to $x\phi(\mu(u))$ is a $\mc{G}_{\mu,(A,I)}$-equivariant isomorphism of sheaves. Thus, $\mc{G}_{\smallprism,(A,I)}\defeq \varprojlim \mc{G}_d$, is a sheaf of sets carrying a canonical action of $\mc{G}_{\mu,(A,I)}$. 

As in \cite{Ito1}, a \emph{$\mc{G}\text{-}\mu$-display} on $(A,I)$ is a pair $(\mc{Q}_{(A,I)},\alpha_{\mc{Q}_{(A,I)}})$ where $\mc{Q}_{(A,I)}$ is a $\mc{G}_{\mu,(A,I)}$-torsor and $\alpha_{\mc{Q}_{(A,I)}}\colon \mc{Q}\to \mc{G}_{\smallprism,(A,I)}$ is a $\mc{G}_{\mu,(A,I)}$-equivariant map of sheaves. There is an evident notion of morphism of $\Gmu$-displays on $(A,I)$, and we denote by $\GDisp_\mu(A,I)$ the category of prismatic $\Gmu$-displays on $(A,I)$. For a morphism $(A,I)\to (B,J)$, where both $I$ and $J$ are principal, there is an obvious pullback morphism $\GDisp_\mu(A,I)\to \GDisp_\mu(B,J)$ and Ito defines a \emph{prismatic $\Gmu$-display} on $R$ to be an object $(\mc{Q},\alpha_\mc{Q})$  of the category 
\begin{equation*}
    \GDisp_\mu(R_\smallprism)\defeq \twolim_{\scriptscriptstyle (A,I)\in R_\smallprism}\GDisp_\mu(A,I),
\end{equation*}
which makes sense as every object $(A,I)$ of $R_\smallprism$ has a cover $(A,I)\to (B,J)$ where $J$ is principal.

This definition extends, mutatis mutandis, to a quasi-syntomic formal scheme, and we denote the resulting category by $\GDisp_\mu(\mf{X}_\smallprism)$.

\subsubsection{Relationship to prismatic \texorpdfstring{$\mc{G}$}{G}-torsors with \texorpdfstring{$F$}{F}-structure}\label{p:G-mu-displays-and-prismatic-G-torsors-with-F-structure} We now show that $\Gmu$-displays are precisely the prismatic $\mc{G}$-torsors with $F$-structure bounded by $\mu$. More precisely, we construct
\begin{equation*}
    \GDisp_\mu(A,(d))\isomto \cat{Tors}_{\mc{G}}^{\varphi,\mu}(A,(d)),
\end{equation*}
functorial in an object $(A,(d))$ of $R_\smallprism$.

We first construct an equivalence between the category of banal (see \cite[\S5.1]{Ito1}) $\Gmu$-displays on $(A,(d))$ and the full-subcategory of $\cat{Tors}_{\mc{G}}^{\varphi,\mu}(A,(d))$ consisting of those $(\mc{A},\varphi_\mc{A})$ with $\mc{A}$ trivializable. Let $(\mc G_{\mu,(A,I)},X\colon \mc G_{\mu,(A,I)}\to \mc G_{\smallprism,(A,I)})$ be a banal $\Gmu$-display on $(A,I)$. 
Choose a generator $d$ of $I$, and write $X_d$ in $\mc G(A)$ for the $d$-component of $X(1)$. Define $\mc{P}_{X,d}$ to be the trivial $\mc{G}_A$-torsor and $\varphi_{\mc{P}_{X,d}}$ to be the composition 
\be
\phi^*\mc{G}_{A[\nicefrac{1}{d}]}=\mc{G}_{A[\nicefrac{1}{d}]}\isomto \mc{G}_{A[\nicefrac{1}{d}]}\isomto \mc{G}_{A[\nicefrac{1}{d}]},
\ee
where the first map is left multiplication by $X_d$ and the second map is left multiplication by $\mu(d)$. For another generator $d'$ of $I$ with $d=ud'$ with $u\in A$ a unit, the left multiplication by $\mu(u)$ defines an isomorphism $(\mc{P}_{X,d'},\varphi_{\mc{P}_{X,d'}})\isomto (\mc{P}_{X,d},\varphi_{\mc{P}_X,d})$. Define $(\mc{P}_X,\varphi_{\mc{P}_X})$ to be the inverse limit $\varprojlim_{d}(\mc{P}_{X,d},\varphi_{\mc{P}_{X,d}})$ with transition maps given by $\mu(u)$.

The pair $(\mc{P}_X,\varphi_{\mc{P}_X})$ is then seen to be an object of $\cat{Tors}_{\mc{G}}^{\varphi,\mu}(A,(d))$ whose underlying $\mc{G}$-torsor is trivalizable. This defines a functor as for an element $g$ of  $\mc G_{\mu,(A,(d))}$, we have an induced morphism $(\mu(d)g\mu(d)^{-1})_d\colon \mc{P}_{X\cdot g}\to \mc{P}_{X}$, which is functorial and preserves the Frobenius structures by construction. This functor is clearly fully faithful.

Stackifying this association gives us a functor 
\begin{equation*}
    \GDisp_\mu(A,(d))\to \cat{Tors}_{\mc{G}}^{\varphi,\mu}(A,(d)), 
\end{equation*}
which is compatible in $\mc{G}$ and $(A,(d))$. So, for a quasi-syntomic $W$-algebra $R$, we obtain a functor 
\begin{equation}\label{eq:display-torsor}
\GDisp_\mu(R_\smallprism)\to\cat{Tors}_{\mc{G}}^{\varphi,\mu}(R_\smallprism),
\end{equation}
functorial in $\mc{G}$ and $R$.

\begin{prop}\label{prop:display-torsor-equiv} The functor \eqref{eq:display-torsor} defines an equivalence of categories
\begin{equation*}\GDisp_\mu(R_\smallprism)\isomto\cat{Tors}_{\mc{G}}^{\varphi,\mu}(R_\smallprism)
\end{equation*}
functorial in $\mc{G}$ and $R$.
\end{prop}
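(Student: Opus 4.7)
The strategy is to first show that both sides are fpqc stacks on the subsite of $R_\Prism$ consisting of prisms with principal Hodge--Tate ideal (which forms a basis), then explicitly verify the equivalence on banal objects over a single such prism, and finally use descent together with the naturality of the construction to conclude.

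I would first note that both $\GDisp_\mu$ and $\cat{Tors}^{\varphi,\mu}_\mc{G}$ satisfy fpqc descent on such prisms. For $\GDisp_\mu$ this follows from the fact that $\mc{G}_{\mu,(A,I)}$ is a smooth affine sheaf of groups, so that torsors satisfy fpqc descent, together with the observation that $\alpha_\mc{Q}$ is equivariant data to which descent applies verbatim. For $\cat{Tors}^{\varphi,\mu}_\mc{G}$ one uses faithfully flat descent for $\mc{G}$-torsors together with the fact that being of type $\mu$ is $(p,d)$-adically faithfully flat local by definition (Definition \ref{defn:type-mu-crystals}). Since \eqref{eq:display-torsor} is manifestly compatible with prism morphisms and the formation of the underlying torsor commutes with covers, it suffices to check it is an equivalence on banal objects over a single $(A,(d))$.

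On such a prism, banal $\mc{G}\text{-}\mu$-displays are parametrized (after fixing the generator $d$) by elements $X_d \in \mc{G}(A)$, while banal objects of $\cat{Tors}^{\varphi,\mu}_\mc{G}(A,(d))$, namely those with trivializable underlying torsor, are parametrized (after choosing trivializations $\theta\colon \mc{G}_A \isomto \phi^*\mc{P}$ and $\theta'\colon \mc{G}_A \isomto \mc{P}$) by elements $Y \in \mc{G}(A[\nicefrac{1}{d}])$ lying in the double coset $\mc{G}(A)\cdot\mu(d)\cdot\mc{G}(A)$; indeed, by Lemma \ref{lem: type mu on punctured Rees stack} this is precisely the type $\mu$ condition translated to trivialized data. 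The construction of \S\ref{p:G-mu-displays-and-prismatic-G-torsors-with-F-structure} then sends $X_d \mapsto Y = \mu(d)X_d$. Fully faithfulness on banal objects is already verified in the text preceding the statement. For essential surjectivity, given $Y = g_2\mu(d)g_1^{-1}$ in the double coset, replacing $\theta$ by $\theta\circ g_1$ and $\theta'$ by $\theta'\circ g_2$ brings $Y$ into the form $\mu(d)\cdot X_d$ for some $X_d \in \mc{G}(A)$, providing a preimage; the independence of the construction from the choice of generator $d$ is handled by the $\mu(u)$-cocycle already explicit in the construction.

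Assembling the local equivalence and taking the limit over $R_\Prism$ yields the claimed equivalence in general, and functoriality in $\mc{G}$ and $R$ is transparent from the construction since every step refers only to the data of $\mc{G}$, $\mu$, and base change along prism morphisms. The main technical delicacy I anticipate is a careful bookkeeping of trivializations on the $\cat{Tors}^{\varphi,\mu}_\mc{G}$-side and matching of the corresponding double-coset orbits with the $\mc{G}_{\mu,(A,I)}$-action on the display side; once one is careful that the convention for $\mc{G}_{\mu,(A,I)}$ matches the direction of type $\mu$ in Definition \ref{defn:type-mu-crystals}, the verification on banal objects becomes essentially formal.
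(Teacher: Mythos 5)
Your proposal follows the paper's approach closely: both use that the source and target are stacks to reduce to banal objects, take the full faithfulness already established in \S\ref{p:G-mu-displays-and-prismatic-G-torsors-with-F-structure} as given, and verify essential surjectivity by an explicit computation in the trivialized double coset. However, your essential surjectivity step has a bookkeeping slip worth flagging. To produce a preimage of $(\mc{P},\varphi_{\mc{P}})$ under the functor, you need an isomorphism $\mc{P}_{X,d}\isomto\mc{P}$, which is a \emph{single} trivialization $\theta'\colon\mc{G}_A\isomto\mc{P}$; the trivialization of $\phi^\ast\mc{P}$ is then forced to be $\phi^\ast\theta'$ and is not an independent choice. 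Your replacement $\theta\mapsto\theta\circ g_1$, $\theta'\mapsto\theta'\circ g_2$ brings the conjugated Frobenius to $\mu(d)$, but the resulting pair $(\theta\circ g_1,\theta'\circ g_2)$ is in general not of the form $(\phi^\ast\theta',\theta')$, so it does not immediately yield a preimage. The repair is to absorb the discrepancy $Z=(\phi^\ast\theta')^{-1}\theta\in\mc{G}(A)$ (for the adjusted $\theta,\theta'$) into $X_d$, giving $\theta'^{-1}\varphi_{\mc{P}}\phi^\ast\theta'=\mu(d)Z^{-1}$. The paper avoids the auxiliary $\theta$ altogether: writing $\theta'^{-1}\varphi_{\mc{P}}\phi^\ast\theta'=Y\mu(d)X$ with $X,Y\in\mc{G}(A)$, one checks directly that left multiplication by $Y$ gives an isomorphism $\mc{P}_{X\phi(Y),d}\isomto\mc{P}$ respecting the Frobenii, since $Y\cdot\bigl(\mu(d)X\phi(Y)\bigr)=\bigl(Y\mu(d)X\bigr)\cdot\phi(Y)$. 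This is the cleaner route and you may want to adopt it.
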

\begin{proof} As we have already observed, this functor is fully faithful, it remains to show that it is essentially surjective. As both the source and target are stacks on $\mf{X}_\smallprism$, this fully faithfulness allows us to reduce ourselves to showing that the functor is essentially surjective on banal objects over some $(A,(d))$. Let $(\mc{P},\varphi_{\mc{P}})$ be an object of $\cat{Tors}_{\mc G}^{\varphi,\mu}(A,(d))$ 
with $\mc{P}$ trivializable. Then by definition, $\varphi_{\mc{P}}$ is defined by $Y\mu(d)X$ for some $X$ and $Y$ in $\mc G(A)$. But left multiplication by $Y$ then defines an isomorphism $\mc{P}_{X\phi(Y),d}\to \mc{P}$ in $\cat{Tors}_{\mc G}^{\varphi,\mu}(A,(d))$, 
from where the claim follows.
\end{proof}

\subsubsection{\texorpdfstring{$\mc{G}\text{-}F$}{G-F}-gauges of type \texorpdfstring{$\mu$}{mu}}\label{sss: G F gauges}

As in \cite[\S4.1]{Ito1}, set $A_\mc G$ to be the ring $\mc O(\mc G_W)$ and denote by $A_\mc G=\bigoplus_{i\in \Z}A_{\mc G,i}$ the weight decomposition with respect to the $\mu$-conjugation $\mu(z)g\mu(z)^{-1}$.\footnote{We take the opposite convention from \cite{Ito1}: the weight decomposition in loc.\ cit.\ is taken with respect to the \emph{inverse} $\mu$-conjugation. We do so as the variable $t$ is declared to have degree $-1$ in \cite[Definition 8.1.1]{Ito1}.} As $\mc{G}$ is defined over $\Z_p$ there is a natural identification $\phi^\ast A_\mc{G}=A_\mc{G}$, and so we obtain a decomposition
\begin{equation*}
    A_\mc{G}=\bigoplus_{i\in\Z}A_{\mc{G},i}^{(-1)},
\end{equation*}
where, by definition, $A_{\mc{G},i}^{(-1)}$ is the base change of $A_{\mc{G},i}$ along $\phi^{-1}\colon W\to W$.

For a qrsp ring $R$, set
\begin{equation*}
    \mc G_{\mu,{\smallN}}(R)\defeq \left\{g\in \mc G(\Prism_R): g^*\colon A_\mc G\to \Prism_R\text{ satisfies } g^\ast(A_{\mc G,i}^{(-1)})\subseteq \Fil^{-i}_\mr{Nyg}\Prism_R \text{ for all }i\in\Z\right\}. 
\end{equation*}
For a generator $d$ of $I_R\subset \Prism_R$, let $\mc G(\Prism_R)_{{\smallN},d}$ denote the group $\mc G(\Prism_R)$ with the right $\mc G_{\mu,{\smallN}}(R)$-action given by $X\cdot g\defeq g^{-1}X\sigma_{\mu,{\smallN},d}(g)$, where $\sigma_{\mu,{\smallN},d}(g)\defeq \mu(d)\phi(g)\mu(d)^{-1}$. We define $\mc G(\Prism_R)_{{\smallN}}$ to be the inverse limit $\varprojlim_{d}\mc G(\Prism_R)_{{\smallN},d}$, where $d$ runs over the set of generators of $I_R$, and when $d=ud'$ for a unit $u\in\Prism_R$ the transition map $\mc G(\Prism_R)_{{\smallN},d}\to \mc G(\Prism_R)_{{\smallN},d'}$ is given by $X\mapsto X\mu(u)$. 

Let $\mf X$ be a quasi-syntomic $p$-adic formal scheme. By \cite[Lemma 8.2.4]{Ito1}, the groups $\mc{G}_{\mu,\smallN}(R)$ form a sheaf of groups $\mc G_{\mu,{\smallN}}$ on $\mf X_\qrsp$ together with an action on the sheaf of sets $\mc G_{\smallprism,{\smallN}}$ defined via the association $R\mapsto \mc G(\Prism_R)_{{\smallN}}$.

\begin{defn}[{\cite[Definition 8.2.5]{Ito1}}] Let $\mf{X}$ be a quasi-syntomic $p$-adic formal scheme. A \emph{prismatic $\mc G$-$F$-gauge of type $\mu$} on $\mf X$ is a pair $(\msr Q,\alpha_\msr Q)$ consisting of a $\mc G_{\mu,{\smallN}}$-torsor $\msr Q$ on $\mf{X}_\qrsp$ and a $\mc G_{\mu,{\smallN}}$-equivariant map of sheaves $\alpha_\msr Q\colon \msr Q\to \mc G_{\smallprism,{\smallN}}$.
\end{defn}

Prismatic $\mc G$-$F$-gauges of type $\mu$ form a groupoid $\mc G\hyphen F\hyphen\cat{Gauge}_\mu(\mf{X})$. There is a natural functor
\begin{equation*}
    \mathrm{R}_\mf{X}^\mr{disp}\colon \mc{G}\text{-}F\text{-}\cat{Gauge}_\mu(\mf{X})\to\GDisp_\mu(\mf{X}),
\end{equation*}
(see \cite[Proposition 8.2.11]{Ito1}).

\begin{rem}\label{lem: the group G mu}
There exists a canonical isomorphism 
    \begin{eqnarray*}
        \left[\Spec\left(\bigoplus_{i\in\Z}A_{\mc G,i}\right)/\bb{G}_{m,W}\right]\isomto \underline{\Aut}(\mc P_\mu)
    \end{eqnarray*}
of group stacks over $B\bG_{m,W}$. 
    In fact, the action of $\bb{G}_{m,W}$ on $\mc{G}_W$ by $\mu$-conjugation is identified with the natural action of $\bb{G}_{m,W}$ on the group scheme $\underline{\Aut}(\mc{P}_\mu|_{\Spf(W)})$. 
    In particular, for a qrsp ring $R$, we have a canonical identification 
    \begin{eqnarray*}
        \mc G_{\mu,{\smallN}}(R)\isomto \underline{\Aut}(\mc P_{\mu^{(-1)}})(R^{\smallN})\defeq \mr{Map}_{B\bb{G}_{m,W}}(R^{\smallN},\underline{\Aut}(\mc P_{\mu^{(-1)}})),
    \end{eqnarray*}
    where $R^{\smallN}$ is regarded as a stack over $B\bb{G}_{m,W}$ by the Rees structure map $t\colon R^{\smallN}\to B\bb{G}_{m,W}$. 
\end{rem}

\subsubsection{Comparison to \texorpdfstring{$\mc G$}{G}-torsors on the syntomification}

We aim to prove the following. 

\begin{prop}\label{prop: BT equals Ito's}
    Let $\mf X$ be a quasi-syntomic $p$-adic formal scheme. Then, there exists an equivalence of categories 
    \begin{eqnarray*}
        \cat{Tors}^\mu_\mc G(\mf X^\syn)\isomto\mc G\hyphen F\hyphen\cat{Gauge}_\mu(\mf X),
    \end{eqnarray*}
    $2$-bi-functorial in $\mf X$ and $\mc{G}$. 
\end{prop}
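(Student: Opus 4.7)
The plan is to work qrsp-locally on $\mf X$ and exploit both the pushout presentation of $R^\syn$ and the automorphism-sheaf identification from Remark \ref{lem: the group G mu}. By Proposition \ref{prop:syn-for-qsyn}(2) and Remark \ref{rem:GVect-tors-syn}, giving a $\mc G$-torsor on $\mf X^\syn$ is the same as giving compatibly, for each $\Spf(R) \to \mf X$ in $\mf X_\qrsp$, a $\mc G$-torsor on $R^\syn$; by its very definition, $\mc G\hyphen F\hyphen\cat{Gauge}_\mu(\mf X)$ is already a sheaf of groupoids on $\mf X_\qrsp$. Thus it suffices to construct a natural equivalence $\cat{Tors}_\mc G^\mu(R^\syn) \isomto \mc G\hyphen F\hyphen\cat{Gauge}_\mu(\Spf(R))$ for qrsp $R$ and verify it is $2$-functorial in $R$ and in $\mc G$.

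Fix a qrsp $R$. Using the pushout square defining $R^\syn$, a $\mc G$-torsor $\mc F$ on $R^\syn$ is the same data as a $\mc G$-torsor $\mc F_\mc N$ on $R^\mc N$ together with an isomorphism $\beta\colon j_\mr{HT}^*\mc F_\mc N \isomto j_\mr{dR}^*\mc F_\mc N$ of $\mc G$-torsors on $R^\Prism$. The type $\mu$ condition means that $\mc F_\mc N$ is qrsp-locally on $R$ isomorphic to $\mc P_{\mu,\mc N}|_{R^\mc N}$. To such data I associate $(\msr Q, \alpha_\msr Q)$, where, for qrsp $R$-algebras $R'$,
\begin{equation*}
\msr Q(R') \defeq \uIsom_{{R'}^\mc N}(\mc P_{\mu,\mc N}|_{{R'}^\mc N}, \mc F_\mc N|_{{R'}^\mc N}).
\end{equation*}
This is a torsor under $\underline{\Aut}(\mc P_{\mu,\mc N})$, which by Remark \ref{lem: the group G mu} is canonically identified with $\mc G_{\mu,\mc N}$. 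The map $\alpha_\msr Q$ sends a local trivialization $\theta\colon \mc P_{\mu,\mc N} \isomto \mc F_\mc N$ to the composite
\begin{equation*}
j_\mr{HT}^*\mc P_{\mu,\mc N} \xrightarrow{j_\mr{HT}^*\theta} j_\mr{HT}^*\mc F_\mc N \xrightarrow{\beta} j_\mr{dR}^*\mc F_\mc N \xleftarrow{j_\mr{dR}^*\theta} j_\mr{dR}^*\mc P_{\mu,\mc N},
\end{equation*}
read as an element of $\mc G(\Prism_R)$ after fixing the canonical trivializations of $j_\mr{dR}^*\mc P_{\mu,\mc N}$ and $j_\mr{HT}^*\mc P_{\mu,\mc N}$ (the latter determined by $\mu(d)$ for a chosen generator $d$ of $I_R$). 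Varying $d$ produces an element of the inverse limit $\mc G_{\Prism,\mc N}$, and the $\mc G_{\mu,\mc N}$-equivariance of $\alpha_\msr Q$ unwinds directly to Ito's formula $\sigma_{\mu,\mc N,d}(g) = \mu(d)\phi(g)\mu(d)^{-1}$.

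Conversely, given $(\msr Q, \alpha_\msr Q)$, form $\mc F_\mc N \defeq \msr Q \times^{\mc G_{\mu,\mc N}} \mc P_{\mu,\mc N}$ via the identification of Remark \ref{lem: the group G mu}; the resulting $\mc G$-torsor on $R^\mc N$ is of type $\mu$ by construction, and $\alpha_\msr Q$ provides the descent datum $\beta\colon j_\mr{HT}^*\mc F_\mc N \isomto j_\mr{dR}^*\mc F_\mc N$, thereby producing a type-$\mu$ $\mc G$-torsor on $R^\syn$. That the two constructions are mutually quasi-inverse is immediate from the definitions, and the $2$-bi-functoriality in $\mf X$ and in $\mc G$ follows from the naturality of the pushout description of $R^\syn$, the Tannakian character of the constructions, and the analogous functoriality of $\mc G_{\mu,\mc N}$ and $\mc G_{\Prism,\mc N}$. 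The main obstacle will be the careful bookkeeping of Frobenius twists: the factor $\phi$ in the target of $\alpha_\msr Q$ must match exactly the Frobenius twist $\mu^{(-1)}$ implicit in the definition of $\mc P_{\mu,\mc N}$, so that pullback under $j_\mr{HT}$ (which factors $\phi$) produces the correct $\mu(d)\phi(g)\mu(d)^{-1}$ on the nose. Pinning down this matching, together with checking that $\mc G_{\mu,\mc N}$-equivariance of $\alpha_\msr Q$ is equivalent to $\mc G$-equivariance of $\beta$ under the contraction construction, is where the computational content of the argument lies.
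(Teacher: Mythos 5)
Your construction is structurally close to the paper's, but there is a genuine gap at the heart of it: the identification of the structure group. You define $\msr Q(R') = \uIsom_{{R'}^\mc N}(\mc P_{\mu,\mc N}, \mc F_\mc N)$ and assert that this is a torsor under $\mc G_{\mu,\mc N}$, citing Remark \ref{lem: the group G mu}. But that remark identifies $\mc G_{\mu,\mc N}(R)$ with $\mr{Map}_{B\Gm}(R^\mc N, \underline{\Aut}(\mc P_{\mu^{(-1)}}))$ where $R^\mc N$ carries the \emph{Rees structure map} $t$, whereas $\mc P_{\mu,\mc N}$ is pulled back along the \emph{Breuil--Kisin twist} structure map given by $\mc O_\mc N\{1\}$. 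These two maps to $B\Gm$ differ by the (generally nontrivial) line bundle $L = \mc O_\Prism\{1\}^\vee$. As Lemma \ref{lem: global section of G mu} records, the automorphism group of $\mc P_{\mu,\mc N}$ is canonically $\mc G'_{\mu,\mc N}(R)$, not $\mc G_{\mu,\mc N}(R)$; the two groups are isomorphic only after choosing a generator $\pi$ of $L_R$, via $\mu'(\pi)$-conjugation (Proposition \ref{prop: choice free display equals display}). So your $\msr Q$ is a $\mc G'_{\mu,\mc N}$-torsor, and what you have actually constructed is (essentially) an object of $\mc G\hyphen F\hyphen\cat{Gauge}'_\mu(R)$ in the paper's notation, not of Ito's category.

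This is precisely what forces the paper's detour: it defines the ``choice-free'' category $\mc G\hyphen F\hyphen\cat{Gauge}'_\mu$, proves $\cat{Tors}^\mu_\mc G(R^\syn) \isomto \mc G\hyphen F\hyphen\cat{Gauge}'_\mu(R)$ by the construction you outline, and then invokes Proposition \ref{prop: choice free display equals display} to compare with Ito's $\mc G\hyphen F\hyphen\cat{Gauge}_\mu$. That last comparison only makes sense after $L$ is trivialized, which is why the paper restricts to qrsp $\mc O$-algebras for $\mc O = \Z_p\langle\mu_{p^\infty}\rangle$ and then glues back up by quasi-syntomic descent. Your closing paragraph does flag ``careful bookkeeping of Frobenius twists,'' but the obstruction is not the $\mu^{(-1)}$-twist alone --- it is the Breuil--Kisin twist $L$, which is a genuine line bundle rather than a choice of Frobenius normalization. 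Relatedly, your proposed trivialization of $\sigma^*\mc P_{\mu,\mc N}$ and $\tau^*\mc P_{\mu,\mc N}$ ``by $\mu(d)$ for $d$ generating $I_R$'' does not work: these are $\mu_*(\mc O_\Prism\{1\})$ and $\mu'_*(\mc O_\Prism\{1\})$ respectively, so what you need to generate is $L_R$, not $I_R$. To close the gap you must either work with $\mc G'_{\mu,\mc N}$ throughout and supply a separate, choice-and-descent argument identifying the resulting category with Ito's, or explain how to trivialize $L$ functorially --- which is exactly the content of the paper's intermediate step.
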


To construct this equivalence we give a different notion of prismatic $\mc G$-$F$-gauge of type $\mu$ (see Definition \ref{defn: choice free displays}) and show it is equivalent to that in \cite{Ito1} (see Proposition \ref{prop: choice free display equals display}). 

To this end, for each qrsp ring $R$ we let $L_R$ denote the invertible $\Prism_R$-module that corresponds to $\mc O_\smallprism\{1\}^{\vee}$ via the canonical isomorphism $R^\smallprism\simeq\Spf(\Prism_R)$ from Proposition \ref{prop:syn-for-qsyn}, which comes with a canonical isomorphism $\phi^*L_R\isomto I_R\otimes_{\Prism_R}L_R$.

Consider the perfectoid ring $\mc O\defeq\Z_p\langle \mu_{p^\infty}\rangle$. Then $L\defeq L_{\mc O}$ is generated by $[\varepsilon]-1$,  
where $\varepsilon$ in $\mc O^\flat$ is as in \cite[Notation 1.1]{IKY1}. Then, for a qrsp ring $R$ that admits a map from $\mc O$, the expression $\Prism_R[\nicefrac{1}{L}]$ makes sense.

\begin{defn}
    For a qrsp $\mc{O}$-algebra $R$, we define the following groups:
    \begin{equation*}
        \begin{aligned}\mc G'_{\mu,{\smallN}}(R) &\defeq \left\{g\in \mc G(\Prism_R[\nicefrac{1}{L}]):  g^*(A_{\mc G,i}^{(-1)})\subseteq \Fil^{-i}_\mr{Nyg}\cdot L^{i} \right\}, \\
         \mc G'_{\mu,\smallprism}(R) & \defeq \left\{g\in \mc G(\Prism_R[\nicefrac{1}{L}]): g^*(A_{\mc G,i}^{(-1)})\subseteq L^{i}\right\},
         \\ \mc G''_{\mu,\smallprism}(R)&\defeq\left\{g\in \mc G(\smallprism_R[\nicefrac{1}{L}]): g^*(A_{\mc G,i})\subseteq L^{i}\right\}.
         \end{aligned}
    \end{equation*}
\end{defn}

\begin{lem}\label{lem: global section of G mu}
    There is a canonical commutative diagram 
    \bx{
        \Aut(\tau^*\mc P_{\mu,{\smallN}})\ar[d]^-\sim
        & \Aut(\mc P_{\mu,{\smallN}}) \ar[r]^-{\sigma^*}\ar[l]_-{\tau^*}\ar[d]^-\sim
        & \Aut(\sigma^*\mc P_{\mu,{\smallN}}) \ar[d]^-\sim
        \\ \mc G'_{\mu,\smallprism}(R)
        & \mc G'_{\mu,{\smallN}}(R)\ar[r]^\phi\ar[l]
        &\mc G''_{\mu,\smallprism}(R).
    }\ex
\end{lem}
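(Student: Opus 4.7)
The plan is to derive the three vertical identifications in parallel, using Remark~\ref{lem: the group G mu} as the starting point. That remark provides a canonical identification
\begin{equation*}
    \underline{\Aut}(\mc{P}_{\mu^{(-1)}}) \simeq \left[\Spec\left(\bigoplus_{i\in\Z} A_{\mc{G},i}^{(-1)}\right)/\bb{G}_{m,W}\right]
\end{equation*}
of group stacks over $B\bb{G}_{m,W}$. For any formal $W$-stack $Y$ together with a map $f\colon Y\to B\bb{G}_{m,W}$ classifying a line bundle $\mc{L}_f$, the group $\Aut(f^*\mc{P}_{\mu^{(-1)}})$ is then canonically identified with the group of degree-preserving ring homomorphisms from the graded ring $\bigoplus_i A_{\mc{G},i}^{(-1)}$ into the graded ring $\bigoplus_i \Gamma(Y, \mc{L}_f^{\otimes i})$ (with appropriate sign conventions for the $\bb{G}_{m}$-weight coming from Convention~\ref{conve: BGm}).

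I would apply this with $Y = R^\mc{N}$, and with $Y = \Spf(\Prism_R)$ in each of the two ways induced by $\tau$ and $\sigma$, having first computed the relevant line bundles. Using the defining formula $\mc{O}_{\mc{N}}\{1\} \defeq \pi^*\mc{O}_\Prism\{1\} \otimes t^*\mc{O}_{B\bb{G}_m}\{-1\}$ together with $\pi \circ \tau = \id$, $\pi \circ \sigma = \phi$, and the relation $\phi^*L_R \cong I_R\otimes L_R$, I would check that $\tau^*\mc{O}_{R^\mc{N}}\{1\}$ and $\sigma^*\mc{O}_{R^\mc{N}}\{1\}$ are both canonically isomorphic to $L_R^\vee$ on $\Spf(\Prism_R)$; the $t^*\mc{O}_{B\bb{G}_m}\{-1\}$ factor in the definition of $\mc{O}_{\mc{N}}\{1\}$ exists precisely to cancel the Frobenius twist contributed by $\pi\circ\sigma = \phi$. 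Unwinding the graded-ring conditions then yields the three subgroups of $\mc{G}(\Prism_R[\nicefrac{1}{L}])$: for the middle column the target in degree $i$ is $\Fil^{-i}_{\mr{Nyg}}\Prism_R \cdot L^i$ (from the Rees presentation $R^\mc{N}\simeq\wh{\mc{R}}(\Fil^\bullet_{\mr{Nyg}}(\Prism_R))$ combined with the Breuil--Kisin twist), matching $\mc{G}'_{\mu,\mc{N}}(R)$; for the two outer columns the Rees filtration collapses upon passage to the open $\tau$ or $\sigma$, leaving only the $L^i$ factor, matching $\mc{G}'_{\mu,\smallprism}(R)$ on the left and, after the canonical identification $\phi^*A_{\mc{G},i}^{(-1)}\simeq A_{\mc{G},i}$, matching $\mc{G}''_{\mu,\smallprism}(R)$ on the right.

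Commutativity of the resulting diagram is then automatic by functoriality of pullback: the bottom-left arrow is the inclusion forgetting the Nygaard filtration (encoding $\pi\circ\tau=\id$ together with $\Fil^{-i}_\mr{Nyg}\Prism_R\subseteq\Prism_R$), while the bottom-right arrow is the Frobenius $\phi$ (encoding $\pi\circ\sigma=\phi$, which simultaneously converts $A_{\mc{G},i}^{(-1)}$ into $A_{\mc{G},i}$ and $\Fil^{-i}_\mr{Nyg}$ into $I^{-i}$, the latter being absorbed into the adjusted line bundle on the $\sigma$-side). The main obstacle will be the careful bookkeeping of the Breuil--Kisin twist: producing the canonical isomorphisms $\tau^*\mc{O}_{R^\mc{N}}\{1\} \cong L_R^\vee \cong \sigma^*\mc{O}_{R^\mc{N}}\{1\}$, and checking that the filtration conditions on each side match up, requires repeated use of $\phi^*L_R\cong I_R\otimes L_R$ and the sign/weight conventions fixed in Convention~\ref{conve: BGm}.
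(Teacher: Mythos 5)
Your proposal is correct and follows essentially the same approach as the paper: both start from Remark~\ref{lem: the group G mu}, identify the automorphism groups with graded $W$-algebra homomorphisms via the Rees-stack presentation $R^{\mc N}\simeq\wh{\mc R}(\Fil^\bullet_{\mr{Nyg}}(\Prism_R))$ together with Convention~\ref{conve: BGm}, and then match the degree conditions with the definitions of $\mc G'_{\mu,\mc N}(R)$, $\mc G'_{\mu,\smallprism}(R)$, and $\mc G''_{\mu,\smallprism}(R)$. The paper only writes out the middle-column identification in detail (phrasing it as the structure map corresponding to the $\Gm$-torsor $\Spec(\bigoplus_i\Fil^{-i}_{\mr{Nyg}}\cdot L^i)$) and dispatches the outer columns and the commutativity with ``the other isomorphisms are obtained similarly''; your version usefully fills in the line-bundle computations $\tau^*\mc O_{\mc N}\{1\}\cong L_R^\vee\cong\sigma^*\mc O_{\mc N}\{1\}$ via $\phi^*L_R\cong I_R\otimes L_R$ and the way the Frobenius semilinearity of $\pi\circ\sigma=\phi$ converts $A_{\mc G,i}^{(-1)}$ into $A_{\mc G,i}$, which is exactly the content the paper leaves implicit.
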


\begin{proof}
    We first note that, by Remark \ref{lem: the group G mu}, the group $\Aut(\mc P_{\mu,{\smallN}})$ is canonically identified with the group $\mr{Map}_{B\bG_{m,W}}(R^{\smallN}\{1\},\Spec(\bigoplus_{i\in\Z}A_{\mc G,i}^{(-1)}))$, 
    where $R^{\smallN}\{1\}$ denotes the stack $R^{\smallN}$ with the $B\bG_{m,W}$-structure $R^{\smallN}\to B\bG_{m,W}$ given by the line bundle $\mc O_{\smallN}\{1\}$ (with the canonical $W$-structure). 
    This structure map corresponds to the $\Gm$-torsor $\Spec(\bigoplus_{i\in \Z}\Fil^{-i}_\mr{Nyg}\cdot L^{i})$ over $\mc R(\Fil^\bullet_\mr{Nyg})$ via Convention \ref{conve: BGm} together with Proposition \ref{prop:rees-equiv}. 
    Thus, this group is identified with the group of $\Gm$-equivariant $W$-maps $\Spec(\bigoplus_{i\in \Z}\Fil_\mr{Nyg}^{-i}\cdot L^{i})\to \Spec(\bigoplus_{i\in\Z}A_{\mc G,i}^{(-1)})$,\footnote{As $R^{\smallN}$ is identified with the \emph{completed} Rees stack $\wh{\mc R}(\Fil_\mr{Nyg}^\bullet)$, the group $\mc G\{\mu\}(R^{\smallN})$ is, a priori, identified with the group of compatible systems of $\bb{G}_m$ equivariant maps $\Spec(\bigoplus_{i\in \Z}(\Fil_\mr{Nyg}^{-i}\cdot L^{i})/(p,I)^n)\to \Spec(\bigoplus_{i\in\Z}A_{\mc G,i}^{(-1)})$. But, since each $\Fil^{-i}_\mr{Nyg}\cdot L^{i}$ is complete (see Lemma \ref{lem:Nygaard-closed}), giving such a system is equivalent to giving a $\Gm$-equivariant map as claimed.}  i.e., the group of graded $W$-algebra homomorphisms $\bigoplus_{i\in\Z}A_{\mc G,i}^{(-1)}\to\bigoplus_{i\in \Z}(\Fil_\mr{Nyg}^{-i}\cdot L^{i})$, which is identified with the group $\mc G'_{\mu,{\smallN}}(R)$. 
    The other isomorphisms are obtained similarly. 
\end{proof}

We now consider the sheaf of sets $\mathcal A$ on $R_\qrsp$ defined by the association
\begin{equation*}R'\mapsto {\mr{Isom}}((\sigma^*\mc P_{\mu,{\smallN}})|_{{R'}^\smallprism},(\tau^*\mc P_{\mu,{\smallN}})|_{{R'}^\smallprism}).
\end{equation*}
It admits a natural right action of $\mc G'_{\mu,{\smallN}}$ defined by the rule $X'\cdot g'\defeq {g'}^{-1}X'\phi(g')$ where $g'$ belongs to $\mc G'_{\mu,{\smallN}}(R)$ and $X'$ is in $\mc A(R)$.

\begin{defn}\label{defn: choice free displays}
    Let $R$ be a qrsp ring. By a \emph{choice-free prismatic $\mc G$-$F$-gauge} on $R$ of type $\mu$ we mean a pair $\mathscr{Q}=(\mathscr{Q},\alpha_\mathscr{Q})$ consisting of a $\mc G'_{\mu,{\smallN}}$-torsor $\mathscr{Q}$ on $R_\mr{qrsp}$ and a $\mc G'_{\mu,{\smallN}}$-equivariant map of sheaves $\alpha_\mathscr{Q}\colon \msr Q\to \mc A$. 
\end{defn}

Set $\mc G\hyphen F\hyphen\cat{Gauge}'_\mu(R)$ to be the groupoid of choice-free prismatic $\mc G$-$F$-gauges on $R$ of type $\mu$. 
In the following we let $\mu'$ denote the twist $\mu^{(-1)}$. 

\begin{prop}\label{prop: choice free display equals display}
    Let $R$ be a qrsp $\mc{O}$-algebra and choose a generator $\pi$ in $L$. Let $d_\pi$ denote the element $\phi(\pi)/\pi$ in $\Prism_R$. 
    \begin{enumerate}
        \item The following diagram commutes 
        \bx{
        \mc G(\Prism_R)\ar[d]^-{\mu'(\pi)\text{-}\mr{conj.}}
        & \mc G_{\mu,{\smallN}}(R) \ar[r]^{\sigma_{\mu,{\smallN},d_\pi}}\ar[l]_-{\mr{can}}\ar[d]^-{\mu'(\pi)\text{-}\mr{conj.}}
        & \mc G(\Prism_R) \ar[d]^-{\mu(\pi)\text{-}\mr{conj.}}
        \\ \mc G'_{\mu,\smallprism}(R)
        & \mc G'_{\mu,{\smallN}}(R)\ar[r]^\phi\ar[l]
        &\mc G''_{\mu,\smallprism}(R),
        }\ex
        where $\sigma_{\mu,{\smallN},d_\pi}$ is the map from \S\ref{sss: G F gauges}. 
        \item The choice of $\pi$ induces an \emph{isomorphism} of categories 
        \begin{eqnarray*}
            \mc G\hyphen F\hyphen\cat{Gauge}(R)
            \isomto
            \mc G\hyphen F\hyphen\cat{Gauge}'(R).
        \end{eqnarray*}
    \end{enumerate}
\end{prop}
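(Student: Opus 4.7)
The plan for Part (1) is a direct computation anchored in the weight decomposition. By definition, $A_{\mc G,i}$ and $A_{\mc G,i}^{(-1)}$ are the weight-$i$ subspaces of $A_\mc G$ under $\mu$- and $\mu'$-conjugation, respectively, so for any $g\in\mc G(\Prism_R)$ and $f\in A_{\mc G,i}^{(-1)}$ one has $(\mu'(\pi)g\mu'(\pi)^{-1})^*(f)=\pi^i g^*(f)$, and analogously $(\mu(\pi)g\mu(\pi)^{-1})^*(f)=\pi^i g^*(f)$ for $f\in A_{\mc G,i}$. This immediately shows that each vertical map sends its stated source into its stated target (e.g., $g^*(A_{\mc G,i}^{(-1)})\subseteq\Fil^{-i}_\mr{Nyg}$ implies $(\mu'(\pi)g\mu'(\pi)^{-1})^*(A_{\mc G,i}^{(-1)})\subseteq L^i\cdot\Fil^{-i}_\mr{Nyg}$) and is a bijection with inverse the opposite conjugation. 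Commutativity of the left square and of the bottom horizontal on the left are tautological, while commutativity of the right square uses the defining identity $\phi\circ\mu'=\mu\circ\phi$ of $\mu'=\mu^{(-1)}$ together with $\phi(\pi)=\pi d_\pi$ to compute
\begin{equation*}
\phi(\mu'(\pi)g\mu'(\pi)^{-1})=\mu(\phi(\pi))\phi(g)\mu(\phi(\pi))^{-1}=\mu(\pi)\mu(d_\pi)\phi(g)\mu(d_\pi)^{-1}\mu(\pi)^{-1}=\mu(\pi)\,\sigma_{\mu,\mc N,d_\pi}(g)\,\mu(\pi)^{-1}.
\end{equation*}

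For Part (2), the plan is to use Part (1) to transport structures. The middle vertical isomorphism in Part (1) is an isomorphism of sheaves of groups $\mc G_{\mu,\mc N}\isomto\mc G'_{\mu,\mc N}$ on $R_\qrsp$, so a $\mc G_{\mu,\mc N}$-torsor $\msr Q$ pushes forward to a $\mc G'_{\mu,\mc N}$-torsor $\msr Q'$. To identify $\mc G_{\prism,\mc N}$ with $\mc A$, the choice of $\pi$ (and the resulting distinguished generator $d_\pi$ of $I_R$) furnishes canonical trivializations of $\tau^*\mc P_{\mu,\mc N}$ and $\sigma^*\mc P_{\mu,\mc N}$ as $\mc G$-torsors on $\Spec(\Prism_R)$ via the identifications $\tau^*\mc O_\mc N\{1\}\simeq\mc O_\Prism\{1\}$ and $\sigma^*\mc O_\mc N\{1\}\simeq I_R\otimes\mc O_\Prism\{1\}$; these trivializations give a bijection $\mc A(R)\isomto\mc G(\Prism_R)$. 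Combining with the tautological bijection $\mc G_{\prism,\mc N}(R)\simeq\mc G(\Prism_R)_{\mc N,d_\pi}=\mc G(\Prism_R)$ coming from the same choice produces an isomorphism of sheaves $\mc G_{\prism,\mc N}\isomto\mc A$, and this defines $\alpha_{\msr Q'}$ from $\alpha_\msr Q$. The inverse functor is obtained by reversing all conjugations and trivializations.

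The main obstacle, and the key computational check, is that the two right actions are intertwined by these bijections: the $\Gmu$-display action $X\cdot g=g^{-1}X\mu(d_\pi)\phi(g)\mu(d_\pi)^{-1}$ must match the choice-free action $X'\cdot g'=g'^{-1}X'\phi(g')$. The right factor $\mu(d_\pi)\phi(g)\mu(d_\pi)^{-1}\in\mc G(\Prism_R)$ is transported to $\phi(g')\in\mc G''_{\mu,\smallprism}(R)=\Aut(\sigma^*\mc P_{\mu,\mc N})$ by the right square of Part (1) composed with the right vertical $\mu(\pi)$-conjugation. The left factor $g^{-1}$ is transported to $g'^{-1}\in\mc G'_{\mu,\smallprism}(R)=\Aut(\tau^*\mc P_{\mu,\mc N})$ because the trivialization of $\tau^*\mc P_{\mu,\mc N}$ is built out of $\mu'(\pi)$ rather than $\mu(\pi)$, so that the canonical action of $\mc G'_{\mu,\smallprism}(R)$ on $\tau^*\mc P_{\mu,\mc N}$ corresponds under this trivialization to the composite map appearing in the left square of Part (1). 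Once these two intertwining statements are confirmed, the desired isomorphism of categories follows formally from the definitions.
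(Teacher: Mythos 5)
Your proof is correct and follows essentially the same route as the paper's. Part (1) is handled exactly as in the paper (the weight-decomposition computation $(\mu'(\pi)g\mu'(\pi)^{-1})^*(f)=\pi^i g^*(f)$ is the core of both), with your explicit verification of the right square via $\phi\circ\mu'=\mu\circ\phi$ and $\phi(\pi)=\pi d_\pi$ being a welcome piece of detail the paper leaves implicit; for Part (2) the paper likewise identifies $\mc G(\Prism_R)_{\mc N,d_\pi}$ with $\mc A(R)$ via the composite involving $\mu_*(\pi^{-1}\cdot)$ and $\mu'_*(\pi\cdot)$ and then invokes the diagram from Part (1) for equivariance, which matches your transport argument.
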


\begin{proof}
    Claim (1) follows from the definition of the weight decomposition. Indeed, the natural map $\mu'(\pi)g\mu'(\pi)^{-1}\colon A_\mc G\to \Prism[\nicefrac{1}{L}]$ sends $a$ in $A_{\mc G,i}^{{(-1)}}$ to $\pi^{i}g^*(a)$. 

    To see Claim (2), we identify $\mc G(\Prism_R)_{\smallN}\defeq\varprojlim_{d}\mc G(\Prism_R)_{{\smallN},d}$ with $\mc A(R)$ as follows. We have the canonical projection $\mc G(\Prism_R)_{\smallN}\isomto \mc G(\Prism)_{{\smallN},d_\pi}$, so it suffices to identify $\mc G(\Prism_R)_{{\smallN},d_\pi}$ with $\mc A(R)$. 
    Let $\theta_{d_\pi}\colon \mc G(\Prism_R)_{{\smallN},d_\pi}=\mc G(\Prism_R)\isomto \mc A(R)$ 
    be the map sending an element $X$ of $\mc G(\Prism_R)$ to the isomorphism $X'\colon \sigma^*\mc P_{\mu,{\smallN}}\isomto \tau^*\mc P_{\mu,{\smallN}}$ defined as composite
    \begin{eqnarray*}
        \sigma^*\mc P_{\mu,{\smallN}}=\mu_*(\mc O_\smallprism\{1\})\xrightarrow{\mu_*(\pi^{-1}\cdot)}\mu_*(\mc O_\smallprism)=\mc G_{\Prism_R}\xrightarrow{X\cdot}\mc G_{\Prism_R}=\mu'_*(\mc O_\smallprism)\xrightarrow{\mu'_*(\pi\cdot)}\mu'_*(\mc O_\smallprism\{1\})=\tau^*\mc P_{\mu,{\smallN}}.
    \end{eqnarray*}
    By Claim (1), this bijection is equivariant for the action of $\mc G_{\mu,{\smallN}}(R)$ on the left-hand side and that of $\mc G'_{\mu,{\smallN}}(R)$ on the right-hand side via the identification $\mc G'_{\mu,{\smallN}}(R)\isomto\mc G_{\mu,{\smallN}}(R)$ from Claim (1). 
\end{proof}    

\begin{proof}[Proof of Proposition \ref{prop: BT equals Ito's}]
    By Proposition \ref{prop: choice free display equals display} and quasi-syntomic descent, it suffices to construct an equivalence $\cat{Tors}_\mc G^\mu(R^\syn)\isomto \mc G\hyphen F\hyphen\cat{Gauge}'_\mu(R)$ for qrsp rings $R$ in a functorial way. 
    
    Suppose first that $\mc P$ is an object of $\cat{Tors}_\mc G^\mu(R^\syn)$ presented via an isomorphism $\varphi_\mc{P}\colon \sigma^\ast \mc{P}_\smallN\to \tau^\ast\mc{P}_\smallN$ for $\mc{P}_\smallN$ a $\mc{G}$-torsor on $R^\smallN$. Say that $\mc{P}$ is \emph{banal} if $\mc P_{\smallN}$ is isomorphic to $\mc P_{\mu,\smallN}$. Suppose $\mc{P}$ is banal and choose an isomorphism $\theta\colon \mc{P}_{\mu,\smallN}\isomto \mc{P}_\smallN$. Consider the element $X$ in $\mc{A}(R)$ that defines $\varphi_\mc P$ via $\theta$. We then define a choice-free prismatic $\mc G$-$F$-gauge $\mathscr{Q}=(\msr Q,\alpha_\msr Q)$ of type $\mu$ as follows. Define $\msr Q$ to be the trivial $\mc G'_{\mu,{\smallN}}$-torsor on $R_\mr{qrsp}$. Further define a $\mc G'_{\mu,{\smallN}}$-equivariant map $\alpha_\msr Q\colon\msr Q=\mc G'_{\mu,{\smallN}} \to \mc A$ by sending $1$ to $X$. By the definition of the action of $\mc G'_{\mu,{\smallN}}(R)$ on $\mc A(R)$, the object $(\msr Q,\alpha_\msr Q)$ is independent of the choice of $\theta$ in the sense that, if we pick another isomorphism $\theta'\colon \mc P_{\mu,\smallN}\isomto\mc P_{\smallN}$ and $\alpha'_\msr Q$ denotes the similarly defined map, then ${\theta'}^{-1}\circ\theta$, viewed as an element of $\mc G_{\mu,{\smallN}}'(R)$ via Lemma \ref{lem: global section of G mu}, gives an isomorphism $(\msr Q,\alpha_\msr Q)\isomto (\msr Q,\alpha'_\msr Q)$. 
    One may check this defines an equivalence $\cat{Tors}^{\mu}_\mc G(R^\syn)_\mr{banal}\isomto \mc G\hyphen F\hyphen\cat{Gauge}'_\mu(R)_\mr{banal}$. This equivalence is $2$-functorial in $R$, and so we obtain the desired equivalence passing to the quasi-syntomic stackifications.
\end{proof}

\subsubsection{Summary of equivalences} We summarize the above results as follows.

\begin{prop}\label{prop: summary display comparison}
    Let $\mf X$ be a quasi-syntomic $p$-adic formal scheme. 
    \begin{enumerate}
    \item 
    There exists a commutative diagram of categories 
\begin{equation}\label{eq:summary}\begin{tikzcd}[sep=large]
	{\cat{Tors}_\mc{G}^\mu(\mf{X}^\syn)} &  
    {\mc{G}\text{-}F\text{-}\mathbf{Gauge}_\mu(\mf{X})} \\
	{\cat{Tors}^{\varphi,\mu}_\mc{G}(\mf{X}_\smallprism)} &{\mc{G}\text{-}\mathbf{Disp}_\mu(\mf{X})}
	\arrow["\sim", from=1-1, to=1-2]
	\arrow["{\ref{prop: BT equals Ito's}}"', from=1-1, to=1-2]
	\arrow["{\mathrm{R}_\mf{X}}"', from=1-1, to=2-1]
	\arrow["{\mathrm{R}_\mf{X}^\mr{disp}}", from=1-2, to=2-2]
	\arrow["\sim", from=2-1, to=2-2]
	\arrow["{\ref{prop:display-torsor-equiv}}"', from=2-1, to=2-2]
\end{tikzcd}
\end{equation}
$2$-bi-functorial in $\mc{G}$ and $\mf{X}$, where the horizontal arrows are equivalences and the vertical arrows are fully faithful. 
\item Assume further that $\mf X$ is either, (a) $\Spf(R)$ for a perfectoid ring $R$ or, (b) a base formal $W$-scheme. Then the vertical arrows in \eqref{eq:summary} are equivalences.
\end{enumerate}
\end{prop}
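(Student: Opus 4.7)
The plan is to combine the two horizontal equivalences of Propositions \ref{prop: BT equals Ito's} and \ref{prop:display-torsor-equiv} with the structural inputs from Propositions \ref{prop:F-gauge-lff-equiv} and \ref{prop: F gauge type mu equals F crystal type mu}. Since both horizontal arrows in \eqref{eq:summary} are already equivalences, the remaining tasks are: (i) the square commutes $2$-bi-functorially; (ii) the vertical arrows are fully faithful in part (1); (iii) they are essentially surjective in part (2).

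For (i), I would reduce to the banal case over a qrsp $\mc{O}$-algebra $R$ and unwind both equivalences explicitly. Fix a generator $\pi$ of $L=L_R$, set $d_\pi=\phi(\pi)/\pi$, and trivialize the $\mu$-typical torsor $\mc{P}_{\mu,\mc{N}}$. By Proposition \ref{prop: choice free display equals display}, a banal type-$\mu$ gauge is encoded by an element $X\in \mc{G}(\Prism_R)$. Going right-then-down, the forgetful functor $\mathrm{R}_\mf{X}^\mr{disp}$ forgets the Nygaard structure on $\mc{G}_{\mu,\mc{N}}$, and the equivalence of Proposition \ref{prop:display-torsor-equiv} produces the pair $(\mc{P}_X,\varphi_{\mc{P}_X})$ whose Frobenius, for the chosen $\pi$, is left multiplication by $\mu(d_\pi)X$ in the trivialization. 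Going down-then-right, the three-step construction of $\mathrm{R}_\mf{X}$ from \S\ref{ss:forgetful-functor}, applied to the Rees-algebra description of the gauge attached to $X$, produces the same composite by virtue of Lemma \ref{lem: global section of G mu}. Quasi-syntomic descent then upgrades this banal match to the required $2$-bi-functorial commutativity on general $\mf{X}$. For (ii), Example \ref{ex:mu-lff} shows that $\mathrm{R}_\mf{X}$ carries type-$\mu$ objects into the lff subcategory of $\cat{Tors}^\varphi_\mc{G}(\mf{X}_\Prism)$, and Proposition \ref{prop:F-gauge-lff-equiv} shows that $\mathrm{R}_\mf{X}$ is an equivalence onto this lff subcategory; its restriction to type-$\mu$ objects is thus fully faithful, and commutativity together with the horizontal equivalences transports fully faithfulness to $\mathrm{R}_\mf{X}^\mr{disp}$. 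For (iii), Proposition \ref{prop: F gauge type mu equals F crystal type mu}(2) asserts directly that the left vertical arrow is an equivalence in both cases (a) and (b), and commutativity again transports the conclusion to the right vertical arrow.

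The main obstacle is the commutativity check in (i): the other assertions are essentially immediate corollaries of already-proved inputs. This check is a genuine bookkeeping exercise of reconciling the $\mu$-conjugation conventions built into Ito's definition of $\mc{G}_{\mu,\mc{N}}$ and the functor $\mathrm{R}_\mf{X}^\mr{disp}$ with those implicit in the construction of $\mathrm{R}_\mf{X}$ via the Nygaard-filtered prismatization. The key device that makes the reconciliation tractable is Proposition \ref{prop: choice free display equals display}(1), which translates between the direct definition of $\mc{G}_{\mu,\mc{N}}$ and its realization as a $\mu'$-conjugate subgroup of $\mc{G}(\Prism_R)[\nicefrac{1}{L}]$; once the pieces are expressed in this common language, the equality of the two composites comes down to matching explicit products of the form $\mu(d_\pi)X$ on both sides.
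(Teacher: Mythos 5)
Your proposal takes essentially the same route as the paper: reduce the commutativity check to the banal case over a qrsp $\mc{O}$-algebra, fix a generator $\pi$ of $L_R$, trivialize $\mc{P}_{\mu,\mc{N}}$ via some $\theta$, and then reconcile the two composites by unwinding through Lemma \ref{lem: global section of G mu} and Proposition \ref{prop: choice free display equals display}(1), while the remaining assertions follow immediately from Propositions \ref{prop: BT equals Ito's}, \ref{prop:display-torsor-equiv}, \ref{prop:F-gauge-lff-equiv}, and \ref{prop: F gauge type mu equals F crystal type mu}. One caution for when you actually write the banal computation: the two composites do \emph{not} agree on the nose. The down-then-right path yields Frobenius given by the composite of $\mu(d_\pi)$ and $X_\theta^\pi$, whereas the right-then-down path (passing through Ito's $\mathrm{R}_\mf{X}^\mr{disp}$) produces the display element $\phi(X_\theta^\pi)$, hence Frobenius $\mu(d_\pi)\phi(X_\theta^\pi)$; the required $2$-isomorphism is left multiplication by $X_\theta^\pi$. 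Your phrase ``the equality of the two composites'' should therefore be read as equality up to this explicit (and non-identity) natural isomorphism, which is exactly what $2$-commutativity demands.
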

\begin{proof}
    The only remaining part which is potentially unclear is the $2$-commutativity of \eqref{eq:summary}. It suffices to work with objects $\mc P=(\mc P_{\smallN},\varphi_\mc P)$ over $\mf X=\Spf(R)$ with $R$ a qrsp ring admitting a map from $\mc O=\Z_p\langle \mu_{p^\infty}\rangle$ and which are banal, i.e., there exists an isomorphism $\mc{P}_{\mu,\smallN}\isomto \mc{P}_\smallN$. 
    Choose such an isomorphism $\theta\colon \mc P_{\mu,{\smallN}}\isomto\mc P_\smallN$ and let $X_\theta$ denote the element of $\mr{Isom}(\sigma^*\mc P_{\mu,{\smallN}},\tau^*\mc P_{\mu,{\smallN}})$ that defines $\varphi_\mc P$ via $\theta$. 
    Then the Frobenius structure of the associated prismatic $\mc{G}$-torsor with $F$-structure is, via $\theta$ given by the composition 
    \begin{equation*} \phi^*\tau^*\mc P_{\mu,{\smallN}}[\nicefrac{1}{I}]\xrightarrow{\mr{can.}}\sigma^*\mc P_{\mu,{\smallN}}[\nicefrac{1}{I}]\xrightarrow{X_\theta}\tau^*\mc P_{\mu,{\smallN}}[\nicefrac{1}{I}].
    \end{equation*}
    Choose a generator $\pi$ of the ideal $L_R$ and set $d_\pi\defeq\phi(\pi)/\pi$. Then, via the identification 
    \begin{eqnarray*}
        \mc G_{\Prism_R}=\mu_*(\mc O_\smallprism)\xrightarrow{\mu'_*(\pi\cdot)}\mu_*(\mc O_\smallprism\{1\})=\tau^*\mc P_{\mu,{\smallN}},
    \end{eqnarray*} 
    the above composition is identified with the composition 
    \begin{equation*} \phi^*\mc G[\nicefrac{1}{I}]\simeq \mc G[\nicefrac{1}{I}]\xrightarrow{\mu(d_\pi)}\mc G[\nicefrac{1}{I}]\xrightarrow{X_\theta^\pi}\mc G[\nicefrac{1}{I}],
    \end{equation*}
    where $X_\theta^\pi\defeq\mu'(\pi)^{-1}X_\theta \mu(\pi)$. 
    
    On the other hand, the associated object $(\msr Q,\alpha_{\msr Q})$ in $\mc G\hyphen F\hyphen\cat{Gauge}_\mu(R)$ is canonically isomorphic to $(\mc G_{\mu,{\smallN}}\to \mc G(\Prism_R):1\mapsto X^\pi_\theta)$, and hence the associated object of $\mc G\hyphen\cat{Disp}_\mu(R)$ is given by $(\mc G_{\mu,\smallprism}\to \mc G(\Prism_R)\colon 1\mapsto \phi(X_\theta^\pi))$, whose associated prismatic $\mc G$-torsor is canonically isomorphic to $(\mc G_{\Prism_R},\varphi)$ where $\varphi$ corresponds to left multiplication by $\mu(d_\pi)\phi(X_\theta^\pi)$.  
    Left multiplication by $X_\theta^\pi$ on $\mc G_\smallprism$ gives the desired commutativity isomorphism. 
\end{proof}

\section{Prismatic realization functors on Shimura varieties of abelian type}\label{s:applications-to-Shimura-varieties}
In this section we construct an object $\omega_{\mathsf{K}^p,\smallprism}$ of $\GVect^\varphi((\wh{\ms{S}}_{\mathsf{K}^p})_\smallprism)$, called the \emph{prismatic realization functor}, where $\ms{S}_{\mathsf{K}^p}$ is the integral canonical model of a Shimura variety of abelian type, and show it can be upgraded to a \emph{syntomic realization functor} $\omega_{\mathsf{K}^p,\syn}$ in $\GVect((\wh{\ms{S}}_{\mathsf{K}^p})^\mr{syn})$.

\subsection{Notation and basic definitions}\label{ss:Shim-Var-Notation} Throughout this section, we fix the following.
\begin{nota}\label{nota:shim-var} Define the following notation:
\begin{itemize}[label=$\diamond$]
    \item $\mb{G}$ is a reductive group over $\Q$,
    \item $\mb{Z}$ denotes the center $Z(\mb{G})$ of $\mb{G}$,
    \item $\bb{S}\defeq \mathrm{Res}_{\C/\R}\,\mathbb{G}_{m,\C}$ is the Deligne torus,
    \item $(\mb{G},\mathbf{X})$ is a Shimura datum (see \cite[Definition 5.5]{MilneShimura}),
    \item $\mathbf{E}=\mathbf{E}(\mb{G},\mathbf{X})\subseteq \bb{C}$ denotes the reflex field of $(\mb{G},\mb{X})$ (see \cite[Definition 12.2]{MilneShimura}),
    \item $\mathsf{K}\subseteq \mb{G}(\mbb{A}_f)$ is a (variable) neat (cf.\@ \cite[p.\@ 288]{MilneShimura}) compact open subgroup.
\end{itemize}
\end{nota}
As in \cite{DeligneModulaire} (cf.\@ \cite{Moonen}), associated to this data is the (canonical model of the) Shimura variety $\Sh_{\mathsf{K}}(\mb{G},\mb{X})$, which is a smooth and quasi-projective $\mathbf{E}$-scheme. For $\mathsf{K}$ and $\mathsf{K}'$ of $\mb{G}(\A_f)$, and $g$ in $\mb{G}(\A_f)$ such that $g^{-1}\mathsf{K}g\subseteq \mathsf{K}'$, denote by $t_{\mathsf{K},\mathsf{K}'}(g)$ the unique finite \'etale morphism of $\mathbf{E}$-schemes $\Sh_{\mathsf{K}}(\mb{G},\mb{X})\to \Sh_{\mathsf{K}'}(\mb{G},\mb{X})$ given on $\C$-points by
\begin{equation*}
    t_{\mathsf{K},\mathsf{K}'}(g)\left(\mb{G}(\Q)(x,g')\mathsf{K}\right)=\mb{G}(\Q)(x,g'g)\mathsf{K}'.
\end{equation*}
We shorten $t_{\mathsf{K},\mathsf{K}'}(\id)$ to $\pi_{\mathsf{K},\mathsf{K}'}$ and $t_{\mathsf{K},g^{-1}\mathsf{K}g}(g)$ to $[g]_K$. The morphisms $\pi_{\mathsf{K},\mathsf{K}'}$ form a projective system $\{\Sh_{\mathsf{K}}(\mb{G},\mb{X})\}$ with finite \'etale transition maps, and the morphisms $[g]_K$ endow
\begin{equation*}
    \Sh(\mb{G},\mb{X})\defeq \varprojlim_\mathsf{K} \Sh_{\mathsf{K}}(\mb{G},\mb{X})
\end{equation*}
(cf.\@ \stacks{01YX}) with a continuous action of $\mb{G}(\A_f)$ (in the sense of \cite[2.7.1]{DeligneModulaire}).

We shall often fix the following additional data/notation/assumptions:
\begin{itemize}[label=$\diamond$]
    \item $p$ is a rational prime and $\mf{p}$ a prime of $\mathbf{E}$ lying over $p$,
    \item $E$ is the completion $\mathbf{E}_\mf{p}$, $\mc{O}_E$ its ring of integers, and $k$ its residue field,
    \item $G\defeq \mb{G}_{\Q_p}$, and $\mc{G}$ is a parahoric model of $G$ over $\Z_p$,
    \item $\mathsf{K}_0\subseteq G(\Q_p)$ the parahoric subgroup given by $\mc{G}(\Z_p)$,
    \item  ${\mathsf{K}^p}\subseteq \mb{G}(\bb{A}^p_f)$ a neat compact open subgroup.
\end{itemize}
The triple $(\mb{G},\mb{X},\mc{G})$ is a \emph{parahoric Shimura datum}, and is an \emph{unramified Shimura datum} if $\mc{G}$ is reductive. For an unramified Shimura datum, the extension $E/\Q_p$ is unramified (see \cite[Corollary 4.7]{MilShmot}) and we identify $\mc{O}_E$ with $W=W(k)$. Moreover, $G$ is quasi-split and split over $\breve{E}$. We shorten $\Sh_{\mathsf{K}}(\mb{G},\mb{X})_E$ (resp.\@ $\Sh(\mb{G},\mb{X})_E$) to $\Sh_{\mathsf{K}}$ (resp.\@ $\Sh$). 

Let $(\mb{G},\mb{X},\mc{G})$ be a parahoric Shimura datum. Associated to $\mb{X}$ is a unique conjugacy class of coharacters $\bb{G}_{m,\C}\to \mb{G}_\C$ (see \cite[p.\@ 344]{MilneShimura}) whose field of definition is $\mb{E}$. Using \cite[Lemma 1.1.3]{KotShtw} this corresponds to a unique conjugacy class $\bm{\mu}_h$ of cocharacters $\bb{G}_{m,\ov{E}}\to G_{\ov{E}}$ which one checks has field of definition $E$. If $(\mb{G},\mb{X},\mc{G})$ is unramified then one may use loc.\@ cit.\@ to show the existence of a unique conjugacy class $\bbmu_h$ of cocharacters $\bb{G}_{m,\breve{\Z}_p}\to \mc{G}_{\breve{\Z}_p}$ modeling $\bm{\mu}_{h}$.

We often denote other Shimura data with numerical subscripts (e.g.\@ $(\mb{G}_1,\mb{X}_1)$) and use the same numerical subscripts to denote the objects defined above (or below) for this Shimura datum (e.g.\@ $\Sh_{\mathsf{K}_{0,1}\mathsf{K}^p_1}$ or $\mc{G}_1$). A morphism of Shimura data $\alpha\colon (\mb{G}_1,\mb{X}_1)\to (\mb{G},\mb{X})$ is a morphism of group $\Q$-schemes $\alpha\colon \mb{G}_1\to\mb{G}$ such that $\alpha_\R(\mb{X}_1)\subseteq \mb{X}$, and is an \emph{embedding} if $\alpha$ is a closed embedding. By \cite[\S5]{DeligneModulaire}, for a morphism $\alpha$ one has $\mathbf{E}\subseteq \mathbf{E}_1$ and there is a morphism $\Sh(\mb{G}_1,\mb{X}_1)\to \Sh(\mb{G},\mb{X})_{\mathbf{E}_1}$ of $\mb{E}_1$-schemes equivariant for $\alpha\colon \mb{G}_1(\A_f)\to \mb{G}(\A_f)$ and such that if $\alpha(\mathsf{K}_1)\subseteq \mathsf{K}$ then the induced map $\alpha_{\mathsf{K}_1,\mathsf{K}}\colon \Sh_{\mathsf{K}_1}(\mb{G},\mb{X})\to \Sh_{\mathsf{K}}(\mb{G},\mb{X})_{\mathbf{E}_1}$ is given by 
\begin{equation*}
    \alpha_{\mathsf{K}_1,\mathsf{K}}\left(\mb{G}_1(\Q)(x,g_1)\mathsf{K}_1\right)=\mb{G}(\Q)(\alpha\circ x,\alpha(g_1))\mathsf{K}
\end{equation*}
on $\C$-points. If the induced map $\alpha\colon G_1^\der\to G^\der$ is an isogeny, then each $\alpha_{\mathsf{K}_1,\mathsf{K}}$ is finite \'etale, as can be checked on connected components (cf.\@ \cite[p.\@ 6620]{ShenPerfectoid}).

By a morphism $\alpha\colon (\mb{G}_1,\mb{X}_1,\mc{G}_1)\to (\mb{G},\mb{X},\mc{G})$ of parahoric Shimura data we mean a morphism $\alpha\colon (\mb{G}_1,\mb{X}_1)\to (\mb{G},\mb{X})$ of Shimura data together with a specified model $\mc{G}_1\to\mc{G}$ of $G_1\to G$, which  we also denote $\alpha$.  We say that $\alpha$ is an embedding if $\mc{G}_1\to\mc{G}$ is a closed embedding.

\subsection{Integral canonical models}\label{ss:integral-models}

We consider the following objects:
\begin{multicols}{2}
    \begin{itemize}[label=$\diamond$]
    \item a symplectic space $\bm{\Lambda}_0$ over $\Z_{(p)}$,
    \item set $\mb{V}_0\defeq \bm{\Lambda}_0\otimes_\Z \Q$, 
    \item set $\Lambda_0\defeq \mb{\Lambda}_0\otimes_{\Z_{(p)}}\Z_p$,
    \item set $V_0\defeq \mb{V}_0\otimes_\Q \Q_p=\Lambda_0[\nicefrac{1}{p}]$.
\end{itemize}
\end{multicols}
\noindent We then have the \emph{Siegel Shimura datum} $(\GSp(\mb{V}_0),\mf{h}^{\pm})$ (see \cite[\S6]{MilneShimura}) with reflex field $\Q$. For a neat compact open subgroup $\mathsf{L}\subseteq \GSp(\mb{V}_0)(\A_f)$ there is an identification of $\mb{Sh}_\mathsf{L}(\GSp(\mb{V}_0),\mf{h}^{\pm})$ with Mumford's moduli space of principally polarized abelian schemes with level $\mathsf{L}$-structure (see \cite[\S4]{DelTS}). Set $\mathsf{L}_0=\GSp(\Lambda_0)$. Then, $\Sh_{\mathsf{L}_0\mathsf{L}^p}$ admits a smooth model $\ms{M}_{\mathsf{L}^p}(\Lambda_0)$ over $\Z_p$ with a similar moduli description (see loc.\@ cit.\@). 

Recall that $(\mb{G},\mb{X})$ is of \emph{Hodge type} if there exists an embedding (called a \emph{Hodge embedding}) $(\mb{G},\mb{X})\hookrightarrow (\GSp(\mb{V}_0),\mf{h}^{\pm})$ for some symplectic space $\mb{V}_0$ over $\Q$, and of \emph{abelian type} if there exists $(\mb{G}_1,\mb{X}_1)$ of Hodge type and an isogeny $\mb{G}_1^\der\to \mb{G}^\der$ inducing an isomorphism of adjoint Shimura data $(\mb{G}_1^\ad,\mb{X}_1^\ad)\to (\mb{G}^\ad,\mb{X}^\ad)$. As in \cite[2.5.14]{LoveringModels} (cf.\@ the proof of \cite[Corollary 3.4.14]{KisIntShab}), if $(\mb{G},\mb{X},\mc{G})$ is an unramified Shimura datum of abelian type then $(\mb{G}_1,\mb{X}_1,\mc{G}_1)$ may be further chosen so that $G_1^\der\to G^\der$ admits a central isogeny model $\mc{G}_1^\der\to\mc{G}^\der$. For such well-chosen data, we say that $(\mb{G}_1,\mb{X}_1)$ (resp.\@ $(\mb{G}_1,\mb{X}_1,\mc{G}_1)$) is \emph{adapted} to $(\mb{G},\mb{X})$ (resp.\@ $(\mb{G},\mb{X},\mc{G})$).

Suppose now that $(\mb{G},\mb{X},\mc{G})$ is an unramified Shimura datum of abelian type. Set 
\begin{equation*}
    \Sh_{\mathsf{K}_0}=\varprojlim_{{\mathsf{K}^p}}\Sh_{\mathsf{K}_0\mathsf{K}^p}=\Sh/\mathsf{K}_0,
\end{equation*}
which is a scheme with a continuous action of $\mb{G}(\A_f^p)$. In \cite{KisIntShab}, there is constructed an $\mc{O}_E$-scheme $\ms{S}=\ms{S}_{K_0}$ with a continuous action of $\mb{G}(\A_f^p)$ whose generic fiber recovers $\Sh_{\mathsf{K}_0}$ with its $\mb{G}(\A_f^p)$-action. For a neat compact open subgroup ${\mathsf{K}^p}\subseteq \mb{G}(\A_f^p)$ write $\ms{S}_{{\mathsf{K}^p}}\defeq \ms{S}/{\mathsf{K}^p}$, and for neat compact open subgroups ${\mathsf{K}^p}$ and ${\mathsf{K}^{'p}}$ of $\mb{G}(\A_f^p)$, and an element $g^p$ of $ \mb{G}(\A_f^p)$ such that $(g^p)^{-1}{\mathsf{K}^p} g^p\subseteq {\mathsf{K}^{'p}}$ denote by $t_{{\mathsf{K}^p},{\mathsf{K}^{'p}}}(g^p)$ the induced map $\ms{S}_{{\mathsf{K}^p}}\to\ms{S}_{{\mathsf{K}^{'p}}}$, subject to the same notational shortenings as in the generic fiber case. Then, $\ms{S}$ is a so-called \emph{integral canonical model}: the $\mc{O}_E$-schemes $\ms{S}_{{\mathsf{K}^p}}$ are smooth (and quasi-projective), the maps $t_{{\mathsf{K}^p},{\mathsf{K}^{'p}}}(g^p)$ are finite \'etale, and for any regular and formally smooth $\mc{O}_E$-scheme $\ms{X}$ any morphism $\ms{X}_\eta\to \Sh_{\mathsf{K}_0}$ of $E$-schemes lifts uniquely to a morphism of $\mc{O}_E$-schemes $\ms{X}\to\ms{S}$ (the \emph{extension property}).

\begin{example}When $(\mb{G},\mb{X})=(\GSp(\mb{V}_0),\mf{h}^{\pm})$, and $\mathsf{L}_0=\GSp(\Lambda_0)$, then the integral canonical model is precisely the system $\{\ms{M}_{\mathsf{L}^p}(\Lambda_0)\}$ (cf.\@ \cite[Corollary 3.8]{Moonen}).
\end{example}

If $\alpha\colon (\mb{G}_1,\mb{X}_1,\mc{G}_1)\to (\mb{G},\mb{X},\mc{G})$ is a morphism of unramified Shimura data of abelian type, then the morphism $\Sh_{\mathsf{K}_{0,1}}\to(\Sh_{\mathsf{K}_0})_{E_1}$ has a unique model $\ms{S}_1\to\ms{S}_{\mc{O}_{E_1}}$ equivariant for the map $\mb{G}_1(\A_f^p)\to \mb{G}(\A_f^p)$ . If $\alpha({\mathsf{K}^p_1})\subseteq {\mathsf{K}^p}$ we denote by $\alpha_{{\mathsf{K}^p_1},{\mathsf{K}^p}}$ the induced morphism $\ms{S}_{{\mathsf{K}^p_1}}\to(\ms{S}_{{\mathsf{K}^p}})_{\mc{O}_{E_1}}$.

\begin{lem}\label{lem:isogeny-finite-etale}
    If $\alpha\colon \mc{G}_1^\der\to \mc{G}^\der$ is a central isogeny, then each $\alpha_{\mathsf{K}_1^p,\mathsf{K}^p}$ is finite \'etale.
\end{lem}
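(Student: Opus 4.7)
The generic fiber of $\alpha_{\mathsf{K}_1^p,\mathsf{K}^p}$ is identified with the morphism $\alpha_{\mathsf{K}_{0,1}\mathsf{K}_1^p,\mathsf{K}_0\mathsf{K}^p}\colon \Sh_{\mathsf{K}_{0,1}\mathsf{K}_1^p}\to(\Sh_{\mathsf{K}_0\mathsf{K}^p})_{E_1}$, which is finite \'etale by the discussion immediately preceding the lemma (a central isogeny being, in particular, an isogeny). Both $\ms{S}_{\mathsf{K}_1^p}$ and $(\ms{S}_{\mathsf{K}^p})_{\mc{O}_{E_1}}$ are smooth and quasi-projective over $\mc{O}_{E_1}$ of the same relative dimension. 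My plan is to propagate finite \'etaleness to the integral level by treating finiteness and \'etaleness separately.

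For finiteness, I would form the relative normalization $\gamma\colon Y\to (\ms{S}_{\mathsf{K}^p})_{\mc{O}_{E_1}}$ inside $\Sh_{\mathsf{K}_{0,1}\mathsf{K}_1^p}$. Since $(\ms{S}_{\mathsf{K}^p})_{\mc{O}_{E_1}}$ is a normal noetherian Nagata scheme (being of finite type over a discrete valuation ring) and the induced map on its generic fiber is finite \'etale, $\gamma$ is finite by \stacks{03GR}. Normality of $\ms{S}_{\mathsf{K}_1^p}$ (from its smoothness over $\mc{O}_{E_1}$) together with the universal property of relative normalization produces a canonical factorization $\alpha_{\mathsf{K}_1^p,\mathsf{K}^p}=\gamma\circ\beta$, where $\beta\colon\ms{S}_{\mathsf{K}_1^p}\to Y$ restricts to the identity on generic fibers. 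To finish, it then suffices to show $\gamma$ is \'etale: this will make $Y$ smooth over $\mc{O}_{E_1}$, hence regular and formally smooth, after which the extension property of the system $\{\ms{S}_{\mathsf{K}_1^p}\}$ supplies an inverse to $\beta$ (the extended map $Y\to\ms{S}_1$ factors through $\ms{S}_{\mathsf{K}_1^p}$ because its generic fiber does).

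For the \'etaleness of $\gamma$, I would invoke Zariski--Nagata purity of the branch locus: because $\gamma$ is finite and generically \'etale between regular noetherian schemes, its ramification locus is pure of codimension one, and so must be supported in the special fiber of $(\ms{S}_{\mathsf{K}^p})_{\mc{O}_{E_1}}$. To rule out ramification along the special fiber, I would reduce to the Hodge-type case via Kisin's adapted-data construction: an abelian-type integral canonical model is built from that of a Hodge-type adapted datum by a free action of a finite \'etale group scheme, reducing the question to when both $(\mb{G}_1,\mb{X}_1)$ and $(\mb{G},\mb{X})$ are of Hodge type. In the Hodge-type case, one analyses $\alpha_{\mathsf{K}_1^p,\mathsf{K}^p}$ via the moduli-theoretic description of $\ms{S}_{\mathsf{K}_1^p}$ in terms of principally polarized abelian schemes with level structure, using Serre--Tate theory to identify formal neighbourhoods at closed points. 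The main obstacle is this last step---eliminating special-fiber branching of $\gamma$---which is where the good-reduction geometry of abelian-type integral canonical models and Kisin's construction enter crucially.
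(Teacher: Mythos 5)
Your proposal is unfinished at exactly the step where the lemma's content lies, and you say so yourself. The normalization-plus-purity scaffolding is sound: forming the relative normalization $Y\to(\ms{S}_{\mathsf{K}^p})_{\mc{O}_{E_1}}$ inside $\Sh_{\mathsf{K}_{0,1}\mathsf{K}_1^p}$ gives a finite morphism by \stacks{03GR}, the universal property produces $\beta\colon\ms{S}_{\mathsf{K}_1^p}\to Y$, Zariski--Nagata purity (which needs $Y$ normal and the target regular, both of which you have) pins any ramification of $\gamma$ to a divisor, and that divisor can only live over the special fiber since the generic fiber is \'etale. The circle closes once you show $\gamma$ is unramified in the special fiber, because then $Y$ is smooth, hence regular and formally smooth, and the extension property identifies $Y$ with $\ms{S}_{\mathsf{K}_1^p}$. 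But ruling out that special-fiber branching is the entire substance of the lemma, and you explicitly defer it to an unexecuted plan involving ``Kisin's adapted-data construction,'' ``moduli-theoretic description,'' and ``Serre--Tate theory,'' without saying what argument these phrases point to. That is not a proof; it is a restatement of the problem.

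The paper closes the gap in a different and more direct way. Rather than inferring \'etaleness from purity and a case analysis of the special fiber, it observes (following Kisin's construction~\cite[(3.4.9)]{KisIntShab}) that the morphism factors through the ``connected'' integral models $\mathscr{S}_{\mathsf{K}^p}(\mathcal{G}^{\der},\mb{X}^+)$, chooses a single adapted unramified Shimura datum of Hodge type $(\mb{G}_2,\mb{X}_2,\mc{G}_2)$ that dominates both $(\mb{G}_1,\mb{X}_1,\mc{G}_1)$ and $(\mb{G},\mb{X},\mc{G})$ and fits into a commutative triangle, and then appeals to \cite[2.5.14]{LoveringModels}: the relevant group $\Delta^N$ is finite and acts freely (by \cite[Proposition 2.5.9, Lemma 2.5.10]{LoveringModels}), so each map from the Hodge-type connected model is a quotient by a finite free action, hence finite \'etale, from which the map along the base of the triangle is finite \'etale as well. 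This hands you finite \'etaleness directly from the explicit group-theoretic description of the abelian-type models as quotients of Hodge-type ones, without needing purity, normalization, or any separate analysis of the special fiber. Your outline, even if completed, would end up reproving a weak form of that quotient statement by hand; the missing ingredient in your plan is precisely the input that Lovering's \cite[2.5.14]{LoveringModels} supplies, and you should cite and use it rather than try to derive it from Serre--Tate theory at individual closed points.
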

\begin{proof}
    It suffices to show the maps $\mathscr{S}_{\mathsf{K}_1^p}(\mathcal{G}_1^{\der},\mb{X}_1^+) \to \mathscr{S}_{\mathsf{K}^p}(\mathcal{G}^{\der},\mb{X}^+)$ (with notation as in \cite[(3.4.9)]{KisIntShab}) are finite \'etale. Let $(\mb{G}_2,\mb{X}_2,\mc{G}_2)$ be an unramified Shimura datum of Hodge type adapted to $(\mb{G}_1,\mb{X}_1,\mc{G}_1)$ and thus to $(\mb{G},\mb{X},\mc{G})$ and fix a sufficiently small neat compact open subgroup $\mathsf{K}^p_2$. As the map $\mathscr{S}_{\mathsf{K}_1^p}(\mathcal{G}_1^{\der},\mb{X}_1^+) \to \mathscr{S}_{\mathsf{K}^p}(\mathcal{G}^{\der},\mb{X}^+)$ fits into a commutative triangle with maps of the form $\mathscr{S}_{\mathsf{K}_2^p}(\mathcal{G}_2^{\der},\mb{X}_2^+) \to \mathscr{S}_{\mathsf{K}_1^p}(\mathcal{G}_1^{\der},\mb{X}_1^+)$ and $\mathscr{S}_{\mathsf{K}_2^p}(\mathcal{G}_2^{\der},\mb{X}_2^+) \to \mathscr{S}_{\mathsf{K}^p}(\mathcal{G}^{\der},\mb{X}^+)$ it suffices to show these maps are finite \'etale. But, this follows from \cite[2.5.14]{LoveringModels} as the group $\Delta^N$ is finite and acts freely by \cite[Proposition 2.5.9 and Lemma 2.5.10]{LoveringModels}.
\end{proof}

For an unramified Shimura datum $(\mb{G},\mb{X},\mc{G})$ of Hodge type, an \emph{integral Hodge embedding} is an embedding $\iota\colon (\mb{G},\mb{X},\mc{G})\hookrightarrow (\GSp(\mb{V}_0),\mf{h}^{\pm},\GSp(\Lambda_0))$. By \cite[3.3.1]{KimUnif}, such an integral Hodge embedding always exists. As each $\ms{M}_{\mathsf{L}^p}(\Lambda_0)$ is a fine moduli space of principally polarized abelian varieties it has a universal abelian scheme $\ms{A}_{\mathsf{L}^p}$ compatible in $\mathsf{L}^p$. If $\iota(\mathsf{K}^p)\subseteq \mathsf{L}^p$, we (suppressing $\iota$ from the notation) denote by $\ms{A}_{{\mathsf{K}^p}}\to \ms{S}_{{\mathsf{K}^p}}$ the pullback of $\ms{A}_{\mathsf{L}^p}$ along $\iota_{{\mathsf{K}^p},\mathsf{L}^p}$. Denote by $\wh{\ms{A}}_{{\mathsf{K}^p}}\to \wh{\ms{S}}_{{\mathsf{K}^p}}$ its $p$-adic completion (equiv.\@ the pullback of $\ms{A}_{{\mathsf{K}^p}}$ along $\wh{\ms{S}}_{{\mathsf{K}^p}}\to \ms{S}_{{\mathsf{K}^p}}$), and by $A_{{\mathsf{K}^p}}\to\Sh_{{\mathsf{K}_0\mathsf{K}^p}}$ the generic fiber of $\ms{A}_{{\mathsf{K}^p}}\to\ms{S}_{{\mathsf{K}^p}}$.

We finally observe that the connected components of $\ms{S}$ are homogeneous in a suitable sense.

\begin{lem}[{cf.\@ \cite[Lemma 2.2.5]{KisIntShab}}]\label{lem:transitivity-on-conn-comp} The action of $\mb{G}(\A_f^p)$ on $\pi_0(\ms{S}_{\mc{O}_{\breve{E}}})$ is transitive.
\end{lem}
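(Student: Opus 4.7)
The plan is to reduce the assertion to a classical statement about connected components of the complex Shimura variety, and then to verify a set-theoretic identity in $\mb{G}(\A_f)$ using strong approximation for $\mb{G}^\der$.

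First, I will reduce to the generic fiber. Since each $\ms{S}_{\mathsf{K}^p}$ is smooth (and in particular flat with geometrically reduced fibers) over the complete discrete valuation ring $\mc{O}_E$, the natural map $\pi_0(\ms{S}_{\mathsf{K}^p}) \to \pi_0(\Sh_{\mathsf{K}_0\mathsf{K}^p})$ is a bijection (closed points of the special fiber lie in the closure of the generic fiber, so each component meets $\Sh_{\mathsf{K}_0\mathsf{K}^p}$, and conversely smoothness prevents components from fusing). Passing to the inverse limit over $\mathsf{K}^p$, we obtain a $\mb{G}(\A_f^p)$-equivariant bijection $\pi_0(\ms{S}) \isomto \pi_0(\Sh_{\mathsf{K}_0})$. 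Since the natural $\Gal(\overline{E}/E)$-action on $\pi_0(\Sh_{\mathsf{K}_0,\overline{E}})$ commutes with the $\mb{G}(\A_f^p)$-action, it suffices to prove $\mb{G}(\A_f^p)$ acts transitively on $\pi_0(\Sh_{\mathsf{K}_0,\overline{E}}) = \pi_0(\Sh_{\mathsf{K}_0,\mathbb{C}})$.

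Next, I will invoke Deligne's description of $\pi_0(\Sh_{\mathsf{K}_0,\mathbb{C}})$. By \cite[Theorem 5.17]{MilneShimura} (or \cite{DeligneModulaire}), there is a $\mb{G}(\A_f)$-equivariant identification
\begin{equation*}
\pi_0(\Sh_{\mathsf{K}_0,\mathbb{C}}) \;\simeq\; \mb{G}(\Q)_+ \backslash \mb{G}(\A_f)/\bigl(\mathsf{K}_0 \cdot \overline{\mb{Z}(\Q)}\bigr),
\end{equation*}
where $\mb{G}(\Q)_+$ denotes the preimage in $\mb{G}(\Q)$ of $\mb{G}^\ad(\R)^+$ and $\overline{\mb{Z}(\Q)}$ is the closure of $\mb{Z}(\Q)$ in $\mb{G}(\A_f)$. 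Transitivity of the $\mb{G}(\A_f^p)$-action (acting by right translation through the $p$-away component) is therefore equivalent to the identity
\begin{equation}\label{eq:strong-approx-identity}
\mb{G}(\A_f) \;=\; \mb{G}(\Q)_+ \cdot \mathsf{K}_0 \cdot \overline{\mb{Z}(\Q)} \cdot \mb{G}(\A_f^p),
\end{equation}
or equivalently, that the image of $\mb{G}(\Q)_+ \cdot \overline{\mb{Z}(\Q)}$ in $\mb{G}(\Q_p)/\mathsf{K}_0$ equals all of $\mb{G}(\Q_p)/\mathsf{K}_0$.

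The main work is to verify \eqref{eq:strong-approx-identity}. I would first reduce modulo $\mb{G}^\der$: quotienting $\mb{G} \to \mb{G}^\ab$ transports \eqref{eq:strong-approx-identity} to an analogous statement for the torus $\mb{G}^\ab$, where it reduces to the obvious fact that $\mb{G}^\ab(\Q_p) = \mb{Z}(\Q_p) \cdot \mc{G}^\ab(\Z_p) \cdot \mb{G}^\ab(\Q_p)^\circ$ using that $\mc{G}^\ab$ is a torus over $\Z_p$ with connected special fiber. It thus suffices to establish \eqref{eq:strong-approx-identity} after replacing $\mb{G}$ by $\mb{G}^\der$, and here I will apply strong approximation for the simply-connected cover $\mb{G}^\mr{sc} \to \mb{G}^\der$. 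Concretely, since $\mb{G}^\mr{sc}$ is semisimple simply connected and $(\mb{G},\mb{X})$ is a Shimura datum (so $\mb{G}^\mr{sc}_\R$ has no $\Q$-anisotropic factors by Deligne's axiom SV3, giving strong approximation away from $\infty$ by Kneser--Platonov), one has that $\mb{G}^\mr{sc}(\Q)$ is dense in $\mb{G}^\mr{sc}(\A_f^p)$. Combined with the fact that $\mathsf{K}_0 = \mc{G}(\Z_p)$ contains the image of $\mc{G}^\mr{sc}(\Z_p)$ (a hyperspecial subgroup of $\mb{G}^\mr{sc}(\Q_p)$), this produces the needed elements of $\mb{G}(\Q)_+$ to absorb the $p$-part of any element of $\mb{G}(\A_f)$ into $\mathsf{K}_0$ up to an element of $\mb{G}(\A_f^p)$ and the center.

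The main obstacle will be the clean passage between $\mb{G}^\mr{sc}$ and $\mb{G}^\der$ (which differ by the fundamental group, a finite multiplicative group), and the verification that the elements produced by strong approximation can indeed be chosen in $\mb{G}(\Q)_+$ rather than just $\mb{G}(\Q)$; this latter point uses real approximation for $\mb{G}^\ad$ to adjust the archimedean component into the identity component of $\mb{G}^\ad(\R)$.
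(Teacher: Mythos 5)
Your plan has the right shape and follows what one would expect the cited source (Kisin, Lemma 2.2.5) to do: pass to the generic fiber (using normality of $\ms{S}_{\mathsf{K}^p}$ so that each connected component is irreducible, hence has nonempty connected generic fiber), invoke Deligne's double-coset description of $\pi_0$ at infinite level, and finish with strong approximation. The reduction step is essentially correct, though the arrow $\pi_0(\ms{S}_{\mathsf{K}^p}) \to \pi_0(\Sh_{\mathsf{K}_0\mathsf{K}^p})$ you write goes the wrong direction; the natural map induced by the open immersion $\Sh_{\mathsf{K}_0\mathsf{K}^p}\hookrightarrow\ms{S}_{\mathsf{K}^p}$ points from $\pi_0$ of the generic fiber to $\pi_0(\ms{S}_{\mathsf{K}^p})$, and this is the bijection you want (surjective by flatness, injective by normality/smoothness).

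The genuine gap is in your verification of \eqref{eq:strong-approx-identity}. First, the symbol $\mb{G}^\ab(\Q_p)^\circ$ has no meaning: $\mb{G}^\ab(\Q_p)$ is totally disconnected, so its identity component (in the classical topology) is trivial, and algebraically a torus is already connected. So the displayed identity $\mb{G}^\ab(\Q_p) = \mb{Z}(\Q_p)\cdot\mc{G}^\ab(\Z_p)\cdot\mb{G}^\ab(\Q_p)^\circ$ is not well-formed. Second, even ignoring that, the torus case is not ``obvious'': you would need to produce rational points to cover $\mb{G}^\ab(\Q_p)/\mc{G}^\ab(\Z_p)\cong X_*(\mb{G}^\ab)^{\Gamma_p}$ using only $\mb{Z}(\Q)$ and elements coming from $\mb{G}(\Q)_+$, and this requires a precise input from the hyperspecial hypothesis (e.g.\ via the Kottwitz homomorphism, or a Cartan decomposition together with an argument that the relevant cocharacter values lift integrally after adjusting by the center). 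Third, the d\'evissage you invoke --- ``if \eqref{eq:strong-approx-identity} holds for $\mb{G}^\der$ and for $\mb{G}^\ab$ then it holds for $\mb{G}$'' --- is not automatic for the short exact sequence $1\to\mb{G}^\der\to\mb{G}\to\mb{G}^\ab\to 1$; there are cohomological obstructions (the $\Q_p$-points of $\mb{G}$ do not surject onto $\mb{G}^\ab(\Q_p)$, and the failure is controlled by $H^1(\Q_p,\mb{G}^\der)$), and the statement must be routed through the image of $\mb{G}(\Q_p)$ in $\mb{G}^\ab(\Q_p)$ rather than $\mb{G}^\ab(\Q_p)$ itself. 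Finally, strong approximation for $\mb{G}^\mr{sc}$ gives you elements in the image of $\mb{G}^\mr{sc}(\Q)$, but that image sits inside $\mb{G}^\der(\Q)$, which is too small: in the $\GL_2$ example already, $\mb{G}^\der(\Q_p)\cdot\mb{Z}(\Q_p)\cdot\mathsf{K}_0$ fails to be all of $\GL_2(\Q_p)$, and it is elements of $\mb{G}(\Q)_+$ like $\mathrm{diag}(p,1)$ --- outside both $\mb{G}^\der(\Q)$ and $\mb{Z}(\Q)$ --- that fill the gap. Your proof needs to explain where these elements come from in general; the standard device is to combine real approximation for $\mb{G}^\ad$ with the hyperspeciality of $\mathsf{K}_0$ and an analysis on the level of the algebraic fundamental group $\pi_1(\mb{G})_{\Gamma_p}$, and this is precisely the step that is missing.
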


\subsection{\'Etale realization functors}\label{ss:etale-fiber-functor-def} Following \cite[Definition 1.5.4]{KSZ}, for a multiplicative $\Q$-group $\mb{T}$ denote by $\mb{T}_\mathrm{a}$ the largest $\Q$-anisotropic subtorus of $\mb{T}$, and by $\mb{T}_\ac$ the smallest subtorus of $\mb{T}_\mathrm{a}$ whose base change to $\R$ contains the maximal split subtorus of $(\mb{T}_a)_\R$. For a reductive $\Q$-group $\mb{G}$ denote by $\mb{G}^c$ the $\Q$-group $\mb{G}/\mb{Z}_\ac$, and by $G^c$ the group $\mb{G}^c_{\Q_p}$. 

Fix a parahoric Shimura datum $(\mb{G},\mb{X},\mc{G})$. There is a canonical map of Bruhat--Tits buildings $B(G,F)\to B(G^c,F)$. Let $x$ denote a point of $B(G,F)$ corresponding to $\mc{G}$, and $x^c$ its image in $B(G^c,F)$ (see \cite[\S1.1--1.2]{KisinPappas} and the references therein). Set $\mc{G}^c$ to be the parahoric group scheme associated to $x^c$ (denoted by $\mc{G}_{x^c}^\circ$ in \cite[\S1.2]{KisinPappas}). By \cite[Proposition 1.1.4]{KisinPappas}, $\mc{G}$ is a central extension of $\mc{G}^c$, and so one is reductive if and only if the other is.\footnote{For the reader less familiar with Bruhat--Tits theory, \cite[Proposition 1.1.4]{KisinPappas} shows that when $\mc{G}$ is reductive, $\mc{G}^c=\mc{G}/\mc{Z}$, where $\mc{Z}$ is the Zariski closure of $(\mb{Z})_{\Q_p}$.} Denote by $\bm{\mu}^c_h$ the conjugacy class of cocharacters of $G^c$ induced by $\bm{\mu}_h$, and if $(\mb{G},\mb{X},\mc{G})$ is unramified let $\bbmu_h^c$ be conjugacy class of cocharacters of $\mc{G}^c_{\breve{\Z}_p}$ induced by $\bbmu_h$.

\begin{lem}
    A morphism $\alpha\colon (\mb{G}_1,\mb{X}_1)\to (\mb{G},\mb{X})$ (resp.\@ $\alpha\colon (\mb{G}_1,\mb{X}_1,\mc{G}_1)\to (\mb{G},\mb{X},\mc{G})$) of Shimura data (resp.\@ parahoric Shimura data) induces a morphism $\mb{G}_1^c\to \mb{G}^c$ (resp.\@ $\mc{G}_1^c\to\mc{G}^c$).
\end{lem}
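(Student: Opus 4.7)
Since $\mb{G}^c = \mb{G}/\mb{Z}_\ac$ and $\mb{G}_1^c = \mb{G}_1/\mb{Z}_{1,\ac}$, the existence of the induced morphism $\mb{G}_1^c \to \mb{G}^c$ is equivalent to the inclusion $\alpha(\mb{Z}_{1,\ac}) \subseteq \mb{Z}_\ac$. The plan is to split the argument into two stages: a Shimura-datum-specific step showing $\alpha(\mb{Z}_{1,\ac}) \subseteq \mb{Z}$, followed by a formal step refining this to land in $\mb{Z}_\ac$.

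The first stage is the main content, as center-to-center is not automatic for morphisms of reductive $\Q$-groups (e.g.\ the diagonal embedding $\mb{G}_m \hookrightarrow \SL_2$), so the Shimura datum structure is essential. For any $h_1 \in \mb{X}_1$ with $h \defeq \alpha_\R \circ h_1 \in \mb{X}$, the image $\alpha_\R(\mb{Z}_{1,\ac})$ centralizes both $\alpha_\R(\mb{G}_1)$ and $h(\bb{S})$; combining this with Deligne's axioms (SV2) on the Cartan property of $\mr{Int}(h(i))$ and (SV3) ruling out compact $\Q$-factors of $\mb{G}^\ad$, together with the $\R$-splitness of $\mb{Z}_{1,\ac}$, forces this image into $\mb{Z}_\R$, and hence into $\mb{Z}$ as a $\Q$-subscheme. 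Alternatively one may cite the standard functoriality result in \cite[Lemma 3.2.4]{LanStrohII}.

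For the second stage, observe that the morphism of $\Q$-tori $\mb{Z}_{1,\ac} \to \mb{Z}$ obtained from the first stage has image which is $\Q$-anisotropic (as a quotient of an anisotropic torus), and whose $\R$-base change is $\R$-split (as the image of the $\R$-split torus $\mb{Z}_{1,\ac,\R}$). Hence $\alpha(\mb{Z}_{1,\ac}) \subseteq \mb{Z}_\mathrm{a}$ and its $\R$-base change is contained in the maximal $\R$-split subtorus of $\mb{Z}_{\mathrm{a},\R}$, which by definition lies inside $\mb{Z}_{\ac,\R}$. A cocharacter-lattice argument (two $\Q$-subtori of $\mb{Z}_\mathrm{a}$ are comparable if and only if their $\R$-base changes are) then yields $\alpha(\mb{Z}_{1,\ac}) \subseteq \mb{Z}_\ac$.

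For the parahoric variant, I would combine the reductive statement with the compatibility of the parahoric construction with central quotients. The composition $\mc{G}_1 \to \mc{G} \to \mc{G}^c$ yields an integral morphism of $\Z_p$-group schemes whose generic fiber, by the reductive case, factors as $G_1 \to G_1^c \to G^c$. Since $\mc{G}_1^c$ is the parahoric of $G_1^c$ at the image $x_1^c$ of $x_1$ in $B(G_1^c,\Q_p)$, and $\mc{G}^c$ is the parahoric of $G^c$ at $x^c$, the generic-fiber factorization extends integrally to $\mc{G}_1^c \to \mc{G}^c$ by the universal property of parahoric group schemes (or equivalently by separatedness of $\mc{G}^c$ together with flatness of the central quotient $\mc{G}_1 \to \mc{G}_1^c$). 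The main obstacle throughout is the first stage above; once that is secured, the remaining steps are formal.
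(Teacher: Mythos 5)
Your reduction to the containment $\alpha(\mb{Z}_{1,\ac})\subseteq\mb{Z}_\ac$ and your use of the Cartan-involution axiom follow the same broad outline as the paper, but there is a genuine gap: both stages of your argument rest on the assertion that $\mb{Z}_{1,\ac}$ is $\R$-split, and this is false for the definition in force here. Following KSZ, $\mb{T}_\ac$ is the \emph{smallest} $\Q$-subtorus of $\mb{T}_{\mathrm{a}}$ whose base change to $\R$ \emph{contains} the maximal $\R$-split subtorus of $(\mb{T}_{\mathrm{a}})_\R$; it is not the maximal $\Q$-anisotropic $\R$-split subtorus, and in general it is strictly larger than every $\R$-split $\Q$-subtorus. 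For instance, if $\mb{T}_{\mathrm{a}}$ is the norm-one torus of a quadratic extension $K/F$ with $F$ real quadratic and $K$ neither totally real nor CM, then $(\mb{T}_{\mathrm{a}})_\R\simeq \Gm\times U(1)$ while the Galois action forces $\mb{T}_\ac=\mb{T}_{\mathrm{a}}$, which is not $\R$-split; nothing in (SV1)--(SV3) prevents such a torus from occurring as $\mb{Z}_{\mathrm{a}}$. Without $\R$-splitness, the Cartan-involution argument (which shows that an $\R$-\emph{split} torus commuting with $h(i)$ cannot have nontrivial image in $\mb{G}^{\ad}_\R$) only controls the $\R$-split part of $\alpha(\mb{Z}_{1,\ac})_\R$ in your first stage, and in your second stage the image of $\mb{Z}_{1,\ac}$ need not be $\R$-split, so you cannot place it inside the maximal $\R$-split subtorus of $(\mb{Z}_{\mathrm{a}})_\R$.

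The missing ingredient is the \emph{minimality} built into the definition of $\mb{Z}_{1,\ac}$, which is the pivot of the paper's proof and which your argument never invokes. One argues with a single copy of $\R^\times$ inside the split component of $(\mb{Z}_{1,\mathrm{a}})_\R$, for which the Cartan-involution argument of \cite[Lemma 3.1.3]{LoveringModels} is valid: its image must be central in $\mb{G}_\R$, and being anisotropic and $\R$-split it lands in $(\mb{Z}_\ac)_\R$. Hence $\mb{S}\defeq(\alpha^{-1}(\mb{Z}_\ac)\cap\mb{Z}_{1,\ac})^\circ$ is a $\Q$-subtorus of $\mb{Z}_{1,\mathrm{a}}$ whose $\R$-base change contains the entire split component, and minimality then forces $\mb{Z}_{1,\ac}\subseteq\mb{S}$, i.e., $\alpha(\mb{Z}_{1,\ac})\subseteq\mb{Z}_\ac$. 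Your argument cannot conclude without this step. The parahoric part of your proposal is fine and matches what the paper's appeal to \cite[Proposition 1.1.4]{KisinPappas} amounts to: the composite $\mc{G}_1\to\mc{G}\to\mc{G}^c$ kills the flat central kernel of $\mc{G}_1\to\mc{G}_1^c$ because it does so on generic fibers and $\mc{G}^c$ is separated, so it factors through $\mc{G}_1^c$.
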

\begin{proof} The claim concerning Shimura data would follow from $\alpha(\mb{Z}_{1,\ac})\subseteq \mb{Z}_\ac$. To show this, it suffices to show that if $\mb{S}=(\alpha^{-1}(\mb{Z}_\ac)\cap \mb{Z}_{1,\ac})^\circ$, then $\mb{S}_\R$ contains the split component of $(\mb{Z}_{1,\mathrm{a}})_\R$. Suppose not and that $\R^\times\subseteq (\mb{Z}_{1,\mathrm{a}})(\R)$ is not contained in $\mb{S}(\R)$. If $\alpha(\R^\times)$ is not contained in $\mb{Z}_\R$ then we arrive at a contradiction as in the proof of \cite[Lemma 3.1.3]{LoveringModels}. As $\R^\times\subseteq (\mb{Z}_{1,\mathrm{a}})(\R)$ this then implies that $\alpha(\R^\times)\subseteq \mb{Z}_\ac(\R)$ which is again a contradiction. The claim concerning parahoric Shimura data then follows by applying \cite[Proposition 1.1.4]{KisinPappas}.
\end{proof}

\begin{rem}If $(\mb{G},\mb{X})$ is of Hodge type, then $\mb{G}$ is equal to $\mb{G}^c$. Indeed, this can be checked explicitly for Siegel datum, and follows by functoriality for arbitrary $(\mb{G},\mb{X})$. Shimura data of abelian type need not enjoy this equality in general.
\end{rem}

For a Shimura datum $(\mb{G},\mb{X})$, and a neat compact open $\mathsf{K}=\mathsf{K}_p\mathsf{K}^p\subseteq\mb{G}(\A_f^p)$, the map
\begin{equation*}
    \varprojlim_{\scriptscriptstyle\mathsf{K}_p'\subseteq \mathsf{K}_p}\Sh_{\mathsf{K}_p'\mathsf{K}^p}\to \Sh_{\mathsf{K}}
\end{equation*}
is a $\mathsf{K}_p/\mb{Z}(\Q)^{-}_{\mathsf{K}}$-torsor on $(\Sh_{\mathsf{K}_p\mathsf{K}^p})_\proet$, where $\mb{Z}(\Q)^{-}_{\mathsf{K}}$ is the closure of $\mb{Z}(\Q)\cap \mathsf{K}$ in $\mathsf{K}$ (see \cite[\S1.5.8]{KSZ}). If $K_p^c$ denotes the image of $K_p$ in $G^c(\Q_p)$, loc.\@ cit.\@ shows that $K_p\to K_p^c$ factorizes through $K_p/\mb{Z}(\Q)_{K}^{-}$. Denote by $\mathsf{T}_\mathsf{K}$ the $K_p^c$-torsor obtained by pushing forward $\varprojlim_{\scriptscriptstyle\mathsf{K}_p'\subseteq \mathsf{K}_p}\Sh_{\mathsf{K}_p'\mathsf{K}^p}$ along $\mathsf{K}_p/\mb{Z}(\Q)^{-}_{\mathsf{K}}\to K_p^c$. We obtain an object $\nu_{\mathsf{K},\et}$ of  $G^c\text{-}\cat{Loc}_{\Q_p}(\Sh_{{\mathsf{K}_p\mathsf{K}^p}})$ given by sending $\rho\colon G^c\to\GL(V)$ to the pushforward of $\mathsf{T}_\mathsf{K}$ along $\rho\colon K_p^c\to \GL(V)$. Fix $g$ in $\mb{G}(\A_f)$, and suppose $g^{-1}\mathsf{K} g\subseteq \mathsf{K}'$. If $g=g_pg^p$, and $\Int(g_p^c)$ is the inner automorphism of $G^c$ associated to the image $g_p^c$ of $g_p$ in $G^c(\Q_p)$, then
\begin{equation}\label{eq:Hecke-action-local-system-compat-rat}
  t_{\mathsf{K},\mathsf{K}^{'}}(g)^\ast(\nu_{{\mathsf{K}^{'}},\et}(\rho)))=\nu_{{\mathsf{K}},\et}(\rho\circ \mathrm{Int}((g_p^c)^{-1})). 
\end{equation}
We call the system $\nu_\et\defeq \{\nu_{{\mathsf{K}_p\mathsf{K}^p},\et}\}$ the \emph{(rational $p$-adic) \'etale realization functor} on $\Sh_{{\mathsf{K}_p}}$.

Let $\alpha\colon (\mb{G}_1,\mb{X}_1)\to (\mb{G},\mb{X})$ be a morphism of Shimura data. If $\alpha(\mathsf{K}_1)\subseteq \mathsf{K}$ one obtains a morphism $\mathsf{T}_{\mathsf{K}_1}\to \mathsf{T}_{\mathsf{K}}\times_{(\Sh_{\mathsf{K}})_{E_1}}\Sh_{{\mathsf{K}_1}}$ equivariant for $\alpha^c\colon K_{p,1}^c\to K_p^c$ and, thus an isomorphism of $K_p^c$-torsors $\alpha_\ast^c(\mathsf{T}_{\mathsf{K}_1})\to \mathsf{T}_{\mathsf{K}}\times_{(\Sh_{\mathsf{K}})_{E_1}}\Sh_{{\mathsf{K}_1}}$. This is compatible in ${\mathsf{K}}$ in the obvious way. Equivalently, for $\rho$ in $\cat{Rep}_{\Q_p}(G^c)$ there is an identification 
\begin{equation}\label{eq:shimura-data-morphism-local-system-compat-rat}
    \alpha_{{\mathsf{K}_1},{\mathsf{K}}}^\ast(\nu_{\mathsf{K},\et}(\rho)_{E_1})=\nu_{{\mathsf{K}_1},\et}(\rho\circ\alpha^c),
\end{equation}
compatible in ${\mathsf{K}_1}$, ${\mathsf{K}}$, and $\xi$ in the obvious sense.

For a parahoric Shimura datum $(\mb{G},\mb{X},\mc{G})$, and $\mathsf{K}=\mathsf{K}_0\mathsf{K}^p$ (recall $K_0=\mc G(\Z_p)$), there are analogous integral objects. Again by \cite[\S1.5.8]{KSZ}, $\mb{Z}(\Q)_{\mathsf{K}}\subseteq \mb{Z}_{\ac}(\Q)$ and so $\mathsf{K}_0\to\mc{G}^c(\Z_p)$ factorizes through $\mathsf{K}_0/\mb{Z}(\Q)^{-}_{\mathsf{K}}$.

Denote by $\mathsf{S}_{\mathsf{K}^p}$ the  push forward of $\varprojlim_{\scriptscriptstyle\mathsf{K}_p\subseteq \mathsf{K}_0}\Sh_{\mathsf{K}_p\mathsf{K}^p}$ along $\mathsf{K}_0/\mb{Z}(\Q)^{-}_{\mathsf{K}}\to \mc{G}^c(\Z_p)$. From the contents of \cite[\S2.1.1]{IKY1}, we obtain an associated object of $\mc{G}^c\text{-}\cat{Loc}_{\Z_p}(\Sh_{{\mathsf{K}_0\mathsf{K}^p}})$:
\begin{equation*}
\omega_{{\mathsf{K}^p},\et}\colon \cat{Rep}_{\Z_p}(\mc{G}^c)\to \cat{Loc}_{\Z_p}(\Sh_{{\mathsf{K}_0\mathsf{K}^p}}). 
\end{equation*}
Fix $g=g_p g^p$ in $K_0\mb{G}(\A_f^p)$, and suppose $(g^p)^{-1}\mathsf{K}^pg^p\subseteq \mathsf{K}^{'p}$. If $\Int(g_p^c)$ is the inner automorphism of $\mc{G}^c$ associated to the image $g_p^c$ of $g_p$ in $\mc{G}^c(\Z_p)$, then
\begin{equation}\label{eq:Hecke-action-local-system-compat}
  t_{{\mathsf{K}^p}\mathsf{K}_0,{\mathsf{K}^{'p}}\mathsf{K}_0}(g)^\ast(\omega_{{\mathsf{K}^{'p}},\et}(\xi)))=\omega_{{\mathsf{K}^p},\et}(\xi\circ \mathrm{Int}((g_p^c)^{-1})).
\end{equation}
We call the system $\{\omega_{{\mathsf{K}^p},\et}\}$ the \emph{(integral) \'etale realization functor} on $\Sh_{{\mathsf{K}_0}}$. That this is an integral model of $\nu_{\mathsf{K}_0\mathsf{K}^p,\et}$ is made precise by the observation that 
\begin{equation}\label{eq:integral-model-etale-realization}
    \omega_{\mathsf{K}^p,\et}[\nicefrac{1}{p}]=\nu_{\mathsf{K}_0\mathsf{K}^p,\et}
\end{equation}
compatibly in a neat compact open subgroup $\mathsf{K}^p\subseteq\mb{G}(\A_f^p)$.

Let $\alpha\colon (\mb{G}_1,\mb{X}_1,\mc{G}_1)\to (\mb{G},\mb{X},\mc{G})$ be a morphism of parahoric Shimura data. If $\alpha(\mathsf{K}_1^p)\subseteq {\mathsf{K}^p}$ one obtains a morphism $\mathsf{S}_{\mathsf{K}_{0,1}\mathsf{K}^p_1}\to \mathsf{S}_{\mathsf{K}^p}\times_{(\Sh_{\mathsf{K}_0\mathsf{K}^p})_{E_1}}\Sh_{{\mathsf{K}_{0,1}\mathsf{K}^p_1}}$ equivariant for $\alpha^c\colon \mc{G}_1^c(\Z_p)\to\mc{G}^c(\Z_p)$ and, thus an isomorphism of $\mc{G}^c(\Z_p)$-torsors $\alpha_\ast^c(\mathsf{S}_{\mathsf{K}^1_p})\to \mathsf{S}_{\mathsf{K}^p}\times_{(\Sh_{\mathsf{K}_0\mathsf{K}^p})_{E_1}}\Sh_{{\mathsf{K}_{0,1}\mathsf{K}^p_1}}$. This is compatible in ${\mathsf{K}^p}$ in the obvious way. Equivalently, for $\xi$ in $\cat{Rep}_{\Z_p}(\mc{G}^c)$ there is an identification 
\begin{equation}\label{eq:shimura-data-morphism-local-system-compat}
    \alpha_{{\mathsf{K}^p_1},{\mathsf{K}^p}}^\ast(\omega_{\mathsf{K}^p,\et}(\xi)_{E_1})=\omega_{{\mathsf{K}^p_1},\et}(\xi\circ\alpha^c),
\end{equation}
compatible in ${\mathsf{K}^p_1}$, ${\mathsf{K}^p}$, and $\xi$ in the obvious sense.

\begin{prop}\label{prop:G-local-system-de-Rham} The $\mc{G}^c(\Z_p)$-local system $\omega_{\mathsf{K}^p,\et}$ belongs to $\mc{G}^c\text{-}\cat{Loc}_{\Z_p,\bm{\mu}^c_h}^{\dR}(\Sh_{\mathsf{K}_0\mathsf{K}^p})$. 
\end{prop}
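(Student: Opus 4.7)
The plan is a standard reduction from abelian type to Hodge type, leveraging the functoriality \eqref{eq:shimura-data-morphism-local-system-compat} of $\omega_{\mathsf{K}^p,\et}$ together with the transitivity of the prime-to-$p$ Hecke action on connected components recorded in Lemma \ref{lem:transitivity-on-conn-comp}.

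I would first treat the Hodge-type case. Here $\mathbf{G}=\mathbf{G}^c$ and $\bm{\mu}_h^c=\bm{\mu}_h$. Fixing an integral Hodge embedding $\iota\colon(\mathbf{G},\mathbf{X},\mathcal{G})\hookrightarrow(\mathrm{GSp}(\mathbf{V}_0),\mathfrak{h}^\pm,\mathrm{GSp}(\Lambda_0))$, the composition of $\omega_{\mathsf{K}^p,\et}$ with the standard representation of $\mathrm{GSp}(\Lambda_0)$ is canonically identified with the (dual of the) $p$-adic Tate module of the universal abelian scheme $A_{\mathsf{K}^p}\to \Sh_{\mathsf{K}_0\mathsf{K}^p}$. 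This Tate module is de Rham with Hodge--Tate weights in $\{0,1\}$ realizing the Hodge cocharacter of $\mathrm{GSp}(\mathbf{V}_0)$; since the $\mathcal{G}$-structure is cut out inside $\mathrm{GSp}(\Lambda_0)$ by absolute Hodge (hence de Rham of weight zero) tensors, this gives the de Rham property of type $\bm{\mu}_h$ for $\omega_{\mathsf{K}^p,\et}$ by Tannakian arguments (as packaged, e.g., in the Tannakian framework of \cite{IKY1}).

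For general abelian type, pick an adapted unramified Hodge-type datum $(\mathbf{G}_1,\mathbf{X}_1,\mathcal{G}_1)$, yielding a central isogeny $\mathcal{G}_1^\der\to\mathcal{G}^\der$ and hence a map $\mathcal{G}_1^c\to\mathcal{G}^c$ compatible with the Hodge cocharacters. Through the standard chain of constructions via the connected integral canonical models $\mathscr{S}_{\mathsf{K}_1^p}(\mathcal{G}_1^\der,\mathbf{X}_1^+)\to\mathscr{S}_{\mathsf{K}^p}(\mathcal{G}^\der,\mathbf{X}^+)$ (finite \'etale by Lemma \ref{lem:isogeny-finite-etale}) and the resulting finite \'etale covers of distinguished connected components, one obtains on a chosen component $\Sh^\circ_{\mathsf{K}_0\mathsf{K}^p}$ a finite \'etale cover by a component of $\Sh_{\mathsf{K}_{0,1}\mathsf{K}_1^p}$ such that the pullback of $\omega_{\mathsf{K}^p,\et}$ is identified, via pushforward along $\mathcal{G}_1^c\to\mathcal{G}^c$, with $\omega_{\mathsf{K}_1^p,\et}$ restricted to that component. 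As both the de Rham property and the type $\bm{\mu}$-condition are preserved under finite \'etale descent and under pushforward along a morphism of groups that carries one cocharacter to the other, the Hodge-type step applies on this component. Transitivity of $\mathbf{G}(\mathbb{A}_f^p)$ on $\pi_0(\mathscr{S})$ together with \eqref{eq:Hecke-action-local-system-compat} (taken with $g_p=\mathrm{id}$, so no $\mathcal{G}^c$-twist occurs) then propagates the de Rham property of type $\bm{\mu}_h^c$ to every component via finite \'etale Hecke correspondences.

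The main obstacle is the careful bookkeeping in the abelian-type step: one must verify at the level of $\mathcal{G}^c$-torsors on $\Sh^\circ_{\mathsf{K}_0\mathsf{K}^p}$ that the restriction of $\omega_{\mathsf{K}^p,\et}$ really is the pushforward of $\omega_{\mathsf{K}_1^p,\et}|_{\Sh^\circ_{\mathsf{K}_{0,1}\mathsf{K}_1^p}}$ along $\mathcal{G}_1^c\to\mathcal{G}^c$, up to the Deligne-style $\Delta^N$-quotient used in passing between the connected Shimura datum of $\mathcal{G}^\der$ and that of $\mathbf{G}$. Tracking the pro-\'etale torsor structures through this quotient, and confirming that $\bm{\mu}_{h,1}^c\mapsto\bm{\mu}_h^c$ under $\mathcal{G}_1^c\to\mathcal{G}^c$ (which is formal once one uses that the adjoint Shimura data of the two pairs coincide), is the only delicate ingredient; all other steps are routine once this identification is established.
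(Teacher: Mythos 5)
Your route is genuinely different from the paper's. The paper proves the proposition in one stroke for an \emph{arbitrary} parahoric Shimura datum: the de Rham property follows from the rigidity theorem \cite[Corollary 4.9]{LiuZhu}, and the claim that the Hodge cocharacter lies in $\bm{\mu}_h^c$ is reduced, by the analytic density of special points (\cite[Lemma 13.5]{MilneShimura}), to the case of special points, where it is \cite[Lemma 4.8]{LiuZhu}. Your Hodge-embedding argument is precisely the alternative the paper itself records in the remark immediately following the proposition ("one may directly verify \ldots by reducing to the Siegel-type case"), so it is a legitimate strategy; what the paper's approach buys is uniformity (no case analysis, no integral models, no Hecke-transitivity argument) and, more importantly, generality.

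That generality matters here: the proposition is stated in \S\ref{ss:etale-fiber-functor-def} for a general parahoric Shimura datum $(\mb{G},\mb{X},\mc{G})$, not only for unramified data of abelian type, so your proof as written does not establish the statement in the generality in which it is asserted and used. Within the abelian-type case there is also a bookkeeping slip you should repair: there is in general no morphism of Shimura data $(\mb{G}_1,\mb{X}_1)\to(\mb{G},\mb{X})$, so one cannot push forward along a map $\mc{G}_1^c\to\mc{G}^c$ directly. The correct devissage (as in Lemma \ref{lem:Lovering-lem} and the proof of Theorem \ref{thm:main-Shimura-theorem-abelian-type-case}) passes through the auxiliary datum $(\mb{G}_2,\mb{X}_2,\mc{G}_2)$ together with the special-type factor $(\mb{T},\{h\},\mc{T})$: one uses $\beta_{\mathsf{K}_2^p,\mathsf{K}^p}$ (finite \'etale) to compare $\omega_{\mathsf{K}^p,\et}$ with $\omega_{\mathsf{K}_2^p,\et}$ via \eqref{eq:shimura-data-morphism-local-system-compat}, and $\alpha^c\colon\mc{G}_2^c\hookrightarrow\mc{G}_1\times\mc{T}^c$ to reduce to the Hodge-type and special-type factors; the special-type factor then needs its own (CM-theoretic) verification, which your sketch omits. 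Finally, in the Hodge-type step the type-$\bm{\mu}_h$ assertion is not purely formal from "de Rham tensors of weight zero": one still has to identify the conjugacy class of the cocharacter splitting the Hodge filtration on $D_{\dR}$ of the Tate module with tensors, which is most cleanly done at (dense) special or CM points — i.e., the same density argument the paper uses globally.
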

\begin{proof} That $\omega_{\mathsf{K}^p,\et}$ is de Rham follows from \cite[Corollary 4.9]{LiuZhu} and the claim about cocharacters is reduced to the case of special points which follows from \cite[Lemma 4.8]{LiuZhu}.
\end{proof}

Let $(\mb{G},\mb{X},\mc{G})$ be an unramified Shimura datum of Hodge type, and fix an integral Hodge embedding $\iota\colon (\mb{G},\mb{X},\mc{G})\hookrightarrow(\GSp(\mb{V}_0),\mf{h}^{\pm},\GSp(\Lambda_0))$. By \cite[Theorem A.14]{IKY1} there is a tensor package $(\Lambda_0,\mathds{T}_0)$ with $\mc{G}=\mathrm{Fix}(\mathds{T}_0)$ (in the sense of loc.\@ cit.\@). As in \cite[\S3.1.2]{KimUnif}, one may construct from $\mathds{T}_{0}\otimes 1\subseteq V_0^\otimes$ tensors $ \mathds{T}_{0,p}^\et$ on $\mc{H}^1_{\Q_p}(A_{{\mathsf{K}^p}}/\Sh_{{\mathsf{K}_0\mathsf{K}^p}})^\vee$ as an object
of $\cat{Loc}_{\Q_p}(\Sh_{{\mathsf{K}_0\mathsf{K}^p}})$, which are compatible in ${\mathsf{K}^p}$.

\begin{prop}\label{prop:comparison-of-otorsors-Hodge-type} There is an isomorphism of $\Z_p$-local systems $\omega_{{\mathsf{K}^p},\et}(\Lambda_0)\isomto \mc{H}^1_{\Z_p}(A_{{\mathsf{K}^p}}/\Sh_{{\mathsf{K}_0\mathsf{K}^p}})^\vee$ carrying $\omega_{\mathsf{K}^p,\et}(\mathds{T}_0)\otimes 1$ in $\omega_{{\mathsf{K}^p},\et}(\Lambda_0)[\nicefrac{1}{p}]$ to $\mathds{T}_{0,p}^\et$ in $\mc{H}^1_{\Q_p}(A_{{\mathsf{K}^p}}/\Sh_{{\mathsf{K}_0\mathsf{K}^p}})^\vee$.
\end{prop}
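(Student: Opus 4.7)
The plan is to first reduce to the Siegel case by functoriality, and then to invoke the moduli-theoretic interpretation of the Siegel Shimura variety. Concretely, choose $\mathsf{L}^p\subseteq \GSp(\mb{V}_0)(\A_f^p)$ a neat compact open with $\iota(\mathsf{K}^p)\subseteq \mathsf{L}^p$, and write $\omega^{\mr{Sg}}_{\mathsf{L}^p,\et}$ for the integral \'etale realization functor on the Siegel Shimura datum (noting that $\GSp(\mb{V}_0)=\GSp(\mb{V}_0)^c$, so no subtleties with the $(-)^c$-modification arise). Applying \eqref{eq:shimura-data-morphism-local-system-compat} to $\iota$ and the standard representation $\Lambda_0$ of $\GSp(\Lambda_0)$ produces a canonical isomorphism
\[
\iota_{\mathsf{K}^p,\mathsf{L}^p}^\ast\big(\omega^{\mr{Sg}}_{\mathsf{L}^p,\et}(\Lambda_0)_{E}\big)\isomto \omega_{\mathsf{K}^p,\et}(\Lambda_0),
\]
compatible in $\mathsf{K}^p$. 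On the other hand, proper smooth base change for $p$-adic \'etale cohomology (applied to the pullback diagram $\ms{A}_{\mathsf{K}^p}=\iota_{\mathsf{K}^p,\mathsf{L}^p}^\ast\ms{A}_{\mathsf{L}^p}$) identifies $\iota_{\mathsf{K}^p,\mathsf{L}^p}^\ast\mc{H}^1_{\Z_p}(\ms{A}_{\mathsf{L}^p}/\ms{M}_{\mathsf{L}^p})^\vee_{E}$ with $\mc{H}^1_{\Z_p}(A_{\mathsf{K}^p}/\Sh_{\mathsf{K}_0\mathsf{K}^p})^\vee$. Thus the problem reduces to exhibiting a canonical $\GSp(\Lambda_0)$-equivariant isomorphism
\begin{equation}\label{eq:siegel-tate-module}
\omega^{\mr{Sg}}_{\mathsf{L}^p,\et}(\Lambda_0)\isomto \mc{H}^1_{\Z_p}(\ms{A}_{\mathsf{L}^p}/\ms{M}_{\mathsf{L}^p})^\vee
\end{equation}
on the Siegel moduli space, compatibly in $\mathsf{L}^p$.

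To produce \eqref{eq:siegel-tate-module}, I would unwind the definitions. By construction, $\omega^{\mr{Sg}}_{\mathsf{L}^p,\et}(\Lambda_0)$ is the $\Z_p$-local system obtained by pushing the $\GSp(\Lambda_0)$-torsor $\mathsf{S}^{\mr{Sg}}_{\mathsf{L}^p}$ (coming from the tower $\varprojlim_{\mathsf{L}_p\subseteq\mathsf{L}_0}\Sh_{\mathsf{L}_p\mathsf{L}^p}\to \Sh_{\mathsf{L}_0\mathsf{L}^p}$) along the standard representation. On the other hand, by Mumford's moduli description of $\ms{M}_{\mathsf{L}^p}(\Lambda_0)$ and its characteristic $0$ fibre $\Sh_{\mathsf{L}_0\mathsf{L}^p}$, the pro-\'etale tower is canonically identified with the tower of full symplectic-similitude level structures on the universal abelian scheme, which is precisely the torsor of pro-\'etale-locally-defined symplectic-similitude isomorphisms $\Lambda_0\isomto \mc{H}^1_{\Z_p}(\ms{A}_{\mathsf{L}^p}/\ms{M}_{\mathsf{L}^p})^\vee$. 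Pushing this tautology along the standard representation yields \eqref{eq:siegel-tate-module} and moreover makes it compatible in $\mathsf{L}^p$.

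For the compatibility with tensors, I would note that the tensors $\mathds{T}^\et_{0,p}$ on $\mc{H}^1_{\Q_p}(A_{\mathsf{K}^p}/\Sh_{\mathsf{K}_0\mathsf{K}^p})^\vee$ are defined in \cite[\S3.1.2]{KimUnif} as the images of $\mathds{T}_0\otimes 1\subseteq V_0^\otimes$ under precisely the pullback of \eqref{eq:siegel-tate-module} (inverted on $p$) along $\iota_{\mathsf{K}^p,\mathsf{L}^p}$. On the other hand, $\omega_{\mathsf{K}^p,\et}(\mathds{T}_0)\otimes 1$ is by construction the image of $\mathds{T}_0\otimes 1$ under the map of $\GSp(\Lambda_0)$-representations $\Lambda_0^\otimes\to V_0^\otimes$, pushed forward along the same torsor; tracing this through the identification above gives the matching of tensors.

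I expect no substantive obstacles: the main subtlety is the careful matching of the abstract pro-\'etale pushforward definition of $\omega_{\mathsf{K}^p,\et}$ with the concrete moduli-theoretic description of $\mc{H}^1_{\Z_p}$ as the Tate module of the universal abelian scheme, which amounts to verifying that under the moduli identification on $\Sh_{\mathsf{L}_0\mathsf{L}^p}$, the $\GSp(\Lambda_0)$-action on each side is the same (a well-known compatibility going back to Deligne's framework). Once this tautology on the Siegel side is in hand, functoriality and the definition of $\mathds{T}^\et_{0,p}$ deliver the rest.
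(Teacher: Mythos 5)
Your reduction to the Siegel case via \eqref{eq:shimura-data-morphism-local-system-compat}, and the identification of the Siegel torsor $\mathsf{S}_{\mathsf{L}^p}$ with the Isom-sheaf of symplectic-similitude level structures on the universal abelian scheme (giving the isomorphism of $\Z_p$-local systems), both match the paper's argument. The gap is in the tensor-matching step, where you assert that $\mathds{T}_{0,p}^\et$ is ``\emph{by definition}'' the image of $\mathds{T}_0\otimes 1$ under the pullback of \eqref{eq:siegel-tate-module} along $\iota_{\mathsf{K}^p,\mathsf{L}^p}$. That is not how $\mathds{T}_{0,p}^\et$ is constructed in \cite[\S3.1.2]{KimUnif}: the construction there goes through absolute Hodge cycles -- one first produces Hodge tensors on the Betti realization over $\C$, invokes Deligne's theorem that Hodge cycles on abelian varieties are absolutely Hodge, and then takes the $p$-adic \'etale component. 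There is no a priori reason this output agrees with the pushforward of $\mathds{T}_0$ along the pro-\'etale level-structure torsor; if it did by fiat the proposition would be tautological.

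Your parenthetical remark that this agreement is ``a well-known compatibility going back to Deligne's framework'' is precisely the thing that needs to be proved, not cited. The paper handles it by observing that both constructions globalize over the reflex field $\mb{E}$ (so one may compare the resulting $\Q_p$-local systems on the canonical model over $\mb{E}$), and then reducing the identity to a statement about $\C$-points, where it becomes exactly \cite[Theorem 7.4]{MilneShimura} -- the reconciliation of the automorphic (double-coset/torsor) description of Shimura varieties with the Hodge-theoretic (moduli) one. You should replace the last paragraph of your argument with this globalize-and-check-on-$\C$-points step; everything else stands.
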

\begin{proof} First suppose that $(\mb{G},\mb{X},\mc{G})=(\GSp(\mb{V}_0),\mf{h}^{\pm},\GSp(\Lambda_0))$ and $\iota$ is the identity embedding. Then, by the moduli description of $\Sh_{\mathsf{K}_0\mathsf{K}^p}$ one observes that there is an identification
\begin{equation*}
\mathsf{S}_{\mathsf{K}^p}=\underline{\Isom}\left((\Lambda_0\otimes_{\Z_p}\underline{\Z_p},t_0),(T_p(A_{\mathsf{K}^p}),t)\right), 
\end{equation*}
where $t_0$ is the tensor as in \cite[Example 2.1.6]{KimRZ}, and $t$ is the analogous tensor built from the Weil pairing coming from the principal polarization on $A_{\mathsf{K}^p}$. We deduce a natural identification between $\omega_{\mathsf{K}^p,\et}(\Lambda_0)$ and $\mc{H}^1_{\Z_p}(A_{\mathsf{K}^p}/\Sh_{\mathsf{K}_0\mathsf{K}^p})^\vee$. The desired isomorphism for general $(\mb{G},\mb{X},\mc{G})$ comes from the compatability in \eqref{eq:shimura-data-morphism-local-system-compat}. To prove that the induced isomorphism of $\Q_p$-local systems takes $\omega_{\mathsf{K}^p,\et}(\mathds{T}_0)\otimes 1$ to $\mathds{T}_{0,p}^\et$, we observe that these constructions admit globalizations over $\mb{E}$ in the obvious way, in which case it suffices to check the claim on $\C$-points. But, this then follows from \cite[Theorem 7.4]{MilneShimura}. 
\end{proof}

As a result of Proposition \ref{prop:comparison-of-otorsors-Hodge-type}, we see that $\mathds{T}_{0,p}^\et$ actually lies in the image of the injective map $\mc{H}^1_{\Z_p}(A_{{\mathsf{K}^p}}/\Sh_{{\mathsf{K}_0\mathsf{K}^p}})^\vee\to \mc{H}^1_{\Q_p}(A_{{\mathsf{K}^p}}/\Sh_{{\mathsf{K}_0\mathsf{K}^p}})^\vee$. We deduce from the contents of \cite[\S2.1.1]{IKY1} that $\omega_{{\mathsf{K}^p},\et}$ is the object of $\GLoc_{\Z_p}(\Sh_{{\mathsf{K}_0\mathsf{K}^p}})$ associated to the torsor
\begin{equation*}
    \underline{\Isom}\left((\Lambda_0\otimes_{\Z_p}\underline{\Z_p},\mathds{T}_0\otimes 1),(\mc{H}^1_{\Z_p}(A_{{\mathsf{K}^p}}/\Sh_{{\mathsf{K}_0\mathsf{K}^p}})^\vee,\mathds{T}_{0,p}^\et)\right),
\end{equation*}
which thus  is independent of any choices. Similar claims may be verified for $\nu_{\mathsf{K},\et}$.

We end this section by describing a method, applying ideas from \cite[\S4.6--4.7]{LoveringModels}, which will allow us to reduce statements about Shimura data of abelian type to those of Hodge type.

\begin{lem}\label{lem:Lovering-lem}Let $(\mb{G},\mb{X},\mc{G})$ be an unramified Shimura datum of abelian type, and $(\mb{G}_1,\mb{X}_1,\mc{G}_1)$ an adapted unramified Shimura datum of Hodge type. Then, there exists an unramified Shimura datum $(\mb{T},\{h\},\mc{T})$ of special type and an unramified Shimura datum $(\mb{G}_2,\mb{X}_2,\mc{G}_2)$ of abelian type, both with reflex field $\mb{E}_1$, such that 
\begin{enumerate}
    \item there exists a morphism of unramified Shimura data
    \begin{equation*}
        \alpha=(\alpha^1,\alpha^2)\colon (\mb{G}_2,\mb{X}_2,\mc{G}_2)\to (\mb{G}_1\times \mb{T},\mb{X}_1\times\{h\},\mc{G}_1\times\mc{T}) 
    \end{equation*}
    such that $\alpha^c\colon \mc{G}_2^c\to \mc{G}_1\times \mc{T}^c$ is a closed embedding,
    \item there exists a morphism of unramified Shimura data $\beta\colon (\mb{G}_2,\mb{X}_2,\mc{G}_2)\to(\mb{G},\mb{X},\mc{G})$ such that $\beta_{\mathsf{K}^p_2,\mathsf{K}^p}\colon \ms{S}_{\mathsf{K}^p_2}\to(\ms{S}_{\mathsf{K}^p})_{\mc{O}_{E_1}}$ is finite \'etale.
\end{enumerate}
\end{lem}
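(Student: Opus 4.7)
The plan is to adapt the construction from \cite[\S4.6--4.7]{LoveringModels} to our integral setting, keeping careful track of the $(-)^c$ modification. The rough idea is to use $(\mb{T},\{h\},\mc{T})$ as a parameter space to ``enlarge'' the centre of $(\mb{G}_1,\mb{X}_1,\mc{G}_1)$ so as to match that of $(\mb{G},\mb{X},\mc{G})$ while keeping the derived group fixed, thereby producing a common cover.

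First I would construct the special datum $(\mb{T},\{h\},\mc{T})$. Write $\mb{D} \defeq \mb{G}^{\ab}=\mb{G}/\mb{G}^\der$, a $\Q$-torus unramified at $p$ (since $\mc{G}$ is reductive), and similarly $\mb{D}_1\defeq \mb{G}_1^{\ab}$. The composition $\bb{G}_{m,\C}\xrightarrow{\mu_{h_1}}\mb{G}_{1,\C}\to \mb{D}_{1,\C}$ has field of definition $\mb{E}_1$, and likewise the cocharacter of $\mb{D}_\C$ coming from $h\in\mb{X}$ has field of definition $\mb{E}$. Then I would take $\mb{T}$ to be the $\Q$-torus of the form $\mb{D}\times_{\mathrm{Res}_{\mb{E}_1/\Q}\bb{G}_{m,\mb{E}_1}}(\text{stuff})$, or more prosaically, a well-chosen torus together with a cocharacter $\mu_h$ whose field of definition is exactly $\mb{E}_1$ (this can be done explicitly by a Weil restriction trick). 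The parahoric model $\mc{T}$ is then the unique connected N\'eron lft-model over $\Z_p$, which is reductive since $\mb{T}$ is unramified at $p$.

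Next I would construct $(\mb{G}_2,\mb{X}_2,\mc{G}_2)$ as an appropriate fibre product. Concretely, I would set
\begin{equation*}
\mb{G}_2\defeq \mb{G}_1\times_{\mb{D}_1\times \mb{D}}(\mb{T}\times \mb{D}),
\end{equation*}
where the map $\mb{G}_1\to \mb{D}_1\times \mb{D}$ is the natural projection to $\mb{D}_1$ in the first factor composed with an auxiliary map to $\mb{D}$ (using the centre of $\mb{G}$ to match up with $\mb{T}$). The point of this construction is that $\mb{G}_2^\der=\mb{G}_1^\der$, while the centre of $\mb{G}_2$ is large enough to admit a projection $\mb{G}_2\to \mb{G}$ whose restriction to derived groups is the given central isogeny $\mb{G}_1^\der\to \mb{G}^\der$. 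The product of the Hodge cocharacters $h_1$ and $h_\mb{T}$ factors through $\mb{G}_2(\R)$, giving the $\bb{S}$-conjugacy class $\mb{X}_2$; verifying the Shimura datum axioms reduces to doing so for $(\mb{G}_1,\mb{X}_1)$ and $(\mb{T},\{h\})$ factor-by-factor, via $\mb{G}_2^{\ad}=\mb{G}_1^{\ad}$. The parahoric model $\mc{G}_2$ is obtained as the analogous fibre product of group schemes over $\Z_p$, which is reductive since $\mc{G}_1$, $\mc{T}$, $\mc{D}$, and $\mc{D}_1$ are. Reflex fields are computed on $\mu_h$: by construction, the field of definition of $\mu_{h_2}$ is the compositum $\mb{E}_1\cdot \mb{E}_{\mb{T}}=\mb{E}_1$.

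For claim (1), the map $\alpha=(\alpha^1,\alpha^2)$ is the evident projection, with $\alpha^2$ the composition to the $\mb{T}$-factor. The closed immersion property of $\alpha^c\colon \mc{G}_2^c\to \mc{G}_1\times\mc{T}^c$ follows by noting that passage to $(-)^c$ only kills central anisotropic subtori, so $\alpha^c$ has trivial kernel (since the centre of $\mb{G}_2$ embeds into $\mb{G}_1^{\ab}\times \mb{T}$ by the fibre product definition) and has closed image, which suffices since the involved group schemes are smooth affine over $\Z_p$. For claim (2), the projection $\beta\colon\mb{G}_2\to \mb{G}$ restricts to the central isogeny $\mb{G}_1^\der\to\mb{G}^\der$ on derived groups, and this isogeny lifts to a central isogeny $\mc{G}_2^\der\to \mc{G}^\der$ by the adaptedness hypothesis on $(\mb{G}_1,\mb{X}_1,\mc{G}_1)$. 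One also checks that $\beta$ induces an isomorphism of adjoint data, so Lemma \ref{lem:isogeny-finite-etale} applies to give the finite \'etale conclusion.

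The main technical obstacle I anticipate is the precise choice of $(\mb{T},\{h\},\mc{T})$ so that simultaneously: the reflex field of $(\mb{G}_2,\mb{X}_2)$ is exactly $\mb{E}_1$ (not smaller), the Shimura datum axioms hold for $\mb{G}_2$, and the $(-)^c$-closed-embedding requirement in (1) is satisfied. In particular, since $\mc{Z}_{\ac}$ is not functorial under arbitrary maps of tori, one must argue carefully that the anisotropic-over-$\R$-split part of $Z(\mb{G}_2)$ is transported into $Z(\mb{G}_1)\times \mb{Z}_{\mb{T},\ac}$; this is where the particular fibre-product definition above is essential.
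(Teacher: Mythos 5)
Your overall strategy is the right one---the paper also proves this by invoking Lovering's constructions from \cite[\S4.6--4.7]{LoveringModels}, taking $(\mb{T},\{h\},\mc{T})=(\mathrm{Res}_{\mb{E}_1/\Q}\,\bb{G}_{m,\mb{E}_1},\{h_E\},(\mathrm{Res}_{\mc{O}_{\mb{E}_1}/\Z}\,\bb{G}_m)_{\Z_p})$ and $\mb{G}_2=\mb{G}_1\times_{\mb{G}_1^{\mathrm{ab}}}\mb{T}$---but two steps of your proposal do not go through as written. First, your definition $\mb{G}_2\defeq\mb{G}_1\times_{\mb{D}_1\times\mb{D}}(\mb{T}\times\mb{D})$ with $\mb{D}=\mb{G}^{\mathrm{ab}}$ presupposes an ``auxiliary map'' $\mb{G}_1\to\mb{D}$. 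No such map is part of the data: adaptedness only supplies the central isogeny $\mb{G}_1^{\der}\to\mb{G}^{\der}$ inducing an isomorphism of adjoint data, and this induces no homomorphism $\mb{G}_1\to\mb{G}$ or $\mb{G}_1\to\mb{G}^{\mathrm{ab}}$. For the same reason $\beta\colon\mb{G}_2\to\mb{G}$ is not a projection; its construction is the nontrivial content of \cite[\S4.7.2]{LoveringModels} (matching connected components using $\mb{G}_2^{\der}=\mb{G}_1^{\der}$ and a chosen $\mb{X}_1^+$), which is exactly why the fibre product is taken over $\mb{G}_1^{\mathrm{ab}}$ alone.

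Second, the closed-embedding claim for $\alpha^c$ is asserted rather than proved. Functoriality of $(-)_{\ac}$ (which, contrary to your final paragraph, does hold with the paper's conventions) gives only that $\alpha$ descends to a map $\mc{G}_2^c\to\mc{G}_1\times\mc{T}^c$; for this to be a closed embedding one needs the reverse containment $\mb{Z}_2\cap(\mb{Z}_1\times\mb{Z}_{\mb{T}})_{\ac}\subseteq(\mb{Z}_2)_{\ac}$. The paper extracts this from the Hodge-type hypothesis: $(\mb{Z}_1)_{\ac}$ is trivial, hence $(\mb{G}_1^{\mathrm{ab}})_{\ac}$ is trivial since $\mb{Z}_1\to\mb{G}_1^{\mathrm{ab}}$ is an isogeny, and therefore $(\mb{Z}_2)_{\ac}=\mb{T}_{\ac}$ inside $\mb{Z}_2=\mb{Z}_1\times_{\mb{G}_1^{\mathrm{ab}}}\mb{T}$, which is precisely what is quotiented out on the target side. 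Your parenthetical ``the centre of $\mb{G}_2$ embeds into $\mb{G}_1^{\mathrm{ab}}\times\mb{T}$'' does not substitute for this computation, and you flag the difficulty yourself at the end without resolving it.
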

\begin{proof} Choose a connected component $\mb{X}_1^+$ of $\mb{X}_1$ and an element $h_G$ of $\mb{X}_1^+$. Let $(\mb{G}_2,\mb{X}_2,\mc{G}_2)$ be the unramifed Shimura datum of abelian type obtained by applying the construction in \cite[\S4.6]{LoveringModels} to $(\mb G_1,\mb X_1,\mc G_1)$ and the choice of $\mb X_1^+$, and set 
\begin{equation*}
    (\mb{T},\{h\},\mc{T})\defeq \left(\mathrm{Res}_{\mb{E}_1/\Q}
    \,\,\mathbb{G}_{m,\mb{E}_1},\{h_E\}, \left(\mathrm{Res}_{\mathcal{O}_{\mb{E}_1}/\mathbb{Z}}\,\,\mathbb{G}_{m,\mathcal{O}_{\mb{E}_1}}\right)_{\Z_p}\right),
\end{equation*} 
with $h_E$ as in \cite[4.6.4]{LoveringModels}. The map $\alpha$ is then constructed from the natural inclusion of $\mb{G}_2=\mb{G}_1\times_{\mb{G}_1^\mathrm{ab}}\mb{T}$ into $\mb{G}_1\times\mb{T}$. To prove that $\alpha^c$ is a closed embedding, observe that as $(\mb{G}_1,\mb{X}_1)$ is of Hodge type that $(\mb{Z}_1)_\ac$ is trivial, and as $\mb{Z}_1\to \mb{G}_1^\mathrm{ab}$ is an isogeny that $(\mb{G}_1^\mathrm{ab})_\ac$ is also trivial. From this we deduce that $\mb{Z}_2^c=\mb{T}_\ac$ and so the claim follows. The map $\beta$ is constructed as in \cite[\S4.7.2]{LoveringModels}, the second claim follows from Lemma \ref{lem:isogeny-finite-etale}.
\end{proof}

\subsection{Prismatic and syntomic 
realization functors}\label{ss:prismatic-realization-functors} For an unramified Shimura datum $(\mb{G},\mb{X},\mc{G})$ of abelian type, and a neat compact open subgroup $\mathsf{K}^p\subseteq\mb{G}(\A_f^p)$, we associate the smooth $p$-adic formal scheme $\wh{\ms{S}}_{\mathsf{K}^p}$, and the open embedding
\begin{equation*}
    \mc{S}_{{\mathsf{K}^p}}\defeq (\wh{\ms{S}}_{{\mathsf{K}^p}})_\eta\subseteq \Sh_{{\mathsf{K}^p}}^\an,
\end{equation*}
with quasi-compact source, which is an isomorphism when $\ms{S}_{\mathsf{K}^p}\to \Spec(\mc{O}_E)$ is proper (see \cite[Remark 4.6 (iv)]{HuberGen}).  The morphisms $t_{{\mathsf{K}^p},{\mathsf{K}^{'p}}}(g^p)$ induce morphisms on the adic spaces $\mc{S}_{{\mathsf{K}^p}}$ compatible with those maps on the $\Sh_{{\mathsf{K}^p}}^\an$, and we use similar notational shortenings for them. We may also consider the functors
\begin{equation*}
    \omega_{{\mathsf{K}^p},\an}\colon \cat{Rep}_{\Z_p}(\mc{G}^c)\to \cat{Loc}_{\Z_p}(\mc{S}_{{\mathsf{K}^p}}),\qquad \Lambda\mapsto \omega_{{\mathsf{K}^p},\et}(\Lambda)^\an|_{\mc{S}_{{\mathsf{K}^p}}},
\end{equation*}
which enjoy the same compatabilities for varying level structure and morphisms of unramified Shimura varieties as the $\mc{G}^c$-local systems $\omega_{\mathsf{K}^p,\et}$. 

\medskip

\paragraph*{Prismatic realization functors} By a \emph{prismatic ($F$-crystal) realization functor} at level $\mathsf{K}^p$, we mean an exact $\Z_p$-linear $\otimes$-functor (unique up to unique isomorphism, as $T_\et$ is fully faithful)
\begin{equation*}
\omega_{{\mathsf{K}^p},\smallprism}\colon \cat{Rep}_{\Z_p}(\mc{G}^c)\to \cat{Vect}^\varphi((\widehat{\ms{S}}_{{\mathsf{K}^p}})_\smallprism),
\end{equation*}
together with $\j_{\mathsf{K}^p}\colon T_\et\circ \omega_{{\mathsf{K}^p},\smallprism}\isomto \omega_{{\mathsf{K}^p},\an}$. If the isomorphisms $\j_{\mathsf{K}^p}$ are chosen compatibly in $\mathsf{K}^p$, we call the collection $\{(\omega_{\mathsf{K}^p,\smallprism},\j_{\mathsf{K}^p})\}$ a \emph{prismatic ($F$-crystal) canonical model} of $\{\omega_{{\mathsf{K}^p},\an}\}$, which is unique up to unique isomorphism. We often omit the data of $\j_{{\mathsf{K}^p}}$ from the notation. 

Fix prismatic canonical models $\{\omega_{\mathsf{K}^p,\smallprism}\}$ and $\{\omega_{\mathsf{K}^p_1,\smallprism}\}$ for unramified Shimura data $(\mb{G},\mb{X},\mc{G})$ and $(\mb{G}_1,\mb{X}_1,\mc{G}_1)$ respectively. If $\alpha\colon (\mb{G}_1,\mb{X}_1,\mc{G}_1)\to (\mb{G},\mb{X},\mc{G})$ is a morphism, then for any $\xi$ in $\cat{Rep}_{\Z_p}(\mc{G}^c)$, and neat compact open subgroups $\mathsf{K}^p\subseteq\mb{G}(\A_f^p)$ and $\mathsf{K}^p_1\subseteq\mb{G}_2(\A_f^p)$ with $\alpha({\mathsf{K}^p_1})\subseteq {\mathsf{K}^p}$, one has canonical, compatible in $\mathsf{K}^p$, $\mathsf{K}^p_1$, and $\xi$, identifications
    \begin{equation}\label{eq:compatibility-prismatic-model-pullback}
        \alpha_{{\mathsf{K}^p_1},{\mathsf{K}^p}}^\ast(\omega_{{\mathsf{K}^p},\smallprism}(\xi)_{\mc{O}_1})=\omega_{{\mathsf{K}^p_1},\smallprism}(\xi\circ \alpha^c).
\end{equation}
This follows by appropriately applying $T_\et^{-1}$ and the isomorphisms $\j_{\mathsf{K}^p}$ and $\j_{\mathsf{K}^p_1}$ to  \eqref{eq:shimura-data-morphism-local-system-compat}.

\begin{thm}\label{thm:main-Shimura-theorem-abelian-type-case} Suppose that $(\mb{G},\mb{X},\mc{G})$ is an unramified Shimura datum of abelian type. Then, for any ${\mathsf{K}^p}$, the functor $\omega_{{\mathsf{K}^p},\an}$ takes values in $\cat{Loc}^\strcrys_{\Z_p}(\mc{S}_{{\mathsf{K}^p}})$. In particular, the collection
\begin{equation*}
    T_\et^{-1}\circ \omega_{{\mathsf{K}^p},\an}\efdeq \omega_{{\mathsf{K}^p},\smallprism}\colon \cat{Rep}_{\Z_p}(\mc{G}^c)\to \cat{Vect}^\varphi((\wh{\ms{S}}_{{\mathsf{K}^p}})_\smallprism)
\end{equation*}
forms a prismatic canonical model of $\{\omega_{{\mathsf{K}^p},\an}\}$.
\end{thm}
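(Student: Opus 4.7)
The plan is to reduce the abelian-type statement to the Hodge-type case via the special-type twisting construction recorded in Lemma \ref{lem:Lovering-lem}, and to handle the Hodge-type case using the universal abelian scheme together with the prismatic Dieudonné functor and the Guo--Reinecke full faithfulness.

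\textbf{Step 1 (Hodge type, standard representation).} Assume first that $(\mb{G},\mb{X},\mc{G})$ is of Hodge type and fix an integral Hodge embedding $\iota\colon (\mb{G},\mb{X},\mc{G})\hookrightarrow (\GSp(\mb{V}_0),\mf{h}^\pm,\GSp(\Lambda_0))$ with associated tensor package $(\Lambda_0,\mathds{T}_0)$. The universal abelian scheme $\ms{A}_{\mathsf{K}^p}\to\ms{S}_{\mathsf{K}^p}$ has good reduction, so its $p$-divisible group $H\defeq \wh{\ms{A}}_{\mathsf{K}^p}[p^\infty]$ over $\wh{\ms S}_{\mathsf K^p}$ yields via $\mc{M}_\Prism$ (Theorem \ref{thm:ALB-equiv}) an object of $\cat{Vect}^\varphi_{[0,1]}((\wh{\ms S}_{\mathsf K^p})_\Prism)$. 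By Proposition \ref{prop:ALB-dJ-comparison} together with Proposition \ref{prop:comparison-of-otorsors-Hodge-type}, its \'etale realization is canonically identified with $\omega_{\mathsf{K}^p,\an}(\Lambda_0)$. Hence $\omega_{\mathsf{K}^p,\an}(\Lambda_0)$ already has prismatically good reduction.

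\textbf{Step 2 (Hodge type, Tannakian upgrade).} To get a prismatic realization functor in the Tannakian sense, we must extend $\mc{M}_\Prism(H)$ to a $\mc{G}^c$-object. Via the identification in Proposition \ref{prop:comparison-of-otorsors-Hodge-type} and \eqref{eq:integral-model-etale-realization}, each Hodge tensor $\mathds{T}_{0,p}^\et$ gives rise to a tensor on $\omega_{\mathsf{K}^p,\an}(\Lambda_0)$. Using the extension/fully faithfulness machinery for prismatic $F$-crystals from \cite{GuoReinecke} (combined with Proposition \ref{prop:G-local-system-de-Rham}, which ensures the local systems are de Rham), these tensors lift uniquely to prismatic tensors in $\mc{M}_\Prism(H)^\otimes$. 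This produces a prismatic $\mc{G}^c$-pseudo-torsor on $\wh{\ms S}_{\mathsf K^p}$ whose \'etale realization is the genuine $\mc{G}^c$-torsor $\omega_{\mathsf K^p,\an}$. The methods of \cite{IKY1} (specifically, the criterion for promoting a pseudo-torsor to a torsor when the \'etale realization is a torsor) then upgrade this pseudo-torsor to a prismatic $\mc{G}^c$-torsor, producing the desired $\omega_{\mathsf K^p,\smallprism}\in\GVect^\varphi((\wh{\ms S}_{\mathsf K^p})_\Prism)$ together with the isomorphism $\j_{\mathsf K^p}$.

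\textbf{Step 3 (Abelian type via special-type twist).} Now suppose $(\mb{G},\mb{X},\mc{G})$ is of abelian type and apply Lemma \ref{lem:Lovering-lem} to produce $(\mb{G}_1,\mb{X}_1,\mc{G}_1)$ of Hodge type, $(\mb{T},\{h\},\mc{T})$ of special type, and $(\mb{G}_2,\mb{X}_2,\mc{G}_2)$ of abelian type fitting into morphisms
\begin{equation*}
(\mb{G}_2,\mb{X}_2,\mc{G}_2)\xrightarrow{\alpha}(\mb{G}_1\times\mb{T},\mb{X}_1\times\{h\},\mc{G}_1\times\mc{T}),\qquad (\mb{G}_2,\mb{X}_2,\mc{G}_2)\xrightarrow{\beta}(\mb{G},\mb{X},\mc{G}),
\end{equation*}
with $\alpha^c$ a closed embedding and $\beta_{\mathsf{K}_2^p,\mathsf{K}^p}$ finite \'etale. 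For the special-type factor, $\Sh(\mb T,\{h\})_E$ is a finite \'etale $E$-scheme whose integral canonical model is finite \'etale over $\mc O_{E_1}$; prismatic good reduction for $\omega_{\mathsf K^p_\mb T,\an}$ then follows either by direct construction on each connected component (using CM theory and the classical $D_\mr{crys}$ of CM Galois representations, matched integrally via Example \ref{ex:Kisin-Dcrys}) or from \cite[Theorem A]{GuoReinecke} applied componentwise. Combining this with Step 2 for $(\mb G_1,\mb X_1,\mc G_1)$ and using \eqref{eq:compatibility-prismatic-model-pullback} along $\alpha$ (with $\alpha^c$ a closed embedding so that every $\mc G_2^c$-representation is a subquotient of an $(\mc G_1\times\mc T^c)$-representation), we obtain a prismatic realization $\omega_{\mathsf K^p_2,\smallprism}$ on $\wh{\ms S}_{\mathsf K^p_2}$.

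\textbf{Step 4 (Descent along $\beta$).} Finally, descend $\omega_{\mathsf K^p_2,\smallprism}$ along the finite \'etale Hecke/covering map $\beta_{\mathsf K^p_2,\mathsf K^p}\colon \wh{\ms S}_{\mathsf K^p_2}\to (\wh{\ms S}_{\mathsf K^p})_{\mc O_{E_1}}$ using the fpqc descent of prismatic $F$-crystals (see \cite[Proposition 1.16]{IKY1}). The descent datum is manufactured from the $\mb G_2(\A_f^p)$-equivariant structure of $\omega_{\mathsf K^p_2,\smallprism}$ and the compatibility \eqref{eq:compatibility-prismatic-model-pullback}, matched against the known descent datum of $\omega_{\mathsf K^p,\an}$ on the generic fiber (which it must refine by full faithfulness of $T_\et$). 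This produces $\omega_{\mathsf K^p,\smallprism}$ on $(\wh{\ms S}_{\mathsf K^p})_{\mc O_{E_1}}$, and a second \'etale-local descent along $\Spf(\mc O_{E_1})\to\Spf(\mc O_E)$ (which is finite \'etale) yields the desired object over $\wh{\ms S}_{\mathsf K^p}$.

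The main obstacle will be Step 4: constructing and verifying the descent datum for the prismatic $F$-crystal purely from data on the generic fiber, since descent of prismatic $F$-crystals is more subtle than that of \'etale local systems. One expects to exploit Lemma \ref{lem:transitivity-on-conn-comp} (transitivity on connected components) to reduce to a pointwise check, and to leverage full faithfulness of $T_\et$ (from \cite{GuoReinecke}) so that any descent datum witnessed on the \'etale realization automatically descends the prismatic object.
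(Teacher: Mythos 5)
Your Steps 1--3 essentially match the paper's argument: the Hodge-type case is handled exactly as you describe (prismatic cohomology/Dieudonn\'e theory of $\wh{\ms{A}}_{\mathsf{K}^p}$ gives good reduction of $\omega_{\mathsf{K}^p,\an}(\Lambda_0)$, the tensors are transported by $T_\et^{-1}$, and the pseudo-torsor is promoted to a torsor by the exactness criterion of \cite{IKY1}), and the reduction to Hodge $\times$ special type via Lemma \ref{lem:Lovering-lem} is the paper's route. One small imprecision in your Step 3: rather than arguing that every $\mc{G}_2^c$-representation is a subquotient of a restricted $(\mc{G}_1\times\mc{T}^c)$-representation (the category of prismatically-good-reduction lattices is not obviously closed under subquotients), the paper checks good reduction on a single faithful representation $\xi_2=(\xi_1\otimes\xi_t)\circ\alpha^c$ and invokes \cite[Corollary 2.30]{IKY1}, which reduces the $\mc{G}^c$-statement to that one representation.

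The genuine divergence is Step 4, and the obstacle you correctly identify there is one the paper simply never encounters. You propose to \emph{descend} the prismatic $F$-crystal $\omega_{\mathsf{K}_2^p,\smallprism}$ along $\beta_{\mathsf{K}_2^p,\mathsf{K}^p}$ by manufacturing a descent datum; besides being delicate, this runs into the fact that $\beta_{\mathsf{K}_2^p,\mathsf{K}^p}$ is finite \'etale but generally \emph{not surjective}, so it is not a cover along which one can descend. The paper's maneuver is cheaper: since the object downstairs is just $T_\et^{-1}\circ\omega_{\mathsf{K}^p,\an}$, all one must verify is the \emph{property} that each $\omega_{\mathsf{K}^p,\an}(\xi)$ has prismatically good reduction, and this property may be checked after pullback along a finite \'etale cover of each connected component (Lemma \ref{lem:strongly-crystalline-check-on-finite-etale-cover-of-components}, whose proof reduces to checking that a pushforward of a Breuil--Kisin module is a vector bundle after a faithfully flat base change). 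Non-surjectivity is then handled exactly as you anticipate, by Lemma \ref{lem:transitivity-on-conn-comp} and Hecke equivariance \eqref{eq:Hecke-action-local-system-compat}, translating an arbitrary component into the image of some $\beta_{\mathsf{K}_2^p,\mathsf{K}^p}$. So your strategy can be completed, but replacing the descent in Step 4 by this property-check makes the "main obstacle" you flag disappear entirely.
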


\begin{rem}\label{rem:Daniels-comp} If $(\mb{G},\mb{X},\mc{G})$ is of special type this theorem was (implicitly) obtained by Daniels in \cite{Daniels}, and his construction agrees with ours by the unicity of canonical prismatic models.
\end{rem}

We first prove a refined version of this theorem when $(\mb{G},\mb{X},\mc{G})$ is of Hodge type using \cite[Theorem 2.28]{IKY1}. Choose an integral Hodge embedding $\iota\colon (\mb{G},\mb{X},\mc{G})\to (\GSp(\mb{V}_0),\mf{h}^{\pm},\GSp(\Lambda_0))$ and write $\mc{A}_{{\mathsf{K}^p}}\to \mc{S}_{{\mathsf{K}^p}}$ for the generic fiber of $\wh{\ms{A}}_{{\mathsf{K}^p}}\to \wh{\ms{S}}_{{\mathsf{K}^p}}$. Define 
\begin{equation*}
    \mathds{T}_{0,p}^\an\defeq (\mathds{T}_{0,p}^\et)^\an|_{\mc{S}_{{\mathsf{K}^p}}}\subseteq (\mc{H}^1_{\Z_p}(\mc{A}_{{\mathsf{K}^p}}/\mc{S}_{{\mathsf{K}^p}})^\vee)^\otimes.
\end{equation*}
By Proposition \ref{prop:comparison-of-otorsors-Hodge-type}, we have a canonical identification
\begin{equation*}
    (\omega_{{\mathsf{K}^p},\an}(\Lambda_0),\omega_{{\mathsf{K}^p},\an}(\mathds{T}_0))\isomto (\mc{H}^1_{\Z_p}(\mc{A}_{{\mathsf{K}^p}}/\mc{S}_{{\mathsf{K}^p}})^\vee,\mathds{T}_{0,p}^\an).
\end{equation*}
Combining \cite[Corollary 4.64]{AnschutzLeBrasDD} and \cite[Theorem 1.10 (i)]{GuoReinecke}, we deduce that $\mc{H}^1_{\Z_p}(\mc{A}_{{\mathsf{K}^p}}/\mc{S}_{{\mathsf{K}^p}})^\vee$ has prismatically good reduction with a canonical identification
\begin{equation*}
    T_\et^{-1}(\mc{H}^1_{\Z_p}(\mc{A}_{{\mathsf{K}^p}}/\mc{S}_{{\mathsf{K}^p}})^\vee)=\mc{H}^1_\smallprism(\wh{\ms{A}}_{{\mathsf{K}^p}}/\wh{\ms{S}}_{{\mathsf{K}^p}})^\vee,
\end{equation*}
compatible in $\mathsf{K}^p$. Applying $T_\et^{-1}$ to $\mathds{T}^\an_{0,p}$ gives rise to a set $\mathds{T}_{0,p}^\smallprism$ of tensors on the object $\mc{H}^1_\smallprism(\wh{\ms{A}}_{{\mathsf{K}^p}}/\wh{\ms{S}}_{{\mathsf{K}^p}})^\vee$ of $\cat{Vect}^\varphi((\wh{\ms{S}}_{{\mathsf{K}^p}})_\smallprism)$. The following is a consequence of \cite[Theorem 2.28]{IKY1}, and immediately implies Theorem \ref{thm:main-Shimura-theorem-abelian-type-case} for $(\mb{G},\mb{X},\mc{G})$ of Hodge type by \cite[Proposition 1.28]{IKY1}.

\begin{thm}\label{thm:main-Shimura-theorem-Hodge-type-case}
    Suppose that $(\mb{G},\mb{X},\mc{G})$ is an unramified Shimura datum of Hodge type. Then, 
    \begin{equation*}
        \underline{\Isom}\left((\Lambda_0\otimes_{\Z_p}\mc{O}_{\smallprism},\mathds{T}_0\otimes 1),(\mc{H}^1_\smallprism(\wh{\ms{A}}_{{\mathsf{K}^p}}/\wh{\ms{S}}_{{\mathsf{K}^p}})^\vee,\mathds{T}^\smallprism_{0,p})\right)
    \end{equation*}
is a prismatic $\mc{G}$-torsor with $F$-structure on $(\wh{\ms{S}}_{{\mathsf{K}^p}})_\smallprism$, compatible in ${\mathsf{K}^p}$. 
\end{thm}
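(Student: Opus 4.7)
The plan is to invoke \cite[Theorem 2.28]{IKY1} directly, so the task reduces to checking its hypotheses in this setting. First I would recall from \cite[Theorem A.14]{IKY1} that the choice of tensor package $(\Lambda_0,\mathds{T}_0)$ realizes $\mc{G}$ as $\mathrm{Fix}(\mathds{T}_0)$; consequently the displayed $\underline{\Isom}$ sheaf is a priori a pseudo-torsor under $\mc{G}$ on $(\wh{\ms{S}}_{\mathsf{K}^p})_\Prism$, and the content of the theorem is the promotion of this pseudo-torsor to an honest torsor equipped with Frobenius structure.

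Second, I would feed the data into \cite[Theorem 2.28]{IKY1}: the underlying prismatic $F$-crystal is $\mc{H}\defeq\mc{H}^1_\Prism(\wh{\ms{A}}_{\mathsf{K}^p}/\wh{\ms{S}}_{\mathsf{K}^p})^\vee$, which via \cite[Corollary 4.64]{AnschutzLeBrasDD} and \cite[Theorem 1.10 (i)]{GuoReinecke} satisfies $T_\et(\mc{H})=\mc{H}^1_{\Z_p}(\mc{A}_{\mathsf{K}^p}/\mc{S}_{\mathsf{K}^p})^\vee$; the tensors $\mathds{T}^\Prism_{0,p}$ are defined as $T_\et^{-1}(\mathds{T}_{0,p}^\an)$, which is well-posed because each individual component of the tuple is a morphism of $\Z_p$-local systems on $\mc{S}_{\mathsf{K}^p}$ already known to have prismatically good reduction (the tensor powers of $\mc{H}$ and their duals being so by functoriality). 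Under these identifications, Proposition \ref{prop:comparison-of-otorsors-Hodge-type} ensures that on the generic fiber the pseudo-torsor becomes the étale $\mc{G}(\Z_p)$-torsor attached to $\omega_{\mathsf{K}^p,\et}$; in particular the generic-fiber hypothesis of \cite[Theorem 2.28]{IKY1} is satisfied.

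Third, I would verify the remaining local hypothesis of \cite[Theorem 2.28]{IKY1}, which requires producing a trivialization compatible with the tensors after completing at each $\ov{\bb{F}}_p$-point $x$ of $\ms{S}_{\mathsf{K}^p}$. Here I would combine Kisin's description \cite{KisIntShab} of $\wh{\mc{O}}_{\ms{S}_{\mathsf{K}^p},x}$ as the versal deformation space of the $p$-divisible group $\wh{\ms{A}}_{\mathsf{K}^p,x}[p^\infty]$ together with the crystalline Tate tensors, with the comparison in Proposition \ref{prop:ALB-dJ-comparison} between prismatic Dieudonné crystals and classical filtered Dieudonné crystals. The latter transports the known tensor-fixing trivialization of the classical Dieudonné module constructed in loc.~cit.\@ into a trivialization of $\mc{H}$ on the completion at $x$ sending $\mathds{T}^\Prism_{0,p}$ to $\mathds{T}_0\otimes 1$, as required.

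Compatibility in $\mathsf{K}^p$ follows from functoriality of every step: the transition maps $t_{\mathsf{K}^p,\mathsf{K}^{'p}}(g^p)$ are finite étale, the abelian scheme $\wh{\ms{A}}_{\mathsf{K}^p}$ is pulled back from finer level, and so is its prismatic cohomology together with the tensors $\mathds{T}^\Prism_{0,p}$ (which come by $T_\et^{-1}$ from étale data that is itself compatible in $\mathsf{K}^p$ by construction). The main obstacle in this plan is the local verification at $\ov{\bb{F}}_p$-points: one must honestly compare the tensors produced via $T_\et^{-1}$ on the prismatic side with those produced by Kisin on the crystalline side, and the bridge is precisely the crystalline–de Rham–prismatic comparisons of Section \ref{s:integral-Dcrys}.
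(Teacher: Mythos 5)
Your first two steps and the final compatibility paragraph match the paper's argument, which is essentially just: the identification of Proposition \ref{prop:comparison-of-otorsors-Hodge-type} on the generic fiber, plus the prismatically good reduction of $\mc{H}^1_{\Z_p}(\mc{A}_{\mathsf{K}^p}/\mc{S}_{\mathsf{K}^p})^\vee$ (via \cite[Corollary 4.64]{AnschutzLeBrasDD} and \cite[Theorem 1.10 (i)]{GuoReinecke}), are exactly the hypotheses of \cite[Theorem 2.28]{IKY1}, and the torsor property follows. The problem is your third step. You attribute to \cite[Theorem 2.28]{IKY1} a ``remaining local hypothesis'' requiring a tensor-compatible trivialization at the completion of each $\ov{\bb{F}}_p$-point; no such hypothesis exists there. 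The entire point of that theorem — and the ``new insight'' the authors advertise in the introduction — is that the promotion of the pseudo-torsor to a torsor is verified by a Tannakian exactness criterion read off from the \'etale realization, precisely so that one does \emph{not} have to run the Kisin-style pointwise argument. The introduction explicitly contrasts this ``more abstract approach using \cite{GuoReinecke} and \cite{IKY1}'' with an approach ``closer to that of \cite{KisIntShab}''; you have reinserted the latter.

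Beyond being unnecessary, your step 3 is circular as the paper is structured. The matching of the prismatic tensors $\mathds{T}_{0,p}^\Prism=T_\et^{-1}(\mathds{T}_{0,p}^\an)$ with Kisin's crystalline tensors at a point — the ``bridge'' you defer to the comparisons of Section \ref{s:integral-Dcrys} — is exactly the content of Proposition \ref{prop:prismatic-crystalline-tensor-matching} and Theorem \ref{thm:ito-shim-comp}, both of which are proved \emph{using} the present theorem as input (the first sentence of the proof of Proposition \ref{prop:prismatic-crystalline-tensor-matching} invokes it). One could imagine rescuing your route by appealing only to external results of Kisin, Kim, and Ansch\"utz--Le Bras, but that would be a genuinely different and substantially longer proof, and it is not what checking the hypotheses of \cite[Theorem 2.28]{IKY1} requires. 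Delete step 3; the proof is complete after step 2.
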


To reduce from the abelian type case to the Hodge type case, we require a simple lemma concerning prismatically good reduction local systems. We use the notation from \S\ref{s:G-objects-prismatic-crystals}.

\begin{lem}\label{lem:strongly-crystalline-check-on-finite-etale-cover-of-components}
    Suppose that $\mf{X}_2\to\mf{X}_1$ is a finite \'etale cover where $\mf{X}_1\to\Spf(\mc{O}_K)$ is smooth. Then, an object $\bb{L}_1$ of $\cat{Loc}_{\Z_p}(X_1)$ has prismatically good reduction if and only if $\bb{L}_2\defeq \bb{L}_1|_{X_2}$ does.
\end{lem}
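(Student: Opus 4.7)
My plan is to prove the easy direction first and then tackle descent. The forward (``only if'') direction is immediate: if $(\mc{E}_1,\varphi_{\mc{E}_1})$ is an analytic prismatic $F$-crystal on $\mf{X}_1$ realizing $\bb{L}_1$, its pullback to $\mf{X}_2$ is an analytic prismatic $F$-crystal whose \'etale realization is $\bb{L}_2$, by functoriality of $T_\et$ with respect to the morphism $\mf{X}_2\to\mf{X}_1$ of smooth formal $\mc{O}_K$-schemes.

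For the reverse direction, suppose $\bb{L}_2$ has prismatically good reduction, realized by $(\mc{E}_2,\varphi_{\mc{E}_2})$ on $\mf{X}_2$. Let $\mf{X}_2^{(n)}$ denote the $(n+1)$-fold fiber product of $\mf{X}_2$ over $\mf{X}_1$. Since $\mf{X}_2\to\mf{X}_1$ is finite \'etale and $\mf{X}_1\to\Spf(\mc{O}_K)$ is smooth, each $\mf{X}_2^{(n)}$ is finite \'etale over $\mf{X}_1$ and hence smooth over $\Spf(\mc{O}_K)$. The canonical identification $p_1^\ast\bb{L}_2\isomto p_2^\ast\bb{L}_2$ on the generic fiber of $\mf{X}_2^{(1)}$ (both sides arising as the pullback of $\bb{L}_1$) together with its cocycle condition on the generic fiber of $\mf{X}_2^{(2)}$ transport, via the full faithfulness of $T_\et$ on analytic prismatic $F$-crystals over smooth formal $\mc{O}_K$-schemes (see \cite[Theorem 1]{IKY1}, which invokes \cite{GuoReinecke}), to a genuine descent datum for $(\mc{E}_2,\varphi_{\mc{E}_2})$ along $\mf{X}_2\to\mf{X}_1$.

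The next step is to invoke flat descent for analytic prismatic $F$-crystals along the finite \'etale cover $\mf{X}_2\to\mf{X}_1$ in order to glue $(\mc{E}_2,\varphi_{\mc{E}_2})$ together with its descent datum into an analytic prismatic $F$-crystal $(\mc{E}_1,\varphi_{\mc{E}_1})$ on $\mf{X}_1$. Since $T_\et$ commutes with pullback and is fully faithful, its value on $(\mc{E}_1,\varphi_{\mc{E}_1})$ restricts to $\bb{L}_2=\bb{L}_1|_{X_2}$ on $X_2$, and the two descent data on the cover $X_2\to X_1$ match, forcing $T_\et(\mc{E}_1,\varphi_{\mc{E}_1})\simeq\bb{L}_1$.

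The main obstacle is the descent assertion for analytic prismatic $F$-crystals along the flat cover $\mf{X}_2\to\mf{X}_1$ of smooth formal $\mc{O}_K$-schemes. The key point is that $(A,I)\mapsto\cat{Vect}^{\varphi,\mathrm{an}}(A,I)$ is a stack for the $(p,I)$-adically faithfully flat topology on bounded prisms (this is the analytic analogue of the prismatic $F$-crystal stackiness used throughout \cite{BhattScholzeCrystals} and \cite{IKY1}), combined with the fact that a finite \'etale cover $\mf{X}_2\to\mf{X}_1$ induces a $(p,I)$-adically faithfully flat cover $(A,I)\to (A\otimes_{\mc{O}(\mf{X}_1)}\mc{O}(\mf{X}_2),I)$ for each affine cover of $\mf{X}_1$ by opens admitting Breuil--Kisin prisms as in \cite[\S1.1.5]{IKY1}; local descent over each such Breuil--Kisin prism then assembles globally via \cite[Proposition 1.16]{IKY1}.
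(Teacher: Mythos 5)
Your proposal takes a genuinely different route from the paper. The paper argues locally: after reducing to $\mf{X}_i = \Spf(R_i)$, it observes that $\bb{L}_1$ is crystalline (immediate, since this can be checked after a finite extension), so the analytic prismatic $F$-crystal $\mc{V}^1 = T^{-1}_{\mf{X}_1}(\bb{L}_1)$ already exists; it then invokes the criterion of \cite[Proposition 1.26]{IKY1} reducing ``prismatically good reduction'' to a single module $(j_{(\mf{S}_{R_1},(E))})_\ast \mc{V}^1_{(\mf{S}_{R_1},(E))}$ over $\mf{S}_{R_1}$ being a vector bundle, and checks this by base change along the faithfully flat map $\mf{S}_{R_1}\to\mf{S}_{R_2}$. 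No descent datum needs to be constructed at all. Your proof instead builds a global descent datum on the \v{C}ech nerve via full faithfulness of $T_\et$ and descends $(\mc{E}_2,\varphi_{\mc{E}_2})$; this is sound in spirit but requires more machinery than the direct computation.

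There is, however, a genuine gap as written. Prismatically good reduction of $\bb{L}_1$ means it is the \'etale realization of an \emph{honest} prismatic $F$-crystal (a vector bundle on $(\mf{X}_1)_\Prism$), not merely an analytic one. Your descent step is phrased entirely in terms of the category $\cat{Vect}^{\varphi,\mathrm{an}}$ and concludes with ``an analytic prismatic $F$-crystal $(\mc{E}_1,\varphi_{\mc{E}_1})$''; taken literally, this is insufficient. What you actually descend is the genuine $F$-crystal $(\mc{E}_2,\varphi_{\mc{E}_2})$, and you must record that the output is again a genuine $F$-crystal. This is true, precisely because being a vector bundle descends along the $(p,\mc{I}_\Prism)$-completely faithfully flat covers of prisms induced by $\mf{X}_2\to\mf{X}_1$ (which is exactly the descent the paper exploits at the Breuil--Kisin prism). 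You should also be careful that the expression $A\otimes_{\mc{O}(\mf{X}_1)}\mc{O}(\mf{X}_2)$ is ill-formed as stated: $A$ is not an $\mc{O}(\mf{X}_1)$-algebra, only $A/I$ is, so one must form the finite \'etale $A/I$-algebra $\mc{O}(\mf{X}_2)\otimes_{\mc{O}(\mf{X}_1)} A/I$ and then lift it uniquely to a finite \'etale $A$-algebra using the topological invariance of the \'etale site. With these two repairs, your descent argument goes through, but it remains substantially heavier than the paper's local criterion.
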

\begin{proof} It suffices to prove the if condition. We may assume that $\mf{X}_i=\Spf(R_i)$ where $R_i$ are (framed) small $\mc{O}_K$-algebras. That $\bb{L}_1$ is crystalline is clear. Let $(\mc{V}^i,\varphi_{\mc{V}^i})$ denote the object $T^{-1}_{\mf{X}_i}(\bb{L}_i)$ of $\cat{Vect}^{\an,\varphi}((\mf{X}_i)_\smallprism)$. Then,  by the flatness of $\mf{X}_2\to\mf{X}_1$, we have that
\begin{equation*}
    (j_{(\mf{S}_{R_1},(E))})_\ast \mc{V}^1_{(\mf{S}_{R_1},(E))}\otimes_{\mf{S}_{R_1}}\mf{S}_{R_2}=(j_{(\mf{S}_{R_2},(E))})_\ast \mc{V}^2_{(\mf{S}_{R_2},(E))}.
\end{equation*}
The right-hand side is a vector bundle by \cite[Proposition 1.26]{IKY1} and thus so is the sheaf $(j_{(\mf{S}_{R_1},(E))})_\ast \mc{V}^1_{(\mf{S}_{R_1},(E))}$. Thus, $\bb{L}_1$ has prismatically good reduction again by loc.\@ cit.
\end{proof}

\begin{proof}[Proof of Theorem \ref{thm:main-Shimura-theorem-abelian-type-case}] We freely use notation from Lemma \ref{lem:Lovering-lem}. We first prove the claim for $(\mb{G}_2,\mb{X}_2,\mc{G}_2)$. Choosing a faithful representation $\xi_2'=\xi_1\otimes \xi_t$ of $\mc{G}_1\times\mc{T}^c$, where $\xi_1$ (resp.\@ $\xi_t$) is a faithful representation of $\mc{G}_1$ (resp.\@ $\mc{T}^c$), we obtain the faithful representation $\xi_2\defeq \xi_2'\circ\alpha^c$ of $\mc{G}_2^c$. Choosing neat compact open subgroups $\mathsf{K}_1^p\subseteq\mb{G}_1(\A_f^p)$ and $\mathsf{K}^p_t\subseteq \mb{T}(\A_f^p)$ such that $\alpha(\mathsf{K}^p_2)\subseteq {\mathsf{K}^p_1}\times {\mathsf{K}^p_t}$, we see from \eqref{eq:shimura-data-morphism-local-system-compat} that
\begin{equation*}
    \omega_{\mathsf{K}^p_2,\an}(\xi_2)=(\alpha^1_{\mathsf{K}^p_2K_{0,2},{\mathsf{K}^p_1}\mathsf{K}_{0,1}})^\ast(\omega_{{\mathsf{K}^p_1},\an}(\xi_1))\otimes_{\underline{\Z_p}}(\alpha^2_{\mathsf{K}^p_2 K_{0,2},\mathsf{K}^p_t\mathsf{K}_{0,t}})^\ast(\omega_{\mathsf{K}^p_t,\an}(\xi_t)).
\end{equation*}
But, $\omega_{{\mathsf{K}^p},\an}(\xi_1)$ has prismatically good reduction by Theorem \ref{thm:main-Shimura-theorem-Hodge-type-case}. Moreover, as $\wh{\ms{S}}_{\mathsf{K}^p_t}$ is of the form $\coprod \Spf(\mc{O}_{E'})$, for connected finite \'etale $\mc{O}_E$-algebras $\mc{O}_{E'}$ (e.g.\@ see \cite[Proposition 3.22]{DanielsYoucis}), $\omega_{\mathsf{K}^p_t,\an}(\xi_t)$ has prismatically good reduction by \cite[Proposition 3.7]{GuoReinecke}. Thus, as having prismatically good reduction is preserved by pullbacks and tensor products, the claim follows from \cite[Corollary 2.30]{IKY1}.

Now, to prove the claim for $(\mb{G},\mb{X},\mc{G})$ it suffices to prove that for each compact open subgroup $\mathsf{K}^p$ and each connected component $\ms{C}$ of $\ms{S}_{\mathsf{K}^p}$ that $\omega_{\mathsf{K}^p,\an}(\xi)|_{\wh{\ms{C}}_\eta}$ has prismatically good reduction. By Lemma \ref{lem:transitivity-on-conn-comp} and Equation \eqref{eq:Hecke-action-local-system-compat}, we may assume that there exists some neat compact open subgroup $\mathsf{K}_2^p\subseteq \mb{G}_2(\A_f^p)$ such that $\beta(\mathsf{K}^p_2)\subseteq \mathsf{K}^p$ and $\ms{C}$ lies in the image of $\beta_{\mathsf{K}^p_2,\mathsf{K}^p}$. As $\beta_{\mathsf{K}^p_2,\mathsf{K}^p}$ is finite \'etale the claim follows from \eqref{eq:shimura-data-morphism-local-system-compat} and Lemma \ref{lem:strongly-crystalline-check-on-finite-etale-cover-of-components}. 
\end{proof}

\medskip

\paragraph*{Syntomic realization functor} We now discuss the existence of an upgrade of $\omega_{\mathsf{K}^p,\smallprism}$ to a \emph{syntomic realization functor}. By a \emph{syntomic realization functor} at level $\mathsf{K}^p$, we mean an object $\omega_{\mathsf{K}^p,\mr{syn}}$ of $\mc{G}^c\text{-}\cat{Vect}((\wh{\ms{S}}_{\mathsf{K}^p})^\mr{syn})$ (which is unique up to unique isomorphism, as $\mr{R}_{\wh{\ms{S}}_{\mathsf{K}^p}}$ is fully faithful) with $\j_{\mathsf{K}^p}\colon \mr{R}_{\wh{\ms{S}}_{\mathsf{K}^p}}\circ \omega_{{\mathsf{K}^p},\mr{syn}}\isomto \omega_{{\mathsf{K}^p},\smallprism}$. Such $\j_{\mathsf{K}^p}$ are unique if they exist, and so we often omit them from the notation. The collection $\{\omega_{\mathsf{K}^p,\syn}\}$ is called a \emph{syntomic canonical model} of $\{\omega_{\mathsf{K}^p,\an}\}$.

By Proposition \ref{prop:F-gauge-lff-equiv} to show such $\omega_{\mathsf{K}^p,\mr{syn}}$ exist it suffices to show that $\omega_{\mathsf{K}^p,\smallprism}$ takes values in $\cat{Vect}^{\varphi,\mr{lff}}((\wh{\ms{S}}_{\mathsf{K}^p})_\smallprism)$. This is true (see Corollary \ref{cor:prismatic-realization-lff}) but we delay its proof until \S\ref{ss:comparison-to-shim-vars}.

\begin{thm}\label{thm:prismatic-F-gauge-realization} Suppose that $(\mb{G},\mb{X},\mc{G})$ is an unramified Shimura datum of abelian type. Then, for any ${\mathsf{K}^p}$, the functor $\omega_{{\mathsf{K}^p},\smallprism}$ is of type $-\mu_h^c$, and so takes values in $\cat{Vect}^{\varphi,\mr{lff}}((\wh{\ms{S}}_{\mathsf{K}^p})_\smallprism)$. In particular, the collection
\begin{equation*}
    \mr{R}_{\wh{\ms{S}}_{\mathsf{K}^p}}^{-1}\circ \omega_{{\mathsf{K}^p},\smallprism}\efdeq \omega_{{\mathsf{K}^p},\mr{syn}}\colon \cat{Rep}_{\Z_p}(\mc{G}^c)\to \cat{Vect}((\wh{\ms{S}}_{\mathsf{K}^p})^\mr{syn})
\end{equation*}
forms a syntomic canonical model of $\{\omega_{{\mathsf{K}^p},\an}\}$.
\end{thm}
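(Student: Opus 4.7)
The plan is to reduce the theorem to establishing the type $-\mu_h^c$ condition, and then verify that condition pointwise and globalize via Artin approximation. More precisely, once $\omega_{\mathsf{K}^p,\smallprism}$ is known to define an object of $\cat{Tors}^{\varphi,-\mu_h^c}_{\mc{G}^c}((\wh{\ms{S}}_{\mathsf{K}^p})_\Prism)$ in the sense of Definition \ref{defn: type mu F crystals}, Example \ref{ex:mu-lff} guarantees that for every $\xi$ the object $\omega_{\mathsf{K}^p,\smallprism}(\xi)$ lies in $\cat{Vect}^{\varphi,\mr{lff}}((\wh{\ms{S}}_{\mathsf{K}^p})_\prism)$, and Proposition \ref{prop:F-gauge-lff-equiv} then gives a unique object $\omega_{\mathsf{K}^p,\mr{syn}}$ of $\mc{G}^c\text{-}\cat{Vect}((\wh{\ms{S}}_{\mathsf{K}^p})^\syn)$ with $\mathrm{R}_{\wh{\ms{S}}_{\mathsf{K}^p}}\circ \omega_{\mathsf{K}^p,\syn}\simeq \omega_{\mathsf{K}^p,\smallprism}$. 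Composing with the $\j_{\mathsf{K}^p}$ of Theorem \ref{thm:main-Shimura-theorem-abelian-type-case} realizes $\omega_{\mathsf{K}^p,\syn}$ as a syntomic canonical model, and Proposition \ref{prop: F gauge type mu equals F crystal type mu} gives that $\omega_{\mathsf{K}^p,\syn}$ is of type $-\mu_h^c$ as a $\mc G^c$-torsor on the syntomification.

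To check the type $-\mu_h^c$ condition, we first note that being of type $-\mu_h^c$ is a quasi-syntomically local condition on the base formal scheme. Thus it suffices to test it after passage to a suitable flat cover. The key input is Theorem \ref{thm:ito-shim-comp}: for any $\bar{\bb{F}}_p$-point $x$ of $\ms{S}_{\mathsf{K}^p}$, the attached object $\omega_{b_x}$ of $\mc G^c\text{-}\cat{Vect}^\varphi((\bar{\bb F}_p)_\prism)$ is of type $-\mu_h^c$, and Ito's construction in \cite{Ito2} produces a universal deformation $\omega_{b_x}^{\mr{univ}}$ over $R_{\mc G^c,\mu_h^c}$ which is automatically of type $-\mu_h^c$ over the completed Rees stack of $\Spf(R_{\mc G^c,\mu_h^c})$. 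By Theorem \ref{thm:ito-shim-comp}, there is an isomorphism $i_x\colon R_{\mc G^c,\mu_h^c}\isomto \wh{\mc O}_{\ms S_{\mathsf{K}^p},x}$ such that the restriction of $\omega_{\mathsf{K}^p,\smallprism}$ to $\Spf(\wh{\mc O}_{\ms S_{\mathsf{K}^p},x})$ is identified with $i_x^\ast\omega_{b_x}^\mr{univ}$. Hence the restriction of $\omega_{\mathsf{K}^p,\smallprism}$ to each such formal neighborhood is of type $-\mu_h^c$.

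To globalize, I would use an Artin approximation argument. Since $\wh{\ms{S}}_{\mathsf{K}^p}$ is the $p$-adic completion of a smooth and finite-type $\mc O_E$-scheme, it is excellent, and the ring $\mc O_{\ms S_{\mathsf{K}^p},x}^h$ is the henselization at $x$. The type $-\mu_h^c$ condition, unwound using Definition \ref{defn: type mu F crystals}, amounts (fppf-locally) to the existence of a trivialization of a $\mc{G}^c_{\mu_h^c,(A,I)}$-torsor on a suitable cover of each object $(A,I)$ of $\mf X_\prism$ with $I$ principal; equivalently, by Proposition \ref{prop: summary display comparison} (2) (b), it may be phrased via prismatic $\mc G^c$-$F$-gauges, which form a $p$-adic formal Artin stack $\mr{BT}^{\mc{G}^c,\mu_h^c}_\infty$. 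The existence of such a trivialization over $\wh{\mc O}_{\ms S_{\mathsf{K}^p},x}$ produced above descends by Artin approximation (\stacks{07QY}) to the existence of an étale neighborhood of $x$ over which the trivialization exists, because the functor parameterizing trivializations is locally of finite presentation. Covering $\ms{S}_{\mathsf{K}^p}$ by the resulting étale neighborhoods of its $\bar{\bb F}_p$-points (which suffices since $\wh{\ms{S}}_{\mathsf{K}^p}$ is a $p$-adic formal scheme whose special fiber is covered by such points) gives the type $-\mu_h^c$ condition globally.

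The main obstacle I anticipate is the final Artin approximation step: one must articulate the type $-\mu_h^c$ condition functorially as the vanishing or representability of a certain fppf (equivalently, quasi-syntomic) functor of finite presentation over $\wh{\ms S}_{\mathsf{K}^p}$ so as to legitimately invoke Artin approximation. The cleanest way to do this is to package the data of $\omega_{\mathsf{K}^p,\smallprism}$ together with a would-be trivialization into a map to a suitable moduli stack (e.g.\ a relative version of $\mr{BT}^{\mc{G}^c,\mu_h^c}_\infty$ of Definition \ref{defn: F-gauge of G mu structure}), compatibility of which with Ito's universal deformation at each $\bar{\bb F}_p$-point produces a formal neighborhood where the condition holds and then, by approximation, an étale neighborhood; all other steps are straightforward given the results of \S\ref{s:G-objects-prismatic-crystals}, \S\ref{s:integral-Dcrys}, Theorem \ref{thm:main-Shimura-theorem-abelian-type-case}, and Theorem \ref{thm:ito-shim-comp}.
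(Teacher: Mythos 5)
Your proposal is correct and follows essentially the same route as the paper: the type $-\mu_h^c$ condition is verified on completed local rings via Theorem \ref{thm:ito-shim-comp} and then spread out by Artin approximation over the excellent ring $\mf{S}_R=R\ll t\rr$, after which Example \ref{ex:mu-lff} and Proposition \ref{prop:F-gauge-lff-equiv} give the lff property and the syntomic model. The approximation step you flag as the main obstacle is resolved in the paper's Corollary \ref{cor:prismatic-realization-lff} exactly as you anticipate, by exhibiting the condition as nonemptiness of the limit-preserving functor $A\mapsto\{(h,h')\in\mc{G}^c(A)\times\mc{G}^c(A):hgh'=\mu_h^c(E)^{-1}\}$ on $R\ll t\rr$-algebras (rather than via a relative moduli stack), together with the observation that the resulting \'etale neighborhood of $\Spec(R\ll t\rr)$ completes to $S\ll t\rr$ for a $p$-adically \'etale $\Spf(S)\to\Spf(R)$.
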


\subsection{Potentially crystalline loci and comparison of stratifications}\label{ss:pot-crys-strat}

Let $K$ be a complete discrete valuation field with perfect residue field, and let $X$ be a quasi-separated adic space locally of finite type over $K$, and $\Sigma$ be either $\Z_p$ or $\Q_p$. For an object $\bb{L}$ of $\cat{Loc}_{\Sigma}(X)$ we call a point $x$ of $|X|^\mathrm{cl}$ \emph{(potentially) crystalline} for $\bb{L}$ if $\bb{L}_x$ is a (potentially) crystalline representation of $\Gamma_{k(x)}$. 

There exists at most one quasi-compact open subset $U\subseteq X$ such that $|U|^\mathrm{cl}$ is the set of potentially crystalline points of $\bb{L}$ (cf.\@ \cite[Corollary 4.3]{HuberCV}). In this case we call $U$ the \emph{potentially crystalline locus} of $\bb{L}$. For a $K$-scheme $S$ locally of finite type, and an object $\bb{L}$ of $\cat{Loc}_{\Sigma}(S)$, if we speak of the potentially crystalline locus of $\bb{L}$ we mean the potentially crystalline locus of $\bb{L}^\mathrm{an}$. These definitions apply equal well for $\mc{G}$-objects (or $G$-objects) $\omega$ in these categories.\footnote{Although as observed in \cite[Proposition 2.20]{IKY1}, this will coincide with the set of potentially crystalline points for the value of $\omega$ on any faithful representation of $\mc{G}$ or $G$.} 

Observe that potentially crystalline points satisfy pullback stability: for a map $f\colon X'\to X$, a classical point $x'$ of $X'$ is potentially crystalline for $f^\ast(\bb{L})$ if and only if $x=f(x')$ is a potentially crystalline point for $\bb{L}$. If $k(x')/k(x)$ is unramified (e.g.\@ $f=\mf{f}_\eta$ for a finite \'etale model $\mf{f}\colon \mf{X}'\to\mf{X}$), one may replace `potentially crystalline' by `crystalline'.

For a Shimura datum $(\mb{G},\mb{X})$ of (pre-)abelian type, and a neat compact open subgroup $\mathsf{K}\subseteq \mb{G}(\A_f)$, the existence of a potentially crystalline locus $U_\mathsf{K}\subseteq \Sh_\mathsf{K}^\an$ for $\nu_{\mathsf{K},\et}$ was established in \cite[Theorem 5.17]{ImaiMieda} (see \cite[Remark 2.12]{ImaiMieda} and \cite[Theorem 1.2]{LiuZhu}). If $(\mb{G},\mb{X},\mc{G})$ is an unramified Shimura datum of abelian type, and $\mathsf{K}=\mathsf{K}_0\mathsf{K}^p$, we abbreviate $U_{\mathsf{K}}$ to $U_{\mathsf{K}^p}$ which coincides with the potentially crystalline locus of $\omega_{\mathsf{K}^p,\et}$.

We now describe $U_{\mathsf{K}^p}$ for unramified Shimura data of abelian type, generalizing results of Imai--Mieda in the PEL setting (see \cite[Corollary 2.11 and Proposition 5.4]{ImaiMieda} and \cite[\S7]{ImaiMiedaRIMS}).

\begin{prop}\label{prop:crystalline-locus}
    Let $(\mb{G},\mb{X},\mc{G})$ be an unramified Shimura datum of abelian type, and $\mathsf{K}^p\subseteq\mb{G}(\A_f^p)$ a neat compact open subgroup. Then, $U_{\mathsf{K}^p}=(\wh{\ms{S}}_{\mathsf{K}^p})_\eta$ and all the classical points of $U_{\mathsf{K}^p}$ are crystalline for $\omega_{{\mathsf{K}^p},\et}$. 
\end{prop}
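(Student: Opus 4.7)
The plan is to establish the stronger statement by proving both set-theoretic inclusions $|(\wh{\ms{S}}_{\mathsf{K}^p})_\eta|^{\mathrm{cl}} \subseteq |U_{\mathsf{K}^p}|^{\mathrm{cl}}$ and $|U_{\mathsf{K}^p}|^{\mathrm{cl}} \subseteq |(\wh{\ms{S}}_{\mathsf{K}^p})_\eta|^{\mathrm{cl}}$; since both subspaces are quasi-compact opens of $\Sh_{\mathsf{K}_0\mathsf{K}^p}^{\an}$, these inclusions upgrade to equality of opens. The easy direction, together with the claim that every classical point of $(\wh{\ms{S}}_{\mathsf{K}^p})_\eta$ is crystalline (not only potentially so), is immediate from Theorem \ref{thm:main-Shimura-theorem-abelian-type-case}: an object of $\cat{Loc}^{\strcrys}_{\Z_p}(\mc{S}_{\mathsf{K}^p})$ has crystalline stalk at every classical point by the very definition of prismatically good reduction.

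For the reverse inclusion, the plan is to reduce to the Hodge type case via Lemma \ref{lem:Lovering-lem}. Choose auxiliary data $(\mb{G}_1,\mb{X}_1,\mc{G}_1)$ (Hodge), $(\mb{T},\{h\},\mc{T})$ (special), and $(\mb{G}_2,\mb{X}_2,\mc{G}_2)$ (abelian) equipped with $\alpha = (\alpha^1,\alpha^2)$ and $\beta$ as in that lemma. Given a classical potentially crystalline point $x \in |\Sh^{\an}_{\mathsf{K}_0\mathsf{K}^p}|^{\mathrm{cl}}$, pick $\mathsf{K}^p_2$ with $\beta(\mathsf{K}^p_2) \subseteq \mathsf{K}^p$; since $\beta_{\mathsf{K}^p_2,\mathsf{K}^p}$ is finite étale (Lemma \ref{lem:isogeny-finite-etale}), $x$ lifts, after a finite unramified extension of $k(x)$ (which preserves both potential crystallinity and membership in the formal generic fiber), to a classical point $x_2$ of $\Sh^{\an}_{\mathsf{K}_{0,2}\mathsf{K}^p_2}$. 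By \eqref{eq:shimura-data-morphism-local-system-compat}, $x_2$ is potentially crystalline for $\omega_{\mathsf{K}^p_2,\et}$, and by finite étaleness of $\beta_{\mathsf{K}^p_2,\mathsf{K}^p}$ at the integral level it suffices to show $x_2 \in |(\wh{\ms{S}}_{\mathsf{K}^p_2})_\eta|^{\mathrm{cl}}$. Because $\alpha^c\colon \mc{G}_2^c \hookrightarrow \mc{G}_1 \times \mc{T}^c$ is a closed embedding, the compatibility \eqref{eq:shimura-data-morphism-local-system-compat} applied to a faithful representation of $\mc{G}_1 \times \mc{T}^c$ shows that $\alpha^1(x_2)$ is potentially crystalline for $\omega_{\mathsf{K}^p_1,\et}$ and $\alpha^2(x_2)$ is potentially crystalline for $\omega_{\mathsf{K}^p_t,\et}$. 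The special type case is trivial, since $\wh{\ms{S}}_{\mathsf{K}^p_t}$ is a disjoint union of $\Spf(\mc{O}_{E'})$'s and hence exhausts all classical points of its generic fiber. So the problem reduces to showing $\alpha^1(x_2) \in |(\wh{\ms{S}}_{\mathsf{K}^p_1})_\eta|^{\mathrm{cl}}$.

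For the Hodge type case, fix an integral Hodge embedding $\iota\colon (\mb{G}_1,\mb{X}_1,\mc{G}_1) \hookrightarrow (\GSp(\mb{V}_0),\mf{h}^{\pm},\GSp(\Lambda_0))$. Given a classical point $y \in |\Sh^{\an}_{\mathsf{K}_{0,1}\mathsf{K}^p_1}|^{\mathrm{cl}}$ potentially crystalline for $\omega_{\mathsf{K}^p_1,\et}$, evaluation at $\Lambda_0$ and Proposition \ref{prop:comparison-of-otorsors-Hodge-type} identify the stalk with $T_p(A_{\mathsf{K}^p_1,y})^{\vee}$, so this dual Tate module is a potentially crystalline Galois representation. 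Fontaine's refinement of the Néron–Ogg–Shafarevich criterion then implies that $A_{\mathsf{K}^p_1,y}$ has potential good reduction over $k(y)$. After replacing $k(y)$ by a finite unramified extension over which $A_{\mathsf{K}^p_1,y}$ acquires good reduction, the moduli interpretation of the Siegel integral model, together with Kisin's construction of $\ms{S}_{\mathsf{K}^p_1}$ as the normalization inside the Siegel integral model of the Zariski closure of $\Sh_{\mathsf{K}_{0,1}\mathsf{K}^p_1}$, produces the desired $\mc{O}_{k(y)^{\mathrm{unr}}}$-point of $\ms{S}_{\mathsf{K}^p_1}$ lifting $y$, placing $y$ in $|(\wh{\ms{S}}_{\mathsf{K}^p_1})_\eta|^{\mathrm{cl}}$.

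The main obstacle is the last paragraph: upgrading potential good reduction of the abelian variety to an integral point of $\ms{S}_{\mathsf{K}^p_1}$ rather than merely of the Siegel integral model. This relies on Kisin's construction and the fact that normalization inside a finite map preserves the relevant extension property. A secondary technical point is ensuring that the descent from $\mathsf{K}^p_2$ to $\mathsf{K}^p$ via finite étale $\beta_{\mathsf{K}^p_2,\mathsf{K}^p}$, and from $(\mb{G}_2,\mb{X}_2,\mc{G}_2)$ into $(\mb{G}_1 \times \mb{T}, \mb{X}_1 \times \{h\}, \mc{G}_1 \times \mc{T})$ via the closed embedding $\alpha^c$, transports integrality correctly; both rely on the fact that potential crystallinity is insensitive to finite unramified base change and that the relevant maps of integral canonical models are finite étale.
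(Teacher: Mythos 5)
Your approach is genuinely different from the paper's and reverses its logical direction: you prove $|U_{\mathsf{K}^p}|^{\mathrm{cl}} \subseteq |(\wh{\ms{S}}_{\mathsf{K}^p})_\eta|^{\mathrm{cl}}$ directly, whereas the paper proves its contrapositive (classical points outside $\mc{S}_{\mathsf{K}^p}$ are not potentially crystalline) and, crucially, takes the Siegel-type case as a black box from Imai--Mieda's \cite[Theorem 5.17]{ImaiMieda}. The Hodge-type step in the paper is then purely geometric: $\ms{S}_{\mathsf{K}^p_1}$ being finite over $\ms{M}_{\mathsf{L}^p}(\Lambda_0)$, a classical point outside $\mc{S}_{\mathsf{K}^p_1}$ maps to one outside $\mc{S}_{\mathsf{L}^p}$, and the Siegel case plus pullback-stability finishes. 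The abelian-type step uses the faithful \emph{adjoint} representation, which transports consistently through all three auxiliary Shimura data and thereby sidesteps the product decomposition entirely.

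The main gap in your argument is in the Hodge-type case. You assert that after a finite \emph{unramified} extension of $k(y)$ the abelian variety acquires good reduction, and infer from this an integral point of $\ms{S}_{\mathsf{K}^p_1}$. But Fontaine's crystalline criterion combined with potential crystallinity only yields potential good reduction, i.e.\ good reduction over some finite (possibly \emph{ramified}) extension; a ramified extension does not produce a $\Spf(\mc{O}_{k(y)^{\mathrm{unr}}})$-point of the integral model, and hence does not place $y$ in the formal generic fiber. What rescues the statement is the neatness of the level structure: Serre's classical argument shows that when $A$ has potential good reduction and enough prime-to-$p$ level is rational, inertia acts trivially on $T_\ell(A)$ for some $\ell \geqslant 3$, forcing actual good reduction. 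This is precisely the nontrivial content of the Siegel-type result, and you have implicitly assumed it. The paper avoids the issue entirely by citing \cite[Theorem 5.17]{ImaiMieda} rather than re-deriving it. Secondary issues: your lift $x_2$ of $x$ needs the $\mb{G}(\A_f^p)$-transitivity on components (Lemma \ref{lem:transitivity-on-conn-comp}) and the Hecke-compatibility \eqref{eq:Hecke-action-local-system-compat}, since $\beta_{\mathsf{K}^p_2,\mathsf{K}^p}$ is finite \'etale but need not be surjective on connected components; and the passage from potential crystallinity of $x_2$ (for a faithful representation of $\mc{G}_2^c$) to potential crystallinity of $\alpha^1(x_2)$ for $\omega_{\mathsf{K}^p_1,\et}$ requires a small direct-summand argument with the product decomposition that you leave implicit, whereas the paper's choice of the adjoint representation eliminates it.
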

\begin{proof}
We know that all the classical points of $\mc{S}_{{\mathsf{K}^p}}$ are crystalline for $\omega_{{\mathsf{K}^p},\et}$ by Theorem 
\ref{thm:main-Shimura-theorem-abelian-type-case}. 
Moreover, the full claim holds in the Siegel-type case by \cite[Theorem 5.17]{ImaiMieda} and its proof.

Assume that $(\mb{G},\mb{X},\mc{G})$ is of Hodge type and choose an integral Hodge embedding $\iota\colon (\mb{G},\mb{X},\mc{G})\hookrightarrow (\GSp(\mb{V}_0),\mf{h}^{\pm},\GSp(\Lambda_0))$, and a level $\mathsf{L}^p$ with $\iota(\mathsf{K}^p)\subseteq \mathsf{L}^p$ and $\iota_{\mathsf{K}_0\mathsf{K}^p,\mathsf{L}_0\mathsf{L}^p}$ is a closed embedding. By the construction of $\ms{S}_{\mathsf{K}^p}$ (see \cite[Theorem (2.3.8)]{KisIntShab}), $\ms{S}_{\mathsf{K}^p}$ is obtained as the normalization of a closed subscheme of $\ms{M}_{\mathsf{L}^p}(\Lambda_0)$, and so finite over $\ms{M}_{\mathsf{L}^p}(\Lambda_0)$.\footnote{More precisely, as a closed subscheme of $\ms{M}_{\mathsf{L}^p}(\Lambda_0)$ is finite type over $\mc{O}_E$, it is excellent (see \stacks{07QW}), and thus Nagata (see \stacks{07QV}) and so one may apply \stacks{035S}.} So, if $x$ is a classical point of $\Sh_{{\mathsf{K}^p}}^{\an}$ not in $\mc{S}_{\mathsf{K}^p}$ then the image of $x$ in $\Sh_{\mathsf{L}^p}^{\an}$ is a point outside $\mc{S}_{{\mathsf{L}^p}}$ (see \cite[Proposition 1.9.6]{HuberEC}). Hence $x$ is not a potentially crystalline point by the Siegel case, and pullback stability.

Assume now that $(\mb{G},\mb{X},\mc{G})$ is of abelian type. 
We use notation from Lemma \ref{lem:Lovering-lem}. 
Let $x$ be a classical point of $\Sh_{{\mathsf{K}^p}}^{\an}$ not in $\mc{S}_{{\mathsf{K}^p}}$. By Lemma \ref{lem:transitivity-on-conn-comp} and Equation \eqref{eq:Hecke-action-local-system-compat}, we may assume that 
there is a lift $x_2$ of $x$ in $\Sh_{\mathsf{K}_2^p}^{\an}$, but not in $\mc{S}_{{\mathsf{K}_2^p}}$, for some neat compact open subgroup $\mathsf{K}_2^p \subset \mb{G}_2 (\A_f^p)$. The image $x_1$ of $x_2$ in $\Sh_{\mathsf{K}_1^p}^{\an}$ for an appropriate $\mathsf{K}_1^p \subset \mb{G}_1 (\A_f^p)$ is a point outside $\mc{S}_{{\mathsf{K}_1^p}}$ because $\ms{S}_{\mathsf{K}_2^p}$ is finite over $\ms{S}_{\mathsf{K}_1^p}$ (cf.\@ \cite[Proposition 1.9.6]{HuberEC}). Take a faithful representation $\xi_1^{\ad}$ of $\mb{G}_1^{\ad}$, inducing faithful representations $\xi^{\ad}$ and $\xi_2^{\ad}$ of $\mb{G}^{\ad}$ and $\mb{G}_2^{\ad}$ respectively as $\mb{G}^{\ad} \cong  \mb{G}_2^{\ad} \cong \mb{G}_1^{\ad}$. By \cite[Corollary 2.11 and Proposition 5.4]{ImaiMieda}, $\omega_{{\mathsf{K}^p_1},\an}(\xi_1^{\ad})_{x_1}$ is not potentially crystalline. Hence $\omega_{{\mathsf{K}^p},\an}(\xi^{\ad})_{x}$ is not potentially crystalline too by pullback stability, as the pullbacks of $\omega_{{\mathsf{K}^p},\an}(\xi^{\ad})_{x}$ and $\omega_{{\mathsf{K}^p_1},\an}(\xi_1^{\ad})_{x_1}$ to $x_2$ are both isomorphic to $\omega_{\mathsf{K}_2^p,\an}(\xi_2^{\ad})_{x_2}$. Thus, $x$ is not potentially crystalline.
\end{proof}

For neat compact open subgroups $K^p\subseteq \mb{G}(\A_f^p)$ and $K_p\subseteq G(\Q_p)$, we may define functions
\begin{equation*}
    \begin{aligned}\Sigma_{\mathsf{K}_p\mathsf{K}^p} &\colon |U_{\mathsf{K}_p\mathsf{K^p}}|^\mathrm{cl}\to B(G^c)\\ \bigg(\text{resp. }\Sigma^\circ_{\mathsf{K}^p} &\colon |U_{\mathsf{K}_0\mathsf{K}^p}|^\mathrm{cl}\to C(\mc{G}^c)
    \bigg)\end{aligned}
\end{equation*}
(where $C(\mc{G}^c)$ is the quotient of $G^c(\breve{\Q}_p)$ by the action of $\mc{G}^c(\breve{\Z}_p)$ by $\sigma$-conjugacy),  associating to $x$ the element of $B(G^c)$ (resp.\@ $C(\mc{G}^c)$) associated with the $F$-isocrystal (resp.\@ $F$-crystal) with $G^c$-structure (resp.\@ $\mc{G}^c$-structure) given by $\underline{D}_\crys\circ (\nu_{\mathsf{K}_p\mathsf{K}^p,\et})_x$ (resp.\@ $\underline{\bb{D}}_\crys\circ(\omega_{\mathsf{K}^p,\et})_x$) (see \cite[Example 1.5]{IKY3} for this latter notation). These functions are equivariant via the map $\Gamma_E\to \Gamma_k$, when the source (resp.\@ target) is endowed with the natural action of $\Gamma_E$ (resp.\@ $\Gamma_k$). On the other hand, we may define functions 
 \begin{equation*}
    \begin{aligned}\ov{\Sigma}_{\mathsf{K}^p} &\colon \ms{S}_{\mathsf{K}^p}(\ov{k})\to B(G^c)\\ \bigg(\text{resp. }\ov{\Sigma}^\circ_{\mathsf{K}^p} &\colon \ms{S}_{\mathsf{K}^p}(\ov{k})\to C(\mc{G}^c)
    \bigg)\end{aligned}
\end{equation*}
in the analogous way using the the $\mc{G}$-object in $\cat{Vect}^\varphi((\ms{S}_{\mathsf{K}^p,k})_\crys)$ given by $\underline{\bb{D}}_\crys\circ\omega_{\mathsf{K^p},\smallprism}$ which is equivariant with respect to the actions of $\Gamma_k$.

In the following, we use the notion of an \emph{overconvergent} (also known as \emph{wide}, \emph{partially proper}, or \emph{Berkovich}) open subset of a rigid $E$-space, as in \cite[Chapter II, \S4.3]{FujiwaraKato}
.

\begin{prop}\label{prop:factorization-through-specialization} The functions $\Sigma_{\mathsf{K}_p\mathsf{K}^p}$ and $\Sigma^\circ_{\mathsf{K}^p}$ are overconvergent locally constant.\footnote{i.e., for every classical point $x$ there exists an overconvergent open neighborhood $U_x\subseteq U_{\mathsf{K}_p\mathsf{K}^p}$ such that these functions are constant on $|U_x|^\mathrm{cl}$.\label{footnote:oc-local-constancy}} Moreover, 
\begin{equation}\label{eq:specialization-equations}
    \Sigma_{\mathsf{K}_0\mathsf{K}^p}=\overline{\Sigma}_{\mathsf{K}^p}\circ \sp,\quad \Sigma^\circ_{\mathsf{K}^p}=\overline{\Sigma}^\circ_{\mathsf{K}^p}\circ\sp,
\end{equation}
where $\sp\colon |(\wh{\ms{S}}_{\mathsf{K}^p})_\eta|^\mathrm{cl}=|U_{\mathsf{K}_0\mathsf{K}^p}|^\mathrm{cl}\to \ms{S}_{\mathsf{K}^p}(\ov{k})$ is the specialization map.
\end{prop}
\begin{proof} The second equality in \eqref{eq:specialization-equations} follows essentially by construction. To prove the first equality, it suffices to check that for a point $x$ of $|U_{\mathsf{K}_0\mathsf{K}^p}|^\mr{cl}$ the $F$-isocrystals with $G$-structure given by $\underline{D}_\crys\circ (\nu_{\mathsf{K}_0\mathsf{K}^p})_x$ and that induced by $\underline{\bb{D}}_\mathrm{crys}\circ(\omega_{\mathsf{K}^p,\et})_x$ agree. It suffices to find an isomorphism between their values at $\Lambda_0$ matching the tensors $\mathds{T}_0$. But, this can be reduced to the second equality in \eqref{eq:specialization-equations} considering \eqref{eq:Hecke-action-local-system-compat}. 

The claim concerning overconvergent local constancy for $\Sigma_{\mathsf{K}_0\mathsf{K}^p}$ and $\Sigma^\circ_{\mathsf{K}^p}$ follows from the equations in \eqref{eq:specialization-equations} as the the tube open subsets $\sp^{-1}(x)^\circ$, for $x$ in $\ms{S}_{\mathsf{K}^p}(\ov{k})$, are overconvergent open and contain every classical point of $(\wh{\ms{S}}_{\mathsf{K}^p})_\eta$ (see \cite[Proposition 2.13]{ALYSpecialization}). To prove that $\Sigma_{\mathsf{K}_p\mathsf{K}^p}$ is overconvergent locally constant for all $\mathsf{K}_p$, observe that for $\mathsf{K}_p'\subseteq\mathsf{K}_p$ we have that \begin{equation*}
    \Sigma_{\mathsf{K}_p'\mathsf{K}^p}=\Sigma_{\mathsf{K}_p\mathsf{K}^p}\circ\pi_{\mathsf{K}_p'\mathsf{K}^p,\mathsf{K}_p\mathsf{K}^p}.
\end{equation*}
Using this, and that $\pi_{\mathsf{K}_p'\mathsf{K}^p,\mathsf{K}_p\mathsf{K}^p}$ is finite \'etale and so preserves overconvergent opens under both preimage and image (cf.\@ \cite[p.\@ 427 (a)]{HuberEC}), one reduces to the previous case $\mathsf{K}_p=\mathsf{K}_0$.
\end{proof}

\begin{rem} Beware that the overconvergent local constancy in Proposition \ref{prop:factorization-through-specialization} does not imply constancy on each connected component as the overconvergent subset $\bigcup_x U_x$, with notation as in Footnote \ref{footnote:oc-local-constancy}, need not equal $U_{\mathsf{K}_p\mathsf{K}^p}$ (and may not even have the same number of components). This is because an overconvergent open subset of a rigid space $U$ which contains all classical points need not be all of $U$. For example, it is possible that this union could be $\mathrm{sep}^{-1}(U^\mathrm{Berk}-\{y\})$, where $\mathrm{sep}\colon U\to U^\mathrm{Berk}$ is the separation map as in \cite[Chapter 0, \S2.3.(c)]{FujiwaraKato}, and $y$ is a non-classical rank $1$ point. For example, compare with the overconvergent open subset of the closed unit ball $\mathbb{B}^1_{\mathbb{Q}_p}$ given by the complement of the \emph{closure} of the Gauss point, which is not even a connected subset.
\end{rem}

\subsection{Comparison with work of Lovering}\label{ss:lovering-comp} In this subsection we compare our work to that in \cite{LoveringFCrystals}, and derive several consequences about Shimura varieties of abelian type when $p>2$.

\subsubsection{Comparison result}\label{sss:Lovering-comp} In \cite{LoveringFCrystals}, Lovering constructs a so-called \emph{crystalline canonical model} $\{\omega_{{\mathsf{K}^p},\crys}\}$ of the system $\{\omega_{{\mathsf{K}^p},\an}\}$ for an unramified Shimura datum of abelian type. More precisely, with notation as in \cite[\S2.1.2]{IKY3}, he constructs exact $\Z_p$-linear $\otimes$-functors
\begin{equation*}
    \omega_{{\mathsf{K}^p},\crys}\colon \cat{Rep}_{\Z_p}(\mc{G}^c)\to \cat{VectF}^{\varphi,\mr{div}}((\wh{\ms{S}}_{{\mathsf{K}^p}})_\crys)
\end{equation*}
compatible in ${\mathsf{K}^p}$, together with compatible identifications of filtered objects of $\mc{G}^c\text{-}\cat{MIC}(\mc{S}_{{\mathsf{K}^p}})$:
\begin{equation*}
    i_{\mathsf{K}^p}\colon D_\dR\circ\omega_{{\mathsf{K}^p},\an}[\nicefrac{1}{p}]\isomto \omega_{{\mathsf{K}^p},\crys}[\nicefrac{1}{p}].
\end{equation*}
Moreover, he shows that for all finite unramified $E'/E$, and points $x\colon \Spf(\mc{O}_{E'})\to \wh{\ms{S}}_{{\mathsf{K}^p}}$ that:
\begin{enumerate}[leftmargin=2cm]
    \item[\textbf{(ICM1)}] for all $\xi$ in $\cat{Rep}_{\Z_p}(\mc{G}^c)$, the morphism of isocrystals on $\Spa(E')$
    \begin{equation*}
        i_{{\mathsf{K}^p},x}\colon ( D_\crys\circ\omega_{{\mathsf{K}^p},\an}[\nicefrac{1}{p}])(\xi)_x \isomto \omega_{{\mathsf{K}^p},\crys}[\nicefrac{1}{p}](\xi)_x,
    \end{equation*} 
    is Frobenius equivariant,\footnote{We are implicitly using the fact that for a $\Z_p$-local system $\bb{L}$ on a smooth rigid space $X$ with a smooth formal model over an unramified base, the underlying vector bundles $D_\dR(\bb{L})$ and $D_\crys(\bb{L})$ on $(X,\mc{O}_X)$ are the same. }
    \item[\textbf{(ICM2)}] for all $\xi$ in $\cat{Rep}_{\Z_p}(\mc{G}^c)$, the morphism $ i_{{\mathsf{K}^p},x}$ matches the lattice $\omega_{{\mathsf{K}^p},\crys}(\xi)_x$ with 
    \begin{equation*}
        M/uM\hookrightarrow ( D_\crys\circ\omega_{{\mathsf{K}^p},\an}[\nicefrac{1}{p}])(\xi)_x, 
    \end{equation*}
    where $M=\phi^\ast \mf{M}(\omega_{\mathsf{K}^p,\an}(\xi)_x)$, and this embedding is as in \cite[Theorem (1.2.1)]{KisIntShab}.
\end{enumerate}
As explained in \cite[Proposition 3.1.6]{LoveringFCrystals}, these conditions uniquely characterize $\{\omega_{{\mathsf{K}^p},\crys}\}$.

\begin{thm}\label{thm:prismatic-crystalline-comparison}
    There is an identification $\bb{D}_\crys\circ\omega_{{\mathsf{K}^p},\smallprism}\isomto \omega_{{\mathsf{K}^p},\crys}$ compatible in ${\mathsf{K}^p}$.
\end{thm}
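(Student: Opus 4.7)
The strategy is to invoke Lovering's uniqueness characterization of the system $\{\omega_{\mathsf{K}^p,\crys}\}$, which is uniquely determined as a compatible system of strongly divisible filtered $F$-crystals satisfying the conditions (ICM1) and (ICM2). The plan is therefore to verify that $\bb{D}_\crys\circ\omega_{\mathsf{K}^p,\smallprism}$, together with the natural identification with $D_\crys\circ\omega_{\mathsf{K}^p,\an}$ after inverting $p$, satisfies these two properties. First I would check that the functor lands in the correct category: by Theorem \ref{thm:prismatic-F-gauge-realization}, $\omega_{\mathsf{K}^p,\smallprism}$ takes values in $\cat{Vect}^{\varphi,\mr{lff}}((\wh{\ms{S}}_{\mathsf{K}^p})_\Prism)$, and since $\wh{\ms{S}}_{\mathsf{K}^p}$ is smooth over $\mc{O}_E = W$, Proposition \ref{prop:filtered-equiv} (specifically the implication $(1)\Rightarrow(5)$) shows that $\bb{D}_\crys$ then takes values in $\cat{VectF}^{\varphi,\mr{div}}((\wh{\ms{S}}_{\mathsf{K}^p})_\crys)$. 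The fact that this composition yields a functor on $\mc{G}^c$-objects compatibly in $\mathsf{K}^p$ follows from the $2$-functorialities of $\omega_{\mathsf{K}^p,\smallprism}$ (see \eqref{eq:compatibility-prismatic-model-pullback}) and $\bb{D}_\crys$.

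For condition (ICM1), I would apply Proposition \ref{prop:integral-dcrys-strongly-divisible-and-rational-agreeance}, which provides a canonical Frobenius-equivariant identification of filtered $F$-isocrystals
\[
\bb{D}_\crys[\nicefrac{1}{p}]\isomto D_\crys\circ T_\et
\]
on $\cat{Vect}^\varphi((\wh{\ms{S}}_{\mathsf{K}^p})_\Prism)$. Composing with the defining isomorphism $T_\et\circ\omega_{\mathsf{K}^p,\smallprism}\simeq\omega_{\mathsf{K}^p,\an}$ yields a canonical isomorphism
\[
D_\crys\circ\omega_{\mathsf{K}^p,\an}[\nicefrac{1}{p}]\isomto (\bb{D}_\crys\circ\omega_{\mathsf{K}^p,\smallprism})[\nicefrac{1}{p}]
\]
of $\mc{G}^c$-objects in filtered $F$-isocrystals on $\mc{S}_{\mathsf{K}^p}$, and pulling back along any $x\colon\Spf(\mc{O}_{E'})\to\wh{\ms{S}}_{\mathsf{K}^p}$ with $E'/\Q_p$ finite unramified gives the required Frobenius-equivariant isomorphism $i_{\mathsf{K}^p,x}$.

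For condition (ICM2), the point is to match the lattice $(\bb{D}_\crys\circ\omega_{\mathsf{K}^p,\smallprism})(\xi)_x$ with $M/uM$, where $M=\phi^\ast\mf{M}(\omega_{\mathsf{K}^p,\an}(\xi)_x)$. By the crystal property and compatibility of $T_\et$ with pullback, the pullback $x^\ast\omega_{\mathsf{K}^p,\smallprism}(\xi)$ is identified via $T_\et$ with $\omega_{\mathsf{K}^p,\an}(\xi)_x$. For $E'/\Q_p$ unramified, Example \ref{ex:Kisin-Dcrys} (or its extension to arbitrary strongly divisible structures in Example \ref{ex:Kisin-Dcrys-II}) gives a canonical Frobenius-equivariant identification
\[
\underline{\bb{D}}_\crys(\omega_{\mathsf{K}^p,\an}(\xi)_x)\isomto \phi^\ast\mf{M}(\omega_{\mathsf{K}^p,\an}(\xi)_x)/(u) = M/uM,
\]
and moreover Example \ref{ex:Kisin-Dcrys-II} asserts that this identification is compatible with Kisin's embedding $M/uM\hookrightarrow D_\crys(\omega_{\mathsf{K}^p,\an}(\xi)_x)$ under the rational comparison $\bb{D}_\crys[\nicefrac{1}{p}]\simeq D_\crys\circ T_\et$ used in the previous paragraph. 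This is exactly (ICM2), and by Lovering's uniqueness result the identification $\bb{D}_\crys\circ\omega_{\mathsf{K}^p,\smallprism}\isomto\omega_{\mathsf{K}^p,\crys}$ follows. Given how directly the needed inputs have already been established (most notably Proposition \ref{prop:integral-dcrys-strongly-divisible-and-rational-agreeance} and the compatibility of $\bb{D}_\crys$ with Kisin's functor in Example \ref{ex:Kisin-Dcrys-II}), I do not expect a genuine technical obstacle; the only subtlety is checking that all identifications are natural enough to assemble into isomorphisms of $\mc{G}^c$-objects compatibly in $\mathsf{K}^p$, which is a formal consequence of the $2$-functoriality of each construction.
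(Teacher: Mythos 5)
Your proposal is correct and takes essentially the same approach as the paper: both invoke Lovering's uniqueness characterization via conditions (ICM1) and (ICM2), using Proposition \ref{prop:integral-dcrys-strongly-divisible-and-rational-agreeance} for the Frobenius-equivariant rational comparison and Example \ref{ex:Kisin-Dcrys}/\ref{ex:Kisin-Dcrys-II} (together with compatibility of $T_\et$ with pullbacks) for the lattice matching. The only cosmetic difference is that the paper cites Corollary \ref{cor:prismatic-realization-lff} and Proposition \ref{prop:dcrys-exact} directly to establish that $\bb{D}_\crys\circ\omega_{\mathsf{K}^p,\smallprism}$ is an exact tensor functor into $\cat{VectF}^{\varphi,\mr{div}}$, whereas you route through Theorem \ref{thm:prismatic-F-gauge-realization} and Proposition \ref{prop:filtered-equiv}; since Theorem \ref{thm:prismatic-F-gauge-realization} itself depends on Corollary \ref{cor:prismatic-realization-lff}, these are the same ingredients.
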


To prove this, we will require the fact that $\omega_{\mathsf{K}^p,\smallprism}$ takes values in $\cat{Vect}^{\varphi,\mr{lff}}((\wh{\ms{S}}_{\mathsf{K}^p})_\smallprism)$ the proof of which, as mentioned before Theorem \ref{thm:prismatic-F-gauge-realization}, we delay until \S\ref{ss:comparison-to-shim-vars} (see Corollary \ref{cor:prismatic-realization-lff}).

\begin{proof}[Proof of Theorem \ref{thm:prismatic-crystalline-comparison}] It suffices to show that $\{\bb{D}_\crys\circ\omega_{{\mathsf{K}^p},\smallprism}\}$ is a crystalline canonical model. Using Corollary \ref{cor:prismatic-realization-lff}, the fact that it is an exact tensor functor valued in $\cat{VectF}^{\varphi,\mr{div}}(\mf{X}_\crys)$ follows from \cite[Proposition 2.16]{IKY3}. Furthermore, by \cite[Theorem 2.10]{IKY3}, there are isomorphisms in $\cat{IsocF}^\varphi((\wh{\ms{S}}_{\mathsf{K}^p})_\crys)$
\begin{equation}
    (\bb{D}_\crys\circ \omega_{{\mathsf{K}^p},\smallprism})[\nicefrac{1}{p}]\isomto D_\crys\circ (T_\et\circ\omega_{{\mathsf{K}^p},\smallprism})[\nicefrac{1}{p}]\xrightarrow{\j_{{\mathsf{K}^p}}} D_\crys \circ \omega_{{\mathsf{K}^p},\an}[\nicefrac{1}{p}],
\end{equation}
and we denote the inverses by $\j^\crys_{{\mathsf{K}^p}}$, which are compatible in ${\mathsf{K}^p}$. As $\j^\crys_{{\mathsf{K}^p}}$ is an isomorphism of filtered $F$-isocrystals, condition \textbf{(ICM1)} is automatic. Condition \textbf{(ICM2)} follows from the compatability of $T_\et$ with pullbacks, and \cite[Example 2.12]{IKY3}.
\end{proof}

We provide an explication of this result when $(\mb{G},\mb{X},\mc{G})$ is of Hodge type. Fix an integral Hodge embedding $\iota\colon (\mb{G},\mb{X},\mc{G})\to (\GSp(\mb{V}_0),\mf{h}^{\pm},\GSp(\Lambda_0))$ and a tensor package $(\Lambda_0,\mathds{T}_0)$ with $\mathrm{Fix}(\mathds{T}_0)=\mc{G}$. By \cite[\S3.1.2]{KimUnif} and \cite[Corollary 2.3.9]{KisIntShab}, one may construct tensors
\begin{equation*}
    \mathds{T}_{0,p}^\dR\subseteq (\mc{H}^1_\dR(\ms{A}_{{\mathsf{K}^p}}/\ms{S}_{{\mathsf{K}^p}})^\vee)^\otimes,
\end{equation*}
which are compatible in ${\mathsf{K}^p}$. By pulling back $\mathds{T}_{0,p}^\dR$, we obtain a set $\wh{\mathds{T}}_{0,p}^\dR$ of tensors on $\mc{H}^1_\dR(\wh{\ms{A}}_{{\mathsf{K}^p}}/\wh{\ms{S}}_{{\mathsf{K}^p}})^\vee$. Using the canonical isomorphism from \cite[Theorem 7.23 and Summary 7.26.3]{BerthelotOgus},
\begin{equation*}
   \mc{H}^1_\dR(\wh{\ms{A}}_{{\mathsf{K}^p}}/\wh{\ms{S}}_{{\mathsf{K}^p}})^\vee\isomto  \mc{H}^1_\crys(\wh{\ms{A}}_{{\mathsf{K}^p}}/\wh{\ms{S}}_{{\mathsf{K}^p}})^\vee_{\wh{\ms{S}}_{\mathsf{K}^p}}, 
\end{equation*}
of vector bundles with connection, we obtain tensors $\mathds{T}^\crys_{0,p}$ in $\mc{H}^1_\crys(\wh{\ms{A}}_{{\mathsf{K}^p}}/\wh{\ms{S}}_{{\mathsf{K}^p}})^\vee$. By \cite[Proposition 3.3.7]{KimUnif} these are tensors in $\mc{H}^1_\crys(\wh{\ms{A}}_{{\mathsf{K}^p}}/\wh{\ms{S}}_{{\mathsf{K}^p}})^\vee$ considered as an object of $\textbf{VectF}^\varphi(\wh{\ms{S}}_{{\mathsf{K}^p}})$. 

\begin{prop}\label{prop:prismatic-crystalline-tensor-matching} There is an isomorphism 
\begin{equation*}
    \bb{D}_\crys(\omega_{\mathsf{K}^p,\smallprism}(\Lambda_0))\isomto\mc{H}^1_\crys(\wh{\ms{A}}_{{\mathsf{K}^p}}/\wh{\ms{S}}_{{\mathsf{K}^p}})^\vee
\end{equation*}
in $\cat{VectF}^{\varphi,\mr{div}}(\wh{\ms{S}}_{\mathsf{K}^p})$, compatible in ${\mathsf{K}^p}$ and carrying $\bb{D}_\crys(\omega_{\mathsf{K}^p,\smallprism}(\mathds{T}_{0}))$ to $\mathds{T}_{0,p}^\crys$.
\end{prop}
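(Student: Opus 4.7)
The plan is to reduce everything to the compatibility of our functor $\bb{D}_\crys$ with classical Dieudonn\'e theory (Proposition \ref{prop:ALB-dJ-comparison}), combined with the prismatic-crystalline comparison for abelian schemes and the Hodge-type prismatic realization from Theorem \ref{thm:main-Shimura-theorem-Hodge-type-case}.

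\textbf{Step 1 (Construction of the isomorphism).} By Theorem \ref{thm:main-Shimura-theorem-Hodge-type-case} applied to $\Lambda_0$ with its tensors $\mathds{T}_0$, there is a canonical isomorphism
\begin{equation*}
\omega_{\mathsf{K}^p,\smallprism}(\Lambda_0)\isomto \mc{H}^1_\Prism(\wh{\ms{A}}_{\mathsf{K}^p}/\wh{\ms{S}}_{\mathsf{K}^p})^\vee
\end{equation*}
in $\cat{Vect}^\varphi((\wh{\ms{S}}_{\mathsf{K}^p})_\Prism)$, compatible in $\mathsf{K}^p$, carrying $\omega_{\mathsf{K}^p,\smallprism}(\mathds{T}_0)$ to $\mathds{T}_{0,p}^\Prism$. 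I would apply $\bb{D}_\crys$ to this isomorphism. To identify the right hand side with $\mc{H}^1_\crys(\wh{\ms{A}}_{\mathsf{K}^p}/\wh{\ms{S}}_{\mathsf{K}^p})^\vee$ as filtered $F$-crystals, I would combine the prismatic Dieudonn\'e identification \cite[Theorem 4.44 and Corollary 4.64]{AnschutzLeBrasDD} (giving $\mc{H}^1_\Prism(\wh{\ms{A}}_{\mathsf{K}^p}/\wh{\ms{S}}_{\mathsf{K}^p})^\vee\simeq\mc{M}_\Prism(\wh{\ms{A}}_{\mathsf{K}^p}[p^\infty])$) with Proposition \ref{prop:ALB-dJ-comparison} to get
\begin{equation*}
\bb{D}_\crys\bigl(\mc{H}^1_\Prism(\wh{\ms{A}}_{\mathsf{K}^p}/\wh{\ms{S}}_{\mathsf{K}^p})^\vee\bigr)\isomto \bb{D}(\wh{\ms{A}}_{\mathsf{K}^p}[p^\infty])\isomto \mc{H}^1_\crys(\wh{\ms{A}}_{\mathsf{K}^p}/\wh{\ms{S}}_{\mathsf{K}^p})^\vee,
\end{equation*}
where the last isomorphism is the standard identification between the filtered Dieudonn\'e crystal of the $p$-divisible group of an abelian scheme and its (Hodge-filtered) first crystalline cohomology.

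\textbf{Step 2 (Tensor matching).} The category $\cat{VectF}^{\varphi,\mr{div}}(\wh{\ms{S}}_{\mathsf{K}^p,\crys})$ is $p$-torsion free when restricted to our setting, so it is enough to verify the tensor matching after inverting $p$. By Proposition \ref{prop:integral-dcrys-strongly-divisible-and-rational-agreeance}, $\bb{D}_\crys[\nicefrac{1}{p}]$ on the prismatic side factors through $D_\crys\circ T_\et$, and the natural isomorphism from Step 1 becomes, after inverting $p$ and applying the de Rham comparison, the rational de Rham comparison for the universal abelian scheme. Under this, $\omega_{\mathsf{K}^p,\smallprism}(\mathds{T}_0)[\nicefrac{1}{p}]$ corresponds to $\mathds{T}_{0,p}^\mr{\acute{e}t}$ by Proposition \ref{prop:comparison-of-otorsors-Hodge-type} (this is where the Hodge-type tensor definition enters), and $\mathds{T}_{0,p}^\mr{\acute{e}t}$ matches $\mathds{T}_{0,p}^\dR$ rationally by the construction of the latter (these tensors on both sides descend from the Betti realization $\mathds{T}_0$, compared through the de Rham-Betti comparison over the Shimura variety; this compatibility is a standard input in \cite{KimUnif} and \cite{LoveringFCrystals}, see e.g.\@ \cite[Proposition 3.3.7]{KimUnif}). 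Finally $\mathds{T}_{0,p}^\dR$ corresponds to $\mathds{T}_{0,p}^\crys[\nicefrac{1}{p}]$ under the Berthelot-Ogus isomorphism by the very definition of $\mathds{T}_{0,p}^\crys$.

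\textbf{Step 3 (Integral matching).} Having equality after inverting $p$, I would deduce the integral equality from the fact that $\mc{H}^1_\crys(\wh{\ms{A}}_{\mathsf{K}^p}/\wh{\ms{S}}_{\mathsf{K}^p})^\vee$ is $p$-torsion free: both $\bb{D}_\crys(\omega_{\mathsf{K}^p,\smallprism}(\mathds{T}_0))$ and $\mathds{T}_{0,p}^\crys$ define sections of a $p$-torsion free sheaf agreeing after inverting $p$, so they coincide. Compatibility in $\mathsf{K}^p$ follows from the compatibility of all the constructions used (the Anschütz-Le Bras identification, Proposition \ref{prop:ALB-dJ-comparison}, Theorem \ref{thm:main-Shimura-theorem-Hodge-type-case}, and the definition of $\mathds{T}_{0,p}^\crys$).

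The main obstacle will be Step 2: carefully threading the tensor matching through the various comparison isomorphisms (\'etale-prismatic, prismatic-crystalline via $\bb{D}_\crys$, crystalline-de Rham via evaluation, and de Rham-crystalline via Berthelot-Ogus) and checking that they are all compatible with the \emph{same} Betti-origin tensors $\mathds{T}_0$. This compatibility is somewhat folkloric but can be extracted by comparing the constructions at the level of complex points, where all realizations are controlled by the Hodge structure on $\Lambda_0$, and then spreading out by $\mathbf{E}$-rationality of the Shimura variety.
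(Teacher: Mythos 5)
Your proposal follows essentially the same route as the paper's proof: Theorem \ref{thm:main-Shimura-theorem-Hodge-type-case} to reduce to identifying $\bb{D}_\crys$ of prismatic cohomology, Proposition \ref{prop:ALB-dJ-comparison} (plus the Anschütz--Le Bras and BBM comparisons) for the filtered $F$-crystal isomorphism, and then a reduction to a rational check via Proposition \ref{prop:integral-dcrys-strongly-divisible-and-rational-agreeance} and \cite[Proposition 3.3.7]{KimUnif} for the tensor matching. The paper phrases the rational-to-integral step as injectivity of global sections of the vector bundle into its generic-fiber sections, which is the same $p$-torsion-freeness argument you use.
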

\begin{proof} By Theorem \ref{thm:main-Shimura-theorem-Hodge-type-case}, there is an isomorphism $\omega_{\mathsf{K}^p,\smallprism}(\Lambda_0)\isomto \mc{H}^1_\smallprism(\wh{\ms{A}}_{{\mathsf{K}^p}}/\wh{\ms{S}}_{{\mathsf{K}^p}})^\vee$ in $\cat{Vect}^\varphi((\wh{\ms{S}}_{{\mathsf{K}^p}})_\smallprism)$, compatible in ${\mathsf{K}^p}$, and carrying $\omega_{\mathsf{K}^p,\smallprism}(\mathds{T}_0)$ to $\mathds{T}_{0,p}^\smallprism$. Thus, it suffices to construct an isomorphism 
\begin{equation*}
    \bb{D}_\crys\left(\mc{H}^1_\smallprism(\wh{\ms{A}}_{{\mathsf{K}^p}}/\wh{\ms{S}}_{{\mathsf{K}^p}})\right)^\vee\isomto \mc{H}^1_\crys(\wh{\ms{A}}_{{\mathsf{K}^p}}/\wh{\ms{S}}_{{\mathsf{K}^p}})^\vee,
\end{equation*}
in $\cat{VectF}^\varphi(\wh{\ms{S}}_{{\mathsf{K}^p}})$ carrying $\bb{D}_\crys(\mathds{T}_{0,p}^\smallprism)$ to $\mathds{T}_{0,p}^\crys$. That there is an isomorphism in $\cat{VectF}^\varphi(\wh{\ms{S}}_{\mathsf{K}^p})$ follows from 
\cite[Theorem 4.6.2]{AnschutzLeBrasDD} and \cite[(3.3.7.3)]{BBMDieuII} via \cite[Theorem 4.8]{IKY3}. 

Thus, it suffices to show that this isomorphism carries $\bb{D}_\crys(\mathds{T}_{0,p}^\smallprism)$ to $\mathds{T}_{0,p}^\crys$. As $\mc{H}^1_\crys(\wh{\ms{A}}_{{\mathsf{K}^p}}/\wh{\ms{S}}_{{\mathsf{K}^p}})^\vee$ is a vector bundle on $\wh{\ms{S}}_{{\mathsf{K}^p}}$, there is an injection 
\begin{equation*}
    \Gamma\left(\wh{\ms{S}}_{{\mathsf{K}^p}},\mc{H}^1_\crys(\wh{\ms{A}}_{{\mathsf{K}^p}}/\wh{\ms{S}}_{{\mathsf{K}^p}})^\vee\right) \to \Gamma\left(\mc{S}_{{\mathsf{K}^p}},\mc{H}^1_\crys(\wh{\ms{A}}_{{\mathsf{K}^p}}/\wh{\ms{S}}_{{\mathsf{K}^p}})^\vee_\eta\right),
\end{equation*}
and so it suffices to show that the images of these two sets of tensors agree. But, by \cite[Theorem 2.10]{IKY3} the image of $\bb{D}_\crys(\mathds{T}_{0,p}^\smallprism)$ may be identified with $D_\crys(T_\et(\mathds{T}_{0,p}^\smallprism))=D_\crys(\mathds{T}_{0,p}^\et)$. The claimed matching is then given by \cite[Proposition 3.3.7]{KimUnif}.
\end{proof}

By Theorem \ref{thm:main-Shimura-theorem-Hodge-type-case}, $\omega_{{\mathsf{K}^p},\smallprism}$ is associated with the prismatic $\mc{G}$-torsor with $F$-structure
\begin{equation*}\underline{\Isom}\left((\Lambda_0\otimes_{\Z_p}\mc{O}_{(\wh{\ms{S}}_{{\mathsf{K}^p}})_\smallprism},\mathds{T}_0\otimes 1), (\mc{H}^1_\smallprism(\wh{\ms{A}}_{{\mathsf{K}^p}}/\wh{\ms{S}}_{{\mathsf{K}^p}})^\vee,\mathds{T}^\smallprism_{0,p})\right),
\end{equation*}
compatibly in ${\mathsf{K}^p}$. Thus, by Proposition \ref{prop:prismatic-crystalline-tensor-matching}, we may identify $\bb{D}_\crys\circ\omega_{{\mathsf{K}^p},\smallprism}$ with 
\begin{equation}\label{eq:crystalline-realization}
     \underline{\Isom}\left((\Lambda_0\otimes_{\Z_p}\mc{O}_{\wh{\ms{S}}_{{\mathsf{K}^p}}/\mc{O}},\mathds{T}_0\otimes 1), (\mc{H}^1_\crys(\wh{\ms{A}}_{{\mathsf{K}^p}}/\wh{\ms{S}}_{{\mathsf{K}^p}})^\vee,\mathds{T}^\crys_{0,p})\right),
\end{equation}
with Frobenius and Rees structure (see \cite[\S2.4]{LoveringFCrystals}) inherited from $\mc{H}^1_\crys(\wh{\ms{A}}_{{\mathsf{K}^p}}/\wh{\ms{S}}_{{\mathsf{K}^p}})^\vee$, compatibly in ${\mathsf{K}^p}$. But, this is Lovering's construction of $\omega_{{\mathsf{K}^p},\crys}$ in the Hodge type case.

\subsubsection{Cohomological consequences} To obtain cohomological implications of Theorem \ref{thm:prismatic-crystalline-comparison}, it is useful to recall that the group-theoretic description of when a Shimura variety is proper. 

\begin{prop} For a neat compact open subgroup $\mathsf{K}^p\subseteq\mb{G}(\A_f^p)$, the following are equivalent:
\begin{enumerate}
    \item $\mb{G}^\ad$ is $\Q$-anisotropic,
    \item $\Sh_{\mathsf{K}_0\mathsf{K}^p}\to\Spec(E)$ is proper,
    \item $\ms{S}_{\mathsf{K}^p}\to\Spec(\mc{O}_E)$ is proper.
\end{enumerate}
\end{prop}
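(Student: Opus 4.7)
The equivalence (1) $\Leftrightarrow$ (2) is the classical Baily--Borel criterion. The connected components of $\Sh_{\mathsf{K}_0\mathsf{K}^p}(\C)$ form a finite disjoint union of arithmetic locally symmetric spaces $\Gamma_i\backslash\mb{X}^+$ with $\Gamma_i\subseteq \mb{G}^\ad(\Q)$; by Borel--Harish-Chandra, these are compact if and only if $\mb{G}^\ad$ is $\Q$-anisotropic. Compactness of the analytification is equivalent to properness over $\C$ by GAGA, and this descends to properness over $E$ by faithfully flat descent. The implication (3) $\Rightarrow$ (2) is immediate, as the generic fiber of a proper morphism is proper, leaving (2) $\Rightarrow$ (3) as the substantive content.

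For (2) $\Rightarrow$ (3), I would first reduce to the Hodge type case via Lemma \ref{lem:Lovering-lem}: one obtains an unramified Shimura datum of abelian type $(\mb{G}_2,\mb{X}_2,\mc{G}_2)$ together with a morphism $\alpha$ into a product involving a Hodge-type datum $(\mb{G}_1,\mb{X}_1,\mc{G}_1)$ and a special-type datum $(\mb{T},\{h\},\mc{T})$, and a finite \'etale (surjective) morphism $\beta_{\mathsf{K}^p_2,\mathsf{K}^p}\colon\ms{S}_{\mathsf{K}^p_2}\to(\ms{S}_{\mathsf{K}^p})_{\mc{O}_{E_1}}$. Since $\mb{G}_2^\ad\cong\mb{G}^\ad$, $\Q$-anisotropy transfers, so (1) (and hence, via the already-established equivalence (1) $\Leftrightarrow$ (2), also (2)) holds for $(\mb{G}_2,\mb{X}_2)$. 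Properness descends along surjective finite \'etale morphisms and faithfully flat finite base change, so it suffices to establish properness of $\ms{S}_{\mathsf{K}^p_2}$. The finite morphism $\ms{S}_{\mathsf{K}_2^p}\to \ms{S}_{\mathsf{K}_1^p}\times_{\mc{O}_{E_1}}\ms{S}_{\mathsf{K}_t^p}$ --- the second factor being a finite disjoint union of $\Spf(\mc{O}_{E'})$, as in the proof of Theorem \ref{thm:main-Shimura-theorem-abelian-type-case}, and hence proper --- further reduces the problem to proving properness of $\ms{S}_{\mathsf{K}_1^p}$ in the Hodge type case.

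In the Hodge type case, fix an integral Hodge embedding $\iota\colon(\mb{G}_1,\mb{X}_1,\mc{G}_1)\hookrightarrow(\GSp(\mb{V}_0),\mf{h}^{\pm},\GSp(\Lambda_0))$. By Kisin's construction, $\ms{S}_{\mathsf{K}_1^p}$ is the normalization of the scheme-theoretic closure $\ms{S}^-_{\mathsf{K}_1^p}$ of $\Sh_{\mathsf{K}_{0,1}\mathsf{K}_1^p}$ inside the Siegel integral model $\ms{M}_{\mathsf{L}^p,\mc{O}_E}$; since normalization is finite (hence proper), it suffices to show that $\ms{S}^-_{\mathsf{K}_1^p}$ is proper over $\mc{O}_E$. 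For this I would verify the valuative criterion: given a DVR $R$ over $\mc{O}_E$ with fraction field $K$ and a point $\Spec(K)\to\Sh_{\mathsf{K}_{0,1}\mathsf{K}_1^p}$ corresponding to a polarized abelian variety $(A,\lambda,\eta)/K$, the key input is that the \'etale realization $\omega_{\mathsf{K}_1^p,\an}$ is crystalline at every classical point (cf.\@ Proposition \ref{prop:crystalline-locus}). By the N\'eron--Ogg--Shafarevich criterion this gives good reduction of $A$, so $(A,\lambda,\eta)$ extends to an object of $\ms{M}_{\mathsf{L}^p,\mc{O}_E}(R)$; combined with properness of $\Sh_{\mathsf{K}_{0,1}\mathsf{K}_1^p}$ over $E$, this extension factors through $\ms{S}^-_{\mathsf{K}_1^p}$, verifying the valuative criterion.

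The main obstacle is making the valuative-criterion argument fully rigorous for arbitrary DVRs --- in particular for ramified or mixed-characteristic DVRs whose residue field is not algebraic over $k$. While the good reduction statement is well-known at classical (field-valued) points, ensuring that the N\'eron-model extension actually lies inside the closure $\ms{S}^-_{\mathsf{K}_1^p}$ rather than only inside the ambient Siegel model requires a careful limit/approximation argument together with a robust moduli-theoretic interpretation of the closure.
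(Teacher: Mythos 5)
Your decomposition (1)$\Leftrightarrow$(2), (3)$\Rightarrow$(2) trivially, and a devissage of (2)$\Rightarrow$(3) from abelian to Hodge type via finite \'etale maps between integral canonical models is essentially the same skeleton as the paper's. The paper phrases the devissage through Lemma \ref{lem:isogeny-finite-etale} (a central isogeny on derived groups induces a finite \'etale map of integral models), whereas you route it through the more elaborate Lemma \ref{lem:Lovering-lem} together with a closed embedding into a product; both routes are valid and ultimately rest on the same finiteness observations, with properness descending along surjective finite morphisms and finite flat base change.

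Where you diverge substantively is the Hodge type case. The paper simply cites \cite[Corollary 4.1.7]{MadTorHod}, whose proof goes through the existence of proper toroidal compactifications of $\ms{S}_{\mathsf{K}^p_1}$ and the vanishing of the boundary when $\mb{G}_1^{\mathrm{ad}}$ is $\Q$-anisotropic. You attempt instead a direct verification of the valuative criterion for the closure $\ms{S}^-_{\mathsf{K}^p_1}$ inside $\ms{M}_{\mathsf{L}^p}$ using N\'eron--Ogg--Shafarevich, and you rightly flag the weak spot yourself. Let me sharpen why that spot is a real obstruction rather than a routine tightening. Even after reducing the valuative criterion to complete DVRs $R$ dominating $\mc{O}_E$ whose fraction field $K$ maps to a generic point of a component of $\ms{S}^-_{\mathsf{K}^p_1}$, the good-reduction criterion you invoke does not apply as stated: the $p$-adic crystallinity criterion (Fontaine/Coleman--Iovita) requires $R$ to have perfect residue field, while the residue field of such an $R$ is typically a finitely generated imperfect extension of $\ov{\F}_p$; and Proposition \ref{prop:crystalline-locus}, which you use as the input, is a statement about \emph{classical} points $[k(x):E]<\infty$, not about function-field points. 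The cleaner path within an NOS-style argument would be the $\ell$-adic criterion ($\ell\neq p$), which has no perfectness hypothesis, but then one must independently show that inertia at $R$ acts trivially on $T_\ell(A_\xi)$, and there is no obvious source for that statement that does not already presuppose the extension one is trying to construct. This is precisely the kind of difficulty Madapusi Pera's compactification argument sidesteps, and it is why the paper delegates the Hodge type case to that reference rather than verifying properness by hand.
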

\begin{proof} The equivalence of of the first two conditions is classical (e.g.\@ see \cite[Lemma 3.1.5]{Paugam}). The equivalence of the first and third conditions is \cite[Corollary 4.1.7]{MadTorHod} when $(\mb{G},\mb{X},\mc{G})$ is of Hodge type, and one quickly reduces to this case using Lemma \ref{lem:isogeny-finite-etale}.
\end{proof}

So combining \cite[Corollary 3.11]{IKY3} and Corollary \ref{cor:prismatic-realization-lff} reproves \cite[Theorem 3.6.1]{LoveringFCrystals}.

\begin{prop}[{\cite[Theorem 3.6.1]{LoveringFCrystals}}]\label{prop:coholomogy-crystalline} Suppose that $\mb{G}^\der$ is $\Q$-anisotropic. Then, for any object $\xi$ of $\cat{Rep}_{\Z_p}(\mc{G}^c)$, the Galois representation $H^i_\et((\Sh_{\mathsf{K}_0\mathsf{K}^p})_{\ov{\Q}_p},\omega_{\mathsf{K}^p,\et}(\xi)[\nicefrac{1}{p}])$ of $E$ is crystalline, and there is a canonical isomorphism of filtered $F$-isocrystals
\begin{equation}\label{eq:shim-var-matching}D_\crys\left(H^i_\et((\Sh_{\mathsf{K}_0\mathsf{K}^p})_{\ov{\Q}_p},\omega_{\mathsf{K}^p,\et}(\xi)[\nicefrac{1}{p}])\right)\isomto H^i_\crys\left(((\ms{S}_{\mathsf{K}^p})_{\breve{\Z}_p}/\breve{\Z}_p)_\crys,\omega_{\mathsf{K}^p,\crys}(\xi)[\nicefrac{1}{p}]\right). 
\end{equation}
If the $\bm{\mu}^c_h$-weights of $\xi[\nicefrac{1}{p}]$ are in $[0,p-3-i]$, then the isomorphism in \eqref{eq:shim-var-matching} sends the lattice $H^i_\et((\Sh_{\mathsf{K}_0\mathsf{K}^p})_{\ov{\Q}_p},\omega_{\mathsf{K}^p,\et}(\xi))$ to the lattice $H^i_\crys\left(((\ms{S}_{\mathsf{K}^p})_{\breve{\Z}_p}/\breve{\Z}_p)_\crys,\omega_{\mathsf{K}^p,\crys}(\xi)\right)$.
\end{prop}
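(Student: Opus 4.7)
The strategy is to deduce the statement by applying the general crystalline and Fontaine--Laffaille comparison machinery developed earlier in the paper to the prismatic realization $\omega_{\mathsf{K}^p,\smallprism}$. Since $\mb{G}^\der$ (equivalently $\mb{G}^\ad$) is $\Q$-anisotropic, $\ms{S}_{\mathsf{K}^p}\to\Spec(\mc{O}_E)=\Spec(W)$ is smooth and projective, putting us in the setting of Corollary \ref{cor: FL coh}. By Corollary \ref{cor:prismatic-realization-lff} (cited from later in the paper), $\omega_{\mathsf{K}^p,\smallprism}$ takes values in $\cat{Vect}^{\varphi,\mr{lff}}((\wh{\ms{S}}_{\mathsf{K}^p})_\Prism)$, and by Theorem \ref{thm:prismatic-F-gauge-realization} it is of type $-\mu_h^c$, so the $\bm{\mu}_h^c$-weight condition on $\xi[\nicefrac{1}{p}]$ precisely forces $\omega_{\mathsf{K}^p,\smallprism}(\xi)$ to have Hodge--Tate weights in $[0,p-3-i]$.

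For the rational statement, I would first identify the \'etale cohomology of $\omega_{\mathsf{K}^p,\et}(\xi)[\nicefrac{1}{p}]$ on $\Sh_{\mathsf{K}_0\mathsf{K}^p}$ with the cohomology of $T_\et(\omega_{\mathsf{K}^p,\smallprism}(\xi))[\nicefrac{1}{p}]$ on $(\wh{\ms{S}}_{\mathsf{K}^p})_\eta$ via the algebraization isomorphism of \cite[Theorem 3.7.2]{HuberEC} and the construction of $\omega_{\mathsf{K}^p,\smallprism}$ as $T_\et^{-1}\circ\omega_{\mathsf{K}^p,\an}$. Then the crystalline comparison theorem for smooth proper formal schemes with coefficients in a prismatic $F$-crystal (as recalled in the diagram \eqref{eq:Dcrys-comparison-isom}, relying on \cite{TanTong} and Proposition \ref{prop:integral-dcrys-strongly-divisible-and-rational-agreeance}) delivers the crystallinity and a canonical map $c_{\omega_{\mathsf{K}^p,\smallprism}(\xi)}$ from $D_\crys$ of the \'etale cohomology to $H^i_\crys$ with coefficients in $\bb{D}_\crys(\omega_{\mathsf{K}^p,\smallprism}(\xi))[\nicefrac{1}{p}]$. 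Applying Theorem \ref{thm:prismatic-crystalline-comparison} identifies this last coefficient system with $\omega_{\mathsf{K}^p,\crys}(\xi)[\nicefrac{1}{p}]$, producing the rational isomorphism \eqref{eq:shim-var-matching}.

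For the integral refinement I would directly invoke Corollary \ref{cor: FL coh}, applied to $(\mc{E},\varphi_\mc{E})=\omega_{\mathsf{K}^p,\smallprism}(\xi)$ on the smooth projective $W$-scheme $\ms{S}_{\mathsf{K}^p}$, with $a=p-3-i$ and $b=i$, so $a+b=p-3<p-2$. This yields that the map $c_{\omega_{\mathsf{K}^p,\smallprism}(\xi)}$ restricts to an isomorphism on integral lattices $\bb{D}_\crys(H^i_\et(\ldots,\omega_{\mathsf{K}^p,\et}(\xi)))\isomto H^i_\crys((\ms{S}_{\mathsf{K}^p,k}/W),\bb{D}_\crys(\omega_{\mathsf{K}^p,\smallprism}(\xi)))$, where on the left we are using the integral $\bb{D}_\crys$ of Example \ref{ex:Kisin-Dcrys-II}; combining once more with Theorem \ref{thm:prismatic-crystalline-comparison} matches the right-hand lattice with Lovering's $H^i_\crys(\ldots,\omega_{\mathsf{K}^p,\crys}(\xi))$, as desired. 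The main obstacle is less a conceptual one than a bookkeeping issue: one must carefully verify the dictionary translating the $\bm{\mu}_h^c$-weight condition on $\xi$ (which is the hypothesis available from Shimura-theoretic data) into the bound on Hodge--Tate weights of $\omega_{\mathsf{K}^p,\smallprism}(\xi)$ needed to invoke Corollary \ref{cor: FL coh}, accounting for the sign convention in Theorem \ref{thm:prismatic-F-gauge-realization}, and check that the two versions of $\bb{D}_\crys$ appearing on the \'etale cohomology side (from Example \ref{ex:Kisin-Dcrys-II}) and on the geometric side (from Corollary \ref{cor: FL coh}) yield the same filtered $F$-isocrystal structure under \eqref{eq:Dcrys-comparison-isom}.
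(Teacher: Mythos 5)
Your proposal is correct and matches the paper's own argument. The paper's proof of this proposition is in fact contained in a single sentence just before the statement: ``combining Corollary~\ref{cor: FL coh} and Corollary~\ref{cor:prismatic-realization-lff} reproves \cite[Theorem~3.6.1]{LoveringFCrystals}.'' Your three ingredients (properness/projectivity of $\ms{S}_{\mathsf{K}^p}$ from the $\Q$-anisotropy, lff-ness and the weight dictionary from Corollary~\ref{cor:prismatic-realization-lff}, and the identification $\bb{D}_\crys\circ\omega_{\mathsf{K}^p,\smallprism}\simeq\omega_{\mathsf{K}^p,\crys}$ from Theorem~\ref{thm:prismatic-crystalline-comparison}) together with the diagram \eqref{eq:Dcrys-comparison-isom} for the rational part and Corollary~\ref{cor: FL coh} for the integral lattice matching are precisely what the paper has in mind; the sign-bookkeeping caveat you flag is real but routine given the definition of type $-\mu_h^c$.
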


 We obtain a syntomic refinement of the above result. To this end, let us say that a $\Z/p^n$-representation of $\Gal(\ov{E}/E)$, with $n$ in $\bb{N}\cup\{\infty\}$, is of \emph{syntomically good reduction} if it belongs to the essential image of the functor $T_\et\colon \cat{Vect}(\mc{O}_E^\syn/p^n)\to \cat{Rep}_{\Z/p^n}(\Gal(\ov{E}/E))$, which when $n=\infty$ is a refinement of the crystalliness condition (cf.\@ \cite[Theorem 6.6.13]{BhattNotes} and Footnote \ref{footnote:syntomically-good-reduction}). The following is an immediate corollary of \cite[Proposition 3.12]{IKY3}.

 \begin{thm}\label{thm:gauge-propert-coh} Suppose that $\mb{G}^\der$ is $\Q$-anisotropic. Then, for any object $\xi$ of $\cat{Rep}_{\Z_p}(\mc{G}^c)$ with $\bm{\mu}^c_h$-weights of $\xi[\nicefrac{1}{p}]$ in $[0,p-3-i]$ the $\Gal(\ov{E}/E)$-representation $H^i_\et((\Sh_{\mathsf{K}_0\mathsf{K}^p})_{\ov{\Q}_p},\omega_{\mathsf{K}^p,\et}(\xi)/p^n)$ has syntomically good reduction for any $n$ in $\bb{N}\cup\{\infty\}$. In fact, there is an isomorphism
 \begin{equation*}
     H^i_\et((\Sh_{\mathsf{K}_0\mathsf{K}^p})_{\ov{\Q}_p},\omega_{\mathsf{K}^p,\et}(\xi)/p^n)\simeq T_\et\left(\mc{H}^i_\mr{syn}(\wh{\ms{S}}_{\mathsf{K}^p}/\mc{O}_E,\omega_{\mathsf{K}^p,\mr{syn}}(\xi))/p^n)\right).
 \end{equation*}
 \end{thm}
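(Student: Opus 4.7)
The plan is to deduce this directly from Proposition \ref{prop: Gauge coh} applied to $\ms{X} = \ms{S}_{\mathsf{K}^p}$ with the $F$-gauge $\mc{V} = \omega_{\mathsf{K}^p, \mr{syn}}(\xi)$ produced by Theorem \ref{thm:prismatic-F-gauge-realization}. First I would verify the hypotheses of that proposition. The model $\ms{S}_{\mathsf{K}^p}$ is smooth over $\mc{O}_E$ by construction, and properness over $\mc{O}_E$ follows from the $\Q$-anisotropy of $\mb{G}^\der$: the isogeny $\mb{G}^\der \to \mb{G}^\mathrm{ad}$ shows this is equivalent to $\mb{G}^\mathrm{ad}$ being $\Q$-anisotropic, which by the proposition preceding Proposition \ref{prop:coholomogy-crystalline} is equivalent to $\ms{S}_{\mathsf{K}^p} \to \Spec(\mc{O}_E)$ being proper. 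Since $(\mb{G}, \mb{X}, \mc{G})$ is unramified, $E/\Q_p$ is unramified and $\mc{O}_E = W(k)$ for $k$ the residue field of $E$, so $\ms{S}_{\mathsf{K}^p}$ is indeed smooth and proper over $W$ as required.

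Next I would check the weight/range condition. By Theorem \ref{thm:prismatic-F-gauge-realization}, $\omega_{\mathsf{K}^p,\mr{syn}}$ is a prismatic $F$-gauge with $\mc{G}^c$-structure of type $\mu_h^c$, so if the $\bm{\mu}_h^c$-weights of $\xi[\nicefrac{1}{p}]$ lie in $[0, p-3-i]$ then $\omega_{\mathsf{K}^p,\mr{syn}}(\xi)$ is an object of $\cat{Vect}_{[0,p-3-i]}(\wh{\ms{S}}_{\mathsf{K}^p}^\mr{syn})$. Setting $a = p-3-i$ and $b = i$, the inequality $a + b = p - 3 < p - 2$ is satisfied, so Proposition \ref{prop: Gauge coh} applies and yields a canonical isomorphism
\begin{equation*}
H^i_\et((\ms{S}_{\mathsf{K}^p})_{\ov{\Q}_p}, T_\et(\omega_{\mathsf{K}^p,\mr{syn}}(\xi))^{\mathrm{alg}}/p^n) \isomto T_\et\bigl(\mc{H}^i_\mr{syn}(\wh{\ms{S}}_{\mathsf{K}^p}/\mc{O}_E, \omega_{\mathsf{K}^p,\mr{syn}}(\xi))/p^n\bigr)
\end{equation*}
for every $n$ in $\bb{N} \cup \{\infty\}$.

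To finish, I would identify the left-hand side with the object appearing in the statement. By the defining property of $\omega_{\mathsf{K}^p,\mr{syn}}$, we have $T_\et \circ \omega_{\mathsf{K}^p,\mr{syn}} \simeq \omega_{\mathsf{K}^p,\mathrm{an}}$, and since $\ms{S}_{\mathsf{K}^p}$ is proper, its generic fiber in the sense of formal schemes coincides with $\Sh_{\mathsf{K}_0\mathsf{K}^p}^{\mathrm{an}}$, so the algebraization recovers $\omega_{\mathsf{K}^p,\et}(\xi)$ on $(\ms{S}_{\mathsf{K}^p})_E = \Sh_{\mathsf{K}_0\mathsf{K}^p}$. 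Substituting, we obtain exactly the claimed isomorphism, and since the right-hand side is by construction in the essential image of $T_\et\colon \cat{Vect}(\mc{O}_E^\mr{syn}/p^n) \to \cat{Rep}_{\Z/p^n}(\Gal(\ov{E}/E))$, the Galois representation has syntomically good reduction in the sense defined just before Theorem \ref{thm:gauge-propert-coh}. There is no real obstacle to this argument: once Theorem \ref{thm:prismatic-F-gauge-realization} (producing $\omega_{\mathsf{K}^p,\mr{syn}}$ of the correct type) and Proposition \ref{prop: Gauge coh} (the Fontaine--Laffaille cohomological comparison for $F$-gauges) are in hand, the result is genuinely a formal consequence, with the only bookkeeping being the matching of the $\bm{\mu}_h^c$-weight range to the Hodge--Tate range of the $F$-gauge.
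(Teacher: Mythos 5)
Your proof takes exactly the approach the paper intends: the authors simply state that the theorem is ``an immediate corollary of Proposition~\ref{prop: Gauge coh},'' and your write-up supplies the verification of the hypotheses of that proposition. The checks you do (properness of $\ms{S}_{\mathsf{K}^p}$ via the equivalence of $\Q$-anisotropy of $\mb{G}^\der$ with that of $\mb{G}^\ad$, the identification $\mc{O}_E = W(k)$, the weight-range bookkeeping $a+b = (p-3-i)+i < p-2$, and the identification of $T_\et(\omega_{\mathsf{K}^p,\mr{syn}}(\xi))^\mr{alg}$ with $\omega_{\mathsf{K}^p,\et}(\xi)$ using properness to guarantee $(\wh{\ms{S}}_{\mathsf{K}^p})_\eta = \Sh_{\mathsf{K}_0\mathsf{K}^p}^\an$) are all the relevant points, and the argument is sound.

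One small point to fix in the write-up: Theorem~\ref{thm:prismatic-F-gauge-realization} in the body of the paper states that $\omega_{\mathsf{K}^p,\mr{syn}}$ is of type $-\mu_h^c$, not $\mu_h^c$ as you wrote (the introduction drops the sign but the body is consistent about $-\mu_h^c$). The conclusion you need — that $\bm{\mu}_h^c$-weights of $\xi$ in $[0,p-3-i]$ put $\omega_{\mathsf{K}^p,\mr{syn}}(\xi)$ into $\cat{Vect}_{[0,p-3-i]}(\wh{\ms{S}}_{\mathsf{K}^p}^\mr{syn})$ — is what the paper clearly intends (it is the only reading under which the theorem is an ``immediate corollary'' of Proposition~\ref{prop: Gauge coh}), but this deduction involves the sign conventions built into Definition~\ref{defn: F-gauge of G mu structure} and Convention~\ref{conve: BGm} (the inverse $\Gm$-action on $\mc O_{B\Gm}\{1\}$, the Frobenius twist in $\mc P_{\mu,\mc N}$), so you should cite the type as $-\mu_h^c$ and note that this yields the stated Hodge--Tate range; as it stands you have silently absorbed a sign that the paper's conventions account for elsewhere.
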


\subsubsection{Comparison of stratifications} In \cite{ShenZhang}, Newton stratifications and central leaves are defined on the special fiber of $\ms{S}_{\mathsf{K}^p}$, extending all previously known cases (see the references in op.\@ cit.\@). This gives functions 
\begin{equation*}
    \Upsilon_{\mathsf{K}^p} \colon \ms{S}_{\mathsf{K}^p}(\ov{k})\to B(G^c),\qquad\Upsilon^\circ_{\mathsf{K}^p} \colon \ms{S}_{\mathsf{K}^p}(\ov{k})\to C(\mc{G}^c).
\end{equation*}
equivariant with respect to the actions of $\Gamma_k$. By the results of \cite[\S5.4.2 and \S5.4.5]{ShenZhang}, $\Upsilon_{\mathsf{K}_p\mathsf{K}^p}$ agrees with the function to $B(G^c)$ defined using $\omega_{\mathsf{K}^p,\crys}$, and agrees with the function $C(\mc{G}^c)$ defined using $\omega_{\mathsf{K}^p,\crys}$ when $(\mb{G},\mb{X})$ is of Hodge type or $Z(\mb{G})$ is connected. Combining this with Proposition \ref{prop:factorization-through-specialization} and Theorem \ref{thm:prismatic-crystalline-comparison} then gives the following corollary.

\begin{cor}\label{cor:relation-to-SZ} Suppose that $(\mb{G},\mb{X},\mc{G})$ is an unramified Shimura datum of abelian type (resp.\@ of Hodge type or of abelian type and $Z(\mb{G})$ is connected). Then, for any neat compact open subgroup $\mathsf{K}_p\subseteq \mathsf{K}_0$ and neat compact open subgroup $\mathsf{K}^p\subseteq\mb{G}(\A_f^p)$ we have that 
\begin{equation*}
    \Sigma_{\mathsf{K}_p\mathsf{K}^p}=\Upsilon_{\mathsf{K}^p}\circ \sp\circ \pi_{\mathsf{K}_p\mathsf{K}^p,\mathsf{K}_0\mathsf{K}^p},\quad \bigg(\emph{resp. }\Sigma^\circ_{\mathsf{K}^p}=\Upsilon_{\mathsf{K}^p}^\circ\circ \sp\bigg).
\end{equation*}
\end{cor}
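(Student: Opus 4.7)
The plan is to reduce the corollary directly to the three ingredients already assembled in the excerpt: (i) the specialization factorization of Proposition \ref{prop:factorization-through-specialization}, (ii) the identification of the crystalline and prismatic realization functors in Theorem \ref{thm:prismatic-crystalline-comparison}, and (iii) the comparison, due to Shen--Zhang \cite[\S5.4.2 and \S5.4.5]{ShenZhang}, of their stratifying functions with the functions arising from $\omega_{\mathsf{K}^p,\crys}$. All three ingredients are essentially on the nose, so the task is to arrange them so that both sides of each claimed equality are expressed as the same function composed with $\sp$ (and with $\pi_{\mathsf{K}_p\mathsf{K}^p,\mathsf{K}_0\mathsf{K}^p}$ in the $B(G^c)$-valued case).

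First I would handle the equality $\Sigma_{\mathsf{K}_p\mathsf{K}^p}=\Upsilon_{\mathsf{K}^p}\circ\sp\circ\pi_{\mathsf{K}_p\mathsf{K}^p,\mathsf{K}_0\mathsf{K}^p}$. By functoriality of the local system $\nu_{\mathsf{K}_p\mathsf{K}^p,\et}$ with respect to shrinking $\mathsf{K}_p$, we have $\Sigma_{\mathsf{K}_p\mathsf{K}^p}=\Sigma_{\mathsf{K}_0\mathsf{K}^p}\circ\pi_{\mathsf{K}_p\mathsf{K}^p,\mathsf{K}_0\mathsf{K}^p}$ (this reduction appears already at the end of the proof of Proposition \ref{prop:factorization-through-specialization}), and by that proposition $\Sigma_{\mathsf{K}_0\mathsf{K}^p}=\overline{\Sigma}_{\mathsf{K}^p}\circ\sp$. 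Now $\overline{\Sigma}_{\mathsf{K}^p}$ is, by definition, the function to $B(G^c)$ built from the $F$-isocrystal with $G^c$-structure underlying $\underline{\bb{D}}_\crys\circ\omega_{\mathsf{K}^p,\smallprism}$. By Theorem \ref{thm:prismatic-crystalline-comparison}, $\bb{D}_\crys\circ\omega_{\mathsf{K}^p,\smallprism}\simeq\omega_{\mathsf{K}^p,\crys}$ as $\mc{G}^c$-objects in $\cat{VectF}^{\varphi,\mr{div}}(\wh{\ms{S}}_{\mathsf{K}^p})_\crys$, so the same function is produced (after inverting $p$) by $\omega_{\mathsf{K}^p,\crys}$. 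The cited results of Shen--Zhang identify precisely this function with $\Upsilon_{\mathsf{K}^p}$ in the abelian-type case, giving the first equality.

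The integral statement $\Sigma^\circ_{\mathsf{K}^p}=\Upsilon^\circ_{\mathsf{K}^p}\circ\sp$ is handled in exactly the same manner, but now with lattices (so one cannot invert $p$): Proposition \ref{prop:factorization-through-specialization} gives $\Sigma^\circ_{\mathsf{K}^p}=\overline{\Sigma}^\circ_{\mathsf{K}^p}\circ\sp$, Theorem \ref{thm:prismatic-crystalline-comparison} identifies the $\mc{G}^c$-object $\underline{\bb{D}}_\crys\circ\omega_{\mathsf{K}^p,\smallprism}$ (as a $\mc{G}^c(\breve{\Z}_p)$-torsor, not merely up to isogeny) with the one underlying Lovering's $\omega_{\mathsf{K}^p,\crys}$, and the Shen--Zhang comparison \cite[\S5.4.5]{ShenZhang} applies to give the equality in $C(\mc{G}^c)$, provided $(\mb{G},\mb{X})$ is of Hodge type or $Z(\mb{G})$ is connected. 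The hypothesis is exactly what is needed to pin down the $\mc{G}^c(\breve{\Z}_p)$-conjugacy class (as opposed to the $G^c(\breve{\Q}_p)$-$\sigma$-conjugacy class) from the data on the special fiber, and this is the only place the extra assumption enters.

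Because each input is cited as an equality of functions (not merely an abstract isomorphism of functors), the proof is essentially a short diagram chase and contains no serious obstacle; the only point requiring minor care is ensuring that the $\mc{G}^c$-structure (rather than a $\mc{G}$-structure) is being transported correctly through Theorem \ref{thm:prismatic-crystalline-comparison}, which is handled by the fact that $\omega_{\mathsf{K}^p,\smallprism}$ and $\omega_{\mathsf{K}^p,\crys}$ are both defined, and compared, as exact $\Z_p$-linear $\otimes$-functors from $\cat{Rep}_{\Z_p}(\mc{G}^c)$.
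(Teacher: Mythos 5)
Your proposal is correct and follows exactly the route the paper intends: the paper states the corollary immediately after observing that the Shen--Zhang comparison, Proposition \ref{prop:factorization-through-specialization}, and Theorem \ref{thm:prismatic-crystalline-comparison} combine to give it, and your write-up is simply a careful unfolding of that one-line combination. The only detail worth double-checking (which you handle correctly) is that $\overline{\Sigma}_{\mathsf{K}^p}$ and $\overline{\Sigma}^\circ_{\mathsf{K}^p}$ are defined via $\underline{\bb{D}}_\crys\circ\omega_{\mathsf{K}^p,\smallprism}$ on the special fiber, so Theorem \ref{thm:prismatic-crystalline-comparison} is precisely what lets you replace this with $\omega_{\mathsf{K}^p,\crys}$ and invoke Shen--Zhang.
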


\begin{rem} While some of the rational (i.e.\@, $B(G^c)$ related) results in Proposition \ref{prop:factorization-through-specialization} and Corollary \ref{cor:relation-to-SZ} could have been proven using results in \cite{LoveringFCrystals}, the integral (i.e.\@, $C(\mc{G}^c)$ related) results could not, as Lovering is only able to establish matching between the lattices $\omega_{\mathsf{K}^p,\et}(\Lambda)$ and $\omega_{\mathsf{K}^p,\crys}(\Lambda)$ for low Hodge--Tate weights. 
\end{rem}

\section{Prismatic characterizations of integral models and Serre--Tate theory}\label{ss:Pappas--Rapoport--Daniels}

We formulate and prove two characterizations of integral canonical models for unramified Shimura data of abelian type: one in terms of prismatic $F$-crystals and the other in terms of prismatic $F$-gauges. The first is both stronger and less sophisticated, but the second is more conceptual and naturally leads us to a version of the Serre--Tate deformation theorem for such integral Shimura varieties. Throughout this section we assume that $p>2$.

\subsection{The universal deformation spaces of Ito}\label{ss:display} To formulate our prismatic characterization of Shimura varieties, we need a deformation space of prismatic $F$-crystals with $\mc{G}$-structure of type $\mu$. This is furnished by a construction of Ito which we now recall.\footnote{Ito's work is in terms of prismatic $\mc{G}$-$\mu$-displays. That said, given the contents of \S\ref{p:G-mu-displays-and-prismatic-G-torsors-with-F-structure} we glibly state his work in terms of prismatic $\mc{G}$-torsors with $F$-structure.}

\begin{nota}\label{nota:deformation} We fix the following notation:
\begin{itemize}
    \item $k$ is a perfect field of characteristic $p$,
    \item $W=W(k)$,
    \item $\mc{G}$ is a smooth group $\Z_p$-scheme (assumed reductive in \S\ref{sss:comparison-to-Faltings}),
    \item $\mu \colon \bb{G}_{m,W}\to \mc{G}_W$ is a $1$-bounded cocharacter (see \cite[Definition 6.3.1]{LauHigher}),
    \item $\mc{C}_W$ is the category of complete Noetherian local rings $R$ equipped with a local ring map $W\to R$ with $k\to R/\mf{m}_R$ an isomorphism, and with morphisms maps of local $W$-algebra,
    \item $\mc{C}_W^\mr{reg}$ is the full subcategory of $\mc{C}_W$ consisting of regular local rings.
\end{itemize} 
\end{nota}
We consider objects $\mc{C}_W$ of as $p$-adic topological rings unless stated otherwise.

\subsubsection{Universal deformation spaces}\label{ss:universal-deformation}  Denote by $\mc{U}_{\mu}$ the unipotent group scheme over $W$ associated to $\mu$ via the dynamic method (see \cite[Theorem 4.1.7]{ConradReductive}). Set $R_{\mc{G},\mu}\defeq \mc{O}(\wh{\mc{U}}_{\mu})$, which is a $p$-adically complete ring non-canonically isomorphic to $W\ll t_1,\ldots, t_d\rr$ for some $d$ (see \cite[Lemma 4.2.6]{Ito1}). If $f\colon \mc{G}_1\to\mc{G}$ is a morphism of reductive groups over $\Z_p$ mapping $\mu_1$ to $\mu$ then we obtain an induced continuous morphism of $W$-algebras $R_{\mc{G},\mu}\to R_{\mc{G}_1,\mu_1}$. Furthermore, if the map $\mc{G}_1^\ad\to \mc{G}^\ad$ induced by $f$ is an isomorphism, then the map $R_{\mc{G},\mu}\to R_{\mc{G}_1,\mu_1}$ is an isomorphism.

Fix an element $b$ of $\mc{G}(W)\mu(p)^{-1}\mc{G}(W)$. The pair $(\mc{G}_W,\varphi_b)$, where $\varphi_b$ corresponds to left multiplication by $b$, defines an element of $\cat{Tors}^{\varphi,-\mu}_{\mc{G}}(k_\smallprism)$. As in \cite[Definition 1.1.2]{Ito2}, for an object $R$ of $\mc{C}^\mathrm{reg}_W$, a \emph{deformation} of $(\mc{G}_W,\varphi_b)$ over $R$ is a pair $((\mc{A},\varphi_\mc{A}),\gamma)$ where $(\mc{A},\varphi_\mc{A})$ is an object of $\cat{Tors}_{\mc{G}}^{\varphi,-\mu}(R_\smallprism)$, and $\gamma\colon (\mc{A},\varphi_\mc{A})|_{k_\smallprism}\isomto (\mc{G}_W,\varphi_b)$ is an isomorphism. In \cite[Theorem 1.1.3]{Ito2} (using Proposition \ref{prop:display-torsor-equiv}), Ito shows that there is a \emph{universal deformation} $((\mc{A}^\univ_b,\varphi_{\mc{A}^\univ_b}),\gamma^\univ)$ over $R_{\mc{G},\mu}$, which means that for any other deformation $((\mc{A},\varphi_\mc{A}),\gamma)$ over an object $R$ of $\mc{C}_W^\mr{reg}$, there exists a unique morphism $h\colon R_{\mc{G},\mu}\to R$ such that $h^\ast((\mc{A}^\univ_b,\varphi_{\mc{A}^\univ_b}),\gamma^\univ)$ is isomorphic to $((\mc{A},\varphi_\mc{A}),\gamma)$. Define $\omega_b^\univ$ to be the object of $\GVect^\varphi(R_\smallprism)$ associated to $(\mc{A}_b^\univ,\varphi_{\mc{A}_b^\univ})$.

\begin{lem}\label{lem:ito-compatability}
For a morphism of reductive groups $f\colon \mc{G}_1\to \mc{G}$ sending $\mu_1$ to $\mu$, and $b_1$ to $b$, the induced map $f\colon \Spf (R_{\mc{G}_1,\mu_1}) \to \Spf (R_{\mc{G},\mu})$ satisfies the following, with $\xi$ an object of $\cat{Rep}_{\Z_p}(\mc{G})$: 
\begin{equation*}\label{eq:ito-compatability}
   (\mc{A}_{b_1}^\univ ,\varphi_{\mc{A}_{b_1}^\univ})\times^{\mc{G}_{1,\smallprism}}\mc{G}_{\smallprism}\isomto  f^\ast(\mc{A}_b^\univ,\varphi_{\mc{A}_b^\univ}),\quad f^\ast(\omega_b^\univ(\xi))=\omega_{b_1}^\univ(\xi\circ f). 
\end{equation*}
\end{lem}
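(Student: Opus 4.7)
The plan is to proceed in three steps: first, verify that the left-hand side $(\mc{A}_{b_1}^\univ,\varphi_{\mc{A}_{b_1}^\univ})\times^{\mc{G}_{1,\smallprism}}\mc{G}_\Prism$ naturally defines a deformation of $(\mc{G}_W,\varphi_b)$ of type $-\mu$ over $R_{\mc{G}_1,\mu_1}$ (which is regular and hence an object of $\mc{C}_W^\mr{reg}$); second, invoke the universal property of $(\mc{A}_b^\univ,\varphi_{\mc{A}_b^\univ},\gamma^\univ)$ to obtain a unique classifying map $\alpha\colon R_{\mc{G},\mu}\to R_{\mc{G}_1,\mu_1}$; and third, identify $\alpha$ with the functorially induced map. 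The second identity in the lemma will then follow from the first via the Tannakian construction of $\omega_b^\univ$ from $(\mc{A}_b^\univ,\varphi_{\mc{A}_b^\univ})$.

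For the first step, one observes that the reduction modulo $\mf{m}_{R_{\mc{G}_1,\mu_1}}$ of $(\mc{A}_{b_1}^\univ\times^{\mc{G}_1}\mc{G},\varphi)$ is canonically isomorphic via the trivialization $\gamma_1$ pushed forward along $f$ to $(\mc{G}_{1,W}\times^{\mc{G}_1}\mc{G},\varphi_{f(b_1)})=(\mc{G}_W,\varphi_b)$, using the hypothesis $f(b_1)=b$. The preservation of the type condition is then formal from Definition \ref{defn:type-mu-crystals}: after passing to a faithfully flat cover on which the relevant torsor becomes trivial, a presentation by $\mu_1(d)$ is carried by $f$ to a presentation by $(f\circ\mu_1)(d)=\mu(d)$. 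The universal property of $(\mc{A}_b^\univ,\varphi_{\mc{A}_b^\univ},\gamma^\univ)$ then produces the unique continuous $W$-algebra map $\alpha\colon R_{\mc{G},\mu}\to R_{\mc{G}_1,\mu_1}$ such that $\alpha^\ast(\mc{A}_b^\univ,\varphi_{\mc{A}_b^\univ})$ is isomorphic to the extension of structure group, matching the trivializations modulo the maximal ideal.

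The main obstacle is the third step: identifying $\alpha$ with the map induced by functoriality of $(\mc{G},\mu)\mapsto R_{\mc{G},\mu}=\mc{O}(\wh{\mc{U}}_\mu)$. For this, we would appeal to Ito's explicit construction of the universal deformation via \cite[Theorem 1.1.3]{Ito2} (translated into the prismatic $\mc{G}$-torsor language using Proposition \ref{prop:display-torsor-equiv}): under a chosen banal presentation, the universal deformation is described by a Frobenius structure of the form involving the tautological section $u^\univ_{\mc{G},\mu}\in\mc{U}_\mu(R_{\mc{G},\mu})$ corresponding to the identity map on $\wh{\mc{U}}_\mu$. By Yoneda, the functorial map $R_{\mc{G},\mu}\to R_{\mc{G}_1,\mu_1}$ associated to the group homomorphism $\mc{U}_{\mu_1}\to\mc{U}_\mu$ induced by $f$ pulls back $u^\univ_{\mc{G},\mu}$ to $f(u^\univ_{\mc{G}_1,\mu_1})$. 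The resulting presentation of the pullback deformation then matches, term by term, the presentation of $(\mc{A}_{b_1}^\univ,\varphi_{\mc{A}_{b_1}^\univ})\times^{\mc{G}_{1,\smallprism}}\mc{G}_\Prism$ obtained by extending structure group along $f$, using $f(b_1)=b$. By the uniqueness in the universal property, $\alpha$ must coincide with the functorial map.

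Finally, the second identity is formal from the first: by construction, $\omega_b^\univ(\xi)$ is obtained from $(\mc{A}_b^\univ,\varphi_{\mc{A}_b^\univ})$ by pushforward along $\xi\colon\mc{G}\to\GL(V)$, and pullback along $f$ commutes with this pushforward. Combining this with the compatibility of extension of structure group along $f$ with the Tannakian formalism, namely the identification $(-)\times^{\mc{G}_{1,\smallprism}}\mc{G}_\Prism\times^{\mc{G}_\Prism}\GL(V)_\Prism=(-)\times^{\mc{G}_{1,\smallprism}}\GL(V)_\Prism$ via $\xi\circ f$, yields $f^\ast(\omega_b^\univ(\xi))=\omega_{b_1}^\univ(\xi\circ f)$.
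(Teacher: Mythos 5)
Your proposal is correct, and its crux — pulling back Ito's tautological element along the functorially induced map $R_{\mc{G},\mu}\to R_{\mc{G}_1,\mu_1}$ and matching it with the image under $f$ of the tautological element for $(\mc{G}_1,\mu_1)$ — is exactly the comparison the paper makes. The scaffolding differs slightly: the paper reduces immediately to the first isomorphism, invokes the rigidity of evaluation at the prism $(R_{\mc{G}_1,\mu_1}\ll t\rr,(p-t))$ from \cite[Theorem 6.1.3]{Ito1}, and then observes that both evaluations are the explicit object $\mc{Q}_{f(X_1^{\mr{univ}})}$ from the construction in \cite[Theorem 4.4.2]{Ito2}; you instead first check that the pushforward $(\mc{A}_{b_1}^\univ,\varphi_{\mc{A}_{b_1}^\univ})\times^{\mc{G}_{1,\smallprism}}\mc{G}_\Prism$ is a deformation of $(\mc{G}_W,\varphi_b)$ of type $-\mu$, apply the universal property to get a classifying map $\alpha$, and then identify $\alpha$ with the functorial map by matching banal presentations. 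Your route has the small advantage of not needing the full-faithfulness-of-evaluation input, since two global banal presentations by the same element are isomorphic outright; the paper's route is shorter because it skips the verification that the pushforward is a deformation of the correct type. Both ultimately rest on the same explicit description of the universal object, and your treatment of the second identity (formal consequence of the first via compatibility of pushforward of torsors with the Tannakian formalism) matches the paper's one-line reduction.
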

\begin{proof}
It suffices to show the first isomorphism. 
By \cite[Theorem 6.1.3]{Ito1}, it suffices to construct an isomorphism after evaluating at $(R_{\mc{G}_1,\mu_1}\ll t\rr,(p-t))$. By the construction in the proof of \cite[Theorem 4.4.2]{Ito2}, both evaluations are isomorphic to $\mathcal{Q}_{f(X_1^{\mathrm{univ}})}$, where $X_1^{\mathrm{univ}}$ is $X^{\mathrm{univ}}$ in the proof of \cite[Theorem 4.4.2]{Ito2}, when applied to $(\mc{G}_W,\varphi_{b_1})$, $\mc{G}_1$ and $\mu_1$.   
\end{proof}
If $g$ is in $\mc{G}(W)$ then conjugation by $g^{-1}$ induces an isomorphism $c_g\colon R_{\mc{G},\mu}\to R_{\mc{G},g\mu g^{-1}}$ and $c_g^\ast(\omega_b^\univ)=\omega_{gbg^{-1}}^\univ$. If $b'$ is another element of $\mc{G}(W[\nicefrac{1}{p}])$ and $g$ is an element of $\mc{G}(W)$ such that $b'=gb\phi(g)^{-1}$ then left multiplication by $g$ induces an isomorphism $(\mc{G}_W,\varphi_{b})\to (\mc{G}_W,\varphi_{b'})$ and thus there is an isomorphism $\omega_b^\univ\isomto \omega_{b'}^\univ$. Thus, the pair $(R_{\mc{G},\mu},\omega_b^\univ)$ only depends, up to isomorphism, on the $\mc{G}(W)$-conjugacy class of $\mu$, and the $\sigma$-$\mc{G}(W)$-conjugacy class of $b$.

\subsubsection{Comparison to Faltings universal deformations with Tate tensors}\label{sss:comparison-to-Faltings} We now wish to use the functor $\bb{D}_\crys$ to compare the work of Ito to that of Faltings on universal deformations of $p$-divisible groups with Tate tensors in the reductive case. Below we use the notation for Dieudonn\'e theory as in \cite[\S4]{IKY3}. Most specifically, $\bb{D}(H)$ denotes the filtered Dieudonn\'e crystal of a $p$-divisible group $H$ (see \cite[Definition 4.6]{IKY3}).

We use the notation \ref{ss:universal-deformation}, but now assume that $k$ is algebraically closed and $\mc{G}$ is reductive. We write $\omega_{b,\crys}^\univ$ for the composition $\bb{D}_\crys\circ\omega_b^\univ$, which is a $\mc{G}$-object in $\cat{VectF}^\varphi((R_{\mc{G},\mu})_\crys)$.

\medskip

\paragraph{Universal deformation of $p$-divisible groups} Fix a $p$-divisible group $H_0$ over $k$, and choose any lift $\wt{H}_0$ of $H_0$ over $W$. Set $M_0\defeq \underline{\bb{D}}(H_0)(W\twoheadrightarrow k)$, and let  $b_0$ in $\GL(M_0[\nicefrac{1}{p}])$ such that $\sigma (b_0)$ corresponds to the Frobenius on $M_0$. From $\wt H_0$ we obtain the Hodge filtration:
\begin{equation*}
    \Fil^1_{\wt{H}_0,\mr{Hodge}}\subseteq \bb{D}(\wt H_0)(\id\colon W\to W)=\bb{D}(H_0)(W\twoheadrightarrow k).
\end{equation*}
From the Cartan decomposition, we know that $b_0$ lies in $\mc{G}(W)\mu_0^{-1}(p)\mc{G}(W)$ for a cocharacter $\mu_0\colon \bb{G}_{m,W}\to \GL(M_0)$ uniquely determined up to conjugacy. We take the unique cocharacter $\mu_0$ such that $\Fil^1_{\wt H_0,\mr{Hodge}}$ is induced by $\mu_0$ in the sense of \cite[Definition 2.2.1]{KimRZ}.

Choose an isomorphism $R_{\GL(M_0),\mu_0}\isomto W\ll t_1,\ldots,t_d\rr$ and equip it with the usual Frobenius $\phi_0$. From the above considerations we obtain the following data on $R_{\GL(M_0),\mu_0}$:
\begin{equation*}
    \mb{M}_{b_0}^\univ\defeq R_{\GL(M_0),\mu_0}\otimes_W M_0,\quad \Fil^1{\mb{M}_{b_0}^\univ}\defeq 1\otimes \Fil^1{M_0},\qquad \varphi_{\mb{M}_{b_0}^\univ}\defeq u_t^{-1}\circ(1\otimes b_0),
\end{equation*}
where $u_t$ corresponds to the tautological element of $\wh{\mc{U}}_{\mu_0}(R_{\GL(M_0),\mu_0})$, and $\varphi_{\mb{M}_{b_0}^\univ}$ is considered as a map $\phi_0^\ast \mb{M}_{b_0}^\univ\to \mb{M}_{b_0}^\univ$. As explained in \cite[\S4.5]{Moonen}, Faltings produced a (unique) $\varphi_{\mb{M}_{b_0}^\univ}$-horizontal integrable connection $\nabla_{\mb{M}_{b_0}^\univ}$ on $\mb{M}_{b_0}^\univ$ with $(\mb{M}_{b_0}^\univ,\varphi_{\mb{M}_{b_0}^\univ},\Fil^\bullet_{\mb{M}_{b_0}^\univ},\nabla_{\mb{M}_{b_0}^\univ})$ a filtered Dieudonn\'e crystal on $R_{\GL(M_0),\mu_0}$ and so corresponds to an object $H^\univ_{b_0}$ of $\mr{BT}_p(R_{\GL(M_0),\mu_0})$. 

As explained in \cite[\S3.3]{KimRZ}, this notation is not misleading as $H^\univ_{b_0}$ is a universal deformation of $H_0$ (in the sense that it (pro)represents the functor in \cite[Definition 3.1]{KimRZ}).

\begin{prop}\label{prop:faltings-ito-comp-GL_n} There is a natural isomorphism
\begin{equation*}
    \omega_{b_0,\crys}^\univ(M_0)(R_{\GL(M_0),\mu_0})\isomto (\mb{M}_{b_0}^\univ,\varphi_{\mb{M}_{b_0}^\univ},\Fil^\bullet_{\mb{M}_{b_0}^\univ}).
\end{equation*}

\end{prop}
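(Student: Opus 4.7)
The plan is to reduce the identification to a statement on the prismatic side via Proposition~\ref{prop:ALB-dJ-comparison}, and then to exploit the universal property of Ito's deformation. More precisely, I aim first to construct a natural isomorphism
\begin{equation*}
\omega_{b_0}^\univ(M_0) \isomto \mc{M}_\Prism(H_{b_0}^\univ)
\end{equation*}
in $\cat{Vect}^\varphi((R_{\GL(M_0),\mu_0})_\Prism)$. Once this is established, applying $\bb{D}_\crys$ and invoking Proposition~\ref{prop:ALB-dJ-comparison} yields a natural identification $\omega_{b_0,\crys}^\univ(M_0) \isomto \bb{D}(H_{b_0}^\univ)$ in $\cat{VectF}^{\varphi,\mr{div}}((R_{\GL(M_0),\mu_0})_\crys)$. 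Evaluating at $R_{\GL(M_0),\mu_0}$ then recovers the right-hand side of the claim, because the filtered Dieudonn\'e module of $H_{b_0}^\univ$ over $R_{\GL(M_0),\mu_0}$ is by Faltings' construction precisely $(\mb{M}_{b_0}^\univ, \varphi_{\mb{M}_{b_0}^\univ}, \Fil^\bullet_{\mb{M}_{b_0}^\univ})$ (see \cite[\S4.5]{Moonen} and \cite[\S3.3]{KimRZ}).

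To produce the desired prismatic isomorphism, the first step is to verify that $\mc{M}_\Prism(H_{b_0}^\univ)$ naturally acquires the structure of a deformation of $(\GL(M_0)_W, \varphi_{b_0})$ of type $-\mu_0$ over the regular local $W$-algebra $R_{\GL(M_0),\mu_0}$ in Ito's sense. The rigidification over $k$ follows from the identification $\underline{\bb{D}}(H_0)(W \twoheadrightarrow k) = M_0$ and the defining relation for $b_0$, together with Proposition~\ref{prop:ALB-dJ-comparison}. The assertion that the associated prismatic $\GL(M_0)$-torsor with $F$-structure is of type $-\mu_0$ reflects the fact that the Hodge cocharacter of $H_{b_0}^\univ$ is $\mu_0$, which one can read off from the local description of the prismatic Dieudonn\'e functor in Proposition~\ref{prop:ALB-Lau-SW}. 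Applying Ito's universal property \cite[Theorem 1.1.3]{Ito2} then produces a unique continuous local $W$-algebra endomorphism $\psi$ of $R_{\GL(M_0),\mu_0}$ such that $\psi^* \omega_{b_0}^\univ(M_0) \cong \mc{M}_\Prism(H_{b_0}^\univ)$ compatibly with rigidifications.

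The main obstacle, and the essential step, is to show that $\psi$ is the identity. Writing $R_{\GL(M_0),\mu_0} \cong W\ll t_1, \ldots, t_d\rr$, it suffices by completeness to check that $\psi$ induces the identity on the cotangent space $\mf{m}/(\mf{m}^2 + pR_{\GL(M_0),\mu_0})$, which is canonically identified with $\Lie(\mc{U}_{\mu_0})^\vee \otimes_W k$ via the definition $R_{\GL(M_0),\mu_0} = \mc{O}(\wh{\mc{U}}_{\mu_0})$. I would carry this out by comparing the Kodaira--Spencer maps on both sides: the first-order deformation of $H_0$ to $k[\varepsilon]/(\varepsilon^2)$ obtained from $H_{b_0}^\univ$ along a tautological tangent vector is, by Grothendieck--Messing, classified by the corresponding lift of the Hodge filtration on $M_0 \otimes_W k[\varepsilon]$; while the first-order deformation of $(\GL(M_0)_W, \varphi_{b_0})$ obtained from $\omega_{b_0}^\univ(M_0)$ along the same tangent vector is, by the display-theoretic description in \cite[\S4]{Ito2}, the same universal $\mu_0$-type lift of the Hodge filtration. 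Hence $\psi$ induces the identity on cotangents, forcing $\psi = \id$, and yielding the desired prismatic isomorphism.
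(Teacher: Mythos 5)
Your two-step structure — first identify $\omega_{b_0}^\univ(M_0)$ with $\mc{M}_\Prism(H_{b_0}^\univ)$ prismatically, then apply $\bb{D}_\crys$ together with Proposition~\ref{prop:ALB-dJ-comparison} — is exactly the structure of the paper's proof. The difference is in how step one is obtained. The paper simply cites \cite[Theorem~6.2.1]{Ito2}, which is Ito's comparison of his universal prismatic $\GL$-display with the prismatic Dieudonn\'e module of the universal deformation of the $p$-divisible group; that theorem already \emph{is} the isomorphism $\omega_{b_0}^\univ(M_0)\isomto \mc{M}_\Prism(H_{b_0}^\univ)$ in the form needed. Your proposal instead re-derives this statement from Ito's universal property \cite[Theorem~1.1.3]{Ito2}: check $\mc{M}_\Prism(H_{b_0}^\univ)$ is a type-$(-\mu_0)$ deformation, obtain an endomorphism $\psi$ of $R_{\GL(M_0),\mu_0}$ by universality, and show $\psi = \mathrm{id}$ by comparing Kodaira--Spencer maps.

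The strategy is sound, but the final step as written is a sketch of precisely the nontrivial content of \cite[Theorem~6.2.1]{Ito2}, and as such leaves a real gap. The assertion that the first-order deformation of $\omega_{b_0}^\univ(M_0)$ along a tautological tangent vector is ``the same universal $\mu_0$-type lift of the Hodge filtration'' as the one arising from $H_{b_0}^\univ$ via Grothendieck--Messing presupposes a canonical identification of the two tangent spaces (Ito's deformation theory of prismatic $F$-gauges on one side, classical Grothendieck--Messing for $p$-divisible groups on the other) with $\Lie(\mc{U}_{\mu_0})\otimes_W k$ under which the two Kodaira--Spencer maps become equal. This compatibility is not automatic: Ito's Grothendieck--Messing theory lives on the prismatization, while Faltings' is formulated via the crystalline site, and one must pass through $\mc{M}_\Prism$ (and in effect through Proposition~\ref{prop:ALB-dJ-comparison} again) to match the two filtration-lifting parametrizations. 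Making this precise is essentially a re-proof of Ito's theorem and requires an explicit computation of the universal display on $(R_{\GL(M_0),\mu_0}\ll t\rr,(p-t))$ in the style of \cite[\S4.4]{Ito2}, matched against the description of $\mb{M}_{b_0}^\univ$ recorded in the paper. If you want to take this route rather than citing \cite[Theorem~6.2.1]{Ito2}, you should spell that computation out; otherwise, the cleaner path is the paper's, which treats Ito's theorem as a black box.
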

\begin{proof} This follows from \cite[Theorem 4.8]{IKY3} and \cite[Theorem 6.2.1]{Ito2}.
\end{proof}

\medskip

\paragraph{Deformations with Tate tensors} Fix a triple $(\mc{G},b,\mu)$ as in \S\ref{ss:universal-deformation}. We assume that $(\mc{G},b,\mu)$ is of \emph{Hodge type}: there exists a faithful representation $\iota\colon \mc{G}\to\GL(\Lambda_0)$ such that $\iota \circ \mu$ has only weights $0$ and $1$. Set $(b_0,\mu_0)\defeq (\iota(b),\iota\circ \mu)^\vee$. Further fix isomorphisms 
\begin{equation*}
R_{\GL(\Lambda_0^\vee),\mu_0}\isomto W\ll t_1\ldots,t_d\rr,\qquad R_{\mc{G},\mu}\isomto W\ll s_1,\ldots,s_k\rr
\end{equation*}
such that the natural map $R_{\GL(\Lambda_0^\vee),\mu_0}\to R_{\mc{G},\mu}$ is Frobenius equivariant when the source and target are given the (usual) Frobenii induced by these isomorphisms.\footnote{This is possible, for instance, by the discussion in \cite[\S4.2]{Ito1}, which shows that $U_{\mu}$ is isomorphic to $\mathrm{Lie}(U_{\mu})$ as $W$-schemes, and $\mathrm{Lie}(U_{\mu})$ is a direct summand of $\mathrm{Lie}(\mathrm{GL}(\Lambda_0^\vee))$.} Finally, fix a tensor package (in the sense in \cite[\S A.5]{IKY1}) $(\Lambda_0,\mathds{T}_0)$ with $\mc{G}=\mathrm{Fix}(\mathds{T}_0)$. 

As explained in \cite[\S2.5]{KimRZ} associated to $(\mc{G},b,\mu)$ and $\iota$ is a $p$-divisible group $H_{b_0}$ over $k$ together with an identification $\bb{D}(H_{b_0})(W)=\Lambda_0^\vee$ where Frobenius acts by $b_0$. Moreover, under this identification the set $\mathds{T}_0$ is a set of tensors on $\bb{D}(H_{b_0})$ as an $F$-crystal. Set
\begin{equation*}
(\mb{M}_{b}^\univ,\varphi_{\mb{M}_b^\univ},\Fil^1_{\mb{M}_b^\univ})\defeq (\mb{M}_{{b_0}}^\univ,\varphi_{\mb{M}_{b_0}^\univ},\Fil^1_{\mb{M}_{b_0}^\univ})\otimes_{R_{\GL(\Lambda_0^\vee),\mu_0}}R_{\mc{G},\mu},
\end{equation*}
and let $H_b^\univ$ be the pullback of $H_{b_0}^\univ$ to $R_{\mc{G},\mu}$. Then, 
\begin{equation*}
    \bb{D}(H_b^\univ)(R_{\mc{G},\mu})=(\mb{M}_b^\univ,\varphi_{\mb{M}_b^\univ},\Fil^1_{\mb{M}_b^\univ},\nabla_{\mb{M}_b^\univ}),
\end{equation*}
for some connection $\nabla_{\mb{M}_b^\univ}$. Observe that $\mathds{T}_0$ naturally defines a set of tensors on $\mb{M}_{b_0}^\univ$ and (by base change) on $\mb{M}_b^\univ$, which we denote $\mathds{T}_0^{\Fal,'}$ and $\mathds{T}_0^\Fal$, respectively. The tensors $\mathds{T}_0^\Fal$ on $\bb{D}(H_b^\univ)$ are Frobenius equivariant and lie in the $0^\text{th}$-part of the filtration (see \cite[\S3.5]{KimRZ}). 

By work of Faltings (see \cite[Theorem 3.6]{KimRZ}) $H_b^\univ$ satisfies a universality property. Suppose that $R_0=W\ll u_1,\ldots,u_r\rr$ for some $r$, and $X$ is a $p$-divisible group over $R_0$ deforming $H_{b_0}$. By the universality of $H_{b_0}^\univ$, there exists a unique map $f_X\colon R_{\GL(\Lambda_0^\vee),\mu_0}\to R_0$ such that $f_X^\ast(H_{b_0}^\univ)$ is isomorphic (as a deformation) to $X$. Then, $f_X$ factorizes through $R_{\GL(\Lambda_0^\vee),\mu_0}\to R_{\mc{G},\mu}$ if and only if there exists tensors $\{t_\alpha\}$ on $\bb{D}(X)(R_0)$ lifting those on $\mathds{T}_0$ on $\bb{D}(H_{b_0})$, and which are Frobenus equivariant and lie in the $\text{0}^\text{th}$-part of the filtration. In this case $\{t_\alpha\}=f_X^\ast(\mathds{T}_0^\mathrm{Fal,'})$. 

Consider now  the obvious morphism $f\colon (\mc{G},b,\mu)\to (\GL(\Lambda_0^\vee),b_0,\mu_0)$. Combining Lemma \ref{lem:ito-compatability} and Proposition \ref{prop:faltings-ito-comp-GL_n}, there is a canonical identification 
\begin{equation*}
    \begin{aligned} \omega^\univ_{b,\crys}(\Lambda_0^\vee)(R_{\mc{G},\mu})&\isomto  f^\ast(\bb{D}(H_{b_0}^\univ))(R_{\mc{G},\mu})\\ &=\bb{D}(H_b^\univ)(R_{\mc{G},\mu})\\ &=(\mb{M}_{b}^\univ,\varphi_{\mb{M}_b^\univ},\Fil^1_{\mb{M}_b^\univ}),\end{aligned}
\end{equation*} 
of naive filtered $F$-crystals.

\begin{prop}\label{prop:ito-faltings-match} The isomorphism of naive filtered $F$-crystals on $R_{\mc{G},\mu}$
\begin{equation*}
    \omega_{b,\crys}^\univ(\Lambda_0^\vee)(R_{\mc{G},\mu})\to (\mb{M}_{b}^\univ,\varphi_{\mb{M}_b^\univ},\Fil^1_{\mb{M}_b^\univ})
\end{equation*} 
carries $\omega_{b,\crys}^\univ(\mathds{T}_0)(R_{\mc{G},\mu})$ to $\mathds{T}_0^\Fal$.
\end{prop}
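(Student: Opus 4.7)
The plan is to verify the equality by invoking the characterization of the Faltings tensors due to Kim. Specifically, \cite[Theorem 3.6]{KimRZ} (as reformulated in the paragraph preceding the proposition) asserts that any set of tensors on $\bb{D}(H_b^\univ)(R_{\mc{G},\mu})$ which are Frobenius-equivariant, lie in $\Fil^0$, and specialize to $\mathds{T}_0 \subset (\Lambda_0^\vee)^\otimes$ at the closed point $R_{\mc{G},\mu} \twoheadrightarrow k$ must coincide with the pullback of $\mathds{T}_0^{\Fal,'}$ along the factorization $f$, namely $\mathds{T}_0^\Fal$. The strategy is therefore to verify these three properties for $\omega_{b,\crys}^\univ(\mathds{T}_0)(R_{\mc{G},\mu})$.

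Frobenius-equivariance will follow from functoriality: the tensor functor $\omega_b^\univ\colon \cat{Rep}_{\Z_p}(\mc{G}) \to \cat{Vect}^\varphi((R_{\mc{G},\mu})_\Prism)$ sends the $\mc{G}$-invariant morphism $\mathbf{1} \to \Lambda_0^\otimes$ corresponding to $t \in \mathds{T}_0$ to a Frobenius-equivariant morphism $\mc{O}_\Prism \to \omega_b^\univ(\Lambda_0^\vee)^\otimes$, and this property is preserved by the tensor functor $\bb{D}_\crys$. Membership in $\Fil^0$ will use the type $-\mu$ condition on $\omega_b^\univ$: since $\mu$ factors through $\mc{G}$ and the tensors $\mathds{T}_0$ are $\mc{G}$-invariant, they are $\mu$-invariant, so the corresponding section of $\omega_b^\univ(\Lambda_0^\vee)^\otimes$ lies in $\Fil^0_\mr{Nyg}$, which by Remark \ref{rem:filtered-tensor-product-computation} descends to $\Fil^0$ of $\omega_{b,\crys}^\univ(\Lambda_0^\vee)^\otimes$. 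For specialization at the closed point of $R_{\mc{G},\mu}$, one evaluates Ito's universal deformation on $(W,(p))$: by the construction in \cite[Theorem 1.1.3]{Ito2}, the base change along $R_{\mc{G},\mu} \twoheadrightarrow k$ recovers the constant torsor $(\mc{G}_W,\varphi_b)$, whence $\omega_b^\univ(\mathds{T}_0)(W,(p)) = \mathds{T}_0$ under the tautological identification $\omega_b^\univ(\Lambda_0^\vee)(W,(p)) = \Lambda_0^\vee$. Applying $\bb{D}_\crys$ and using Proposition \ref{prop:ALB-dJ-comparison} together with Proposition \ref{prop:faltings-ito-comp-GL_n} at the closed fiber then produces $\mathds{T}_0$ on $\bb{D}(H_{b_0})(W) = \Lambda_0^\vee$, as required.

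The main obstacle will be carefully navigating the compatibilities between Ito's and Faltings' universal deformations through the embedding $\iota$, ensuring that the functoriality isomorphisms of Lemma \ref{lem:ito-compatability} and Proposition \ref{prop:faltings-ito-comp-GL_n} identify the tensors correctly at each stage. If Kim's uniqueness additionally requires horizontality with respect to the Gauss--Manin connection on $\bb{D}(H_b^\univ)(R_{\mc{G},\mu})$, this is automatic since $\omega_b^\univ(\mathds{T}_0)$, being a morphism in the crystal category $\cat{Vect}^\varphi((R_{\mc{G},\mu})_\Prism)$, yields a horizontal section after applying $\bb{D}_\crys$ by the crystal formalism of Proposition \ref{prop:tsuji-filtered-equiv}.
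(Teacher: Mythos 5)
Your proposal is correct and follows essentially the same route as the paper: the paper's proof likewise verifies that $\omega_{b,\crys}^\univ(\mathds{T}_0)(R_{\mc{G},\mu})$ is Frobenius-equivariant, lies in $\Fil^0$, and lifts the tensors on $\bb{D}(H_{b_0})$, and then invokes the uniqueness clause of Kim's formulation of Faltings' universality (\cite[Theorem 3.6]{KimRZ}) to conclude it equals $f^\ast(\mathds{T}_0^{\Fal,'})=\mathds{T}_0^\Fal$. The paper states these three properties without elaboration, whereas you supply the (correct) justifications.
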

\begin{proof}

Under this identification $\omega_{b,\crys}^\univ(\mathds{T}_0)(R_{\mc{G},\mu})$ constitutes a Frobenius-equivariant tensors lying in the $0^\text{th}$-part of the of the filtration and lifting those on $\bb{D}(H_{b_0})$. Thus, by the universality statement from above, $\omega_{b,\crys}^\univ(\mathds{T}_0)(R_{\mc{G},\mu})$ must be equal to $f^\ast(\mathds{T}_0^{\Fal,'})=\mathds{T}_0^\Fal$. 
\end{proof}

 \subsection{Comparison to Shimura varieties}\label{ss:comparison-to-shim-vars} We now show that for the integral canonical model $\ms{S}_{\mathsf{K}^p}$, the prismatic realization functor $\omega_\smallprism$ recovers Ito's universal prismatic $\mc{G}^c$-torsor with $F$-structure at the completion of $\ms{S}_{\mathsf{K}^p}$ at each point of $\ms{S}_{\mathsf{K}^p}(\ov{\bb{F}}_p)$. This may be seen as a prismatic refinement of \cite[(1.3.9) Proposition]{KisinModp}, in the general abelian type setting.
 
Suppose that $(\mb{G},\mb{X},\mc{G})$ is an unramified Shimura datum of abelian type, and $x$ is a point of $\ms{S}_{\mathsf{K}^p}(\ov{\bb{F}}_p)$. As in \S\ref{ss:pot-crys-strat} we have the associated element $\bm{b}_{x,\crys}\defeq \underline{\bb{D}}_\mr{crys}\circ (\omega_{\mathsf{K}^p,\et})_x$ in $C(\mc{G}^c)$. Additionally, choose an element $\mu^c_h$ in our conjugacy class $\bbmu_h^c$ of cocharacters $\mbb{G}_{m,\breve{\Z}_p}\to \mc G^c_{\breve{\Z}_p}$.

\begin{lem}\label{lem:C(G)-containment} The element $\bm{b}_{x,\crys}$ lies in the image of the map
\begin{equation*}
    \mc{G}^c(\breve{\Z}_p)\sigma(\mu_h^c(p))^{-1}\mc{G}^c(\breve{\Z}_p)\to C(\mc{G}^c). 
\end{equation*}
\end{lem}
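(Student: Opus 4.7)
My plan is to reduce the general abelian-type statement to the Hodge type and special type cases via the structural Lemma \ref{lem:Lovering-lem}, where the containment is either well-documented in the literature or directly verifiable from the construction.

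First, I would handle the Hodge type case, where $\mb G = \mb G^c$ and so $\mc G = \mc G^c$. By Theorem \ref{thm:main-Shimura-theorem-Hodge-type-case} combined with Proposition \ref{prop:prismatic-crystalline-tensor-matching}, the $\mc G$-object $\underline{\bb D}_\crys \circ \omega_{\mathsf K^p,\smallprism}$ in $\cat{Vect}^\varphi((\ms S_{\mathsf K^p,k})_\crys)$ is canonically identified with the filtered $F$-crystal $\mc H^1_\crys(\wh{\ms A}_{\mathsf K^p}/\wh{\ms S}_{\mathsf K^p})^\vee$ endowed with crystalline tensors $\mathds T^\crys_{0,p}$. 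The fiber at $x$ is therefore the (contragredient of the) Dieudonné module of the abelian variety $A_x$, together with the resulting $\mc G$-structure. Kisin's description of $F$-crystals with $\mc G$-structure at mod-$p$ points of Hodge-type integral canonical models (see \cite[Corollary 1.4.3]{KisinModp}) shows that the associated element of $C(\mc G)$ lies in the image of $\mc G(\breve \Z_p)\sigma(\mu_h(p))^{-1}\mc G(\breve \Z_p)$. For the special type case, the prismatic realization has been constructed explicitly by Daniels (see Remark \ref{rem:Daniels-comp}), and the resulting element of $C(\mc T^c)$ is by construction the image of $\sigma(\mu_{h_t}^c(p))^{-1}$.

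For the general abelian-type case, I would apply Lemma \ref{lem:Lovering-lem} to produce an unramified Shimura datum $(\mb G_1,\mb X_1,\mc G_1)$ of Hodge type, an unramified Shimura datum $(\mb T, \{h\}, \mc T)$ of special type, and an intermediate datum $(\mb G_2, \mb X_2, \mc G_2)$ together with morphisms $\alpha$ rendering $\alpha^c \colon \mc G_2^c \to \mc G_1 \times \mc T^c$ a closed embedding, and $\beta \colon (\mb G_2,\mb X_2,\mc G_2)\to (\mb G,\mb X,\mc G)$ such that $\beta_{\mathsf K_2^p,\mathsf K^p}$ is finite étale. Since $\beta_{\mathsf K_2^p,\mathsf K^p}$ is finite étale (in particular surjective on $\ov{\bb F}_p$-points), I may lift $x$ to a point $x_2 \in \ms S_{\mathsf K_2^p}(\ov{\bb F}_p)$, and denote by $x_1$ and $x_t$ its images in $\ms S_{\mathsf K_1^p}(\ov{\bb F}_p)$ and $\ms S_{\mathsf K_t^p}(\ov{\bb F}_p)$, respectively. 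The compatibility \eqref{eq:compatibility-prismatic-model-pullback} of the prismatic realization with morphisms of Shimura data, together with the functoriality of $\underline{\bb D}_\crys$, gives
\begin{equation*}
\beta^c_\ast(\bm b_{x_2,\crys}) = \bm b_{x,\crys} \in C(\mc G^c), \qquad \alpha^c_\ast(\bm b_{x_2,\crys}) = (\bm b_{x_1,\crys},\bm b_{x_t,\crys}) \in C(\mc G_1)\times C(\mc T^c),
\end{equation*}
with $\beta^c \circ \mu_{h_2}^c$ conjugate to $\mu_h^c$ in $\mc G^c$ and $\alpha^c \circ \mu_{h_2}^c$ conjugate to $\mu_{h_1} \times \mu_{h_t}^c$. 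By the Hodge and special type cases, $(\bm b_{x_1,\crys}, \bm b_{x_t,\crys})$ is of type $-(\mu_{h_1}, \mu_{h_t}^c)$. Since $\alpha^c$ is a closed embedding of reductive $\Z_p$-groups, the type $-\mu_{h_2}^c$ condition for $\bm b_{x_2,\crys}$ can be checked on the image along $\alpha^c$ --- a standard fact for affine Schubert cells under closed embeddings of reductive groups --- and so $\bm b_{x_2,\crys}$ is itself of type $-\mu_{h_2}^c$. Pushing forward along $\beta^c$ yields the desired containment for $\bm b_{x,\crys}$.

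The main obstacle I anticipate is the Hodge type case, where one must both verify that the $\mc G$-$F$-crystal produced by the universal abelian variety with crystalline tensors genuinely has Frobenius in the correct double coset, and carefully track cocharacter conventions (the sign in $-\mu_h^c$ versus $\mu_h^c$, and the location of the Frobenius twist $\sigma$). The descent step for the closed embedding $\alpha^c$ is routine but deserves an explicit citation or short verification, since the formulation involves comparing double cosets in three different reductive groups simultaneously.
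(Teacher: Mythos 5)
Your overall approach matches the paper's: reduce via Lemma \ref{lem:Lovering-lem} to the Hodge type, special type, and finally the interpolating datum $(\mb{G}_2,\mb{X}_2,\mc{G}_2)$. The Hodge-type case is handled the same way (the paper cites \cite[Lemma 3.3.14]{KimUnif}, which plays the same role as your citation of Kisin). But there are two places that need strengthening.

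First, in the special-type case, "by construction" is not sufficient. The paper must first reduce to the case of cuspidal tori (following \cite[pp.\@ 31--33]{Daniels}) and then invoke \cite[Proposition 4.3.14 and Corollary 4.4.12]{KSZ}. Your proposal leaves this step unaddressed.

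Second, and more substantively, the step where you argue that the type $-\mu_{h_2}^c$ condition for $\bm{b}_{x_2,\crys}$ can be checked after pushing forward along the closed embedding $\alpha^c\colon \mc{G}_2^c \hookrightarrow \mc{G}_1\times\mc{T}^c$ is \emph{not} a standard fact about affine Schubert cells under arbitrary closed embeddings of reductive groups. In general, distinct $\mc{H}(\breve{\Z}_p)$-double cosets of a subgroup $\mc{H}\subseteq\mc{G}$ can map into a single $\mc{G}(\breve{\Z}_p)$-double coset (for instance when $\mc{H}$ is a torus and non-conjugate cocharacters of $\mc{H}$ become conjugate in $\mc{G}$), so the map on double cosets need not be injective. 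What you actually need is the injectivity of
\begin{equation*}
\mathcal{G}_2^c(\breve{\mathbb{Z}}_p)\backslash \mathcal{G}_2^c(\breve{\mathbb{Q}}_p) / \mathcal{G}_2^c(\breve{\mathbb{Z}}_p) \to \mathcal{G}_3^c(\breve{\mathbb{Z}}_p)\backslash \mathcal{G}_3^c(\breve{\mathbb{Q}}_p) / \mathcal{G}_3^c(\breve{\mathbb{Z}}_p), \qquad \mc{G}_3 \defeq \mc{G}_1\times\mc{T},
\end{equation*}
and this is only true because of the particular structure here: since $\mb{G}_2 = \mb{G}_1\times_{\mb{G}_1^\mr{ab}}\mb{T}$ one has $\mc{G}_2^\der = \mc{G}_3^\der$, so compatible Borel pairs in $\mc{G}_2^c$ and $\mc{G}_3^c$ have the \emph{same} unipotent radical, and the Cartan decomposition then gives injectivity. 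The paper proves exactly this, and your proposal needs the same argument rather than an appeal to a nonexistent general principle.
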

\begin{proof}
If $(\mb{G},\mb{X})$ is of Hodge type this follows from \cite[Lemma 3.3.14]{KimUnif}. In the special-type case, we may assume that the torus in the Shimura datum is cuspidal following the argument given on \cite[pp.\@ 31--33]{Daniels}. From there the claim follows from \cite[Proposition 4.3.14 and Corollary 4.4.12]{KSZ}. 

For the abelian type case, we take $(\mb{G}_1,\mb{X}_1,\mc{G}_1)$, $(\mb{T},\{h\},\mc{T})$ and $(\mb{G}_2,\mb{X}_2,\mc{G}_2)$ as in Lemma \ref{lem:Lovering-lem}. The question is reduced to the case of $(\mb{G}_2,\mb{X}_2,\mc{G}_2)$ by functoriality. 
For the reduction to the case of $(\mb{G}_1\times \mb{T},\mb{X}_1 \times \{h\},\mc{G}_1 \times \mc{T})$, it suffices to show the injectivity of 
\begin{equation*} 
\mathcal{G}_2^c(\breve{\mathbb{Z}}_p)\backslash \mathcal{G}_2^c(\breve{\mathbb{Q}}_p) / \mathcal{G}_2^c(\breve{\mathbb{Z}}_p) \to \mathcal{G}_3^c(\breve{\mathbb{Z}}_p)\backslash \mathcal{G}_3^c(\breve{\mathbb{Q}}_p) / \mathcal{G}_3^c(\breve{\mathbb{Z}}_p),
\end{equation*}
where we put $\mathcal{G}_3=\mathcal{G}_1 \times \mathcal{T}$. 
Fix a Borel pair $\mathcal{T}_1 \subset \mathcal{B}_1 \subset \mathcal{G}_1$. This gives Borel pairs $\mathcal{T}_2 \subset \mathcal{B}_2 \subset \mathcal{G}_2^c$ and $\mathcal{T}_3 \subset \mathcal{B}_3 \subset \mathcal{G}_3^c$ by the constructions of $\mathcal{G}_2^c$ and $\mathcal{G}_3^c$. 
The injectivity of this map of double cosets follows from the Cartan decomposition since the unipotent radials of $\mathcal{B}_2$ and $\mathcal{B}_3$ are same. 
\end{proof}

Choose an element $b_x$ in $\mc{G}^c(\breve{\Z}_p)\mu^c_h(p)^{-1}\mc{G}^c(\breve{\Z}_p)$ 
such that $\sigma (b_x)$ maps to $\bm{b}_{x,\crys}$ in $C(\mc{G}^c)$.

\begin{thm}\label{thm:ito-shim-comp} There exists an isomorphism $i_x\colon R_{\mc{G}^c,\mu_h^c}\isomto \wh{\mc{O}}_{\ms{S}_{\mathsf{K}^p},x}$ such that 
\begin{equation*}
    i_x^\ast(\omega_{b_x}^\univ)\cong (\omega_{\mathsf{K}^p,\smallprism})|_{\wh{\mc{O}}_{\ms{S}_{\mathsf{K}^p},x}}.
\end{equation*}
\end{thm}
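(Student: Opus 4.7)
The plan is to reduce the statement from the abelian type case to the Hodge and special type cases using the chain of Shimura data from Lemma \ref{lem:Lovering-lem}, invoking Daniels \cite{Daniels} (cf.\@ Remark \ref{rem:Daniels-comp}) in the special type case and the Kisin--Kim description of the completion of the integral model in the Hodge type case.

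First, in the Hodge type case, fix an integral Hodge embedding $\iota\colon (\mb{G},\mb{X},\mc{G}) \hookrightarrow (\GSp(\mb{V}_0),\mf{h}^{\pm},\GSp(\Lambda_0))$ and a tensor package $(\Lambda_0,\mathds{T}_0)$ with $\mc{G} = \mathrm{Fix}(\mathds{T}_0) = \mc{G}^c$. Let $H_x$ denote the $p$-divisible group of $\ms{A}_{\mathsf{K}^p,x}$, equipped with the Tate tensors induced by $\mathds{T}_0$. By Kisin--Kim (\cite[(1.3.9) Proposition]{KisIntShab}, cf.\@ \cite[Theorem 4.9.1]{KimRZ}) there is an isomorphism $\wh{\mc{O}}_{\ms{S}_{\mathsf{K}^p}, x} \simeq R^{\Fal}_{\mc{G}, \mu_h}$ of complete local $W$-algebras identifying the formal $p$-divisible group over $\wh{\mc{O}}_{\ms{S}_{\mathsf{K}^p}, x}$ (coming from $\ms{A}_{\mathsf{K}^p}$) with Faltings' universal $p$-divisible group $H_b^\univ$ together with its Tate tensors. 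Combined with the canonical ring identification $R^{\Fal}_{\mc{G},\mu_h} \simeq R_{\mc{G},\mu_h}$ from \S\ref{sss:comparison-to-Faltings}, this yields the desired isomorphism $i_x$. To verify the compatibility of prismatic realizations, apply $\bb{D}_\crys$ and use Proposition \ref{prop:prismatic-crystalline-tensor-matching} to identify $\bb{D}_\crys(\omega_{\mathsf{K}^p,\smallprism}|_{\wh{\mc{O}}_{\ms{S}_{\mathsf{K}^p},x}})$, with tensors from $\mathds{T}_0$, with Faltings' filtered Dieudonn\'e crystal of $H_b^\univ$ with tensors; then invoke Proposition \ref{prop:ito-faltings-match} together with Proposition \ref{prop:ALB-dJ-comparison} to identify the latter with $\bb{D}_\crys(\omega_{b_x}^\univ)$ with its tensors. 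Since the prismatic $F$-crystals involved have Hodge--Tate weights in $[0,1]$, Example \ref{ex:[0,1]-is-lff}, Proposition \ref{prop:ALB-dJ-comparison}, and Theorem \ref{thm:ALB-equiv} imply that $\bb{D}_\crys$ is fully faithful on $\cat{Vect}^\varphi_{[0,1]}(\Spf(\wh{\mc{O}}_{\ms{S}_{\mathsf{K}^p},x})_\prism)$, using that $\wh{\mc{O}}_{\ms{S}_{\mathsf{K}^p},x}$ admits a perfectoid quasi-syntomic cover, so the Dieudonn\'e-level identification lifts uniquely to the prismatic level.

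To reduce the abelian type case to the Hodge and special type cases, let $(\mb{G}_1, \mb{X}_1, \mc{G}_1)$, $(\mb{T}, \{h\}, \mc{T})$, $(\mb{G}_2, \mb{X}_2, \mc{G}_2)$ and the morphisms $\alpha$, $\beta$ be as in Lemma \ref{lem:Lovering-lem}. The product case $(\mb{G}_1 \times \mb{T}, \mb{X}_1 \times \{h\}, \mc{G}_1 \times \mc{T})$ decomposes into the Hodge type case for $(\mb{G}_1, \mb{X}_1, \mc{G}_1)$ and the special type case for $(\mb{T}, \{h\}, \mc{T})$. Since $\alpha^c\colon \mc{G}_2^c \hookrightarrow \mc{G}_1 \times \mc{T}^c$ is a closed embedding inducing an isomorphism of adjoint groups (as $\mc{T}^c$ is a torus), the observations in \S\ref{ss:universal-deformation} show the induced map $R_{\mc{G}_1 \times \mc{T}^c, \mu_{h_1} \times \mu_h^c} \to R_{\mc{G}_2^c, \mu_{h_2}^c}$ is an isomorphism; combined with \eqref{eq:compatibility-prismatic-model-pullback} and Lemma \ref{lem:ito-compatability}, this handles $(\mb{G}_2, \mb{X}_2, \mc{G}_2)$ at a lift $x_2$ of $x$. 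To descend to $(\mb{G}, \mb{X}, \mc{G})$, Lemma \ref{lem:transitivity-on-conn-comp} and Hecke equivariance allow us to assume $x$ lifts to $x_2$ under the finite \'etale morphism $\beta_{\mathsf{K}^p_2, \mathsf{K}^p}$ (Lemma \ref{lem:isogeny-finite-etale}); this yields a compatible isomorphism $\wh{\mc{O}}_{\ms{S}_{\mathsf{K}^p}, x} \otimes_{\mc{O}_E} \breve{\mc{O}}_{E_1} \simeq \wh{\mc{O}}_{\ms{S}_{\mathsf{K}^p_2}, x_2}$ under which the prismatic realizations and Ito universal deformations correspond via Lemma \ref{lem:ito-compatability} and \eqref{eq:compatibility-prismatic-model-pullback}, producing $i_x$.

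The principal obstacle is in the Hodge type step: although each ingredient (the Kisin--Kim local structure, the prismatic-to-Dieudonn\'e comparison, Ito's universal construction, and Faltings' universal deformation with Tate tensors) is available from the earlier sections of the paper or the literature, stringing them together requires careful tensor bookkeeping. In particular, one must verify that under the Kisin--Kim isomorphism the formal $p$-divisible group on $\wh{\mc{O}}_{\ms{S}_{\mathsf{K}^p},x}$ carrying tensors inherited from $\mathds{T}_0$ is matched, at the level of filtered Dieudonn\'e crystals with $\mc{G}$-structure, with Ito's universal object before lifting to the prismatic level.
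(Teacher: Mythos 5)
Your overall strategy is the right one and matches the paper's: reduce the abelian-type case to Hodge type and special type through the chain of Shimura data in Lemma \ref{lem:Lovering-lem}, and handle Hodge type by pairing Kisin--Kim's local description of the integral model with the identification of Faltings' universal deformation with Ito's universal prismatic object. In the Hodge type step you take a genuinely different route from the paper: you apply $\bb{D}_\crys$ everywhere and lift the resulting Dieudonn\'e-level identification via fullness of $\bb{D}_\crys$ on $\cat{Vect}^\varphi_{[0,1]}$ (available from Theorem \ref{thm:ALB-equiv} plus Proposition \ref{prop:ALB-dJ-comparison} and Kim's crystalline Dieudonn\'e equivalence), whereas the paper constructs the prismatic isomorphism on $\Lambda_0^\vee$-values directly by applying $\mc{M}_\Prism$ to Kisin's isomorphism of $p$-divisible groups (using \cite[Theorem 6.2.1]{Ito2} and Lemma \ref{lem:ito-compatability}) and only needs the much cheaper \emph{faithfulness} of $(-)^\crys(R_{\mc{G},\mu_h})$ to check that the tensors transfer. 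Your fullness-based route works, but you should say explicitly that the lifted isomorphism carries tensors correctly: the tensors live in higher tensor powers of $\Lambda_0$, which are not in height $[0,1]$, so you need to supplement fullness on $[0,1]$ with faithfulness on $\cat{Vect}^\varphi$ (Proposition \ref{prop:underline-Dcrys-faithful}) to close the loop -- this is precisely the ``careful tensor bookkeeping'' you flagged.

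There are two genuine gaps. First, the special type case cannot be dispatched by ``invoking Daniels'': Remark \ref{rem:Daniels-comp} identifies the prismatic canonical model with Daniels' construction, but it does not compare it with Ito's universal deformation. The paper's argument has real content here: it reduces (following Daniels) to the case of the Lubin--Tate torus, then uses \cite[Theorem 6.2.1]{Ito2} to identify $T_\et\circ\omega_{b_0}^\univ$ with the Lubin--Tate character, and this must be carried out. Second, and more seriously, your abelian type reduction asserts that \eqref{eq:compatibility-prismatic-model-pullback} and Lemma \ref{lem:ito-compatability} together ``handle'' $(\mb{G}_2,\mb{X}_2,\mc{G}_2)$. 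They do not: those compatibilities only tell you that the $\mc{G}_2^c$-objects $\omega_{\mathsf{K}^p_2,\smallprism}|_{x_2}$ and $i_{x_2}^*(\omega_{b_{x_2}}^\univ)$ agree \emph{after pushforward along the closed embedding} $\alpha^c\colon \mc{G}_2^c\hookrightarrow\mc{G}_1\times\mc{T}^c$ (equivalently, on representations of $\mc{G}_2^c$ that factor through $\mc{G}_1\times\mc{T}^c$). Pushforward along a non-isomorphic closed embedding of reductive groups is not fully faithful on torsors, so agreement of pushforwards does not imply agreement of the $\mc{G}_2^c$-torsors. The paper closes this gap by additionally matching the pushforwards to $\mc{G}^\mr{ab}$ and invoking \cite[Proposition 2.1.6]{DanielsYoucis}, which shows that together these two pushforwards determine the $\mc{G}_2^c$-torsor; you need some analogue of this step.
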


Note while the pair $(R_{\mc{G}^c,\mu^c_h},\omega_{b_x}^\univ)$ depends on the choice of $b_x$ and $\mu_h^c$, the isomorphism type of the pair $(R_{\mc{G}^c,\mu^c_h},\omega_{b_x}^\univ)$ does not, and therefore neither does the statement of Theorem \ref{thm:ito-shim-comp}.

\begin{proof}[Proof of Theorem \ref{thm:ito-shim-comp}] We perform a devissage to the Hodge and special type cases.

\medskip

\paragraph*{Hodge type case} Let $(\mb{G},\mb{X},\mc{G})\hookrightarrow (\GSp(\mb{V}_0),\mf{h}^\pm, \GSp(\Lambda_0))$ be an integral Hodge embedding, and $\ms{A}_{\mathsf{K}^p}\to \ms{S}_{\mathsf{K}^p}$ the associated abelian scheme. Let $\mathds{T}_{0,p}^\crys$ be the tensors on the filtered $F$-crystal 
\begin{equation*}
\mc{H}^1_\crys(\wh{\ms{A}}_{\mathsf{K}^p}/\wh{\ms{S}}_{\mathsf{K}^p})=\bb{D}(\ms{A}_{\mathsf{K}^p}[p^\infty])
\end{equation*} 
(see \cite[(3.3.7.2)]{BBMDieuII} for this identification), as in \S\ref{ss:lovering-comp}. The triple $(\mc{G},b_x,\mu_h)$ 
is of Hodge type relative to the embedding $\iota\colon \mc{G}\to \GL(\Lambda_0)$. By work of Kisin, there exists an isomorphism $i_x\colon R_{\mc{G},\mu_h}\isomto \wh{\mc{O}}_{\ms{S}_{\mathsf{K}^p},x}$ together with an isomorphism $i_x^\ast(H_{b_x}^\univ,\mathds{T}_0^\Fal)\cong(\ms{A}_{\mathsf{K}^p}[p^\infty],\mathds{T}_{0,p}^\crys)_{\wh{\mc{O}}_{\ms{S}_{\mathsf{K}^p},x}}$ (see \cite[Proposition 4.1.6]{KimUnif}). 
We claim that $i_x^\ast(\omega_{b_x}^\univ)$ is isomorphic to $\omega\defeq (\omega_{\mathsf{K}^p,\smallprism})|_{\wh{\mc{O}}_{\ms{S}_{\mathsf{K}^p},x}}$. We consider the following isomorphisms 
\begin{equation*}
    \omega(\Lambda_0^\vee)\isomto \mc{M}_\smallprism(\ms{A}[p^\infty]_{\wh{\mc{O}}_{\ms{S}_{\mathsf{K}^p},x}})\isomto i_x^*\mc M_\smallprism(H^\univ_{b_x})\isomfrom i_x^\ast(\omega_{b_x}^\univ)(\Lambda_0^\vee),
\end{equation*}
where the first isomorphism is from Theorem \ref{thm:main-Shimura-theorem-Hodge-type-case}, the second is the one obtained by applying $\mc M_\smallprism$ to the above isomorphism of Kisin, and the third one is obtained as follows. By \cite[Theorem 6.2.1]{Ito2} we have a canonical isomorphism $\mc M_\smallprism(H^\univ_{\iota(b_{x})^\vee})\isomfrom \omega_{\iota(b_{x})^\vee}^\univ(\Lambda_0^\vee)$. Then, by Lemma \ref{lem:ito-compatability}, we get the third isomorphism as the restriction of this isomorphism along 
\begin{equation*} R_{\GL(\Lambda_0^\vee),\iota(\mu_h)^\vee}\to R_{\mc G,\mu_h}\xrightarrow{i_x}\wh{\mc O}_{\mathscr{S}_{\mathsf K^p,x}}.
\end{equation*}
By \cite[Proposition 1.28]{IKY1} it suffices to show the above composite carries $\omega(\mathds{T}_0)$ to $i_x^\ast(\omega_{b_x}^\univ)(\mathds{T}_0)$

It further suffices to show $\bb{D}_\crys(\omega(\mathds{T}_0))(R_{\mc{G},\mu_h})$ is matched to $\bb{D}_\crys(i_x^\ast(\omega_{b_x}^\univ)(\mathds{T}_0))(R_{\mc{G},\mu_h})$ as 
\begin{equation*}
    \cat{Vect}((R_{\mc{G},\mu_h})_\smallprism)\to \cat{Vect}(R_{\mc{G},\mu_h}),\quad \mc{E}\mapsto \mc{E}^\crys(R_{\mc{G},\mu_h}) 
\end{equation*}
is faithful as follows from \cite[Corollary 2.2.3]{deJongCrystalline} and the second equivalence in \cite[Equation (2.3.2)]{IKY1}. But, this matching follows by combining Proposition \ref{prop:prismatic-crystalline-tensor-matching} and Proposition \ref{prop:ito-faltings-match}.

\medskip

\paragraph*{Special type case} Write $(\mb{T},\mb{X},\mc{T})$ for the unramified Shimura datum. In this case $\ms{S}_{\mathsf{K}^p}$ is a disjoint union of schemes of the form $\Spec(\mc{O}_{E'})$, for a finite unramified extension $E'$ of $E$ (see \cite[Proposition 3.22]{DanielsYoucis}), and so there is a tautological identification $R_{\mc G^c,\mu^c_h}=\breve{\Z}_p=\wh{\mc{O}}_{\ms{S}_{\mathsf{K}^p},x}$. We show that under this identification that the prismatic $\mc{T}^c$-torsors with $F$-structure are matched.

By Remark \ref{rem:Daniels-comp}, Lemma \ref{lem:ito-compatability}, and the argument given in \cite[pp.\@ 31--33]{Daniels} we are reduced to showing the following. Let $\mc{T}=\mathrm{Res}_{\mc{O}_{E'}/\Z_p}\, \bb{G}_{m,\mc{O}_{E'}}$, $T\defeq \mc{T}_{\Q_p}$, and $\mu$ the $\Q^\mr{ur}_p$-cocharacter of $T$ with weights $(1,0,\ldots,0)$. Then, $b_0=(p^{-1},1,\ldots,1)$ represents the unique class in the image of $\mc{T}(\breve{W})\mu^{-1}(p)\mc{T}(\breve{W})$. 
Then, we must show that $T_\et\circ \omega_{b_0}^\univ$ restricted to the inertia subgroup $\Gamma_{E',0}$ agrees with the Lubin--Tate character $\alpha_0\colon \Gamma_{E',0}\to \mc{T}(\Z_p)$ (see the discussion before \cite[Proposition 4.9]{Daniels}), or equivalently that their compositions with embedding $\iota\colon \mc{T}(\Z_p)\to \GL(\mc{O}_{E'})$ are equal. But, as $(\mc{T},b_0,\mu^{-1})$ is the Lubin--Tate triple, we know by \cite[Theorem 6.2.1]{Ito2} that this $\omega^\univ_{b_0}(\iota)$ is $\mathcal{M}_\smallprism(X_{\mathrm{LT}})$, if $X_\mathrm{LT}$ is the $p$-divisible group with $\mc{O}_E$-structure over $\Spf(\breve{W})$ coming from Lubin--Tate theory. Thus, the composition of the character $\Gamma_{E',0}\to\mc{T}(\Z_p)\to \GL(\mc{O}_E)$ corresponding to $T_\et\circ \omega_{b_0}^\univ$ is $T_\et(\mc{M}_\smallprism(X_\mathrm{LT}))=T_p(X_\mathrm{LT})$ (see \cite[Proposition 3.35]{DLMS}). But, this is the composition of $\alpha_0$ with $\mc{T}(\Z_p)\to \GL(\mc{O}_E)$ (see the proof of \cite[Proposition 4.9]{Daniels}).

\medskip

\paragraph*{Abelian type case} Let $(\mb{G}_1,\mb{X}_1,\mc{G}_1)$ be an unramified Shimura datum of Hodge type adapted to $(\mb{G},\mb{X},\mc{G})$. Consider the morphism of Shimura data obtained in Lemma \ref{lem:Lovering-lem}. Then, as the map $\alpha^c\colon (\mc{G}_2^c)^\der\to (\mc{G}_1\times\mc{T}^c)^\der$ is an isogeny, it induces an isomorphism
\begin{equation*}
    R_{\mc{G}_2,\mu_{h,2}^c}\isomto R_{\mc{G}_1\times\mc{T}^c,\mu_{h,1}^c\times \mu_{h,\mc{T}}^c}. 
\end{equation*}
Moreover, for the same reason, for any $x_2$ in $\ms{S}_{\mathsf{K}_p^2}(\ov{\bb{F}}_p)$ we obtain an induced isomorphism 
\begin{equation*}
    \alpha^c\colon \wh{\mc{O}}_{\ms{S}_{\mathsf{K}^p_2},x_2}\to \wh{\mc{O}}_{\ms{S}_{\mathsf{K}_1^p\times\mathsf{K}_\mc{T}^p},(x_1,x_\mc{T})},
\end{equation*}
as $\alpha_{\mathsf{K}^p_2,\mathsf{K}^p}$ is finite \'etale by Lemma \ref{lem:isogeny-finite-etale}. Thus, we may use Lemma \ref{lem:ito-compatability}, together with the claims in the case of Hodge and special type, to deduce the existence of an isomorphism $i_{x_2}\colon R_{\mc{G}_2^c,\mu_{h,2}^c}\isomto \wh{\mc{O}}_{\ms{S}_{\mathsf{K}^p_2},x_2}$ 
such that the prismatic $\mc{G}$-torsors with $F$-structure $\omega_{\mathsf{K}^p_2,\smallprism}|_{\wh{\mc{O}}_{\ms{S}_{\mathsf{K}^p_2},x_2}}$ and $i_{x_2}^\ast(\omega_{b_{x_2}}^\univ)$ for $\mc{G}_2^c$ agree when pushed forward along $\alpha^c\colon \mc{G}_2^c\to \mc{G}_1\times\mc{T}^c$. As they have further natural identifications when pushed forward to $\mc{G}^\mr{ab}$, this implies they are isomorphic (see \cite[Proposition 2.1.6]{DanielsYoucis}). Finally, as the morphism $\beta\colon \mc{G}_2^\der\to\mc{G}^\der$ is an isogeny, it again induces isomorphisms $R_{\mc{G}^c_2,\mu_{h,2}^c}\to R_{\mc{G}^c,\mu_h^c}$ and $\wh{\mc{O}}_{\ms{S}_{\mathsf{K}^p_2},x_2}\to \wh{\mc{O}}_{\ms{S}_{\mathsf{K}^p},x}$, where $x$ is the image of $x_2$. If $i_x\colon R_{\mc{G}^c,\mu^c_h}\isomto \wh{\mc{O}}_{\ms{S}_{\mathsf{K}^p},x}$ is the resulting isomorphism, then there are isomorphisms between  $\omega_{\mathsf{K}^p,\smallprism}|_{\wh{\mc{O}}_{\ms{S}_{\mathsf{K}^p},x}}$ and $i_{x}^\ast(\omega_{b_{x}}^\univ)$. While arbitrary $x$ may not be the image of such an $x_2$, we may reduce to this case by Lemma \ref{lem:transitivity-on-conn-comp} and \eqref{eq:Hecke-action-local-system-compat}.
\end{proof}

We observe an important corollary of the above proof, which was used previously several times (see Theorem \ref{thm:prismatic-F-gauge-realization} and Theorem \ref{thm:prismatic-crystalline-comparison}).

\begin{cor}\label{cor:prismatic-realization-lff} The prismatic realization functor $\omega_{\mathsf{K}^p,\smallprism}$ belongs to $\cat{Tors}_{\mc{G}^c}^{\varphi,-\mu_h^c}((\wh{\ms{S}}_{\mathsf{K}^p})_\smallprism)$. In particular, $\omega_{\mathsf{K}^p,\smallprism}$ takes values in $\cat{Vect}^{\varphi,\mr{lff}}((\wh{\ms{S}}_{\mathsf{K}^p})_\smallprism)$.
\end{cor}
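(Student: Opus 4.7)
The plan is to prove that $\omega_{\mathsf{K}^p,\smallprism}$ is of type $-\mu_h^c$; once this is established, Example \ref{ex:mu-lff} immediately yields the lff conclusion, so the content is entirely in the type condition. First I would reduce to a local verification: by \cite[Proposition 1.16]{IKY1} the Breuil--Kisin prisms $(\mf{S}_R,(E))$ attached to $\Spf(R)$ ranging over an affine cover of $\wh{\ms{S}}_{\mathsf{K}^p}$ by formally framed base $\mc{O}_E$-algebras form a cover of the final object of $\cat{Shv}((\wh{\ms{S}}_{\mathsf{K}^p})_\Prism)$. Since by Definition \ref{defn: type mu F crystals} the type $\mu$ condition is defined via the existence of a $(p,I)$-adically faithfully flat cover trivializing the Frobenius datum, it is fpqc-local on the base, so it suffices to verify the condition after evaluation on each such Breuil--Kisin prism.

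Next, Theorem \ref{thm:ito-shim-comp} records the type $-\mu_h^c$ property at every completion $\wh{\mc{O}}_{\ms{S}_{\mathsf{K}^p},x}$ for $x\in\ms{S}_{\mathsf{K}^p}(\ov{\bb{F}}_p)$, since the universal object $\omega_{b_x}^\univ$ is of type $-\mu_h^c$ by Ito's construction. The central step is then to globalize from the formal neighborhoods of these $\ov{\bb{F}}_p$-points to all of $\Spf R$. The plan is to invoke Artin approximation: for each closed point $x\in\Spec(R_k)$, the henselization $R^h_x$ is excellent, so any solution in $\wh R_x$ to the finite system of equations encoding the type $-\mu_h^c$ condition can be approximated by a solution in $R^h_x$, and thus is realized over some \'etale neighborhood of $x$ in $\Spec R$. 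Since $\ov{\bb{F}}_p$-points are Zariski-dense in the special fiber of $\Spf R$, the resulting \'etale neighborhoods cover $\Spf R$ in the \'etale topology, and fpqc descent of the type condition (built into Definition \ref{defn: type mu F crystals}) concludes the argument.

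The hardest part will be making precise how the type $-\mu_h^c$ condition is captured by a finite system of polynomial equations so that Artin--Popescu approximation applies. A natural path is via Ito's framework of $\mc{G}^c$-$\mu$-displays (\S\ref{ss:display}): by Proposition \ref{prop: summary display comparison} the type $\mu$ condition on a prismatic $\mc{G}^c$-torsor with $F$-structure corresponds to a $\mc{G}_{\mu,\Prism}$-torsor together with an equivariant map to $\mc{G}_{\Prism,\mc{N}}$, and the pro-representability of Ito's deformation functor by the power series ring $R_{\mc{G}^c,\mu_h^c}$ underlying Theorem \ref{thm:ito-shim-comp} furnishes the desired moduli interpretation that is amenable to Artin approximation. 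Alternatively, a Tannakian reduction to a single faithful representation $\xi$ of $\mc{G}^c$ recasts the type condition as the rank condition on the graded pieces of the Nygaard filtration of $\omega_{\mathsf{K}^p,\smallprism}(\xi)$, an open condition on $\Spf R$ that holds at every closed point and therefore, by Zariski density, holds everywhere.
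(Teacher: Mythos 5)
Your overall strategy matches the paper's: establish the type $-\mu_h^c$ property at the completed local rings via Theorem~\ref{thm:ito-shim-comp}, encode the double-coset condition as a limit-preserving functor, and spread out by Artin approximation over an excellent ring. The conclusion via Example~\ref{ex:mu-lff} for the lff statement is likewise what the paper does.

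However, there is a genuine gap in the middle step as you have phrased it. You propose to approximate a solution ``in $\wh R_x$'' by one in the henselization $R^h_x$, treating the type condition as a finite system of equations over $R$. But the condition to be achieved is that the Frobenius is in $\mc{G}^c(\mf{S}_S)\mu_h^c(E)^{-1}\mc{G}^c(\mf{S}_S)$, i.e.\ it is a condition over the Breuil--Kisin ring $\mf{S}_R=R\ll t\rr$, not over $R$. The natural functor to approximate is
\[
F\colon \cat{Alg}_{R\ll t\rr}\to \cat{Set},\quad A\mapsto \left\{(h,h')\in \mc{G}^c(A)^2 : h g h'=\mu_h^c(E)^{-1}\ \text{in}\ \mc{G}^c(A[\nicefrac{1}{E}])\right\},
\]
and this is not a functor of $R$-algebras — composing with $A\mapsto A\ll t\rr$ destroys limit-preservation, so Artin approximation over $R$ (or $R^h_x$) does not directly apply. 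The paper's proof resolves this by applying Artin approximation over the excellent ring $R\ll t\rr$ at the point $y=(x,t)$, using the identity $\wh{\mc{O}}_{\Spec(R\ll t\rr),y}=\wh{\mc{O}}_{\Spf(R),x}\ll t\rr$ to import Theorem~\ref{thm:ito-shim-comp}. This produces an \'etale $R\ll t\rr$-algebra $B$ with $F(B)\neq\emptyset$. One then still has to convert the $(p,t)$-adic completion $A$ of $B$ into something of the form $S\ll t\rr$ for a $p$-adically \'etale $R$-algebra $S$; the paper accomplishes this via the topological invariance of the \'etale site of a formal scheme (identifying \'etale covers of $\Spf(R\ll t\rr)$ with those of $\Spf(R)$ through the common reduced special fiber). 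This last descent step is a nontrivial ingredient your sketch omits, and without it one only obtains an \'etale neighborhood of $\Spec(R\ll t\rr)$, not of $\Spf(R)$.

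As for your second alternative (Tannakian reduction to a rank condition on graded pieces, then openness and Zariski density), it points at a genuinely different route, but it runs into the same issue: the graded ranks are conditions on $\mf{S}_R$-modules, so ``open condition on $\Spf R$'' would again require showing that the open locus in $\Spec(\mf{S}_R)$ descends along $\mf{S}_R\to R$, and additionally one must argue that for non-$\GL_n$ groups the rank condition on a single faithful representation actually characterizes the full conjugacy class of $-\mu_h^c$ rather than just its Newton cocharacter — this needs justification (e.g.\ via Kottwitz invariants of $\mc{G}^c$-bundles on $\A^1/\bb{G}_m$). The paper does not take this route.
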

\begin{proof} It suffices to show that for each small open subset $\Spf(R)$ of $\widehat{\mathscr{S}}_{\mathsf{K}^p}$, and for every point $x$ of the special fiber of $\Spf(R)$, there exists a $p$-adically etale neighborhood $\Spf(S)\to \Spf(R)$ such that the Frobenius for $\omega_{\mathsf{K}^p,\smallprism}$ over $\mathfrak{S}_S$ is in the double coset $\mathcal{G}^c(\mathfrak{S}_S)\mu^c_h(E)^{-1}\mathcal{G}^c(\mathfrak{S}_S)$. 

By moving to an \'etale neighborhood if necessary, we may assume without loss of generality that the underlying $\mathcal{G}^c$-torsor is trivial on $\Spf(R)$. Write $g$ for the element of $\mathcal{G}^c(\mathfrak{S}_R[1/E])$ corresponding to the Frobenius for $\omega_{\mathsf{K}^p,\smallprism}$ on $\mf{S}_R=R\ll t\rr$. Consider the functor
\begin{equation*}
F\colon \cat{Alg}_{R\ll t\rr}\to\mathbf{Set},\quad A\mapsto \{(h,h')\in\mathcal{G}^c(A)\times\mathcal{G}^c(A):hgh'=\mu^c_h(E)^{-1}\in \mathcal{G}^c(A[1/E])\}.
\end{equation*}
Let $y$ be the point of $\Spec(R\ll t\rr)$ equal to $(x,t)$, with the obvious meaning. Observe that we have the equality $\widehat{\mathcal{O}}_{\Spec(R\ll t\rr),y}=\widehat{\mathcal{O}}_{\Spf(R),x}\ll t\rr$. Thus, as a result of Theorem \ref{thm:ito-shim-comp} (and the description of the universal deformation in \cite[Theorem 4.4.2]{Ito2}), we have that $F(\widehat{\mathcal{O}}_{\Spec(R\ll t\rr),y})$ is non-empty. The claim then follows from Artin approximation.

More precisely, first note that $R\ll t\rr$ is excellent (see \cite[Proposition 1.12]{IKY1}). Moreover, the functor $F$ is clearly limit-preserving as $\mathcal{G}^c$ is. Thus, by Artin approximation for an excellent base (see \cite[Theorem 3.4]{AlperHallRydh}), there exists some affine etale neighborhood $\Spec(B)\to \Spec(R\ll t\rr)$ containing $y$ in its image and with $F(\Spec(B))$ non-empty. Let $A$ be the $(p,t)$-adic completion of $B$, so that $\Spf(A)\to\Spf(R\ll t\rr)$ is a $(p,t)$-adically etale neighborhood of $y$. Set $S=A/tA$. Then, $\Spf(S)\to\Spf(R)$ is a $p$-adically etale map, and there is a unique deformation (by the topological invariance of the etale site of a formal scheme) to a $(p,t)$-adically etale map over $\Spf(R\ll t\rr)$ and, in fact, it must be $\Spf(S\ll t\rr)$. Thus, in fact, $A=S\ll t\rr$ where $\Spf(S)\to\Spf(R)$ is a $p$-adically etale neighborhood of $x$. Observe then that, by set-up, $F(S\ll t\rr)$ is non-empty, but this means precisely that the Frobenius is in the double coset of $\mu_h^c(E)^{-1}$ over $S$ as desired.
\end{proof}

\subsection{A prismatic characterization of integral models}

Throughout this section we fix notation and conventions as in \S\ref{s:applications-to-Shimura-varieties}, and in particular fix $(\mb{G},\mb{X},\mc{G})$ to be an unramified Shimura datum of abelian type.

\subsubsection{Characterization of integral canonical models}\label{ss:characterization} Throughout this subsection let us fix a neat compact open subgroup $\mathsf{K}^p\subseteq \mb{G}(\A_f^p)$. Recall from \S\ref{ss:pot-crys-strat} that there exists a potentially crystalline locus $U_{\mathsf{K}^p}\subseteq \Sh_{\mathsf{K}_0\mathsf{K}^p}^\an$ of $\nu_{\mathsf{K}_0\mathsf{K}^p,\et}$ or, equivalently $\omega_{\mathsf{K}^p,\et}$.

Consider a smooth formal $\mc{O}_E$-model $\mf{X}_{\mathsf{K}^p}$ of $U_{\mathsf{K}^p}$, and a prismatic model $\zeta_{\mathsf{K}^p}$ of $\omega_{\mathsf{K}^p,\an}$, i.e.\@, an object of $\mc{G}^c\text{-}\cat{Vect}^\varphi((\mf{X}_{\mathsf{K}^p})_\smallprism)$ with $T_\et\circ \zeta_{\mathsf{K}^p}$ isomorphic to $\omega_{\mathsf{K}^p,\an}$. For $x$ in $\mf{X}_{\mathsf{K}^p}(\ov{\bb{F}}_p)$, there is an element $\bm{b}_{x,\crys}$ in $C(\mc{G}^c)$ associated to the $F$-crystal with $\mc{G}^c$-structure given by $\underline{\bb{D}}_\crys\circ (\zeta_{\mathsf{K}^p})_x$. Fix $\mu_h^c$ in $\bm{\mu}_h^c$. Then, we have the following property of $\bm{b}_{x,\crys}$.

\begin{lem} The element $\bm{b}_{x,\crys}$ lies in the image of the map
\begin{equation*}
    \mc{G}^c(\breve{\Z}_p)\sigma (\mu_h^c(p))^{-1}\mc{G}^c(\breve{\Z}_p)\to C(\mc{G}^c). 
\end{equation*}
\end{lem}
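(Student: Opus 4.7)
The plan is to bootstrap from rational $p$-adic Hodge theory via a characteristic zero lift, using that $\omega_{\mathsf{K}^p,\an}$ is de Rham of cocharacter type $\bm{\mu}_h^c$ (Proposition \ref{prop:G-local-system-de-Rham}). First, the smoothness of $\mf{X}_{\mathsf{K}^p}$ over $\mc{O}_E$ allows me to lift $x$ to a point $\tilde{x}\colon \Spf(\breve{W})\to \mf{X}_{\mathsf{K}^p}\otimes_{\mc{O}_E}\breve{W}$ by successive deformation along the nilpotent thickenings $\Spec(\breve{W}/p^n)\hookrightarrow\Spec(\breve{W}/p^{n+1})$. Pulling back $\zeta_{\mathsf{K}^p}$ along $\tilde{x}$ gives a prismatic $\mc{G}^c$-torsor with $F$-structure $\zeta_{\tilde{x}}$ on $\Spf(\breve{W})$. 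By the compatibility of $T_\et$ with pullback and with base change to $\breve{W}$, the étale realization of $\zeta_{\tilde{x}}$ on the generic fibre $\Spa(\breve{E})$ is the restriction $\omega_{\mathsf{K}^p,\an}|_{\tilde{x}_\eta}$, which is therefore crystalline with Hodge--Tate cocharacter of type $\bm{\mu}_h^c$.

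Next, I would apply $\bb{D}_\crys$ to $\zeta_{\tilde{x}}$ to obtain a $\mc{G}^c$-object in naive filtered $F$-crystals over $\breve{W}$; by Proposition \ref{prop:integral-dcrys-strongly-divisible-and-rational-agreeance}, after inverting $p$ this is canonically identified with $D_\crys$ of the étale realization, so the Hodge filtration of $\bb{D}_\crys(\zeta_{\tilde{x}})[\nicefrac{1}{p}]$ is of type $\mu_h^c$. Evaluating $\zeta_{\tilde{x}}$ on the Breuil--Kisin prism $(\mf{S},(E))$ associated to $\breve{W}$ gives a $\mc{G}^c$-object $\mf{M}$ in prismatic data whose Frobenius $\varphi_\mf{M}\colon\phi^*\mf{M}[\nicefrac{1}{E}]\isomto \mf{M}[\nicefrac{1}{E}]$ has the property that, by Remark \ref{rem:filtered-tensor-product-computation}, the Nygaard filtration on $\phi^*\mf{M}$ modulo $E$ matches the Hodge filtration of $\bb{D}_\crys(\zeta_{\tilde{x}})(R_0)$ under the crystalline--de Rham comparison of Proposition \ref{prop: crys dR}. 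Using the bounded weights of the crystalline representation to show integrality of $\varphi_\mf{M}$, the type $\mu_h^c$ condition on the filtration together with Proposition \ref{prop: F gauge type mu equals F crystal type mu} (whose proof for base formal $W$-schemes applies here) allows one to arrange trivializations, possibly after a $(p,E)$-adically faithfully flat cover, so that $\varphi_\mf{M}$ is left multiplication by an element of $\mc{G}^c(\mf{S})\mu_h^c(E)^{-1}\mc{G}^c(\mf{S})$.

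Finally, I would reduce modulo $u$. Since $K=K_0$ is unramified, we have $E(u)=u-p$ and so $E(0)=-p$. Combining this with the canonical isomorphism $\phi^*\mf{M}/u\isomto \phi^*\underline{\bb{D}}_\crys(\zeta_{\tilde{x}})$ from Proposition \ref{prop:crystalline-realization-Kisin-comparison}, the Frobenius on the $F$-crystal with $\mc{G}^c$-structure $\underline{\bb{D}}_\crys(\zeta_{\tilde{x}})$ over $\breve{W}$---whose class in $C(\mc{G}^c)$ is $\bm{b}_{x,\crys}$ by definition---lands in the double coset $\mc{G}^c(\breve{W})\sigma(\mu_h^c(p))^{-1}\mc{G}^c(\breve{W})$. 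Here the $\sigma$ twist arises because the Frobenius on $\underline{\bb{D}}_\crys$ is presented as a map out of $\phi^\ast\underline{\bb{D}}_\crys$, so trivializing the target turns a double coset of $\mu_h^c(p)^{-1}$ on the level of $\varphi_\mf{M}$ into one of $\sigma(\mu_h^c(p))^{-1}$ on the level of the Frobenius isogeny.

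The hard part will be the very last paragraph: tracking Frobenius twists and signs to confirm the double coset is precisely $\mc{G}^c(\breve{\Z}_p)\sigma(\mu_h^c(p))^{-1}\mc{G}^c(\breve{\Z}_p)$ rather than a conjugate variant, and justifying the integrality claim that $\varphi_\mf{M}$ itself (not just its reduction) lies in $\mc{G}^c(\mf{S})\mu_h^c(E)^{-1}\mc{G}^c(\mf{S})$---this last point needs the type $\mu$ result from Proposition \ref{prop: F gauge type mu equals F crystal type mu} to apply despite $\zeta_{\tilde{x}}$ a priori only being a prismatic $F$-crystal and not known to be lff.
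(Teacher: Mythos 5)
Your proposal takes a genuinely different and much more ambitious route than the paper's, but it has a real gap exactly where you flag it.

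The paper's proof is a reduction: since $\mf{X}_{\mathsf{K}^p}$ is smooth over $\mc{O}_E$, the specialization map $\mathrm{sp}\colon |U_{\mathsf{K}^p}|^{\mathrm{cl}}\to\mf{X}_{\mathsf{K}^p}(\ov{\bb{F}}_p)$ is surjective, so one picks a classical point $y$ of the generic fiber with $\mathrm{sp}(y)=x$. Arguing as in Proposition \ref{prop:factorization-through-specialization}, one identifies $\bm{b}_{x,\crys}$ with $\Sigma^\circ_{\mathsf{K}^p}(y)$, which is manifestly independent of the choice of model since it is computed from $\omega_{\mathsf{K}^p,\et}$ on the generic fiber alone. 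Now $U_{\mathsf{K}^p}=(\wh{\ms{S}}_{\mathsf{K}^p})_\eta$, so the same point $y$ specializes to a point of $\ms{S}_{\mathsf{K}^p}(\ov{\bb{F}}_p)$, and Proposition \ref{prop:factorization-through-specialization} together with Lemma \ref{lem:C(G)-containment} (already proven for the canonical model) gives the double coset containment. No new $p$-adic Hodge theory is needed; the work is in the previously established Lemma \ref{lem:C(G)-containment}, and the present lemma simply observes that the invariant $\bm{b}_{x,\crys}$ does not see the choice of integral model.

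Your proposal instead attempts a direct proof: lift $x$ to $\tilde{x}\in\mf{X}_{\mathsf{K}^p}(\breve{W})$, restrict $\zeta_{\mathsf{K}^p}$ to $\Spf(\breve{W})$, and read the double coset containment off the Breuil--Kisin module $\mf{M}$. The obstruction you correctly identify at the end is fatal as the argument stands. Knowing that the \'etale realization is crystalline with Hodge--Tate cocharacter in $\bm{\mu}_h^c$ gives, after inverting $p$, that the Hodge filtration on $\bb{D}_\crys(\zeta_{\tilde{x}})[\nicefrac{1}{p}]$ is of type $\mu_h^c$. But this is a rational statement; it does not imply that $(\phi^*\mf{M},\Fil^\bullet_{\mathrm{Nyg}})$ is locally filtered free over $(\mf{S},\Fil^\bullet_E)$, nor that $\varphi_\mf{M}$ lands in $\mc{G}^c(\mf{S})\mu_h^c(E)^{-1}\mc{G}^c(\mf{S})$. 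For $\GL_n$ this implication follows from elementary divisors over the DVR $\mf{S}_{(E)}$, but for a general reductive $\mc{G}$ the passage from the localization $\mf{S}_{(E)}$ back to $\mf{S}$ requires a nontrivial extension of the $\mc{G}$-bundle structure: this is precisely the lff/type-$\mu$ condition, which is an additional hypothesis and is not a consequence of the \'etale realization being crystalline of bounded weights. In fact, for the specific case of the prismatic realization functor $\omega_{\mathsf{K}^p,\smallprism}$ on the canonical model, the paper only establishes the type-$\mu_h^c$ property later (Corollary \ref{cor:prismatic-realization-lff}) via the comparison with Ito's universal deformations (Theorem \ref{thm:ito-shim-comp}) and an Artin approximation argument, and for an arbitrary prismatic model $\zeta_{\mathsf{K}^p}$ at this stage of the argument it is simply not known. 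Note also that Remark \ref{rem:filtered-tensor-product-computation}, which you invoke to match the Nygaard filtration with the Hodge filtration, is stated only for objects of $\cat{Vect}^{\varphi,\mathrm{lff}}(R_\Prism)$, so citing it here is circular. The way out is the one the paper takes: observe that the class in $C(\mc{G}^c)$ is a generic-fiber invariant and reduce to Lemma \ref{lem:C(G)-containment}.
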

\begin{proof} As $\mf{X}$ is smooth over $\mc{O}_E$, we know that the specialization map $\mr{sp}\colon |U_{\mathsf{K}^p}|^\mr{cl}\to \mf{X}_{\mathsf{K}^p}(\ov{\bb{F}}_p)$ is surjective. Let $y$ be a point of $|U_{\mathsf{K}^p}|^\mr{cl}$ such that $\mr{sp}(y)=x$. Then, a simple specialization argument shows that $\bm{b}_{x,\crys}$ is equal to the element associated to the isocrystal with $\mc{G}$-structure associated to $\underline{\bb{D}}_\crys\circ (\omega_{\mathsf{K}^p,\et})_y$. But, the claim then follows from Lemma \ref{lem:C(G)-containment}.
\end{proof}

Choose an element $b_x$ in $\mc{G}^c(\breve{\Z}_p)\mu_h^c(p)^{-1}\mc{G}^c(\breve{\Z}_p)$ 
such that $\sigma (b_x)$ maps to $\bm{b}_{x,\crys}$ in $C(\mc{G}^c)$.

\begin{defn}\label{defn:integral-canonical-model} A \emph{prismatic integral canonical model} of $U_{\mathsf{K}^p}$ is a smooth and separated formal $\mc{O}_E$-model $\mf{X}_{\mathsf{K}^p}$ such that there exists a prismatic model $\zeta_{\mathsf{K}^p}$ of $\omega_{\mathsf{K}^p,\an}$ with the following property: 
for each $\mathsf{K}^p$ and each $x$ in $\mf{X}_{\mathsf{K}^p}(\ov{\bb{F}}_p)$ there exists an isomorphism $\Theta_x^\smallprism\colon R_{\mc{G}^c,\mu^c_h}\isomto \wh{\mc{O}}_{\mf{X}_{\mathsf{K}^p},x}$ such that $(\Theta_x^\smallprism)^*(\omega_{b_x}^\univ)$ is isomorphic to the pullback of $\zeta_{\mathsf{K}^p}$ to $\wh{\mc{O}}_{\mf{X}_{\mathsf{K}^p},x}$.
\end{defn}

We now aim to show that the unique prismatic integral canonical model of $U_{\mathsf{K}^p}$ is $\wh{\ms{S}}_{\mathsf{K}^p}$. Our proof relies on providing a slight extension of the fully-faithfulness portion of \cite[Theorem A]{GuoReinecke} to certain semi-stable formal schemes, mimicking the argument in \cite[Theorem 3.29]{DLMS}. 

To state this let us fix a complete discrete valuation ring $\mc{O}_K$ with fraction field $K$ and perfect residue field $k$. Set $W$ to be $W(k)$, and fix a uniformizer $\varpi$ of $K$.

\begin{prop}\label{prop: full faithfulness for semistable rings}
    Set $R$ to be $\mc O_K\lbb x_1,\ldots,x_d\rbb/(x_1x_2\cdots x_m-\varpi)$, where $d$ and $m$ are integers with $1\leqslant m\leqslant d$. Then the \'etale realization functor 
    \begin{equation*}
        T_{\et}\colon \cat{Vect}^\varphi(R_\smallprism)\to \cat{Loc}_{\Z_p}(R[1/p])
    \end{equation*}
    is fully faithful. 
\end{prop}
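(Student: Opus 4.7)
The plan is to mimic the proof of fully-faithfulness for base formal $\mc{O}_K$-schemes given in \cite[Theorem A]{GuoReinecke} (see also \cite[Theorem 3.29]{DLMS}). The first step is to single out a natural prism in $R_\Prism$ that plays the role of the Breuil--Kisin prism in the semi-stable setting. Namely, set $\mf{S}_R \defeq W\ll x_1, \ldots, x_d\rr$ with Frobenius $\phi(x_i) = x_i^p$ (and the usual Frobenius on $W$), and take $I_R = (E(x_1 x_2 \cdots x_m))$. Substituting $y = x_1 \cdots x_m$ and using $E(y) = 0$ gives $y = \varpi$ modulo $I_R$, so $\mf{S}_R/I_R \simeq R$, and $(\mf{S}_R, I_R)$ is a bounded prism with $R$-structure. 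As in the smooth case \cite[Proposition 1.16]{IKY1}, one then verifies that $(\mf{S}_R, I_R)$ is a weakly final cover in $\cat{Shv}(R_\Prism)$; the crucial flatness assertions reduce to the fact that $R$ is regular of the expected relative dimension.

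Via the usual internal-Hom formalism and Tannakian duality, fully-faithfulness of $T_\et$ reduces to proving that, for any prismatic $F$-crystal $(\mc{E}, \varphi_\mc{E})$ on $R_\Prism$, the natural map
\begin{equation*}
\bigl(\mc{E}(\mf{S}_R, I_R)\bigr)^{\varphi = 1} \longrightarrow T_\et(\mc{E})^{\Gamma_R}
\end{equation*}
is a bijection, where $\Gamma_R$ denotes the Galois group of the perfectoid cover constructed next. To compare the two sides, the next step is to adjoin compatible systems of $p^n$-th roots $x_i^{1/p^n}$ for $i = 1, \ldots, m$ (and, if needed, of additional variables) to obtain a perfectoid $R$-algebra $R_\infty$. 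This produces a morphism of prisms $(\mf{S}_R, I_R) \to (\Ainf(R_\infty), (\tilde\xi))$ together with a Galois-equivariant identification $\mf{S}_R \isomto \Ainf(R_\infty)^{\Gamma_R}$, an integral analogue of Fontaine's computation. Combining this invariant computation with the \'etale specialization identity from \cite[Example 2.14]{IKY1} and a Galois-cohomology vanishing argument in the style of \cite[\S3]{DLMS} then yields the desired bijection.

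The main technical obstacle will be the genuinely semi-stable case $m \geqslant 2$, where the single variable $u$ of the usual Breuil--Kisin prism is replaced by the product $x_1 \cdots x_m$, so that the associated perfectoid cover carries a larger (non-abelian in an essential sense) Galois group with more intricate cohomology. In particular, one must verify that $\mf{S}_R = \Ainf(R_\infty)^{\Gamma_R}$ holds integrally (not merely after inverting $p$) and that the relevant higher Galois cohomology of $\mc{E}(\Ainf(R_\infty), (\tilde\xi))$ vanishes; both require a careful analysis of which decompletion arguments carry over from the smooth setting. Once these key inputs are in place, the remainder of the argument follows the blueprint of \cite{GuoReinecke} and \cite{DLMS} essentially without change.
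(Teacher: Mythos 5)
Your plan follows the Fontaine-style Galois-cohomology blueprint of Guo--Reinecke and DLMS, adjoining compatible $p$-power roots to produce a perfectoid cover $R_\infty$, establishing an integral invariance statement $\mf{S}_R=\Ainf(R_\infty)^{\Gamma_R}$, and then running a decompletion-plus-cohomology-vanishing argument. You correctly identify that the genuinely semi-stable case $m\geqslant 2$ makes all three of those inputs substantially harder; but you do not actually supply them, and they are not trivial. In particular, the perfectoid tower you propose is constrained by the relation $x_1\cdots x_m=\varpi$ (a choice of compatible $p$-power roots of the $x_i$ forces a compatible choice for $\varpi$, so the Galois group is a nontrivial quotient of $\Z_p(1)^m\rtimes\Gamma_K$), and the integral invariance $\mf{S}_R\isomto\Ainf(R_\infty)^{\Gamma_R}$ and the higher Galois cohomology vanishing of $\mc{E}(\Ainf(R_\infty),(\tilde\xi))$ do not follow from the smooth case by a formal carry-over. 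As written, the heart of the argument is deferred rather than proved.

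The paper's proof avoids this entirely by a different, and in this context more economical, mechanism. Fix the same semi-stable Breuil--Kisin prism $(\mf S_R,(E))$. For each $i$, set $\varepsilon_i=\prod_{j\ne i}x_j$ and $R_i=R[\nicefrac{1}{\varepsilon_i}]^\wedge_p$; this $R_i$ is a base ring in the smooth (framed) sense, so the fully-faithfulness of $T_\et$ over $R_i$ is already known by \cite[Theorem 3.29 (1)]{DLMS}. A morphism $T_\et(\mc F)\to T_\et(\mc F')$ is first upgraded to a morphism of Laurent $F$-crystals via \cite[Corollary 3.7]{BhattScholzeCrystals}; restricting to each $R_{i,\smallprism}$ and invoking the smooth-case result shows the induced map $\mc M\to\mc M'$ on Breuil--Kisin evaluations carries $\mf M\otimes_{\mf S_R}\mf S_i$ into $\mf M'\otimes_{\mf S_R}\mf S_i$ for every $i$. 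The key new input is then the purely commutative-algebraic Lemma \ref{lem: intersection is S}, which identifies $\mf S_R$ with the intersection $\bigcap_i(\mf S_i\cap\mc O_{\mc E})$ inside $\mc O_{\mc E,i}$ (proved by reducing mod $p$); this forces the map $\mc M\to\mc M'$ to carry $\mf M$ into $\mf M'$. Compatibility with descent data is checked by observing $\mf S_R^{(2)}\hookrightarrow\mf S_R^{(2)}[\nicefrac{1}{E}]^\wedge_p$. Thus the paper sidesteps all new $p$-adic Hodge theory in the semi-stable setting, trading Galois cohomology for a localization-and-intersection argument that bootstraps off the existing smooth-base theorem. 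If you want to complete your approach you would need to independently establish your two ``key inputs''; but given that the smooth-base result is available off the shelf, the paper's route is both shorter and lower-risk.
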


As in \cite{Ito1}, we consider the following Breuil--Kisin type prism. Let $\mf S_R$ be the ring $W\lbb x_1,\ldots,x_d\rbb$ equipped with a Frobenius lift $\phi$ determined by $\phi(x_i)=x_i^p$, and $E$ in  $\mf S_R$ be the polynomial $E_\varpi(x_1x_2\cdots x_m)$, where $E_\varpi$ is the minimal polynomial of $\varpi$ relative to $\mathrm{Frac}(W)$. Then the pair $(\mf S_R,(E))$ defines an object of $R_\smallprism$. 

\begin{lem}\label{lem: semistable BK prism covers}
    The object $(\mf S_R,(E))$ covers the final object $\ast$ of $\cat{Sh}(R_\smallprism)$. 
\end{lem}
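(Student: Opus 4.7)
The plan is to mimic the proof in the smooth case (\cite[Proposition 1.16]{IKY1}, following \cite[Proposition 7.11]{BhattScholzePrisms}), adapting the constructions to handle the semi-stable relation $x_1 x_2 \cdots x_m = \varpi$. Unwinding the definition of the final object in the $(p,I)$-completely flat topology, the claim amounts to showing that for every $(A, I)$ in $R_\Prism$, there is a $(p,I)$-completely faithfully flat cover $(A, I) \to (B, J)$ in $R_\Prism$ admitting a morphism from $(\mf S_R, (E))$.

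First, I would construct an explicit perfectoid cover of $R$. Fix a compatible system $\varpi^\flat = (\varpi_n)_{n \ge 0}$ of $p$-power roots of $\varpi$ in $\ov K$, let $C = \widehat{\ov K}$, and define $\wt R$ to be the $p$-adic completion of
\begin{equation*}
\widehat{\mc O_C}\lbb x_1, \ldots, x_d\rbb[x_{i,n} : 1 \le i \le d,\, n \ge 1] \big/ \big(x_{i,n}^{p^n} - x_i,\; x_{1,n} \cdots x_{m,n} - \varpi_n\big),
\end{equation*}
where we set $x_{i,0} \defeq x_i$. A standard toric-flatness calculation shows that $\wt R$ is perfectoid and that $R \to \wt R$ is $(p)$-completely faithfully flat. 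Setting $x_i^\flat \defeq (x_{i,n})_n \in \wt R^\flat$, the tuple satisfies $x_1^\flat \cdots x_m^\flat = \varpi^\flat$, and the $\delta$-ring map $\mf S_R \to \Ainf(\wt R)$ sending $x_i \mapsto [x_i^\flat]$ carries $E = E_\varpi(x_1 \cdots x_m)$ to $E_\varpi([\varpi^\flat])$, which is a distinguished element generating $\ker(\theta\colon \Ainf(\wt R) \to \wt R)$. This yields a morphism $(\mf S_R, (E)) \to (\Ainf(\wt R), \ker\theta)$ in $R_\Prism$.

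Next, given an arbitrary $(A, I)$ in $R_\Prism$, I would use \cite[Lemma 3.7]{BhattScholzePrisms} to take a $(p,I)$-completely faithfully flat cover $(A, I) \to (A', I')$ with $A'$ a perfect prism, so that $S \defeq A'/I'$ is a perfectoid ring receiving a map from $R$. The final step is to produce a perfectoid $\wt R$-algebra $S'$ that covers $S$ faithfully flatly by extracting compatible $p$-power roots of the images of the $x_i$ in $S$ subject to the relation $x_1 \cdots x_m = \varpi$. Concretely, this is done by iteratively adjoining finite $S$-algebras of the form $S\lbb y_{i,n}\rbb/(y_{i,n}^{p^n} - x_i,\; y_{1,n} \cdots y_{m,n} - \varpi_n)$, $p$-completing the colimit, and verifying that the result remains perfectoid. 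Then $(\Ainf(S'), \ker\theta)$ is a $(p,I)$-completely faithfully flat cover of $(A', I')$, hence of $(A, I)$, in $R_\Prism$, and composing with the morphism from the preceding step gives the desired map from $(\mf S_R, (E))$.

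The main obstacle is verifying, in this last step, that extracting $p$-power roots subject to the non-trivial relation $x_1 \cdots x_m = \varpi$ preserves faithful flatness at every finite stage (and hence gives a genuine $p$-completely faithfully flat cover $S \to S'$). In the smooth case one freely adjoins roots of independent variables and flatness is immediate, but the multiplicative constraint here requires exploiting the toric structure of the semi-stable singularity, much in the spirit of the flatness arguments in \cite{DLMS} that this proof is modeled on; in practice I would invoke those arguments rather than redo them.
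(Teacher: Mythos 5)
Your proposal is correct and takes essentially the same route as the paper: the paper's proof is a one-line reduction to the smooth-case argument of \cite[Propositions 1.11 and 1.16]{IKY1}, with the required perfectoid cover of the semi-stable ring supplied by \cite[Proposition 5.8]{AnschutzLeBrasDD} rather than built by hand. Your explicit construction of $\wt{R}$ by adjoining compatible $p$-power roots subject to $x_{1,n}\cdots x_{m,n}=\varpi_n$, together with the deferred toric-flatness verification for root extraction over an arbitrary perfectoid $S$, is exactly what those citations package.
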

\begin{proof}
    Similarly to \cite[Proposition 1.16]{IKY1}, the assertion follows from \cite[Proposition 1.11]{IKY1} (using \cite[Proposition 5.8]{AnschutzLeBrasDD} in place of \cite[Lemma 1.15]{IKY1}).
\end{proof}

Thus, we can regard a prismatic $F$-crystal on $R_\smallprism$ as a finite free Breuil--Kisin module equipped with a descent datum. More precisely, we let $\mf S^{(1)}_R$ be $(\mf S_R\wh{\otimes}_{\Z_p}\mf S_R)\left\{\frac{J}{E}\right\}_\delta^\wedge$ where
\begin{equation*} 
J\defeq\ker\left(\mf S_R\wh{\otimes}_{\Z_p}\mf S_R\to \mf S_R\to \mf S_R/(E)\isomto R\right).
\end{equation*}
As in \cite[Example 3.4]{DLMS}, it represents the self-product of $\mf S_R$ over $\ast$ in $\cat{Sh}(R_\smallprism)$. 

For $i=1,\ldots,d$, let $\varepsilon_i$ denote the product of $x_j$ for $1\leqslant j\leqslant d$ excluding $i$. Denote the ring 
$R[\nicefrac{1}{\varepsilon_i}]_p^\wedge$  by $R_i$, which 
is a base ring in the sense in \cite[\S1.1.5]{IKY1}. Using the map $\mc O_K\langle x_j^{\pm1};j\ne i\rangle\to R_i$ given by sending $x_j$ to $x_j$ as a formal framing, we obtain the relative Breuil--Kisin ring $\mf S_{R_i}$ which we denote by $\mf S_i$. Then we have a morphism $(\mf S_R,(E))\to (\mf S_i,(E_\varpi))$ sending $x_i$ to $\tfrac{u}{\varepsilon_i}$ and $x_j$ to $x_j$ for $j\ne i$.

We let $\mc O_\mc E$ (resp.\@ $\mc O_{\mc E,i}$) be the $p$-adic completion of $\mf S_R[\nicefrac{1}{E}]$ (resp.\@ $\mf S_i[\nicefrac{1}{E_\varpi}]$). 
We use the following lemma to reduce Proposition \ref{prop: full faithfulness for semistable rings} to the case of $R_i$. 
\begin{lem}\label{lem: intersection is S}
    Let $\mf S_{(i)}$ denote the intersection $\mf S_i\cap \mc O_\mc E$ in the ring $\mc O_{\mc E,i}$. Then the inclusion $\mf S_R\subseteq \bigcap_{i=1}^d\mf S_{(i)}$ in $\mc O_\mc E$ is an equality.
\end{lem}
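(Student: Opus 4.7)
The inclusion $\mf S_R\subseteq\bigcap_{i=1}^d\mf S_{(i)}$ is immediate from the factorization of each map $\mf S_R\to\mf S_i$ through $\mf S_R\to\mc O_\mc E$. To establish the reverse inclusion, the plan is to pass to the reduction modulo $p$ and exploit the UFD structure of $k\lbb x_1,\ldots,x_d\rbb$. Since every ring appearing is $p$-torsion-free and $\mf S_R$ is $p$-adically complete, given $f\in\bigcap_i\mf S_{(i)}$ it suffices to show that $f$ modulo $p$ lies in $\mf S_R/p$ inside $\mc O_\mc E/p$: once a lift $g_0\in\mf S_R$ of $f\bmod p$ is produced, the element $(f-g_0)/p$ lies in $\mc O_\mc E$, and because $u$ is a regular element of $\mf S_i/p$ the map $\mf S_i/p\hookrightarrow\mc O_{\mc E,i}/p$ is injective, so $(f-g_0)/p$ also lies in each $\mf S_{(i)}$ and the argument iterates.

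Modulo $p$, the distinguished element satisfies $E\equiv(x_1\cdots x_m)^e$, giving $\mc O_\mc E/p\simeq k\lbb x_1,\ldots,x_d\rbb[\nicefrac{1}{x_1\cdots x_m}]$ and $\mf S_R/p=k\lbb x_1,\ldots,x_d\rbb$. For $i>m$, the element $\varepsilon_i$ is divisible by $x_1\cdots x_m=\varpi$ in $R$, so $R_i$ is the $p$-adic completion of a $K$-algebra, hence zero; consequently $\mf S_i=\mc O_{\mc E,i}=0$ and the condition $f\in\mf S_{(i)}$ is vacuous in this range. For $i\leqslant m$, a direct analysis of the mod-$p$ map $\mf S_R/p\to\mf S_i/p$ arising from $x_i\mapsto u/\varepsilon_i$ shows that the $u$-adic valuation on $\mc O_{\mc E,i}/p$ tracks the $(x_i)$-adic valuation of elements of $k\lbb x_1,\ldots,x_d\rbb[\nicefrac{1}{x_1\cdots x_m}]$; the condition $f\in\mf S_{(i)}/p$ therefore amounts to $f$ having non-negative valuation at the height-one prime $(x_i)$ of $k\lbb x_1,\ldots,x_d\rbb$.

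Granting this identification, the conclusion is immediate. The regular local ring $k\lbb x_1,\ldots,x_d\rbb$ equals the intersection of its localizations at its height-one primes, and for an element of $k\lbb x_1,\ldots,x_d\rbb[\nicefrac{1}{x_1\cdots x_m}]$ the only height-one primes at which the valuation could be negative are $(x_1),\ldots,(x_m)$. By the preceding paragraph the hypothesis forces non-negativity at each of these, so $\bigcap_{i=1}^m(\mf S_{(i)}/p)\subseteq\mf S_R/p$ inside $\mc O_\mc E/p$; together with the devissage of the first paragraph this finishes the proof.

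The main technical obstacle is the identification used in the second paragraph, namely that $\mf S_{(i)}/p$ consists exactly of the $(x_i)$-integral elements of $\mc O_\mc E/p$. Intuitively this says that $\mf S_i$ captures the formal neighborhood of the divisor $\{x_i=0\}$, but an honest verification has to track how the Breuil--Kisin topology on $\mf S_i$, a $(p,u)$-adic completion over a Tate-algebra base, interacts with the substitution $x_i\mapsto u/\varepsilon_i$ and with the intermediate localization $\mf S_R\to\mf S_R[\nicefrac{1}{\varepsilon_i}]_p^\wedge$; matching the natural topologies in this bookkeeping is the technical heart of the argument.
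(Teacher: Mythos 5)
Your proof is correct and follows the same basic strategy as the paper's (reduce modulo $p$, exploit the explicit mod-$p$ descriptions of the four rings, then lift using $p$-adic completeness and $p$-torsion-freeness). The main difference in presentation is that the paper's argument is organized around the assertion
\[
\mf S_R/(p)=\bigcap_{i=1}^d\overline{\mf S}_{(i)}\subseteq\mc O_\mc E/(p),
\]
which the paper states without justification after writing down the commutative square of mod-$p$ rings, followed by a compactly stated argument that the surjection $\mf S'/(p)\to\bigcap_i\overline{\mf S}_{(i)}$ is injective (reducing to $p\mc O_{\mc E}\cap\mf S_{(i)}=p\mf S_{(i)}$, which holds because $\mf S_i/(p)\hookrightarrow\mc O_{\mc E,i}/(p)$ is injective). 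Your iterative devissage of the first paragraph is a more hands-on way to say the same thing, and it is correct as written: the key injectivity $\mf S_i/(p)\hookrightarrow\mc O_{\mc E,i}/(p)$ enters at exactly the same spot. What you add beyond the paper's proof is the explicit justification of the equality displayed above, via the algebraic-Hartogs argument (a normal Noetherian domain is the intersection of its localizations at height-one primes, and the only height-one primes of $k\llbracket x_1,\dots,x_d\rrbracket$ at which an element of $k\llbracket x_1,\dots,x_d\rrbracket[\nicefrac{1}{x_1\cdots x_m}]$ can have negative valuation are $(x_1),\dots,(x_m)$). You also correctly note that for $i>m$ the ring $R_i$ is the $p$-adic completion of a $K$-algebra and hence zero, so those intersections impose no condition — a point the paper's displayed diagram elides by writing $\mf S_i/(p)$ as a nonzero power series ring uniformly in $i$.

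On the ``technical heart'' you flag in your last paragraph: the identification is genuinely straightforward once one expands an element of $\mc O_\mc E/(p)=k\llbracket x_1,\dots,x_d\rrbracket[\nicefrac{1}{x_1\cdots x_m}]$ as a finite-tailed Laurent series $\sum_k a_k x_i^k$ with $a_k\in k\llbracket x_j;j\ne i\rrbracket[\nicefrac{1}{x_j};j\le m,\ j\ne i]$; substituting $x_i\mapsto u/\overline\varepsilon_i$ with $\overline\varepsilon_i$ a unit shows that the image lies in $\mf S_i/(p)=\bigl(k\llbracket x_j;j\ne i\rrbracket[\nicefrac{1}{x_j};j\ne i]\bigr)\llbracket u\rrbracket$ if and only if $a_k=0$ for $k<0$, since the coefficients $a_k/\overline\varepsilon_i^{\,k}$ automatically land in the Tate-algebra base. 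This is exactly the claim that the $u$-adic and $(x_i)$-adic valuations agree, so no additional topological bookkeeping beyond the paper's own computation of the mod-$p$ rings is required.
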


\begin{proof}
    We put $\mf S'\defeq \bigcap_{i=1}^d\mf S_{(i)}$. Since $\mf{S}_R$ and $\mf S'$ are $p$-adically complete, it suffices to show that the containment $\mf{S}_R\subseteq \mf{S}'$ is an equality modulo $p$. 
    We consider the commutative diagram
    \bx{
    \mf S_R/(p)=k\lbb x_1,\ldots,x_d\rbb \ar[r]\ar[d]
    &\left(k\lbb x_j;j\ne i\rbb[\nicefrac{1}{x_j};j\ne i]\right)\ll u\rr =\mf S_i/(p) \ar[d]
    \\ \mc O_\mc E/(p)=k\lbb x_1,\ldots,x_d\rbb[\nicefrac{1}{x_1\cdots x_m}]\ar[r]
    &\left(k\lbb x_j;j\ne i\rbb[\nicefrac{1}{x_j};j\ne i]\right)\lbb u\rbb[\nicefrac{1}{u}]=\mc O_{\mc E,i}/(p),
    }\ex
    in which all the maps are injective. Setting $\overline{\mf S}_{(i)}$ to be the intersection of $\mf S_i/(p)$ and $\mc O_\mc E/(p)$ in $\mc O_{\mc E,i}/(p)$, we have $\mf S_R/(p)=\bigcap_{i=1}^d\overline{\mf S}_{(i)}$. 
    We claim that the induced surjection $\mf S'/(p)\to \bigcap_{i=1}^d\ov{\mf S}_{(i)}$ is an isomorphism. 
    This is equivalent to the equality 
    \be
        p\mc O_\mc E\cap \bigcap_i\mf S_{(i)}=p\cdot \bigcap_i\mf S_{(i)}.
    \ee
    We note that the right hand side is equal to $\bigcap_i(p\mf S_{(i)})$ as $p$ is a nonzerodivisor in $\mc O_\mc E$. Hence, it suffices to show the equality $p\mc O_\mc E\cap\mf S_{(i)}=p\mf S_{(i)}$ for all $i$. But this follows from the injectivity of the right vertical map in the above diagram. 
\end{proof}

\begin{proof}[Proof of Proposition \ref{prop: full faithfulness for semistable rings}]
    By the proof of Lemma \ref{lem: semistable BK prism covers} the faithfulness portion of the claim is reduced to the case of a perfectoid base, which is clear. Thus, it suffices to check fullness. Let $\mc{F}$ and $\mc{F}'$ be two objects of $\cat{Vect}^\varphi(R_\smallprism)$ and let $T_\et(\mc{F})\to T_\et(\mc{F})$ be a morphism in $\cat{Loc}_{\Z_p}(R[1/p])$, which, by \cite[Corollary 3.7]{BhattScholzeCrystals}, corresponds to a morphism $\mc{F}[\nicefrac{1}{\mc I_\smallprism}]_p^\wedge\to \mc{F}'[\nicefrac{1}{\mc I_\smallprism}]_p^\wedge$ of prismatic Laurent $F$-crystals on $R_\smallprism$ (cf.\ \cite[\S2.2]{IKY1}). Let $\mf M$ and $\mf M'$ (resp.\ $\mc M$ and $\mc M'$) denote the evaluation of $\mc{F}$ and $\mc{F}'$ (resp.\ $\mc{F}[\nicefrac{1}{\mc I_\smallprism}]_p^\wedge$ and $\mc{F}[\nicefrac{1}{\mc I_\smallprism}]_p^\wedge$) at the prism $(\mf S_R,(E))$. 

    Since the \'etale realization functor 
    \begin{equation*} 
    \cat{Vect}^\varphi(R_{i,\smallprism})\to \cat{Vect}(R_{i,\smallprism},\mc O_\smallprism[\nicefrac{1}{\mc I_\smallprism}]_p^\wedge)\isomto\cat{Loc}_{\Z_p}(R_i[1/p])
    \end{equation*}
    is fully faithful by \cite[Theorem 3.29 (1)]{DLMS}, the restriction of $\mc{F}[\nicefrac{1}{\mc I_\smallprism}]_p^\wedge\to \mc{F}'[\nicefrac{1}{\mc I_\smallprism}]_p^\wedge$ to $R_{i,\smallprism}$ induces a morphism $\mc{F}|_{R_{i,\smallprism}}\to \mc{F}'|_{R_{i,\smallprism}}$. 
    In particular, the map 
    \begin{equation*} 
    \mc M_i\defeq\mc M\otimes_{\mc O_\mc E}\mc O_{\mc E,i}\to \mc M'_i\defeq\mc M'\otimes_{\mc O_\mc E}\mc O_{\mc E,i}
    \end{equation*}
    sends $\mf M_i\defeq\mf M\otimes_\mf S\mf S_i$ into $\mf M'_i\defeq\mf M'\otimes_\mf S\mf S_i$. 
    Then, by Lemma \ref{lem: intersection is S}, we get that the map $\mc M\to \mc M'$ sends $\mf M$ into $\mf M'$. 

    Since the prism $(\mf S_R,(E))$ is a cover of the final object by Lemma \ref{lem: semistable BK prism covers}, it suffices to show that the map $\mf M\to \mf M'$ is compatible with the descent data for $\mc{F}$ and $\mc{F}'$. 
    To see this, we observe that the natural map $\mf S_R^{(2)}\to \mf S_R^{(2)}[\nicefrac{1}{E}]_p^\wedge$ is injective. Indeed, it can be checked after passing modulo $p$, where it is reduced to showing that $E$ is a nonzerodivisor in $\mf S_R^{(2)}/(p)$, which follows from the flatness of $\mf S_R\to \mf S_R^{(2)}$, cf.\ \cite[Lemma 3.5]{DLMS}). Thus, the assertion follows from the compatibility of the map $\mc M\to \mc M'$ with the descent data for $\mc{F}[\nicefrac{1}{\mc I_\smallprism}]_p^\wedge$ and $\mc{F}'[\nicefrac{1}{\mc I_\smallprism}]_p^\wedge$.     
\end{proof}

We are now ready to prove our uniqueness claim concerning prismatic integral canonical models of $U_{\mathsf{K}^p}$. We roughly follow the strategy employed in \cite[Theorem 7.1.7]{Pappas}, with some key differences owing to the more formal geometry and $p$-adic Hodge theory nature of our setup. 

\begin{thm}\label{thm:prismatic-characterization-completion} The unique prismatic integral canonical model of $U_{\mathsf{K}^p}$ is $\wh{\ms{S}}_{\mathsf{K}^p}$. 
\end{thm}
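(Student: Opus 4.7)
The plan proceeds in two stages: existence, then uniqueness. For existence, Theorem \ref{thm:ito-shim-comp} together with Proposition \ref{prop:crystalline-locus} shows directly that $\wh{\ms{S}}_{\mathsf{K}^p}$, equipped with the prismatic realization functor $\omega_{\mathsf{K}^p,\smallprism}$ from Theorem \ref{thm:main-Shimura-theorem-abelian-type-case}, is a prismatic integral canonical model of $U_{\mathsf{K}^p}$: the former supplies the local identifications $\Theta_x^\Prism\colon R_{\mc{G}^c,\mu_h^c} \isomto \wh{\mc{O}}_{\ms{S}_{\mathsf{K}^p}, x}$ carrying Ito's $\omega_{b_x}^\univ$ onto $\omega_{\mathsf{K}^p,\smallprism}|_{\wh{\mc{O}}_{\ms{S}_{\mathsf{K}^p},x}}$, while the latter identifies $(\wh{\ms{S}}_{\mathsf{K}^p})_\eta$ with $U_{\mathsf{K}^p}$.

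For uniqueness, let $(\mf{X}_{\mathsf{K}^p},\zeta_{\mathsf{K}^p})$ be another prismatic integral canonical model. Following the template of \cite[Theorem 7.1.7]{Pappas}, I would form the schematic closure $\mf{Z}$ (normalized via admissible formal blowups) of the graph of the generic fiber isomorphism inside $\mf{X}_{\mathsf{K}^p} \times_{\mc{O}_E} \wh{\ms{S}}_{\mathsf{K}^p}$, producing projections $p_1\colon \mf{Z} \to \mf{X}_{\mathsf{K}^p}$ and $p_2\colon \mf{Z} \to \wh{\ms{S}}_{\mathsf{K}^p}$ that are isomorphisms on the generic fiber. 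By smoothness and separatedness of the two outer models, it suffices to check that both projections induce isomorphisms on completions at each point of $\mf{Z}(\ov{\bb{F}}_p)$, projecting to $x \in \mf{X}_{\mathsf{K}^p}(\ov{\bb{F}}_p)$ and $y \in \wh{\ms{S}}_{\mathsf{K}^p}(\ov{\bb{F}}_p)$ respectively. Pulling $\zeta_{\mathsf{K}^p}$ and $\omega_{\mathsf{K}^p,\smallprism}$ back to $\mf{Z}$ yields two prismatic $F$-crystals that both realize $\omega_{\mathsf{K}^p,\an}$ on the generic fiber; the extended full faithfulness of the \'etale realization functor on semistable rings proved in Proposition \ref{prop: full faithfulness for semistable rings} (together with \cite[Theorem A]{GuoReinecke}) then identifies them canonically on $\mf{Z}$.

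As a consequence, the specialization classes $\bm{b}_{x,\crys}$ and $\bm{b}_{y,\crys}$ in $C(\mc{G}^c)$ coincide, so a common representative $b \defeq b_x = b_y$ in $\mc{G}^c(\breve{\Z}_p)\mu_h^c(p)^{-1}\mc{G}^c(\breve{\Z}_p)$ may be chosen. The hypotheses on $\mf{X}_{\mathsf{K}^p}$ and $\wh{\ms{S}}_{\mathsf{K}^p}$ exhibit both completions $\wh{\mc{O}}_{\mf{X}_{\mathsf{K}^p},x}$ and $\wh{\mc{O}}_{\wh{\ms{S}}_{\mathsf{K}^p},y}$, with their induced prismatic structures, as deformations of the common special-fiber object $(\mc{G}_W,\varphi_b)$ via pullbacks of $\omega_b^\univ$ along $\Theta_x^\Prism$ and $\Theta_y^\Prism$. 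The universality part of \cite[Theorem 1.1.3]{Ito2} then forces the composition of $\Theta_y^\Prism$ with the completion of $p_1 \circ p_2^{-1}$ to agree with $\Theta_x^\Prism$ up to an automorphism of deformations, so that the completion of each projection is an isomorphism of local $\mc{O}_E$-algebras. Combined with smoothness and the generic fiber identification, this upgrades to an isomorphism $\mf{Z} \isomto \mf{X}_{\mathsf{K}^p}$ and $\mf{Z} \isomto \wh{\ms{S}}_{\mathsf{K}^p}$, hence to the desired canonical isomorphism $\mf{X}_{\mathsf{K}^p} \cong \wh{\ms{S}}_{\mathsf{K}^p}$.

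The main obstacle is arranging $\mf{Z}$ by suitable blowups and normalizations so that its completions at $\ov{\bb{F}}_p$-points lie within the semistable framework where Proposition \ref{prop: full faithfulness for semistable rings} applies; without this input, there is no direct way to identify $p_1^*\zeta_{\mathsf{K}^p}$ with $p_2^*\omega_{\mathsf{K}^p,\smallprism}$ as prismatic $F$-crystals on $\mf{Z}$ itself and hence no way to access Ito's universality at the correct closed points. A secondary technical point is the bookkeeping required to ensure that the various choices of $b$, of trivializations $\Theta^\Prism$, and of automorphisms of $\omega_b^\univ$ can be made coherently across all closed points, so that the pointwise isomorphisms glue into a morphism of formal schemes.
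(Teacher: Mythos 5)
Your overall architecture matches the paper's: existence via Theorems \ref{thm:main-Shimura-theorem-abelian-type-case}, \ref{thm:ito-shim-comp} and Proposition \ref{prop:crystalline-locus}, and uniqueness by normalizing the product of two models in $U_{\mathsf{K}^p}$ and running a Pappas-style argument with Proposition \ref{prop: full faithfulness for semistable rings} and Ito's universality as the key inputs. However, there are two genuine gaps in the uniqueness step. First, the completed local rings $\mc{O}''$ of the normalization $\mf{Z}$ at $\ov{\bb{F}}_p$-points are only \emph{normal}, not regular, and are not of the semistable form $\mc{O}_K\ll x_1,\ldots,x_d\rr/(x_1\cdots x_m-\varpi)$; so neither Proposition \ref{prop: full faithfulness for semistable rings} nor Ito's universality (which holds only over $\mc{C}_W^{\mr{reg}}$) applies to them directly. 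Your proposed fix --- arranging by blowups that the completions of $\mf{Z}$ themselves become semistable --- is not viable: semistable reduction requires alterations (generically finite covers), not birational modifications, and one must not change $\mf{Z}$ in any case. The correct move is to algebraize an affine neighborhood via Elkik, take a strictly semistable alteration \`a la de Jong, and use the resulting \emph{epimorphism} $\Spf(R)\to\Spf(\mc{O}'')$ from a complete \emph{regular} local ring $R$ of semistable type; one then checks the commutativity of the comparison diagram over $R_{\mc{G}^c,\mu_h^c}$ only after pullback to $\Spf(R)$, where both the full faithfulness result and Ito's universality are available.

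Second, even granting that the diagram
\begin{equation*}
\Spf(\mc{O}'')\longrightarrow \Spf(\mc{O}),\ \Spf(\mc{O}')\longrightarrow \Spf(R_{\mc{G}^c,\mu_h^c})
\end{equation*}
commutes, your conclusion that ``the completion of each projection is an isomorphism'' does not follow formally, and the expression $p_1\circ p_2^{-1}$ presupposes exactly the invertibility you are trying to establish. What the commutativity actually yields is a factorization of $\mc{O}\wh{\otimes}_{\mc{O}_{\breve E}}\mc{O}'\to\mc{O}''$ through the diagonal, hence a surjection from $\mc{O}'$ onto the image; one then needs a dimension count (both $\mc{O}'$ and $\mc{O}''$ have the same Krull dimension, computed via their common rigid generic fiber) to see this surjection is an isomorphism, and finally a lemma that a finite injective map of $p$-torsion-free normal local rings which is an isomorphism after inverting $p$ is an isomorphism. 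Your secondary worry about coherently gluing pointwise choices is, by contrast, a non-issue: the projections already exist globally, and the pointwise isomorphisms on completions together with the generic-fiber isomorphism upgrade to a global isomorphism by a formal-\'etaleness and fiber-counting argument (Lemma \ref{lem:formal-scheme-isom}); no choices need to be matched across points.
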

\begin{proof} That $\wh{\ms{S}}_{\mathsf{K}^p}$ is a prismatic integral canonical model of $U_{\mathsf{K}^p}$ follows from combining Theorem \ref{thm:main-Shimura-theorem-abelian-type-case}, Proposition \ref{prop:crystalline-locus}, Proposition \ref{thm:ito-shim-comp}. Thus, it suffices to show that if $\mf{X}_{\mathsf{K}^p}$ and $\mf{X}'_{\mathsf{K}^p}$ are two prismatic integral canonical models of $U_{\mathsf{K}^p}$, then they are isomorphic.

Denote by $\mf{X}''_{\mathsf{K}^p}$ the normalization of $\mf{X}_{\mathsf{K}^p}\times_{\Spf(\mc{O}_E)}\mf{X}'_{\mathsf{K}^p}$ in $U_{\mathsf{K}^p}$. More precisely, we set $\mf{X}''_{\mathsf{K}^p}$ to be the relative formal spectrum $\underline{\Spf}(\mc{A})\to \mf{X}_{\mathsf{K}^p}\times_{\Spf(\mc{O}_E)}\mf{X}'_{\mathsf{K}^p}$, where $\mc{A}$ is the integral closure of $\mc{O}_{\mf{X}_{\mathsf{K}^p}\times_{\Spf(\mc{O}_E)}\mf{X}'_{\mathsf{K}^p}}$ in $s_\ast(\mc{O}_{U_{\mathsf{K}^p}}^+)$, where $s\colon (U_{\mathsf{K}^p},\mc{O}^+_{\mathsf{U}_{\mathsf{K}^p}})\to \mf{X}_{\mathsf{K}^p}\times_{\Spf(\mc{O}_E)}\mf{X}'_{\mathsf{K}^p}$ is the composition of the following map of locally ringed spaces
\begin{equation*}
    (U_{\mathsf{K}^p},\mc{O}_{U_{\mathsf{K}^p}}^+)\xrightarrow{\Delta}(U_{\mathsf{K}^p}\times_{\Spa(E)}U_{\mathsf{K}^p},\mc{O}_{U_{\mathsf{K}^p}\times_{\Spa(E)}U_{\mathsf{K}^p}}^+)\xrightarrow{\mr{sp}} \mf{X}_{\mathsf{K}^p}\times_{\Spf(\mc{O}_E)}\mf{X}'_{\mathsf{K}^p}.
\end{equation*}
As $E$ is a discrete valuation field, and therefore the local rings of each formal scheme and rigid space are excellent, these normalizations are finite over their original base and so topologically of finite type and normal (cf.\@ \stacks{0AVK} and \stacks{035L}). Let $\pi\colon \mf{X}''_{\mathsf{K}^p}\to \mathfrak{X}_{\mathsf{K}^p}$ and $\pi'\colon \mf{X}''_{\mathsf{K}^p}\to \mf{X}'_{\mathsf{K}^p}$ be the natural projection maps. We show that $\pi$ and $\pi'$ are isomorphisms. 

To prove this, fix a point $x''$ in $\mf{X}''_{\mathsf{K}^p}(\ov{\bb{F}}_p)$ and let $x$ and $x'$ be their images in $\mf{X}_{\mathsf{K}^p}$ and $\mf{X}'_{\mathsf{K}^p}$, respectively. Let $\bm{b}_{x,\crys}$ and $\bm{b}_{x',\crys}$ be as in the definition of a prismatic integral canonical model. Observe that $\bm{b}_{x,\crys}$ actually equals $\bm{b}_{x',\crys}$. Indeed, from the diagram of isomorphisms
\begin{equation*}
    \Spec(k(x))\isomfrom \Spec(k(x''))\isomto \Spec(k(x')),
\end{equation*}
and the identification of $T_\et\circ \zeta_{\mathsf{K}^p}$ and $T_\et\circ \zeta_{\mathsf{K}^p}'$ with $\omega_{\mathsf{K}^p,\an}$, we obtain an isomorphism
\begin{equation*}
    T_\et\circ (\zeta_{\mathsf{K}p})_{x''}\cong T_\et\circ (\zeta'_{\mathsf{K}p})_{x''}, 
\end{equation*}
from where the claim follows by \cite[Theorem A]{GuoReinecke}. Denote this common class of $\mb{b}_x$, $\mb{b}_{x'}$ by $\mb{b}_{x'',\crys}$, and choose an element $b_{x''}$ of $\mc{G}^c(\breve{\Z}_p)\mu_h^c(p)^{-1}\mc{G}^c(\breve{\Z}_p)$ such that $\sigma (b_{x''})$ maps to $\mb{b}_{x'',\crys}$.

Let $\mc{O}$, $\mc{O}'$, and $\mc{O}''$ be the complete local rings of $x$, $x'$, and $x''$ of their respective formal schemes. By excellence each of these complete local rings is normal and formally of finite type over $\mc{O}_E$
(see \stacks{0C23}). Choose isomorphisms $\Theta_x^\smallprism$ and $\Theta^\smallprism_{x'}$ as in the definition of a prismatic integral canonical model. We claim that the following diagram commutes:
\begin{equation}\label{eq:first-charcterization-eq}
\begin{tikzcd}[sep=scriptsize]
	& {\Spf(\mc{O}'')} \\
	{\Spf(\mc{O})} && {\Spf(\mc{O}')} \\
	& {\Spf(R_{\mc{G}^c,\mu_h^c}).}
	\arrow["\pi"', from=1-2, to=2-1]
	\arrow["{\pi'}", from=1-2, to=2-3]
	\arrow["{\Theta_x^{\smallprism}}"', from=2-1, to=3-2]
	\arrow["{\Theta_{x'}^\smallprism}", from=2-3, to=3-2]
\end{tikzcd}
\end{equation}
To prove this, we first make the following observation.

\vspace*{5 pt}

\noindent\textbf{Claim:} There exists an epimorphism of formal schemes of the form $\Spf(R)\to \Spf(\mc{O}'')$, where
\begin{equation*}
    R=\mc{O}_K\llbracket t_1,\ldots,t_n,x_1,\ldots,x_m\rrbracket/(x_1\cdots x_m-\varpi),
\end{equation*} 
with notation as in Proposition \ref{prop: full faithfulness for semistable rings}.
\begin{proof} Let $\Spf(A)$ be an affine open neighborhood of $x''$ in $\mf{X}''_{\mathsf{K}^p}$. As $\Spf(A)_\eta$ is an open subset of the smooth rigid space $U_{\mathsf{K}^p}$, it is smooth, and so by Elkik's algebraization theorem (see \cite[Th\'eor\`eme 7]{Elkik}) there exists some finite type smooth morphism $\Spec(B)\to \Spec(\mc{O}_E)$ such that $A$ is isomorphic to the $p$-adic completion of $B$ over $\mc O_E$. Let $f\colon Y\to \Spec(B)$ be a strictly semi-stable over $\mc{O}_K$ (for some finite extension $K$ of $E$) alteration of $\Spec(B)$ as in \cite[Theorem 6.5]{deJongAlteration}, and choose any closed point $y$ of of $Y$ mapping to $x''$. Then $\wh{\mc{O}}_{Y,y}$ is isomorphic to
\begin{equation*}
    \mc{O}_K\llbracket x_1,\ldots,x_d\rrbracket/(x_1\cdots x_m-\varpi)
\end{equation*}
over $\mc O_K$ for some integers $d$ and $m$ with $1\leqslant m\leqslant d$. We then claim that the induced map $f\colon \Spf(\wh{\mc{O}}_{Y,y})\to \Spf(\mc{O}'')$ is an epimorphism, from where the conclusion will follow. But, the map $f\colon \Spec(\mc{O}_{Y,y})\to \Spec(B_x)$ is dominant by assumption, and thus induces an injection $B_x\to \mc{O}_{Y,y}$. As both the source and target are regular local rings we deduce from \cite[I, Corollaire 3.9.8]{EGASpringer} that $\mc{O}''\to\wh{\mc{O}}_{Y,y}$ is an injection. Since $\wh{\mc{O}}_{Y,y}$ and $\mc{O}''$ are complete local rings, the claim follows.
\end{proof}

To prove that Equation \eqref{eq:first-charcterization-eq} commutes, it thus suffices to show that the outer square of the following diagram commutes
\begin{equation*}
    \begin{tikzcd}[sep=scriptsize]
	& {\Spf(R)} \\
	& {\Spf(\mc{O}'')} \\
	{\Spf(\mc{O})} && {\Spf(\mc{O}')} \\
	& {\Spf(R_{\mc{G}^c,\mu_h^c}),}
	\arrow["\pi"', from=2-2, to=3-1]
	\arrow["{\pi'}", from=2-2, to=3-3]
	\arrow["{\Theta_x^{\smallprism}}"', color={black}, from=3-1, to=4-2]
	\arrow["{\Theta_{x'}^\smallprism}", color={black}, from=3-3, to=4-2]
	\arrow[from=1-2, to=2-2]
	\arrow["f"', color={black}, curve={height=12pt}, from=1-2, to=3-1]
	\arrow["{f'}", color={black}, curve={height=-12pt}, from=1-2, to=3-3]
\end{tikzcd}
\end{equation*}
where $f$ and $f'$ are defined to make the triangle diagrams they sit in commute. But, observe that as $R$ is a complete regular local ring, it suffices by the universality condition of $\Spf(R_{\mc{G}^c,\mu_h^c})$, and our definition of a prismatic integral canonical models, to show that $f^\ast(\zeta_{\mathsf{K}^p})$ is isomorphic to $(f')^\ast(\zeta_{\mathsf{K}^p}')$. But, by setup we know that 
\begin{equation*}
    T_\et\circ f^\ast(\zeta_{\mathsf{K}^p})\cong T_\et\circ (f')^\ast(\zeta_{\mathsf{K}^p}'),
\end{equation*}
and so the claim follows from Proposition \ref{prop: full faithfulness for semistable rings}.

Given the commutativity of \eqref{eq:first-charcterization-eq}, we can now argue as in \cite[Proposition 6.3.1 (b)]{Pappas} to show that $\pi\colon \Spf(\mc{O}'')\to \Spf(\mc{O})$ and $\pi'\colon \Spf(\mc{O}'')\to\Spf(\mc{O}')$ are isomorphisms. Indeed, the commutativity of \eqref{eq:first-charcterization-eq} is equivalent to $\pi\otimes\pi'\colon\mc O\wh{\otimes}_{\mc O_{\breve E}}\mc O'\to \mc O''$ factorizing through the map 
\begin{equation*}
\mc O\wh{\otimes}_{\mc O_{\breve E}}\mc O'\xrightarrow{a\otimes\id_{\mc O'}} \mc O'\wh{\otimes}_{\mc O'_{\breve E}}\mc O'\xrightarrow{\Delta} \mc O',
\end{equation*}
where $a$ denotes the isomorphism $(\Theta^\smallprism_{x'})^{-1}\circ\Theta^\smallprism_x$. Let $R$ denote the image of $\pi\otimes \pi'$. Then, we see that this factorization gives rise to a surjection $\mc{O}'\to R$.

We claim that $\mc{O}''$ has the same Krull dimension as $\mc O'$. Observe that $\dim(\mc{O}'')=\dim(\mathcal{O}_{\mf{X}'',x''})$ and $\dim(\mc{O}')=\dim(\mc{O}_{\mf{X}',x'})$. As these are closed points on integral formal schemes of finite type over $\mc{O}_K$, they have the same dimension as $\mf{X}''$ and $\mf{X}'$, respectively. But, $\dim(\mf{X}'')$ and $\dim(\mf{X}')$ each decrease by $1$ when passing to the rigid generic fiber, but these generic fibers are isomorphic. 

On the other hand, the dimension of $R$ is equal to the dimension of $\mc{O}''$, as $R\to \mc{O}''$ is an integral embedding (see \stacks{00OK}). Thus, combining these two claims we deduce that the dimension of $R$ and $\mc{O}'$ are the same. Thus, the surjection $\mc{O}'\to R$ must be an isomorphism, being a surjection of integral domains of the same finite dimension.

We then get a finite map $\mc O'\isomto R\to \mc O''$. We claim that this map is an isomorphism. This follows from taking $A=\mathcal{O}_{\mf{X}',x'}$ and $B=\mc{O}_{\mf{X}'',x''}$ in the following lemma.

\begin{lem} Let $(A,\mf{m})$ and $ (B,\mf{n})$ be normal local Noetherian rings flat over $\mathbb{Z}_{(p)}$. Suppose that $(A,\mf{m})\to (B,\mf{n})$ satisfies: (1) $\widehat{A}_\mf{m}\to\wh{B}_\mf{n}$ is finite, (2) $A[\nicefrac{1}{p}]\to B[\nicefrac{1}{p}]$ is an isomorphism. Then, $\wh{A}_\mf{m}\to\wh{B}_\mf{n}$ is an isomorphism. 
\end{lem}
\begin{proof} As $\wh{A}_\mf{m}\to\wh{B}_\mf{n}$ is a finite map between normal domains, it suffices to show that the map $\wh{A}_\mf{m}[\nicefrac{1}{p}]\to\wh{B}_\mf{n}[\nicefrac{1}{p}]$ is an isomorphism. Let us begin by observing that the map $A\to B$ is automatically injective as the source and target are both $\mathbb{Z}_{(p)}$-flat and the map $A[\nicefrac{1}{p}]\to B[\nicefrac{1}{p}]$ is injective. As $(A,\mf{m})$ is a normal domain, we deduce from \cite[I, Corollaire 3.9.8]{EGASpringer} that $\wh{A}_\mf{m}\to\wh{B}_\mf{n}$ is injective, and thus that $\wh{A}_\mf{m}[\nicefrac{1}{p}]\to \wh{B}_\mf{n}[\nicefrac{1}{p}]$ is injective. Thus, it suffices to show that $\wh{A}_\mf{m}[\nicefrac{1}{p}]\to \wh{B}_\mf{n}[\nicefrac{1}{p}]$ is surjective. But, observe that as $A[\nicefrac{1}{p}]\to B[\nicefrac{1}{p}]$ is an isomorphism that the map $\widehat{A}_\mf{m}[\nicefrac{1}{p}]\to (\wh{A}_\mf{m}\otimes_A B)[\nicefrac{1}{p}]$ is an isomorphism. As one has a factorization 
\begin{equation*}
    \wh{A}_\mf{m}[\nicefrac{1}{p}]\to (\wh{A}_\mf{m}\otimes_A B)[\nicefrac{1}{p}]\to \wh{B}_\mf{n}[\nicefrac{1}{p}],
\end{equation*}
with the second map being the obvious one, it suffices to show that the map $\wh{A}_\mf{m}\otimes_A B\to \wh{B}_\mf{n}$ is surjective. But, by Nakayama's lemma, using the fact that $\wh{B}_\mf{n}$ is a finite $\wh{A}_\mf{m}$-module, it suffices to show this surjectivity modulo $\mf{m}$. But, as $\wh{B}_\mf{n}$ is a finite $\wh{A}_\mf{m}$-module, its topology agrees with the $\mf{m}$-adic one. Thus, one has that $B/\mf{m}B$ is naturally equal to $\wh{B}_\mf{n}/\mf{m}\wh{B}_\mf{n}$, and thus the surjectivity of $\wh{A}_\mf{m}\otimes_A B\to \wh{B}_\mf{n}$ modulo $\mf{m}$ is clear.
\end{proof}

 From the above we deduce that the map $\pi'\colon\Spf(\mc{O}'')\to\Spf(\mc{O}')$ is an isomorphism and, by symmetry, the same holds for $\pi$. We are then done by Lemma \ref{lem:formal-scheme-isom} below.
\end{proof}

\begin{lem}\label{lem:formal-scheme-isom} Let $\alpha\colon \mf{Y}_1\to \mf{Y}_2$ be a morphism of finite type flat formal $\mc{O}_E$-schemes such that: (a)  $\alpha_\eta$ is an isomorphism of rigid $E$-spaces, (b) for every point $y_1$ of $\mf{Y}_1(\ov{\bb{F}}_p)$ with $y_2=\alpha(y_1)$ the induced map $\wh{\mc{O}}_{\mf{Y}_2,y}\to \wh{\mc{O}}_{\mf{Y}_1,y_1}$ is an isomorphism. Then, $\alpha$ is an isomorphism.
\end{lem}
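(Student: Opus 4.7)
The plan is to show $\alpha$ is \'etale, then a monomorphism (hence an open immersion by flatness), and finally surjective on points, yielding that $\alpha$ is an isomorphism.

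For \'etaleness, fix $y_1 \in \mf{Y}_1(\ov{\bb{F}}_p)$ with $y_2 = \alpha(y_1)$. Faithful flatness of the completion applied to the isomorphism $\wh{\mc{O}}_{\mf{Y}_2, y_2} \isomto \wh{\mc{O}}_{\mf{Y}_1, y_1}$ from (b) shows the induced map $\mc{O}_{\mf{Y}_2, y_2} \to \mc{O}_{\mf{Y}_1, y_1}$ is flat, induces an isomorphism on residue fields, and satisfies $\mf{m}_{y_2}\mc{O}_{\mf{Y}_1, y_1} = \mf{m}_{y_1}$; combined with finite-typeness this means $\alpha$ is \'etale at $y_1$. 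Since the \'etale locus is open and the underlying topological space of $\mf{Y}_1$ is that of the Jacobson scheme $\mf{Y}_{1,s}$, whose closed points are precisely $\mf{Y}_1(\ov{\bb{F}}_p)$, the morphism $\alpha$ is \'etale on all of $\mf{Y}_1$ (in the formal-scheme sense, i.e., each truncation modulo $p^n$ is \'etale).

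For the monomorphism step, since $\alpha$ is \'etale the diagonal $\Delta\colon \mf{Y}_1 \to \mf{Y}_1\times_{\mf{Y}_2} \mf{Y}_1$ is an open immersion, so it suffices to show $\Delta$ is surjective on points. Any hypothetical point $(y, y')$ outside $\Delta(\mf{Y}_1)$ admits a closed specialization $(c, c')$ in the closed complement, giving distinct closed points $c \neq c' \in \mf{Y}_1(\ov{\bb{F}}_p)$ with $\alpha(c) = \alpha(c') = y_2$. Applying (b) at both $c$ and $c'$, the tubes $\sp^{-1}(c), \sp^{-1}(c') \subseteq \mf{Y}_{1,\eta}$ (disjoint and non-empty by flatness of $\mf{Y}_1$) both map isomorphically onto the common tube $\sp^{-1}(y_2)\subseteq \mf{Y}_{2,\eta}$ via $\alpha_\eta$, contradicting (a). So $\alpha$ is a monomorphism, and being additionally flat and finite type it is an open immersion.

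Finally, for surjectivity on closed points of $\mf{Y}_{2,s}$: any closed $y_2$ has non-empty tube in $\mf{Y}_{2,\eta}$ by flatness of $\mf{Y}_2$, and by (a) the preimage in $\mf{Y}_{1,\eta}$ is non-empty, with any classical point specializing to a closed preimage of $y_2$ in $\mf{Y}_1$. Since $|\mf{Y}_2|$ is Jacobson, $\alpha$ surjects on all points, and the open immersion $\alpha$ is an isomorphism. The main obstacle is the monomorphism step: condition (b) alone says nothing about distinct preimages of a single $y_2$, so the tube argument leveraging (a) is the decisive maneuver converting generic-fiber data into special-fiber injectivity.
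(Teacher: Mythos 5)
Your proof is correct and follows essentially the same strategy as the paper's: use condition (b) to deduce \'etaleness, then use condition (a) to control fiber cardinality. The only cosmetic difference is that the paper establishes that fibers are singletons by base changing along a map $\Spf(\mathcal{O}_{E'})\to\mathfrak{Y}_2$ and invoking topological invariance of the \'etale site, while you argue directly with disjoint tubes mapping to a common tube and contradicting injectivity of $\alpha_\eta$; these are two phrasings of the same idea.
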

\begin{proof} For each $n\geqslant 0$ let $\alpha_n\colon \mf{Y}_{1,n}\to \mf{Y}_{2,n}$ denote the reduction of $\alpha$ modulo $p^{n+1}$. It suffices to show that $\alpha_n$ is an isomorphism for all $n$. Indeed, we first observe that as the flat locus of each $\alpha_n$ is open, and contains every closed point of the scheme $\mf{Y}_{1,n}$, which are evidently dense by consideration of the variety $\mf{Y}_{1,0}$, we deduce that $\alpha_n$ is flat. Thus, to prove that it's \'etale, it suffices to prove this claim for $\alpha_0$ (see \stacks{06AG}). But, in this case the fact that (b) implies $\alpha$ is \'etale is classical. To prove that $\alpha_0$, and thus each $\alpha_n$ (see loc.\@ cit.\@), is an isomorphism it suffices to show that the fiber over each $\ov{\bb{F}}_p$-point $y_2$ of $\mf{Y}_2(\ov{\bb{F}}_p)$ is a singleton (see \stacks{02LC}). But, as $\mf{Y}_2$ is a finite type and flat over $\mc{O}_E$, the specialization map $\mr{sp}\colon |(\mf{Y}_2)_\eta|^\mr{cl}\to \mf{Y}_{2,0}(\ov{\bb{F}}_p)$ is surjective. Thus, there exists some finite extension $E'$ of $E$ and a morphism $\Spf(\mc{O}_{E'})\to \mf{Y}_2$ whose special fiber is the underlying point of $y_2$. As $\alpha$ is \'etale, we know that $\alpha^{-1}(y_2)$ is a disjoint union of copies of $\Spec(\ov{\bb{F}}_p)$, and so by the topological invariance of the \'etale site, this implies that $\mf{Y}_1\times_{\mf{Y}_2}\Spf(\mc{O}_{E'})$ is a disjoint union of copies of $\Spf(\mc{O}_{E'})$. As $\alpha_\eta$ is an isomorphism though, this number of copies must be one. The claim follows.
\end{proof}

There is also a characterization of the scheme $\ms{S}_{\mathsf{K}^p}$ itself, using the notion of a prismatic $F$-crystal $(\omega',\zeta,\iota)$ on a $p$-adic scheme as in \cite[\S3.3.4]{IKY1}, which we use freely below.

\begin{defn}\label{defn:prismatic-model-local-system} Let $X$ be a finite type separated $E$-scheme, $U\subseteq X^\mr{an}$ an open adic subspace, and $\omega_\et$ a $\mc{G}$-object in de Rham $\Z_p$-local systems on $X^\mr{an}$ (resp.\@ $X$).
\begin{itemize}[leftmargin=.3in]
    \item A locally of finite type separated flat $\mc{O}_E$-scheme $\ms{X}$ is a \emph{model} of $(X,U)$ if there is an isomorphism $X\isomto \ms{X}_E$ carrying $U$ isomorphically onto $\wh{\ms{X}}_\eta$.
    \item A \emph{prismatic model (of type $\bm{\mu}$)} of $\omega$ (resp.\@ $\omega^\mr{an}$) is a $\mc{G}$-object in prismatic $F$-crystals (one of type $\bm{\mu}$) $(\omega',\zeta,\iota)$ on $\ms{X}$ with $\omega'$ isomorphic to $\omega$ (resp.\@ $\omega^\an$).
\end{itemize}
\end{defn}

We often identify a prismatic model $(\omega',\zeta,\iota)$ of $\omega$ with just its prismatic $F$-crystal component on $\wh{\ms{X}}$, i.e., with just $\zeta$. So, we informally speak of $\zeta$ being a prismatic $F$-crystal model of $\omega$.

\begin{defn}\label{defn:prismatic-integral-canonical-model} A smooth and separated $\mc{O}_E$-model $\ms{X}_{\mathsf{K}^p}$ of $\Sh_{\mathsf{K}_0\mathsf{K}^p}$ is called a \emph{prismatic integral canonical model} if it is a model of $(\Sh_{\mathsf{K}_0\mathsf{K}^p},U_{\mathsf{K}^p})$ and there exists a prismatic model $\zeta_{\mathsf{K}^p}$ of $\omega_{\mathsf{K}^p,\et}$ such that for every point $x$ of $\ms{X}_{\mathsf{K}^p}(\ov{\bb{F}}_p)$ there exists an isomorphism $    \Theta_x^\smallprism\colon R_{\mc{G}^c,\mu^c_h}\isomto \wh{\mc{O}}_{\ms{X}_{\mathsf{K}^p},x}$ such that $(\Theta_x^\smallprism)^*(\omega_{b_x}^\univ)$ is isomorphic to the pullback of $\zeta_{\mathsf{K}^p}$ to $\wh{\mc{O}}_{\ms{X}_{\mathsf{K}^p},x}$.
\end{defn}

Said differently, a smooth separated $\mc{O}_E$-model $\ms{X}_{\mathsf{K}^p}$ of $\Sh_{\mathsf{K}_0\mathsf{K}^p}$ is a prismatic integral canonical model if $\wh{\ms{X}}_{\mathsf{K}^p}$ is a prismatic integral canonical model of $U_{\mathsf{K}^p}$.

The following is an immediately corollary of \cite[Proposition 3.6]{IKY1} and Theorem \ref{thm:prismatic-characterization-completion}.

\begin{cor}\label{cor:scheme-int-can-model-charac} The unique prismatic integral canonical model of $\Sh_{\mathsf{K}_0\mathsf{K}^p}$ is $\ms{S}_{\mathsf{K}^p}$.
\end{cor}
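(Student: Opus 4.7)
The plan is to deduce the statement by translating between schemes and their formal $p$-adic completions, using the paragraph immediately following Definition \ref{defn:prismatic-integral-canonical-model} which notes that a smooth separated $\mc{O}_E$-model $\ms{X}_{\mathsf{K}^p}$ of $\Sh_{\mathsf{K}_0\mathsf{K}^p}$ is a prismatic integral canonical model (in the schematic sense of Definition \ref{defn:prismatic-integral-canonical-model}) if and only if $\wh{\ms{X}}_{\mathsf{K}^p}$ is a prismatic integral canonical model of $U_{\mathsf{K}^p}$ (in the formal sense of Definition \ref{defn:integral-canonical-model}). The role of \cite[Proposition 3.7]{IKY1} is to supply an equivalence, via $p$-adic completion, between smooth separated $\mc{O}_E$-models $\ms{X}$ of $(\Sh_{\mathsf{K}_0\mathsf{K}^p},U_{\mathsf{K}^p})$ equipped with a prismatic model (in the sense of Definition \ref{defn:prismatic-model-local-system}) of $\omega_{\mathsf{K}^p,\et}$, and smooth separated formal $\mc{O}_E$-models $\mf{X}$ of $U_{\mathsf{K}^p}$ equipped with a prismatic $F$-crystal model of $\omega_{\mathsf{K}^p,\an}$.

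For existence, I would first check that $\ms{S}_{\mathsf{K}^p}$ is smooth and separated over $\mc{O}_E$ (from the construction of the integral canonical model), that its generic fiber is $\Sh_{\mathsf{K}_0\mathsf{K}^p}$ by definition, and that $\wh{\ms{S}}_{\mathsf{K}^p,\eta}=U_{\mathsf{K}^p}$ by Proposition \ref{prop:crystalline-locus}, so that it is a model of $(\Sh_{\mathsf{K}_0\mathsf{K}^p},U_{\mathsf{K}^p})$. Next, by Theorem \ref{thm:prismatic-characterization-completion}, the formal completion $\wh{\ms{S}}_{\mathsf{K}^p}$ together with $\omega_{\mathsf{K}^p,\smallprism}$ (whose existence is Theorem \ref{thm:main-Shimura-theorem-abelian-type-case}) satisfies the universal-deformation condition at every $\ov{\bb{F}}_p$-point (this being Theorem \ref{thm:ito-shim-comp}). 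Invoking \cite[Proposition 3.7]{IKY1} on $\omega_{\mathsf{K}^p,\smallprism}$ together with $\omega_{\mathsf{K}^p,\et}$ and the canonical identification on the overlap $U_{\mathsf{K}^p}$, I obtain a prismatic model $\zeta_{\mathsf{K}^p}$ of $\omega_{\mathsf{K}^p,\et}$ on $\ms{S}_{\mathsf{K}^p}$ whose completion recovers $\omega_{\mathsf{K}^p,\smallprism}$, and the completion condition at each $\ov{\bb{F}}_p$-point of $\ms{S}_{\mathsf{K}^p}$ follows immediately.

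For uniqueness, given a second prismatic integral canonical model $\ms{X}_{\mathsf{K}^p}$, its $p$-adic completion $\wh{\ms{X}}_{\mathsf{K}^p}$ together with the completion of its associated prismatic model is a prismatic integral canonical model of $U_{\mathsf{K}^p}$; by Theorem \ref{thm:prismatic-characterization-completion} it is therefore canonically isomorphic to $\wh{\ms{S}}_{\mathsf{K}^p}$ compatibly with the identification $\wh{(-)}_\eta=U_{\mathsf{K}^p}$, and the accompanying prismatic models match. Combined with the tautological identification of generic fibers with $\Sh_{\mathsf{K}_0\mathsf{K}^p}$, and using that these two pieces of data agree on the overlap $U_{\mathsf{K}^p}$, the equivalence from \cite[Proposition 3.7]{IKY1} transports this to an isomorphism $\ms{X}_{\mathsf{K}^p}\isomto \ms{S}_{\mathsf{K}^p}$ of $\mc{O}_E$-schemes.

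The only real obstacle is checking that \cite[Proposition 3.7]{IKY1} indeed packages the required gluing/equivalence — that a smooth separated $\mc{O}_E$-scheme with prismatic data is determined (and can be constructed) by compatible data on its generic fiber and $p$-adic completion — but this is exactly its content, so the deduction is formal.
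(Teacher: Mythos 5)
Your proposal is correct and follows essentially the same route as the paper, which simply declares the corollary an immediate consequence of Theorem \ref{thm:prismatic-characterization-completion} and \cite[Proposition 3.7]{IKY1}; you unpack this by observing that the schematic condition in Definition \ref{defn:prismatic-integral-canonical-model} is the same as the formal condition of Definition \ref{defn:integral-canonical-model} on the $p$-adic completion, then invoking Theorem \ref{thm:prismatic-characterization-completion} for the formal statement and \cite[Proposition 3.7]{IKY1} to translate (prismatic data on $\ms{X}$ + identification of its étale realization over $\ms{X}_E$) back and forth with (prismatic data on $\wh{\ms{X}}$ + identification over $\wh{\ms{X}}_\eta = U_{\mathsf{K}^p}$) and to algebraize the resulting formal isomorphism to one of $\mc{O}_E$-schemes. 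The only caveat, which you already flag, is that the last algebraization step rests on your reading of \cite[Proposition 3.7]{IKY1}; granting that reading, your argument is the intended one.
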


\subsubsection{Relationship to work of Pappas and Rapoport}\label{ss:PR}
We now discuss the relationship between our work and that in \cite{PappasRapoportI}. Below we shall refer to the conjunction of \cite[Conjecture 4.2.2]{PappasRapoportI} and \cite[Conjecture 4.5]{Daniels} as the \emph{Pappas--Rapoport conjecture}.

We begin by formulating a version of a prismatic integral canonical model as in Definition \ref{defn:prismatic-integral-canonical-model}, but for the entirety of the system $\{\Sh_{\mathsf{K}_0\mathsf{K}^p}\}_{\mathsf{K}^p}$. Namely, by a \emph{smooth $\mb{G}(\A_f^p)$-model} of $\{\Sh_{\mathsf{K}^p\mathsf{K}_0}\}_{\mathsf{K}^p}$, we mean a collection $\{\ms{X}_{\mathsf{K}^p}\}_{\mathsf{K}^p}$ of separated smooth $\mc{O}_E$-schemes together with finite \'etale morphisms $f_{\mathsf{K}^p,\mathsf{K}^{p'}}(g^p)$ modeling $t_{\mathsf{K}^p,\mathsf{K}^{p'}}(g^p)$. 

\begin{defn} A \emph{prismatic integral canonical model} of $\{\Sh_{\mathsf{K}_0\mathsf{K}^p}\}_{\mathsf{K}^p}$ is a smooth $\mb{G}(\A_f^p)$-model $\{\ms{X}_{\mathsf{K}^p}\}_{\mathsf{K}^p}$ such that $\ms{X}_{\mathsf{K}^p}$ is a prismatic integral canonical model of $\Sh_{\mathsf{K}_0\mathsf{K}^p}$ for all $\mathsf{K}^p$.
\end{defn}

We have the following which is an essentially trivial corollary of Theorem \ref{thm:prismatic-characterization-completion}.

\begin{thm}\label{thm:prismatic-characterization} The system $\{\ms{S}_{\mathsf{K}^p}\}_{\mathsf{K}^p}$ is the unique prismatic integral canonical model of $\{\Sh_{\mathsf{K}_0\mathsf{K}^p}\}_{\mathsf{K}^p}$. 
\end{thm}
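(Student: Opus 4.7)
The plan is to deduce Theorem \ref{thm:prismatic-characterization} essentially directly from Corollary \ref{cor:scheme-int-can-model-charac}, since unwinding the definitions reduces the statement for systems to one for each individual level plus a straightforward compatibility check between transition maps.

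For existence, I would observe that Corollary \ref{cor:scheme-int-can-model-charac} already shows that each $\ms{S}_{\mathsf{K}^p}$ is a prismatic integral canonical model of $\Sh_{\mathsf{K}_0\mathsf{K}^p}$ in the sense of Definition \ref{defn:prismatic-integral-canonical-model}. The Hecke correspondences $t_{\mathsf{K}^p,\mathsf{K}^{p'}}(g^p)$ between the $\ms{S}_{\mathsf{K}^p}$ are finite \'etale morphisms of $\mc{O}_E$-schemes modeling the corresponding generic-fiber Hecke operators by the defining properties of the integral canonical model recalled in \S\ref{ss:integral-models}. Hence $\{\ms{S}_{\mathsf{K}^p}\}$ equipped with these transition maps is a smooth $\mb{G}(\A_f^p)$-model of $\{\Sh_{\mathsf{K}_0\mathsf{K}^p}\}$ and, by the level-wise statement, a prismatic integral canonical model.

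For uniqueness, suppose $\{\ms{X}_{\mathsf{K}^p}\}$ is another prismatic integral canonical model. Applying Corollary \ref{cor:scheme-int-can-model-charac} at each level produces isomorphisms $\phi_{\mathsf{K}^p}\colon \ms{X}_{\mathsf{K}^p}\isomto\ms{S}_{\mathsf{K}^p}$ of $\mc{O}_E$-schemes compatible with the given identifications of both sides' generic fibers with $\Sh_{\mathsf{K}_0\mathsf{K}^p}$. It remains only to check that the family $\{\phi_{\mathsf{K}^p}\}$ intertwines the transition maps of the two systems. For compact opens $\mathsf{K}^p,\mathsf{K}^{p'}$ and $g^p\in\mb{G}(\A_f^p)$ with $(g^p)^{-1}\mathsf{K}^p g^p\subseteq \mathsf{K}^{p'}$, both compositions $\phi_{\mathsf{K}^{p'}}\circ f_{\mathsf{K}^p,\mathsf{K}^{p'}}(g^p)$ and $t_{\mathsf{K}^p,\mathsf{K}^{p'}}(g^p)\circ \phi_{\mathsf{K}^p}$ are morphisms $\ms{X}_{\mathsf{K}^p}\to \ms{S}_{\mathsf{K}^{p'}}$ whose restrictions to the (schematically dense) generic fiber coincide with $t_{\mathsf{K}^p,\mathsf{K}^{p'}}(g^p)$. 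Since $\ms{X}_{\mathsf{K}^p}$ is $\mc{O}_E$-flat (being smooth) and $\ms{S}_{\mathsf{K}^{p'}}$ is separated over $\mc{O}_E$, the two morphisms must agree, giving the required equivariance.

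The only substantive input is Corollary \ref{cor:scheme-int-can-model-charac}, whose proof rests on the completion-level uniqueness theorem \ref{thm:prismatic-characterization-completion}; the genuine technical obstacle there is the extension of fully faithfulness of $T_\et$ to certain semi-stable bases (Proposition \ref{prop: full faithfulness for semistable rings}). Once those ingredients are granted, the present theorem is a formal consequence and I would not expect to encounter any further difficulty.
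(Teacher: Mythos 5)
Your proof is correct and takes essentially the same approach as the paper's: reduce to the level-wise uniqueness statement (Corollary \ref{cor:scheme-int-can-model-charac}, which itself rests on Theorem \ref{thm:prismatic-characterization-completion}), then observe that the transition maps are rigid by a density argument using flatness over $\mc{O}_E$ and separatedness of the target. If anything, your phrasing via flatness giving schematic density of the generic fiber is slightly more careful than the paper's appeal to $\ms{X}_{\mathsf{K}^p}$ being ``integral,'' since Shimura varieties generally have several connected components; the density argument is the same either way.
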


That said, independent of Theorem \ref{thm:prismatic-characterization-completion} we can show a prismatic integral canonical model satisfies the conditions of the Pappas--Rapoport conjecture.

\begin{prop}\label{prop:compatibility-PR-conjecture}
    Suppose $\{\ms{X}_{\mathsf{K}^p}\}_{\mathsf{K}^p}$ is a prismatic integral canonical model of $\{\Sh_{\mathsf{K}^p\mathsf{K}_0}\}_{\mathsf{K}^p}$. Then, $\{\ms{X}_{\mathsf{K}^p}\}_{\mathsf{K}^p}$ satisfies the conditions of the Pappas--Rapoport conjecture.
\end{prop}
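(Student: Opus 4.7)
The plan is to verify, for a prismatic integral canonical model $\{\ms{X}_{\mathsf{K}^p}\}$, both defining conditions of a shtuka integral canonical model: the extension property for discrete valuation rings of mixed characteristic and the existence of a formally \'etale $\mc{G}^c$-shtuka map $\rho^{\mr{sht}}_{\mathsf{K}^p}$. The two inputs I expect to need are the shtuka realization functor for $\mc{G}$-objects in prismatic $F$-crystals from \cite[\S3.3.4]{IKY1}, and Ito's universal prismatic deformation $\omega_{b_x}^\univ$ over $R_{\mc{G}^c,\mu_h^c}$ constructed in \S\ref{ss:display}.

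First, I would construct the shtuka model. The definition of a prismatic integral canonical model gives a prismatic model $\zeta_{\mathsf{K}^p}$ of $\omega_{\mathsf{K}^p,\et}$, and, via the identification with $\omega_{b_x}^\univ$ at every $\overline{\bb{F}}_p$-point together with Corollary \ref{cor:prismatic-realization-lff}, this model is of type $-\mu_h^c$. Applying the shtuka realization functor from \cite[\S3.3.4]{IKY1} produces a $\mc{G}^c$-shtuka on $\ms{X}_{\mathsf{K}^p}^{/\lozenge}$ bounded by $\mu_h^c$ whose \'etale realization over the generic fiber recovers $\omega_{\mathsf{K}^p,\et}$, and hence defines the desired map $\rho^{\mr{sht}}_{\mathsf{K}^p}\colon \ms{X}_{\mathsf{K}^p}^{/\lozenge}\to \mc{G}^c\text{-}\mathbf{Sht}_{\mu_h^c}$.

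Second, I would verify formal \'etaleness pointwise. Fix $x\in\ms{X}_{\mathsf{K}^p}(\overline{\bb{F}}_p)$ and let $b_x$ be as in Definition \ref{defn:prismatic-integral-canonical-model}. By hypothesis the isomorphism $\Theta_x^\Prism\colon R_{\mc{G}^c,\mu_h^c}\isomto\wh{\mc{O}}_{\ms{X}_{\mathsf{K}^p},x}$ identifies $\zeta_{\mathsf{K}^p}$ with $\omega_{b_x}^\univ$. Passing through the shtuka realization, this reduces the check to showing that the shtuka over $\Spf(R_{\mc{G}^c,\mu_h^c})^{/\lozenge}$ associated to $\omega_{b_x}^\univ$ presents the $v$-sheaf completion $\wh{\mc{M}}^{\mathrm{int}}_{(\mc{G}^c,b_x,\mu_h^c)/x}$ of the integral local shtuka space at its basepoint. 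Because both Ito's space and $\mc{M}^{\mathrm{int}}$ are universal deformation objects for the same underlying crystalline/shtuka data at $b_x$ (up to the quasi-isogeny data tracked by $\mc{M}^{\mathrm{int}}$), one obtains the required identification; combined with Gleason's theory of $v$-sheaf completions from \cite{GleasonSpecialization}, this yields the formal \'etaleness of $\rho^{\mr{sht}}_{\mathsf{K}^p}$.

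Third, I would check the extension property. Given a DVR $R$ over $\mc{O}_E$ of mixed characteristic $(0,p)$, and a compatible system of $\mr{Frac}(R)$-points of $\{\Sh_{\mathsf{K}_0\mathsf{K}^p}\}$, the associated $\mc{G}^c(\Z_p)$-local system on $\Spec(\mr{Frac}(R))$ is potentially crystalline by Proposition \ref{prop:crystalline-locus}, so after replacing $R$ by a finite extension it extends uniquely to a crystalline $\mc{G}^c$-object, hence to a $\mc{G}^c$-shtuka over $R$ bounded by $\mu_h^c$. Formal \'etaleness of $\rho^{\mr{sht}}_{\mathsf{K}^p}$ at the geometric special point, together with separatedness of $\ms{X}_{\mathsf{K}^p}$, then produces the unique lift to an $R$-point, compatibly in $\mathsf{K}^p$. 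The uniqueness follows from separatedness in the source.

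The main obstacle I anticipate is the deformation-theoretic comparison in the second step: namely, identifying $\Spf(R_{\mc{G}^c,\mu_h^c})^{/\lozenge}$ with $\wh{\mc{M}}^{\mathrm{int}}_{(\mc{G}^c,b_x,\mu_h^c)/x}$ as $v$-sheaves under the shtuka realization of $\omega_{b_x}^\univ$. Ito's construction takes place on the prismatic side and does not \emph{a priori} remember a quasi-isogeny, whereas $\mc{M}^{\mathrm{int}}$ is defined to carry this extra datum. Bridging this requires either invoking a rigidity principle for quasi-isogenies of shtukas of type $\mu_h^c$ in this situation, or a direct comparison of the two deformation functors through an intermediate formal moduli (for instance via the correspondence between $\mc{G}^c$-$\mu$-displays and integral local shtukas). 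Once this comparison is in hand, the remaining steps are largely formal.
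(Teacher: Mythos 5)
Your first two steps track the paper's argument. In particular, the deformation-theoretic comparison you single out as the main obstacle --- identifying $\Spd(R_{\mc{G}^c,\mu_h^c})$ with the $v$-sheaf completion $\wh{\mc{M}}^{\mathrm{int}}_{(\mc{G}^c,b_x,\mu_h^c)/x_0}$ compatibly with the shtuka realization of $\omega_{b_x}^{\univ}$ --- is precisely the content of \cite[Theorem 5.3.5]{Ito2}, which is what the paper invokes to close this step; your suggested route through the display/local-shtuka dictionary is essentially how Ito proves it, so this part of your plan is sound once that reference is supplied. (A minor caveat: Corollary \ref{cor:prismatic-realization-lff} is stated only for $\omega_{\mathsf{K}^p,\smallprism}$ on $\wh{\ms{S}}_{\mathsf{K}^p}$; for a general $\zeta_{\mathsf{K}^p}$ you must rerun its Artin-approximation argument starting from the pointwise identification with $\omega_{b_x}^{\univ}$, rather than cite the corollary itself.)

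The genuine gap is in your treatment of the extension property. First, Proposition \ref{prop:crystalline-locus} concerns \emph{classical} points of $\Sh_{\mathsf{K}_0\mathsf{K}^p}^{\an}$, i.e.\ points whose residue field is finite over $E$; a $\mr{Frac}(R)$-point for an arbitrary mixed-characteristic discrete valuation ring $R$ over $\mc{O}_E$ is not such a point, so you cannot conclude from that proposition that the induced map $\Spa(R[\nicefrac{1}{p}])\to \Sh_{\mathsf{K}_0\mathsf{K}^p}^{\an}$ factors through $U_{\mathsf{K}^p}$. That factorization is a N\'eron--Ogg--Shafarevich statement (Lemma \ref{lem:NOS-criterion}), which the paper proves either via the extension property of the Kisin models $\{\ms{S}_{\mathsf{K}^p}\}$ or by reduction to the Siegel case; your proposal supplies no argument for it. Second, even granting a $\mc{G}^c$-shtuka over $R$ bounded by $\mu_h^c$, the inference ``formal \'etaleness of $\rho^{\mathrm{sht}}_{\mathsf{K}^p}$ at the geometric special point produces the unique lift to an $R$-point'' is circular: formal \'etaleness here is a statement about completions at $\ov{\bb{F}}_p$-points of $\ms{X}_{\mathsf{K}^p}$ that one already possesses, and since $\ms{X}_{\mathsf{K}^p}$ is not proper there is no a priori specialization of the generic point into the special fiber --- producing that $R$-point is exactly what condition (a) demands, and is why Pappas--Rapoport impose it as a separate axiom rather than deducing it from the local condition. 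The paper instead obtains condition (a) from the factorization through $U_{\mathsf{K}^p}$ together with the requirement, built into Definition \ref{defn:prismatic-integral-canonical-model}, that $\ms{X}_{\mathsf{K}^p}$ is a model of the pair $(\Sh_{\mathsf{K}_0\mathsf{K}^p},U_{\mathsf{K}^p})$, via \cite[Proposition 3.7]{IKY1}.
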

\begin{proof} That condition (a) of \cite[Conjecture 4.5]{Daniels} holds for $\{\ms{X}_{\mathsf{K}^p}\}_{\mathsf{K}^p}$ follows by combining \cite[Proposition 3.6]{IKY1} and the N\'eron--Ogg--Shafarevich criterion (see Lemma \ref{lem:NOS-criterion} below). To show that condition (b) of loc.\@ cit.\@ holds, set $(\ms{P}_{\mathsf{K}^p},\varphi_{\ms{P}_{\mathsf{K}^p}})=T_\sht(\zeta_{\mathsf{K}^p})$, with notation as in \cite[\S3.3]{IKY1}, an object of $\mc{G}^c\text{-}\Sht_{\bm{\mu}_h^c}(\ms{X}_{\mathsf{K}^p})$. By definition, we have that $(\ms{P}_{\mathsf{K}^p})_E=U_\sht(\omega_{\mathsf{K}^p,\et}^\mr{an})\cong \ms{P}_{\mathsf{K}^p,E}$, and thus condition (b) is satisfied. Finally, to verify condition (c) of loc.\@ cit.\@, it suffices by setup to show that there exists an isomorphism 
\begin{equation*}
    \Theta_x\colon \wh{\mc{M}}^\mathrm{int}_{(\mc{G}^c,b_x,\mu_h^c)/x_0}\isomto \Spd(R_{\mc{G}^c,\mu_h^c}), 
\end{equation*}
with the property that $T_\sht(\Theta_x^\ast(\omega_{b_x}^\univ))$ is isomorphic to the universal shutka. Here $\mc{M}^\mathrm{int}_{(\mc{G}^c,b_x,\mu_h^c)}$ is the integral moduli space of shtukas as in \cite[Definition 25.1]{ScholzeBerkeley}, and $\wh{\mc{M}}^\mathrm{int}_{(\mc{G}^c,b_x,\mu_h^c)/x_0}$ is the completion at the neutral point $x_0$ in the sense of \cite{GleasonSpecialization}. But, the existence of such an isomorphism follows from \cite[Theorem 5.3.5]{Ito2} and its proof.
\end{proof}

\begin{lem}[N\'eron--Ogg--Shafarevich criterion]\label{lem:NOS-criterion} Let $R$ be a discrete valuation ring over $\mc{O}_E$ of mixed characteristic $(0,p)$. For an element $\{x_{\mathsf{K}^p}\}$ of $\varprojlim_{\mathsf{K}^p}\Sh_{\mathsf{K}_0\mathsf{K^p}}(R[\nicefrac{1}{p}])$, each induced morphism 
\begin{equation*} 
x_{\mathsf{K}^p}^\an\colon \Spa(R[\nicefrac{1}{p}])\to \Sh_{\mathsf{K}_0\mathsf{K}^p}^\an
\end{equation*}
factorizes through $U_{\mathsf{K}^p}$.
\end{lem}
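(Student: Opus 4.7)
The plan is first to reduce to the case that $R$ is complete, so that $K\defeq R[\nicefrac{1}{p}]$ is a complete discretely valued field and $x_{\mathsf{K}^p}^\an$ corresponds to a single classical adic point. By Proposition \ref{prop:crystalline-locus}, we have $U_{\mathsf{K}^p}=(\wh{\ms{S}}_{\mathsf{K}^p})_\eta$ and classical points of $U_{\mathsf{K}^p}$ are crystalline for $\omega_{\mathsf{K}^p,\et}$; the desired factorization is therefore equivalent to the assertion that each $x_{\mathsf{K}^p}\in\Sh_{\mathsf{K}_0\mathsf{K}^p}(K)$ extends to an $R$-point of $\ms{S}_{\mathsf{K}^p}$, or equivalently, that the pullback $(\omega_{\mathsf{K}^p,\et})_{x_{\mathsf{K}^p}}$ is a potentially crystalline $\Gamma_K$-representation.

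Next I would handle the case that $(\mb{G},\mb{X},\mc{G})$ is of Hodge type using the classical N\'eron--Ogg--Shafarevich criterion for abelian varieties. Fix an integral Hodge embedding $\iota\colon (\mb{G},\mb{X},\mc{G})\hookrightarrow (\GSp(\mb{V}_0),\mf{h}^\pm,\GSp(\Lambda_0))$. Pushing the compatible family $\{x_{\mathsf{K}^p}\}$ forward via $\iota$, we obtain a compatible family $\{y_{\mathsf{L}^p}\}\in\varprojlim_{\mathsf{L}^p}\Sh_{\mathsf{L}_0\mathsf{L}^p}(K)$, which by the Siegel moduli interpretation corresponds to a principally polarized abelian variety $A/K$ equipped with a $\Gamma_K$-equivariant symplectic isomorphism $T^p(A)\isomto \Lambda_0^p\otimes \underline{\wh{\Z}^p}$. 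In particular each $T_\ell(A)$ with $\ell\ne p$ is an unramified (indeed trivial) $\Gamma_K$-module, so the N\'eron--Ogg--Shafarevich criterion produces an abelian scheme over $R$ extending $A$, and hence an $R$-point of $\ms{M}_{\mathsf{L}^p}(\Lambda_0)$ extending $y_{\mathsf{L}^p}$. Since Kisin constructs $\ms{S}_{\mathsf{K}^p}$ as the normalization of the closure of $\iota_{\mathsf{K}^p,\mathsf{L}^p}(\Sh_{\mathsf{K}_0\mathsf{K}^p})$ inside $\ms{M}_{\mathsf{L}^p}(\Lambda_0)_{\mc{O}_E}$, and since $R$ is integrally closed in $K$, this extension lifts uniquely to an $R$-point of $\ms{S}_{\mathsf{K}^p}$.

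For the general abelian-type case, I would bootstrap from the Hodge-type and toral cases via Kisin's functorial construction of $\ms{S}(\mb{G},\mb{X})$ from adapted Hodge-type data, as in \cite[\S3.4]{KisIntShab}, using Lemma \ref{lem:Lovering-lem}. Namely, pick an adapted datum $(\mb{G}_2,\mb{X}_2,\mc{G}_2)$ together with $\beta\colon (\mb{G}_2,\mb{X}_2,\mc{G}_2)\to(\mb{G},\mb{X},\mc{G})$ (each $\beta_{\mathsf{K}_2^p,\mathsf{K}^p}$ being finite \'etale by Lemma \ref{lem:isogeny-finite-etale}) and $\alpha\colon (\mb{G}_2,\mb{X}_2,\mc{G}_2)\hookrightarrow(\mb{G}_1\times\mb{T},\mb{X}_1\times\{h\},\mc{G}_1\times\mc{T})$. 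After a sufficiently large finite extension $K'/K$, lift the compatible family through $\beta$ to a compatible family $\{y_{\mathsf{K}_2^p}\}\in\varprojlim\Sh_{\mathsf{K}_{0,2}\mathsf{K}_2^p}(K')$, and push forward via $\alpha$ to obtain compatible families in the Hodge-type Shimura variety for $(\mb{G}_1,\mb{X}_1,\mc{G}_1)$ (handled by the preceding paragraph) and in the special-type Shimura variety for $(\mb{T},\{h\},\mc{T})$ (handled by the toral analysis of \cite[Proposition 4.9]{Daniels}, cf.\@ Remark \ref{rem:Daniels-comp}). Combining these, we obtain an $\mc{O}_{K'}$-point of the adapted Hodge-type integral model, which descends to an $R$-point of $\ms{S}_{\mathsf{K}^p}$ via the quotient construction of $\ms{S}(\mb{G},\mb{X})$ from $\ms{S}(\mb{G}_2,\mb{X}_2)$ by a free action of a finite group.

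The main obstacle lies in the coherent lifting step in the abelian-type reduction: producing a compatible family in $\varprojlim_{\mathsf{K}_2^p}\Sh_{\mathsf{K}_{0,2}\mathsf{K}_2^p}(K')$ amounts to finding a $\Gamma_{K'}$-fixed point in the fiber of the pro-\'etale map $\Sh_{\mathsf{K}_{0,2}}\to \Sh_{\mathsf{K}_0,E_1}$ over $x$ for some finite extension $K'/K$. Combining the transitivity of Hecke actions on connected components (Lemma \ref{lem:transitivity-on-conn-comp}) with the finite \'etale character of each $\beta_{\mathsf{K}_2^p,\mathsf{K}^p}$, one reduces to a single component and produces $K'$ by careful analysis of the Galois action on the pro-finite fiber; the subsequent descent from $\mc{O}_{K'}$ to $R$ uses separatedness of $\ms{S}_{\mathsf{K}^p}$ and equivariance of Kisin's quotient construction.
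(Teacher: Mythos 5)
You take a different and considerably more involved route than the paper's own proof, which is extremely short: since Kisin's integral canonical models $\{\ms{S}_{\mathsf{K}^p}\}$ satisfy the extension property, the compatible family $\{x_{\mathsf{K}^p}\}$ extends uniquely to a compatible family $\{y_{\mathsf{K}^p}\}\in\varprojlim_{\mathsf{K}^p}\ms{S}_{\mathsf{K}^p}(R)$; then the $p$-adic completion $\wh{y}_{\mathsf{K}^p}\colon\Spf(\wh{R})\to\wh{\ms{S}}_{\mathsf{K}^p}$ has rigid generic fiber $x_{\mathsf{K}^p}^\an$, which therefore lands in $(\wh{\ms{S}}_{\mathsf{K}^p})_\eta=U_{\mathsf{K}^p}$ by Proposition~\ref{prop:crystalline-locus}. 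The entire ``extend to an $R$-point'' step to which you reduce is already a packaged consequence of the known extension property of the models, so re-proving it via the Siegel embedding is unnecessary---though the remark immediately following the proof in the paper does acknowledge the Siegel-reduction route you propose as a viable alternative.

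For the Hodge-type case your argument is essentially correct and self-contained: the compatible family at full prime-to-$p$ level yields an abelian variety over $K$ with unramified prime-to-$p$ Tate modules, the classical N\'eron--Ogg--Shafarevich theorem gives good reduction over $R$, and finiteness of $\ms{S}_{\mathsf{K}^p}\to\ms{M}_{\mathsf{L}^p}(\Lambda_0)$ lifts the $R$-point by the valuative criterion for finite morphisms.

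Your abelian-type reduction, however, has a genuine gap that you acknowledge but do not close, and it is not merely bookkeeping. You want to lift the compatible family $\{x_{\mathsf{K}^p}\}$ to a compatible family over some \emph{finite} extension $K'/K$ for the adapted datum $(\mb{G}_2,\mb{X}_2,\mc{G}_2)$. Contrast this with the analogous step in the proof of Proposition~\ref{prop:crystalline-locus}: there one only lifts a \emph{single} classical adic point at a \emph{single} level $\mathsf{K}_2^p$, and the argument runs in the negative direction (pullback-stability of the property of \emph{not} being potentially crystalline). Lifting a single point after some finite base change and producing a coherent compatible family over a fixed finite $K'$ are very different problems: the latter requires both coherence of the choices across levels and a uniform bound on $[K':K]$, neither of which follows from the finite \'etaleness of each $\beta_{\mathsf{K}_2^p,\mathsf{K}^p}$ alone. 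The direct appeal to the extension property sidesteps this entirely; a fully detailed version of your route would need to supply this missing coherence argument, and it is not obvious that the needed uniformity holds without further input.
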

\begin{proof} As $\{\ms{S}_{\mathsf{K}^p}\}$ satisfies the extension property, there exists a unique element $\{y_{\mathsf{K}^p}\}$ in $\varprojlim_{\mathsf{K}^p}\ms{S}_{\mathsf{K}^p}(R)$ with $(y_{\mathsf{K}^p})_\eta=x_{\mathsf{K}^p}$. Let $\wh{y}_{\mathsf{K}^p}\colon \Spf(\wh{R})\to \ms{S}_{\mathsf{K}^p}$ denote the completion of $y_{\mathsf{K}^p}$. Then, we observe that $x_{\mathsf{K}^p}^\an=(\wh{y}_{\mathsf{K}^p})_\eta$. But, $(\wh{y}_{\mathsf{K}^p})_\eta$ takes values in $U_{\mathsf{K}^p}$ by Proposition \ref{prop:crystalline-locus}
\end{proof}

\begin{rem} The usage of the system $\{\ms{S}_{\mathsf{K}^p}\}$ and its extension property in the proof of Lemma \ref{lem:NOS-criterion} is not strictly necessary. One could also use the method in the proof of Proposition \ref{prop:crystalline-locus} to reduce to the Siegel-type case, and thus to the classic N\'eron--Ogg--Shafarevich theorem.
\end{rem}

\subsection{A syntomic characterization of integral models and Serre--Tate theory} We now discuss how the material from \S\ref{ss:comparison-to-shim-vars} can be upgraded to the realm of prismatic $F$-gauges. We then use this to realize an expectation of Drinfeld and in doing so produce a syntomic characterization of integral canonical models and an analogue of the Serre--Tate theorem.

\subsubsection{The stack \texorpdfstring{$\mr{BT}^{\mc{G},\mu}_\infty$}{of truncted Barsotti--Tate groups with G-structure}}\label{ss:BTGmu} We begin by recalling Gardner--Madapusi--Mathew's representability of the moduli space of prismatic $F$-gauges with $\mc{G}$-structure of type $\mu$, which is based off of previous ideas of Drinfeld as in \cite{Drinfeld}. 

\begin{nota}\label{nota:BT} Fix notation as in Notation \ref{nota:deformation} and additionally set $P_{-\mu}\subseteq \mc{G}$ to be the parabolic given by the dynamic method for the cocharacter $\mu^{-1}$ (see \cite[Theorem 4.1.7]{ConradReductive}).
\end{nota}
For a bounded $p$-adic ring $R$ and an element $n$ of $\bb{N}\cup\{\infty\}$, the notion of an $n$-truncated prismatic $F$-gauge with $\mc{G}$-structure of type $\mu$ over $R$ is defined as in Definition \ref{defn: F-gauge of G mu structure}.

\begin{defn}[{cf.\@ \cite{GMM}}]
   A $p$-adically complete ring $R$ has \emph{$p$-finite differentials} if the $R/p$-module $\Omega^1_{(R/p)/\bb{F}_p}$ is finitely generated. 
\end{defn}
For example, this condition is satisfied when $R$ is a base $W$-algebra (see \cite[Lemma 1.3.3]{deJongCrystalline}), $R$ is an object of $\mc{C}_W$, $k$ is a characteristic $p$ field with finite $p$-basis (see loc.\@ cit.\@ for the definition), or $R$ is perfectoid. We let $\cat{Alg}_{\Z_p}^{p\text{-fin}}$ denote the category of $p$-adically complete rings with $p$-finite differentials, which we equip with the flat topology.

\begin{defn}[{cf.\@ \cite{GMM}}] For $n$ in $\bb{N}\cup\{\infty\}$ we consider the derived prestack
\begin{equation*}
    \mr{BT}^{\mc{G},\mu}_n\colon \cat{Alg}_{\Z_p}^{p\text{-fin},\mr{op}}\to \cat{Grpd}_\infty, \qquad R\mapsto \mr{BT}^{\mc{G},\mu}_n(R),
\end{equation*} of $n$-truncated prismatic $F$-gauges with $\mc{G}$-structure of type $\mu$.
\end{defn}

We then have the following remarkable theorem of Gardner--Mathew. In the following, we use the notion of Weil restriction as in \cite{GMM}: for a prestack $Y$ on $\cat{Alg}_{\Z_p}^{p\text{-fin}}$, set $Y^{(n)}$ to be the prestack on $\cat{Alg}_{\Z_p}^{p\text{-fin}}$ given by $R\mapsto Y(R\otimes^\bb{L}_{\mathbb{Z}_p} \mathbb{Z}/p^n)$.

\begin{thm}[{\cite{GMM}}]\label{thm:GM-main} Fix $n$ in $\bb{N}\cup\{\infty\}$.
\begin{enumerate} 
\item If $n$ is finite, the prestack $\mr{BT}^{\mc{G},\mu}_n$ is a quasi-compact smooth $p$-adic formal Artin stack of dimension $0$ with affine diagonal. 
\item For finite $n$, the natural map $\mr{BT}^{\mc{G},\mu}_{n+1}\to \mr{BT}^{\mc{G},\mu}_n$ is smooth and surjective.
\item{\emph{[Grothendieck--Messing theory]}} For $(R'\to R,\gamma)$ a (nilpotent) divided power thickening in $\cat{Alg}^{p\emph{-fin}}_{W}$ there is a Cartesian diagram of groupoids
\begin{equation}\label{eq:GR-1}
    \begin{tikzcd}
	{\mr{BT}^{\mc{G},\mu}_n(R')} & {BP_{-\mu}^{(n)}(R')} \\
	{\mr{BT}^{\mc{G},\mu}_n(R)} & {X^\mu_n(R'\to R)}
	\arrow[from=1-1, to=1-2]
	\arrow[from=1-1, to=2-1]
	\arrow[from=1-2, to=2-2]
	\arrow[from=2-1, to=2-2]
\end{tikzcd}
\end{equation}
where $X^\mu_n(R'\to R)\defeq B\mc{G}^{(n)}(R')\times_{B\mc{G}^{(n)}(R)} BP_{-\mu}^{(n)}(R)$.
\end{enumerate}
\end{thm}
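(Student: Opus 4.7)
The plan is to exploit the local structure of $\mr{BT}^{\mc{G},\mu}_n$ afforded by the $\mu$-typical torsor $\mc{P}_{\mu,\mc{N}}$ together with the identification (from Proposition \ref{prop: BT equals Ito's} and Proposition \ref{prop: summary display comparison}) of $\mc{G}$-torsors on the syntomification of type $\mu$ with prismatic $\mc{G}$-$F$-gauges of type $\mu$ in the sense of \S\ref{sss: G F gauges}, and in turn with prismatic $\mc{G}$-$\mu$-displays. Under this chain of identifications the ``banal'' locus is governed by the sheaves $\mc{G}_{\mu,\mc{N}}$ and $\mc{G}(\Prism)_{\mc{N}}$, and one reduces the theorem to an explicit analysis of these sheaves modulo $p^n$ on $\cat{Alg}_{\Z_p}^{p\text{-fin}}$.

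For part (1), I would first show that $\mc{G}_{\mu,\mc{N}}$ and $\mc{G}(\Prism)_{\mc{N}}$, restricted to the site of $p$-finite $\Z/p^n$-algebras, are representable by quasi-compact smooth affine formal schemes (with $\mc{G}_{\mu,\mc{N}}$ carrying a group structure). The key input is that on a $p$-finite ring the Nygaard-filtered prism admits a concrete description whose graded pieces are controlled by the Hodge cotangent complex. Using the weight decomposition $A_{\mc{G}} = \bigoplus_i A_{\mc{G},i}$ with respect to $\mu$-conjugation, one sees that $\mc{G}_{\mu,\mc{N}}$ is an iterated extension of vector groups by the parabolic $P_{-\mu}$; in particular it is smooth of the same dimension as $\mc{G}(\Prism)_{\mc{N}}$. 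The banal atlas $\mc{G}(\Prism)_{\mc{N}}/p^n \to \mr{BT}^{\mc{G},\mu}_n$ then realises the target as a quasi-compact smooth $p$-adic formal Artin stack of dimension $0$, and affineness of $\mc{G}_{\mu,\mc{N}}$ yields affine diagonal. Classicality of $\mr{BT}^{\mc{G},\mu}_n$ is inherited from the fact that its defining sheaves are $p$-torsion-free when evaluated on $p$-finite test rings.

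Part (2) is then almost a corollary: the transition map $\mr{BT}^{\mc{G},\mu}_{n+1} \to \mr{BT}^{\mc{G},\mu}_n$ pulls back along the banal atlas to the reduction $\mc{G}(\Prism)_{\mc{N}}/p^{n+1} \to \mc{G}(\Prism)_{\mc{N}}/p^n$, which is smooth and surjective by the explicit structure above. For part (3), I would argue as follows. Along a nilpotent PD thickening $(R' \to R, \gamma)$, the underlying prismatic $\mc{G}$-$F$-crystal modulo $p^n$ deforms rigidly from $R$ to $R'$, by the crystalline interpretation of prismatic cohomology for quasi-syntomic bases, producing the image in $B\mc{G}^{(n)}(R')$. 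The additional datum needed to refine such a crystal to an $F$-gauge of type $\mu$ is precisely a $P_{-\mu}$-reduction of the Hodge realization, i.e., an element of $BP_{-\mu}^{(n)}(R')$, matching on $R$ the $P_{-\mu}$-reduction induced by the given gauge. The Cartesian square \eqref{eq:GR-1} encodes exactly this decomposition.

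The main obstacle will be part (3). Although morally this is the familiar Grothendieck--Messing theorem from Dieudonn\'e theory, the delicate point is to track the interaction of the Nygaard filtration with the divided power structure at the level of $p^n$-truncations. Concretely, one needs to identify the truncated Nygaard prismatization $(R')^{\mc{N}}$ mod $p^n$ with a pushout of $R^{\mc{N}}$ along a filtered version of the map $(R')^{\Prism} \to R^{\Prism}$, in such a way that $\mc{G}$-torsors on the former are equivalent to pairs consisting of a deformation of the underlying $\mc{G}$-crystal and a lift of the Hodge $P_{-\mu}$-reduction. Once this Nygaard/PD compatibility is set up cleanly, the Cartesian square is formal; the technical work lies essentially in computing the relevant cotangent complex contributions and verifying that the equivalence is not merely a bijection on isomorphism classes but a true equivalence of groupoids.
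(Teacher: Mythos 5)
The paper does not prove this theorem from scratch: it is cited as a result of Gardner--Madapusi--Mathew, and the proof in the paper consists of a single paragraph observing that the only part not directly covered by \cite{GMM} is part~(3) for $n=\infty$, which follows by passing to the limit over finite $n$ (using that a limit of Cartesian squares of groupoids is Cartesian, together with the isomorphism $B\mc{H}(R)\isomto\varprojlim_n B\mc{H}^{(n)}(R)$ for a smooth affine $\Z_p$-group scheme $\mc{H}$ and $p$-complete $R$). Your proposal, by contrast, attempts to reprove the theorem directly via the display/gauge formalism of \S\ref{ss:F-gauges-of-type-mu}--\S\ref{sss: G F gauges}, which is a far larger undertaking and not what the paper is doing.

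As a sketch of an independent proof, your outline is not unreasonable in broad strokes --- it parallels the actual strategy of \cite{GMM}, which does identify $\mr{BT}^{\mc{G},\mu}_n$ via a banal atlas and display-theoretic presentations. But as written there are genuine gaps. For part~(1), the claims that $\mc{G}_{\mu,\mc{N}}$ and $\mc{G}(\Prism)_{\mc{N}}$ restricted to $p$-finite $\Z/p^n$-algebras are representable by smooth quasi-compact affine formal schemes, that $\mc{G}_{\mu,\mc{N}}$ has the same dimension as $\mc{G}(\Prism)_{\mc{N}}$ (which is what forces the stack to have dimension $0$), and that classicality follows from $p$-torsion-freeness are all asserted without justification; each of these is a nontrivial point. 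For part~(3), you identify the essential difficulty (the interaction of the Nygaard filtration with the divided power structure at $p^n$-truncated level) but do not resolve it; you also omit the explicit description of the horizontal arrows in \eqref{eq:GR-1} --- via $x_{\dR,R'}^{\mc N}$ and the map $\alpha_\gamma$ built from $\wt{x}_{\dR,\gamma}$ --- that is needed to even state the Cartesian square precisely. Finally, your argument never addresses the $n=\infty$ case, which is the one thing the paper actually proves rather than cites; if your uniform-in-$n$ argument worked it would plausibly cover $n=\infty$, but since it is incomplete for finite $n$ this is moot. The more appropriate strategy here is to recognize the theorem as a citation and supply only the limit argument for $n=\infty$.
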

\begin{proof} The only thing not addressed by \cite{GMM} is Grothendieck--Messing theory for $n=\infty$. But, as a limit of Cartesian diagrams is a Cartesian diagram, this follows by passing to the limit: for a(n affine smooth) $\Z_p$-group scheme $\mc H$ and a $p$-complete ring $R$, the natural maps $B\mc H(R)\to  B\mc H(\Spf(R))\to \lim_nB\mc H^{(n)}(R)$ are isomorphisms (see \cite[Proposition 2.1.4]{HLP}).  
\end{proof}

In diagram \eqref{eq:GR-1}, the vertical maps are the natural ones, the top horizontal map is the restriction along the Nygaard de Rham point (see e.g., see \cite[Definition 1.11]{IKY3}), and the lower horizontal map is such that the projection, denoted $\alpha_\gamma$, to $B\mc G^{(n)}(R')$ is constructed using the map $\wt{x}_{\mr{dR},\gamma}\colon \Spec(R')\to R^\smallprism$ from \cite[Lemma 6.8.1]{GMM}. The map $\wt{x}_{\mr{dR},\gamma}\colon \Spf(R')\to R^\smallprism$ is an extension of prismatic de Rham point (e.g., see \cite[Definition 1.11]{IKY3}). The extension $\wt{x}_{\dR,\gamma}$ depends on $\gamma$.

From Theorem \ref{thm:GM-main} we see that $\mr{BT}^{\mc{G},\mu}_\infty=\varprojlim \mr{BT}^{\mc{G},\mu}_n$ is the projective limit of smooth quasi-compact $p$-adic formal Artin stacks, with smooth surjective transition maps. This is just as in the case of the formal $\Z_p$-stack $\mr{BT}^{h,d}_{p,\infty}$ of $p$-divisible groups of height $h$ and dimension $d$, which is the limit of the smooth Artin formal stack over $\mathbb{Z}_p$s $\mr{BT}_{p,n}^{h,d}$ of $n$-truncated $p$-divisible groups of height $h$ and dimension $d$ (cf.\@ \cite[\S1]{WedhornBT}). This is reasonable as the following result shows.

\begin{thm}[{\cite[Theorem A]{GMM}, \cite[Theorem 1.11]{MondalClassification}}]\label{Madapusi--Mathew--Mondal} Let $\mu_d$ be the cocharacter $(1^d,0^{h-d})$ of $\GL_{h,\Z_p}$. Then, there is a canonical isomorphism 
\begin{equation*}
    \mc{M}_\mr{syn}\colon \mr{BT}^{h,d}_{p,\infty}\isomto \mr{BT}^{\GL_{h,\Z_p},\mu_d}_{\infty}
\end{equation*}
which agrees with Anschutz--Le Bras's filtered Dieudonn\'e functor $\mc{M}_\smallprism$ (cf.\@ \cite{AnschutzLeBrasDD} and specifically \cite[Remark 1.9]{AnschutzLeBrasDD}, as well as \cite[Proposition 3.45]{MondalClassification}) on qrsp rings $R$. Moreover, the functor $\mc{M}_\syn$ preserves duals.
\end{thm}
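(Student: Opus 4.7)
The plan is to construct $\mc{M}_\syn$ first on qrsp test rings by leveraging the prismatic Dieudonn\'e functor, and then to extend by quasi-syntomic descent. For a qrsp ring $R$, Theorem \ref{thm:ALB-equiv} provides the anti-equivalence $\mc{M}_\Prism\colon \cat{BT}_p(R)\isomto \cat{Vect}^\varphi_{[0,1]}(R_\Prism)$, where I have used that the qrsp hypothesis ensures $R$ admits a quasi-syntomic cover by a perfectoid ring. By the analogue of Example \ref{ex:[0,1]-is-lff} at qrsp level together with Proposition \ref{prop:F-gauge-lff-equiv}, every object of $\cat{Vect}^\varphi_{[0,1]}(R_\Prism)$ is lff and lifts canonically to an object of $\cat{Vect}_{[0,1]}(R^\syn)$. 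Composing, and noting that for $H$ of height $h$ and dimension $d$ the underlying Dieudonn\'e module has rank $h$ while the Hodge filtration has rank $d$ (a statement that can be checked perfectoid-locally, where the $F$-gauge becomes trivializable), one obtains a morphism $\mc{M}_\syn(R)\colon \mr{BT}^{h,d}_{p,\infty}(R)\to \mr{BT}^{\GL_{h,\Z_p},\mu_d}_\infty(R)$ functorially in $R$.

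Both $\mr{BT}^{h,d}_{p,\infty}$ and $\mr{BT}^{\GL_{h,\Z_p},\mu_d}_\infty$ satisfy quasi-syntomic descent, and every object of $\cat{Alg}^{p\text{-fin}}_{\Z_p}$ admits a quasi-syntomic cover by qrsp rings. Hence the qrsp-level construction extends uniquely to the desired morphism of formal prestacks $\mc{M}_\syn\colon \mr{BT}^{h,d}_{p,\infty}\to \mr{BT}^{\GL_{h,\Z_p},\mu_d}_\infty$, and it agrees with $\mc{M}_\Prism$ on qrsp rings by construction.

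To show that $\mc{M}_\syn$ is an isomorphism, I would use that both sides are (pro-)smooth quasi-compact $p$-adic formal Artin stacks: the target by Theorem \ref{thm:GM-main} and the source by classical work of Grothendieck--Illusie on truncated Barsotti--Tate groups. It therefore suffices to check (i) bijectivity on $\ov{\bb{F}}_p$-points, which reduces to classical Dieudonn\'e theory via the identification of $\mc{M}_\Prism$ with the filtered Dieudonn\'e crystal (Proposition \ref{prop:ALB-dJ-comparison}), and (ii) that $\mc{M}_\syn$ matches the two deformation theories on PD-thickenings $R'\to R$. Condition (ii) is the main technical point: one must show that the Cartesian diagram of Theorem \ref{thm:GM-main}(3) corresponds, under $\mc{M}_\syn$, to classical Grothendieck--Messing deformation theory for $p$-divisible groups. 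This matching is a consequence of the compatibility between the prismatic and crystalline realizations (Proposition \ref{prop:ALB-dJ-comparison}) together with the identification of the Hodge filtration of $\mc{M}_\Prism(H)$ (via the Nygaard filtration on the $F$-gauge) with the classical Hodge filtration of $H$; this last point is essentially the content of \cite{GMM} and \cite[Theorem 1.11]{MondalClassification}, where the deformation-theoretic matching is verified in full.

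The preservation of duals may then be checked on qrsp rings, where Cartier duality of $p$-divisible groups is intertwined by $\mc{M}_\Prism$ with duality of prismatic $F$-crystals (up to the appropriate Breuil--Kisin twist, which is absorbed into the $F$-gauge reformulation), and the lift $\cat{Vect}^{\varphi,\mr{lff}}_{[0,1]}\isomto \cat{Vect}_{[0,1]}$ of Proposition \ref{prop:F-gauge-lff-equiv} is monoidal hence preserves duals; the claim then propagates to arbitrary test rings by descent. The hardest step in this plan is the deformation-theoretic matching in (ii) above, where one must reconcile a very concrete deformation theory for $p$-divisible groups with the more abstract one governing $\mr{BT}^{\mc{G},\mu}_\infty$; our plan delegates this crucial verification to the works \cite{GMM} and \cite{MondalClassification}, following the statement attribution.
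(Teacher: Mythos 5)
This theorem is not proved in the paper at all: it carries a bracketed attribution to \cite{GMM} and \cite[Theorem 1.11]{MondalClassification}, and the paper simply cites the result as an external input (one of the two sources being the same unpublished work \cite{GMM} that underlies Theorem \ref{thm:GM-main}). There is therefore no paper proof to compare your proposal against — the authors deliberately do not reproduce this argument. Your proposal is an attempt to reconstruct a proof from scratch, which is a different (and much more ambitious) task than what the paper actually does.

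Beyond this mismatch, the reconstruction itself has two substantive gaps. First, your lifting step ``every object of $\cat{Vect}^\varphi_{[0,1]}(R_\Prism)$ is lff and lifts canonically to an object of $\cat{Vect}_{[0,1]}(R^\syn)$'' invokes Proposition \ref{prop:F-gauge-lff-equiv}, but that equivalence is established in the paper only when $\mf{X}$ is a base formal $\mc{O}_K$-scheme (the quasi-inverse $\Pi_{\mf{X}}$ is only constructed in that generality, as the footnote in the proof points out); for an arbitrary qrsp ring $R$ the forgetful functor $\mr{R}_{\Spf(R)}$ need not be essentially surjective, as the Remark immediately after Proposition \ref{prop:F-gauge-lff-equiv} warns. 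So the qrsp-by-qrsp construction needs a direct argument — in effect one has to exhibit the Nygaard filtration on $\phi^\ast\mc{M}_{\Prism}(H)$ and package it as a graded $\mr{Rees}(\Fil^\bullet_{\mr{Nyg}}(\Prism_R))$-module with Frobenius, which is nontrivial and is exactly what \cite{GMM} and \cite{MondalClassification} do. Second, the key step (ii) — matching the deformation theory of $p$-divisible groups with the Grothendieck--Messing theory of Theorem \ref{thm:GM-main}(3), together with the verification that $\mc{M}_\Prism$ sends the Hodge filtration of $H$ to the Nygaard-induced filtration on the $F$-gauge — is precisely the core content of the theorem, and your proposal explicitly defers it to the very references being cited. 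That makes the argument circular rather than a genuine alternative proof. The duality assertion is also treated too briefly: $\cat{Vect}_{[0,1]}(R^{\syn})$ is not closed under duality, so the claimed compatibility must account for the Breuil--Kisin twist that shifts $[0,1]$ to $[-1,0]$ and identifies $\mu_d^{-1}$ with a conjugate of $\mu_{h-d}$; saying the twist is ``absorbed into the $F$-gauge reformulation'' is a placeholder rather than a proof.
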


\begin{eg}[{see \cite[\S11.6]{GMM}}]\label{eg:Siegel} Let us say that $(\mc{G},\mu)$ is of \emph{Siegel type} if it is of the form $(\mr{GSp}(\Lambda_0),\mu_g)$ for some symplectic $\Z_p$-lattice $\Lambda_0$ of rank $2g$. For a $p$-nilpotent ring $R$, we then define $p\text{-}\mr{Div}^{\mc{G},\mu}(R)$ to be the groupoid of \emph{quasi-polarized $p$-divisible groups} over $R$. By definition, such a quasi-polarized $p$-divisible group is a triple $(H,L,\lambda)$ where $H$ is a $p$-divisible group over $R$, $L$ is a rank $1$ $\Z_p$-local system on $R$, and  $\lambda\colon H\isomto H^\vee\otimes L$ such that under the canonical double-duality isomorphism $H\simeq H^{\vee\vee}$ we have that $\lambda^\vee$ corresponds to $-\lambda$.\footnote{Note that precisely $H\otimes L$ denotes the tensor product in fppf $\Z_p$-modules, which is still a $p$-divisible group as this can be checked locally, which reduces us to the case when $L$ is trivial.}

One may then deduce from from Theorem \ref{Madapusi--Mathew--Mondal} that $\mc{M}_\syn$ induces an equivalence of categories $\mr{BT}^{\mc{G},\mu}_\infty(R)\to p\text{-}\mr{Div}^{\mc{G},\mu}(R)$ given by $\mc{P}\mapsto (H,L,\lambda)$. More precisely, $(H,L)$ is uniquely defined so that $f_\ast\mc{P}=(\mc{M}_\syn(H),\mc{M}_\syn(L))$ where $f\colon \mc{G}\to \GL(\Lambda_0)\times \bb{G}_{m,\Z_p}$ is the tautological map, and where in the expression $\mc{M}_\syn(L)$ we are abusing notation and implicitly identifying $L$ with its associated \'etale $p$-divisible group.
\end{eg}

\subsubsection{Deformation theory of prismatic \texorpdfstring{$\mathcal{G}$-$F$-gauges}{G-F-gauges}}\label{ss:defm-theory-G-F-Gauges}

We now show that Ito's universal deformation $(\mc{A}_b^\mr{univ},\varphi_{\mc{A}_b^\mr{univ}})$ from \S\ref{ss:universal-deformation}  is universal not only for $\mc{C}_W^\mr{reg}$ but the larger category $\mc{C}_W$, when interpreted using prismatic $F$-gauges with $\mc{G}$-structure of type $\mu$. 

We continue to use the notation from Notation \ref{nota:BT} and further fix an element $b$ of $\mc{G}(W)\mu(p)^{-1}\mc{G}(W)$. The pair $(\mc{G}_W,\varphi_b)$, where $\varphi_b$ corresponds to left multiplication by $b$, defines an element of $\cat{Tors}^{\varphi,-\mu}_{\mc{G}}(k_\smallprism)$ which by Proposition \ref{prop: F gauge type mu equals F crystal type mu} is equivalent to an object $\mc{P}_b$ of $\mr{BT}^{\mc{G},-\mu}_\infty(k)$. 

Given a functor of groupoids $F\colon \ms{G}_1\to\ms{G}_2$ and an object $x$ of $\ms{G}_2$ we denote by $\mr{fib}(\ms{G}_1\to\ms{G}_2;x)$ the \emph{fiber} over $x$ which, by definition, means the groupoid of pairs $(y,\iota)$ where $y$ is an object of $\ms{G}_2$ and $\iota\colon F(y)\isomto x$ is an isomorphism, where a morphism $(y,\iota)\to (y',\iota')$ is an isomorphism $\sigma\colon y\to y'$ such that $\iota'\circ F(\sigma)=\iota$.

\begin{defn} Let $\mc{Y}$ be a formal stack over $\mathbb{Z}_p$. Let $y$ a point of $\mc{Y}(k)$ for some $k$ perfect extension of $\F_p$. Set $W=W(k)$. Define the \emph{deformation functor} associated to the pair $(\mc{Y},y)$ to be the contravariant functor
\begin{equation*}
    \mr{Def}_{y}\colon \mc{C}_{W}^\mr{op}\to\cat{Grpd},\quad R\mapsto \mr{fib}(\mc{Y}(R)\to \mc{Y}(k);y)
\end{equation*}
We say $\mc{Y}$ has \emph{discrete deformation theory} if for each $p$-nilpotent ring $R$, nilpotent ideal $I$ of $R$, and point $y$ of $\mc Y(R/I)$, the fiber $\mr{fib}(\mc{Y}(R)\to \mc{Y}(R/I);y)$ is a discrete groupoid. 
\end{defn}

For $\mc{Y}=\mr{BT}^{\mc{G},-\mu}_\infty$ and $b$ as above, we shorten the notation $\mr{Def}_{\mc{P}_b}$ to $\mr{Def}_b$. We then have the following result which underlies the rest that follows.

\begin{prop}\label{prop:disc-def-theory} The formal stack $\mr{BT}^{\mc{G},-\mu}_\infty$ over $\Z_p$ has discrete deformation theory.
\end{prop}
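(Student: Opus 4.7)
The plan is to combine a straightforward reduction to a square-zero thickening with the Grothendieck--Messing theory of Theorem \ref{thm:GM-main}(3) and a direct computation of inertia in the classifying stack of a parabolic. Throughout, let $P$ denote the parabolic of $\mc G$ associated to $-\mu$ via the dynamic method (playing the role of $P_{-\mu}$ from Notation \ref{nota:BT} for the cocharacter $-\mu$); it is a closed subgroup scheme of $\mc G$, and is the relevant parabolic in \eqref{eq:GR-1} for $\mr{BT}^{\mc G,-\mu}$.

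First I would reduce to a simpler situation. Filtering $I$ by its powers $I \supseteq I^2 \supseteq \cdots \supseteq I^k = 0$ writes $R \to R/I$ as a composition of square-zero thickenings $R/I^{i+1} \to R/I^i$; since the property that all fibers of a map of prestacks over $\mc Y(R/I)$ are discrete is preserved under composition (an inverse limit of a tower of discrete groupoids is still discrete), it suffices to treat the case $I^2=0$. In that case $I$ admits the trivial divided power structure $\gamma_n\equiv 0$ for $n\geqslant 2$, making $R \to R/I$ into a nilpotent divided power thickening so that Theorem \ref{thm:GM-main}(3) applies. Using $\mr{BT}^{\mc G,-\mu}_\infty=\varprojlim_n \mr{BT}^{\mc G,-\mu}_n$ and the fact that homotopy fibers commute with inverse limits, one further reduces to showing that for each finite $n$ the fiber $\mr{fib}(\mr{BT}^{\mc G,-\mu}_n(R) \to \mr{BT}^{\mc G,-\mu}_n(R/I); y_n)$ is discrete.

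Next I would invoke the Cartesian diagram of Theorem \ref{thm:GM-main}(3) (with $\mu$ replaced by $-\mu$). In any such Cartesian square of $\infty$-groupoids the fiber of the left-hand projection over a point $y_n$ is canonically equivalent to the fiber of the right-hand projection over the image of $y_n$ downstairs; thus the fiber we are studying becomes
\begin{equation*}
\mr{fib}\bigl(BP^{(n)}(R) \longrightarrow X^{-\mu}_n(R\to R/I);\ \alpha_\gamma(y_n)\bigr).
\end{equation*}
An object of this fiber has automorphism group equal to the kernel of the natural map of inertia groups
\begin{equation*}
P^{(n)}(R) \longrightarrow \mc G^{(n)}(R) \times_{\mc G^{(n)}(R/I)} P^{(n)}(R/I),\qquad g \longmapsto (g,\, g \bmod I).
\end{equation*}

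Finally, the proposition will follow from the observation that this kernel is trivial. Indeed, since $P \hookrightarrow \mc G$ is a closed immersion and Weil restriction (being a right adjoint) preserves monomorphisms, the map $P^{(n)}(R) \to \mc G^{(n)}(R)$ is injective; hence any $g\in P^{(n)}(R)$ mapping to the identity in $\mc G^{(n)}(R)$ is already the identity. Thus each fiber is a groupoid with only trivial automorphisms — i.e., discrete — which together with the reductions above gives the statement. The only genuine point of care, rather than a substantive obstacle, is tracking the equivalences of fibers between the various Cartesian squares of groupoids so that the inertia computation is carried out at the right object; everything else is formal.
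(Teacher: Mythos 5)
Your proof is correct, and the reduction to square-zero thickenings by filtering $I$ by powers is both clean and, strictly speaking, more careful than the paper's invocation of \stacks{06GE} (which as stated concerns Artinian local rings). The difference from the paper lies in how Grothendieck--Messing theory is deployed. The paper does not stop at the Cartesian square \eqref{eq:GR-1}: it first passes to the reinterpretation \eqref{eq:GM-reintrepret} --- base-changing along a fixed $\mc{G}$-torsor lift $x'$ so that the right-hand column becomes $R'$- and $R$-points of the twisted flag variety $\mc{Q}_{x'}/P_\mu$ --- and then concludes discreteness directly from \eqref{eq:GR-2}, because $\mc{Q}_{x'}/P_\mu$ is an algebraic space and hence has a discrete groupoid of points. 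You instead work directly with \eqref{eq:GR-1} and compute inertia: automorphisms in the fiber over a point of $X^{-\mu}_n(R\to R/I)$ lie in the kernel of the map from the (inner-twisted) automorphism group of the $P^{(n)}$-torsor to those of its induced $\mc{G}^{(n)}$-torsor, and this vanishes because $P\hookrightarrow\mc{G}$ (hence its Weil restriction) is a monomorphism. The two routes encode the same geometric fact --- that $BP\to B\mc{G}$ has fibers the algebraic space $\mc{G}/P$, precisely because $P$ acts freely on $\mc{G}$ --- but the paper's version gets the discreteness ``for free'' from the representability statement, whereas yours unpacks it as an inertia calculation. Two small points to tighten: the automorphism group of a $P^{(n)}$-torsor $Q$ is the inner twist $P^Q(R)$, not $P^{(n)}(R)$ literally, though injectivity into the twist of $\mc{G}^{(n)}$ survives; and your reduction to finite $n$ via $\mr{BT}^{\mc{G},-\mu}_\infty=\varprojlim_n\mr{BT}^{\mc{G},-\mu}_n$ is unnecessary, since Theorem \ref{thm:GM-main}(3) is stated (and proved by passage to the limit) for $n=\infty$.
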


Before we prove Proposition \ref{prop:disc-def-theory} we give the following interpretation of Grothendieck--Messing theory that will be used many times below. Fix a PD thickening $(R'\twoheadrightarrow R,\gamma)$ of objects of $\cat{Alg}_{\Z_p}^{p{\text{-fin}}}$. Let $x'$ be an object of $B\mc G(R')$ and set 
\begin{equation*} \mr{BT}^{\mc G,-\mu}_\infty(R')_{x'}\defeq \mr{BT}^{\mc G,-\mu}_\infty(R')\times_{x_{\dR,R'}^\ast,B\mc G(R')}x',\quad \mr{BT}^{\mc G,-\mu}_\infty(R)_{x'}\defeq \mr{BT}^{\mc G,-\mu}_\infty(R)\times_{\alpha_\gamma,B\mc G(R')}x'.
\end{equation*}
Then, we have the following natural Cartesian diagram
\begin{equation}\label{eq:GM-reintrepret}
    \begin{tikzcd}
	{\mr{BT}^{\mc{G},-\mu}_\infty(R')_{x'}} & {\mc{Q}_{x'}/P_{\mu}(R')} \\
	{\mr{BT}^{\mc{G},-\mu}_\infty(R)_{x'}} & {\mc{Q}_{x'}/P_{\mu}(R)}
	\arrow[from=1-1, to=1-2]
	\arrow[from=1-1, to=2-1]
	\arrow[from=1-2, to=2-2]
	\arrow[from=2-1, to=2-2]
\end{tikzcd}
\end{equation}
Here $\mc{Q}_{x'}$ is our shorthand for the $\mc{G}$-torsor $\alpha_\gamma(x')$ on $R'$, which carries a natural action of $P_{\mu}$. 
Here we are interpreting $\mc{Q}_{x'}/P_{\mu}$ as a quotient stack. We observe that \eqref{eq:GM-reintrepret} being Cartesian implies a natural isomorphism of groupoids
\begin{equation}\label{eq:GR-2}
    \mr{fib}\left(\mr{BT}^{\mc{G},-\mu}_\infty(R')\to \mr{BT}^{\mc{G},-\mu}_\infty(R);z\right)\isomto \mr{fib}\left((\mc{Q}_{x'}/P_{\mu})(R')\to (\mc{Q}_{x'}/P_{\mu})(R);t_{z}\right),
\end{equation}
for $z$ in $\mr{BT}^{\mc{G},-\mu}(R)_{x'}$ and $t_z$ the image of $z$ under the bottom arrow in \eqref{eq:GM-reintrepret}.

This is quite helpful as $\mc{Q}_{x'}/P_{\mu}$ is actually a smooth algebraic space (cf.\@ \stacks{06PH} and \stacks{0AHE}). Moreover, by \stacks{06GE}, every surjective morphism $R\to R'$ of Artinian objects in $\mc{C}_W$ may be factorized as a sequence of small surjections in the sense of \stacks{06GD}. But, such a small surjection is, in particular, a square-zero thickening and so has a unique (and so canonical) PD structure $(\gamma_i)$ with $\gamma_i=0$ for $i\geqslant 2$, which we denote by $\gamma$ in all cases.

\begin{proof}[Proof of \ref{prop:disc-def-theory}] Applying \stacks{06GE} to $R\to R/I$ it suffices to show that if $R'\to R$ is a small extension and an object $x$ of $\mr{BT}^{\mc{G},-\mu}_\infty(R)$, then $\mr{fib}\left(\mr{BT}^{\mc{G},-\mu}_\infty(R')\to \mr{BT}^{\mc{G},-\mu}_\infty(R);x\right)$ is discrete. But, given \eqref{eq:GR-2} this is clear as the right-hand side is a discrete groupoid.
\end{proof}

Consider the ring $R_{\mc{G},\mu}$ from \S\ref{ss:universal-deformation}. We abuse notation and conflate $\Spf(R_{\mc{G},\mu})$ (where $R_{\mc{G},\mu}$ is endowed with the topology defined by the maximal ideal) with the natural functor $\mc{C}_W^\mr{op}\to \cat{Set}$ it represents. Using Proposition \ref{prop: summary display comparison}, the pair $(\mc{A}_b^\mr{univ},\varphi_{\mc{A}^\mr{univ}_b})$  from \S\ref{ss:universal-deformation} upgrades to a deformation $(\mc{A}_{b,\mr{syn}}^\mr{univ},\varphi_{\mc{A}^\mr{univ}_{b,\mr{syn}}})$ of $\mc{P}_b$ and so defines a morphism $\rho_b\colon \Spf(R_{\mc{G},\mu})\to \mr{Def}_b$. 

\begin{prop}\label{prop:prorepresentable} The morphism 
\begin{equation*} \rho_b\colon \Spf(R_{\mc{G},\mu})\to \mr{Def}_b
\end{equation*}
is an isomorphism. In particular, the pair $(\Spf(R_{\mc{G},\mu}),(\mc{A}_{b,\mr{syn}}^\mr{univ},\varphi_{\mc{A}_{b,\mr{syn}}^\mr{univ}}))$ represents $\mr{Def}_b$.
\end{prop}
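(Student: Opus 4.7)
The plan is to leverage Ito's universality over $\mc{C}_W^{\mr{reg}}$ (cited just before the proposition) together with the discrete deformation theory of Proposition \ref{prop:disc-def-theory} and the Grothendieck--Messing description in Theorem \ref{thm:GM-main}(3) to extend universality from regular to arbitrary objects of $\mc{C}_W$.

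First, I would reduce to the Artinian case. For any $R$ in $\mc{C}_W$ we have $R = \varprojlim_n R/\mf{m}_R^n$, and the functor $\Spf(R_{\mc{G},\mu})(-) = \mr{Hom}^\mr{cont}_W(R_{\mc{G},\mu}, -)$ tautologically commutes with this limit. For $\mr{Def}_b$, Grothendieck existence applied to each smooth Artin stack $\mr{BT}^{\mc{G},-\mu}_n$, combined with the description $\mr{BT}^{\mc{G},-\mu}_\infty = \varprojlim_n \mr{BT}^{\mc{G},-\mu}_n$ from Theorem \ref{thm:GM-main}, yields $\mr{Def}_b(R) \cong \varprojlim_n \mr{Def}_b(R/\mf{m}_R^n)$; each term is a set by Proposition \ref{prop:disc-def-theory} applied to the nilpotent thickening $R/\mf{m}_R^n \to k$. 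So it suffices to establish that $\rho_b(R)$ is a bijection for Artinian $R$ in $\mc{C}_W$, which I would do by induction on the length. The base case $R = k$ is immediate, both sides being singletons.

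For the inductive step, any surjection of Artinian objects factors as a sequence of small extensions (\stacks{06GE}), so I may assume $R' \twoheadrightarrow R$ is a small extension with kernel $I$ (so $I \cdot \mf{m}_{R'} = 0$, equipped with the canonical PD structure $\gamma$ with $\gamma_i = 0$ for $i \geqslant 2$), and that $\rho_b(R)$ is a bijection. Fix a compatible pair $h \in \Spf(R_{\mc{G},\mu})(R)$ and $\omega = \rho_b(R)(h)$; it is enough to show that $\rho_b(R')$ induces a bijection between the fibers of lifts. Since $R_{\mc{G},\mu}$ is formally smooth over $W$, the first fiber is non-empty and a canonical torsor under $\mr{Der}_W(R_{\mc{G},\mu}, I) \cong I \otimes_k \mr{Lie}(U_\mu)_k$. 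By the Grothendieck--Messing identification \eqref{eq:GR-2}, the second fiber is the fiber of $(\mc{Q}_{x'}/P_\mu)(R') \to (\mc{Q}_{x'}/P_\mu)(R)$ over $t_\omega$; since $\mc{Q}_{x'}/P_\mu$ is a smooth algebraic space, this is a non-empty torsor under the tangent space $I \otimes_k T_{t_\omega}(\mc{Q}_{x'}/P_\mu)_k$, which Ito's construction identifies canonically with $I \otimes_k \mr{Lie}(U_\mu)_k$.

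The crucial step, and the principal obstacle, is verifying that $\rho_b(R')$ is equivariant for these two torsor structures. This is essentially the assertion that Ito's construction of the universal deformation is compatible with GM theory: the tautological element $u_t \in \wh{U}_\mu(R_{\mc{G},\mu})$, which parametrizes $\Spf(R_{\mc{G},\mu})$, is engineered precisely so that its action controls the variation of the Hodge filtration in the universal prismatic $F$-gauge in the manner prescribed by GM theory. This compatibility is built into the construction in \cite[\S4]{Ito2}, with Proposition \ref{prop: summary display comparison} providing the passage between displays and $F$-gauges. Granting the equivariance, a map of non-empty torsors under the same abelian group is automatically a bijection, completing the induction and hence the proof.
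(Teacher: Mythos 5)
Your overall architecture (reduce to Artinian rings, induct along small extensions, compare the fibers of lifts on both sides as torsors) is a reasonable alternative to the paper's argument, but it founders exactly where you flag the "principal obstacle": the equivariance of $\rho_b(R')$ for the two torsor structures, equivalently the assertion that the tangent map of $\rho_b$ is an isomorphism onto $I\otimes_k T_{t_\omega}(\mc{Q}_{x'}/P_{\mu})_k$. This is a genuine gap, not a routine verification. It cannot be extracted from the universality of Ito's pair as cited, because that universality is only available over $\mc{C}_W^{\mr{reg}}$, and the rings $k[\epsilon]$ and the Artinian quotients appearing in your induction are not regular — this is precisely the defect the proposition is meant to repair. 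Closing the gap would require unwinding Ito's explicit construction of $(\mc{A}^{\mr{univ}}_b,\varphi_{\mc{A}^{\mr{univ}}_b})$ (the tautological element $u_t$) against the Grothendieck--Messing identification \eqref{eq:GR-2}, a computation your proposal does not carry out but merely asserts is "built into the construction."

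The paper avoids this computation entirely by arguing abstractly: it uses \eqref{eq:GR-2} only to establish properties of the functor $\mr{Def}_b$ itself — Schlessinger's condition (RS), finite-dimensionality of the tangent space, and unobstructedness (each reduced, via small extensions, to the smoothness of the algebraic space $\mc{Q}_{x'}/P_{\mu}$) — concluding that $\mr{Def}_b$ is prorepresentable by some object $R_b^{\mr{univ}}$ of $\mc{C}_W^{\mr{reg}}$. Since both $\Spf(R_{\mc{G},\mu})$ and $\Spf(R_b^{\mr{univ}})$ then lie in $\mc{C}_W^{\mr{reg},\mr{op}}$ and Ito's universality makes $\rho_b$ a bijection on every object of $\mc{C}_W^{\mr{reg}}$, the Yoneda lemma applied within that full subcategory forces $\rho_b$ to be an isomorphism. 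No derivative of $\rho_b$ is ever computed. If you want to salvage your route, you would either have to perform the explicit tangent-space comparison with Ito's construction, or retreat to the paper's strategy of representing $\mr{Def}_b$ abstractly by a regular ring first.
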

\begin{proof} We use the results of \cite{GMM} to show that $\mr{Def}_b$ is represented by an object $R_b^\mr{univ}$ of $\mc{C}_W^\mr{reg}$. This is sufficient, as the morphism $\rho_b$ induces a bijection when evaluated on any object $R$ of $\mc{C}_W^\mr{reg}$ by the universal property of the pair $(\Spf(R_{\mc{G},\mu}),\mc{A}_{b,\mr{syn}}^\mr{univ},\varphi_{\mc{A}^\mr{univ}_{b,\mr{syn}}})$ (again utilizing Proposition \ref{prop: summary display comparison}). As the source and target of $\rho_b$ belong to $\mc{C}_W^{\mr{reg},\mr{op}}$, this implies that $\rho_b$ is an isomorphism by the Yoneda lemma. To show that $\mr{Def}_b$ is (pro)representable it suffices by Schlessinger's criterion (cf.\@ \stacks{06JM}) to show that $\mr{Def}_b$ satisfies condition $\mr{(RS)}$ as in \stacks{06J2} and that $\dim_k T\mr{Def}_b$ is finite, where this tangent space is as in \stacks{06I2}.\footnote{Note that condition (a) in Schlessinger's criterion is equivalent to condition $\mr{(RS)}$ by definition, as evidently $\mr{Def}_b$ is a predeformation category in the sense of \stacks{06GT} as $\mr{Def}_b(k)$ is a singleton. Moreover, condition (c) is vacuous as $W$ surjects onto $k$.} 

To show that $\mr{Def}_b$ satisfies condition $\mr{(RS)}$ we show that whenever we have a diagram of Artinian objects of $\mc{C}_W$ $R_2\rightarrow R\leftarrow R_1$ such that the left arrow is a small surjection, that 
\begin{equation}\label{eq:Schlessinger}
    \mr{Def}_b(R_1\times_R R_2)\to \mr{Def}_b(R_1)\times_{\mr{Def}_b(R)}\mr{Def}_b(R_2)
\end{equation}
is a bijection of sets. This is indeed sufficient by \stacks{06J5}. As \eqref{eq:Schlessinger} is a map of sets over $\mr{Def}_b(R_1)$ it suffices to show that for each $s$ in this set that the map in \eqref{eq:Schlessinger} induced on fibers over $s$ is a bijection. But, this is naturally interpreted as the map of sets
\begin{equation*}
    \mr{fib}\left(\mr{BT}^{\mc{G},-\mu}_\infty(R_1\times_R R_2)\to \mr{BT}^{\mc{G},-\mu}_\infty(R_1);s\right)\to \mr{fib}\left(\mr{BT}^{\mc{G},-\mu}_\infty(R_2)\to \mr{BT}_\infty^{\mc{G},-\mu}(R);\ov{s}\right),
\end{equation*}
where $\ov{s}$ is the image of $s$ in $\mr{Def}_b(R)$.  But, note that $R_1\times_R R_2\to R_1$ is a small surjection by \stacks{06GH}. So, using \eqref{eq:GR-2}, if $x'=\alpha_\gamma(s)$ and $\ov{x}'=\alpha_\gamma(\ov{s})$, we must show that the natural map
\begin{equation*}
    \mr{fib}\left((\mc{Q}_{x'}/P_{\mu})(R_1\times_R R_2)\to (\mc{Q}_{x'}/P_{\mu})(R_1);t_s\right)\to \mr{fib}\left((\mc{Q}_{\ov{x}'}/P_{\mu})(R_2)\to (\mc{Q}_{\ov{x}'}/P_{\mu})(R);t_{\ov{s}}\right) 
\end{equation*}
is a bijection, where we are using the fact that $\mc{Q}_{x'}$ base changes to $\mc{Q}_{\ov{x}'}$. But, this is clear as $\mc{Q}_{x'}/P_{\mu}$ is a smooth algebraic space.

For $\dim_k T\mr{Def}_b<\infty$, it suffices to observe that, as a special case of \eqref{eq:GR-2}, we have an isomorphism of $k$-spaces $T\mr{Def}_b\to T\mr{Def}_{t_b}$, where $\mr{Def}_{t_b}$ is the deformations of $t_b$ in $(\mc{Q}_b/P_{\mu})(k)$ inside $\mc{Q}_b/P_{\mu}$, with $\mc{Q}_b\defeq x_{\dR,k}^\ast\mc{P}_b$. But, as $\mc{Q}_b/P_{\mu}$ is a smooth algebraic space over $k$ this is clear. 

Finally, to show that the universal deformation ring $R_b^\mr{univ}$ is an object of $\mc{C}_W^\mr{reg}$, it suffices by \stacks{0DYL} to show that $\mr{Def}_b$ is unobstructed in the sense of \stacks{06HP}. But, by \stacks{06HH}, we may restrict to small extensions. We may then apply \eqref{eq:GR-2} again to deduce unobstructedness as each $\mc{Q}_{x'}/P_{\mu}$ is a smooth algebraic space.
\end{proof}

\begin{rem}Given Proposition \ref{prop:prorepresentable} we shall often conflate $(\mc{A}_b^\mr{univ},\varphi_{\mc{A}^\mr{univ}_b})$ and $(\mc{A}_{b,\mr{syn}}^\mr{univ},\varphi_{\mc{A}^\mr{univ}_{b,\mr{syn}}})$, only using the extra decoration when clarity is needed.
\end{rem}

\subsubsection{Serre--Tate theory in the abelian-type case}
We now verify an expectation of Drinfeld using results obtained so far. This allows us to further establish an analogue of the Serre--Tate theorem in the setting of abelian-type Shimura varieties at hyperspecial level. 

Throughout this section we adopt the notation and conventions of Notation \ref{nota:shim-var} and \S\ref{ss:comparison-to-shim-vars}. In particular, we fix $(\mb{G},\mb{X},\mc{G})$ to be an unramified Shimura datum of abelian type. We further fix an element $\mu^c_h$ of the conjugacy class from $\bbmu_h^c$ from \S\ref{ss:Shim-Var-Notation}.

\medskip

\paragraph{Serre--Tate theory}\label{p:Serre--Tate-theory} Let us begin by observing that the syntomic realization functor $\omega_{\mathsf{K}^p,\mr{syn}}$ on $\wh{\ms{S}}_{\mathsf{K}^p}$ as constructed in \S\ref{ss:prismatic-realization-functors} is a prismatic $F$-gauge with $\mc{G}^c$-structure of type $\mu_h^c$ by Corollary \ref{cor:prismatic-realization-lff} and Proposition \ref{prop: F gauge type mu equals F crystal type mu}. Thus, we obtain a canonical morphism
\begin{equation*}
    \rho_{\mathsf{K}^p}\colon \wh{\ms{S}}_{\mathsf{K}^p}\to \mr{BT}^{\mc{G}^c,-\mu_h^c}_\infty,
\end{equation*}
of formal stacks over $\Z_p$. 
The existence of such a morphism was expected by Drinfeld (see \cite[\S4.3]{DrinfeldTowardsShimurian}).
The main theorem of this section is the following result. 

\begin{thm}[{Serre--Tate theory}]\label{thm:Drinfeld-conjecture} The morphism 
\begin{equation*}
    \rho_{\mathsf{K}^p}\colon \wh{\ms{S}}_{\mathsf{K}^p}\to \mr{BT}^{\mc{G}^c,-\mu_h^c}_\infty 
\end{equation*}
is formally \'etale, i.e., if $R$ is a $p$-nilpotent ring and $R\to R/I$ a nilpotent thickening, then \begin{equation*}
    \begin{tikzcd}[sep=2.25em]
	{\ms{S}_{\mathsf{K}^p}(R)} & {\mr{BT}^{\mc{G}^c,-\mu_h^c}_\infty(R)} \\
	{\ms{S}_{\mathsf{K}^p}(R/I)} & {\mr{BT}^{\mc{G}^c,-\mu_h^c}_\infty(R/I)}
	\arrow["{\rho_{\mathsf{K}^p}}", from=1-1, to=1-2]
	\arrow[from=1-1, to=2-1]
	\arrow[from=1-2, to=2-2]
	\arrow["{\rho_{\mathsf{K}^p}}"', from=2-1, to=2-2]
\end{tikzcd}
\end{equation*}
is Cartesian.
\end{thm}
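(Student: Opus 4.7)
The plan is to reduce formal étaleness of $\rho_{\mathsf{K}^p}$ over a general nilpotent thickening to the pointwise assertion that $\rho_{\mathsf{K}^p}$ induces an isomorphism on the completion at each $\ov{\bb{F}}_p$-point of $\ms{S}_{\mathsf{K}^p}$, and then to deduce this latter statement by combining Theorem \ref{thm:ito-shim-comp} with Proposition \ref{prop:prorepresentable}.

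First I would carry out the pointwise computation. Fix an $\ov{\bb{F}}_p$-point $x$ of $\ms{S}_{\mathsf{K}^p}$, choose $b_x$ as in \S\ref{ss:comparison-to-shim-vars}, and let $\mc{P}_{b_x}$ denote the associated object of $\mr{BT}^{\mc{G}^c,-\mu_h^c}_\infty(\ov{\bb{F}}_p)$ produced via Proposition \ref{prop: F gauge type mu equals F crystal type mu}. By Theorem \ref{thm:ito-shim-comp}, there exists an isomorphism $i_x\colon R_{\mc{G}^c,\mu_h^c}\isomto \wh{\mc{O}}_{\ms{S}_{\mathsf{K}^p},x}$ carrying the universal prismatic $\mc{G}^c$-torsor with $F$-structure $\omega_{b_x}^\mr{univ}$ to the pullback of $\omega_{\mathsf{K}^p,\smallprism}$. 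Upgrading from $F$-crystals to $F$-gauges via Proposition \ref{prop: summary display comparison}, the composite
\begin{equation*}
    \Spf(R_{\mc{G}^c,\mu_h^c})\xrightarrow{i_x}\Spf(\wh{\mc{O}}_{\ms{S}_{\mathsf{K}^p},x})\xrightarrow{\rho_{\mathsf{K}^p}}\mr{BT}^{\mc{G}^c,-\mu_h^c}_\infty
\end{equation*}
then classifies the pair $(\mc{A}_{b_x,\mr{syn}}^\mr{univ},\varphi_{\mc{A}_{b_x,\mr{syn}}^\mr{univ}})$. By Proposition \ref{prop:prorepresentable}, this composite realizes $\Spf(R_{\mc{G}^c,\mu_h^c})$ as the universal deformation functor $\mr{Def}_{\mc{P}_{b_x}}$, so in combination with the fact that $i_x$ is an isomorphism we conclude that $\rho_{\mathsf{K}^p}$ restricts to an isomorphism of formal deformation functors $\Spf(\wh{\mc{O}}_{\ms{S}_{\mathsf{K}^p},x})\isomto \mr{Def}_{\mc{P}_{b_x}}=\mr{Def}_{\rho_{\mathsf{K}^p}(x)}$.

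Next I would promote this pointwise isomorphism to the claimed Cartesian square over an arbitrary nilpotent thickening $R\to R/I$ of $p$-nilpotent rings. Since $\mr{BT}^{\mc{G}^c,-\mu_h^c}_\infty$ has discrete deformation theory (Proposition \ref{prop:disc-def-theory}), the fiber product $\ms{S}_{\mathsf{K}^p}(R/I)\times_{\mr{BT}^{\mc{G}^c,-\mu_h^c}_\infty(R/I)}\mr{BT}^{\mc{G}^c,-\mu_h^c}_\infty(R)$ is set-valued, so the assertion is a set-theoretic bijection. Using that $\ms{S}_{\mathsf{K}^p}(-)$ is of finite presentation and that $\mr{BT}^{\mc{G}^c,-\mu_h^c}_\infty$ is compatible with filtered colimits of $p$-finite algebras (by Theorem \ref{thm:GM-main}, applied level by level before taking the limit), one may reduce to the case where $R$ is Artinian local with residue field a perfect extension $k'$ of $\bb{F}_p$ and $I$ is the maximal ideal; by the standard decomposition of a surjection of Artinian local rings into small extensions (\stacks{06GE}, as used in the proof of Proposition \ref{prop:prorepresentable}), induction further reduces this to the case of a single small extension. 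In this case, both sides of the claimed bijection are precisely fibers of the two vertical arrows in the pointwise isomorphism already established (applied to the $\ov{\bb{F}}_p$-point induced by $k'\hookrightarrow\ov{\bb{F}}_p$), so the bijection follows.

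The main obstacle will be formulating the reduction step cleanly: one must match the representability assertion of Proposition \ref{prop:prorepresentable} (phrased over the category $\mc{C}_{W(k')}$ of complete Noetherian local rings) with the functor-of-points statement on arbitrary $p$-nilpotent rings. The key technical inputs making this routine are the discrete deformation theory of $\mr{BT}^{\mc{G}^c,-\mu_h^c}_\infty$, the finite presentation of $\ms{S}_{\mathsf{K}^p}$, and the observation that $R_{\mc{G}^c,\mu_h^c}$ is (non-canonically) a power series ring over $W(k')$, so that its functor of points on Artinian $W(k')$-algebras is detected by the deformation functor on $\mc{C}_{W(k')}$.
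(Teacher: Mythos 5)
Your pointwise computation is correct and is exactly what the paper records as Lemma \ref{lem:def-isom}: combining Theorem \ref{thm:ito-shim-comp}, Proposition \ref{prop: summary display comparison} and Proposition \ref{prop:prorepresentable} identifies $\Spf(\wh{\mc{O}}_{\ms{S}_{\mathsf{K}^p},x})\to\mr{Def}_{b_x}$ as an isomorphism for every $x$ in $\ms{S}_{\mathsf{K}^p}(\ov{\bb{F}}_p)$. The gap is in the globalization. Finite presentation of $\ms{S}_{\mathsf{K}^p}$ together with the colimit compatibility of $\mr{BT}^{\mc{G}^c,-\mu_h^c}_\infty$ (Lemma \ref{lem:BT-commutes-with-colimts} in the paper) only reduces you to $R$ a finitely generated $\breve{\Z}_p$-algebra with a square-zero ideal $I$; it does not reduce you further to the Artinian local case, because such an $R$ (e.g.\ $\breve{\Z}_p[x]/p^2$ with $I=(p)$) is not a filtered colimit of Artinian local rings, and the lifting problem over $R$ does not decompose into the lifting problems over the quotients $\wh{R}_{\mf{m}}/\mf{m}^n$. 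Nor can you invoke ``formal \'etaleness is detected on completed local rings'' as for finite-type morphisms of schemes: $\mr{BT}^{\mc{G}^c,-\mu_h^c}_\infty$ is only a pro-(smooth formal Artin stack) with smooth, non-\'etale transition maps, so $\rho_{\mathsf{K}^p}$ is not locally of finite presentation and the standard criterion does not apply. Your claim that ``both sides of the claimed bijection are precisely fibers of the two vertical arrows in the pointwise isomorphism'' is therefore only true after you have already localized and completed, which is exactly what needs to be justified.

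This globalization is where the bulk of the paper's proof lives. After reducing (as you do) to square-zero thickenings of finitely generated $\breve{\Z}_p$-algebras, the paper does \emph{not} pass to Artinian quotients. In the Hodge-type case it produces the lift over all of $R$ at once by transporting the problem to the Siegel moduli space via a closed embedding of integral models (Xu's theorem) and invoking the classical Serre--Tate theorem for abelian schemes; only then is the pointwise statement (your Lemma \ref{lem:def-isom}) used, through Lemma \ref{lem:closed-emb-lifting}, to check that the globally constructed lift factors through the closed subscheme $\wh{\ms{S}}_{\mathsf{K}^p}$. The special-type case is handled by hand, and the abelian-type case by the d\'evissage of Lemma \ref{lem:Lovering-lem} combined with Lemma \ref{lem:formally-etale-triangle}. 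If you want to avoid this route, the natural repair of your argument is a tangent-space/torsor argument: for a square-zero extension, lifts on both sides form pseudo-torsors under quasi-coherent sheaves (on the $\mr{BT}$ side via Grothendieck--Messing, Theorem \ref{thm:GM-main}(3), since lifts are controlled by the smooth algebraic space $\mc{Q}_{x'}/P_{\mu}$), and an equivariant map of torsors under a map of quasi-coherent sheaves that is an isomorphism on completions at all closed points is a bijection. But that is a genuinely different argument which you would need to set up carefully; as written, your reduction step fails.
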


To prove this theorem we need some preliminary setup. To this end, fix $x$ to be a point of $\ms{S}_{\mathsf{K}^p}(\ov{\bb{F}}_p)$. As in \S\ref{ss:characterization} we have the associated element $\bm{b}_{x,\crys}$ in $C(\mc{G}^c)$. Additionally, recall that we have the associated conjugacy class $\bbmu_h^c$ of cocharacters $\mbb{G}_{m,\breve{\Z}_p}\to \mc G^c_{\breve{\Z}_p}$. Then, by Lemma \ref{lem:C(G)-containment} the element $\bm{b}_{x,\crys}$ lies in the image of the map
\begin{equation*}
    \mc{G}^c(\breve{\Z}_p)\sigma(\mu_h^c(p))^{-1}\mc{G}^c(\breve{\Z}_p)\to C(\mc{G}^c). 
\end{equation*}
Fix $b_x$ in $\mc{G}^c(\breve{\Z}_p)\mu^c_h(p)^{-1}\mc{G}^c(\breve{\Z}_p)$ 
with $\sigma (b_x)$ mapping to $\bm{b}_{x,\crys}$ in $C(\mc{G}^c)$, e.g., $b_x=\rho_{\mathsf{K}^p}(x)$.

Consider the functor
\begin{equation*}
    \mr{Def}_x\colon \mc{C}_W\to \mb{Set},\qquad 
    \mr{Def}_x\defeq \ms{S}_{\mathsf{K}^p}|_{\mc{C}_W}.
\end{equation*}
Note that $\mr{Def}_x$ parameterizes the deformations of $x$ within $\ms{S}_{\mathsf{K}^p}$ and is (pro)represented by the formal spectrum $\Spf(\wh{\mc{O}}_{\ms{S}_{\mathsf{K}^p},x})=\Spf(\wh{\mc{O}}_{\wh{\ms{S}}_\mathsf{K}^p,x})$. 

The following is an immediate consequence of Theorem \ref{thm:ito-shim-comp} and Proposition \ref{prop:prorepresentable}. Indeed, Proposition \ref{prop: F gauge type mu equals F crystal type mu} implies that (with notation as in these results) that $\rho_{\mathsf{K}^p,_x}=\rho_{b_x}\circ i_{x}$, as both pull back the universal object over $\mr{Def}_{b_x}$ to isomorphic deformations of $\mc{P}_{b_x}=(\omega_{\mathsf K^p,\syn})|_x$.

\begin{lem}\label{lem:def-isom} The induced map
\begin{equation*}
    \rho_{\mathsf{K}^p,x}\colon \mr{Def}_x\to \mr{Def}_{b_x}
\end{equation*}
is an isomorphism.
\end{lem}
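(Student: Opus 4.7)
The plan is to unpack the hint given immediately before the lemma statement and turn it into a commuting triangle. First, I would invoke Theorem~\ref{thm:ito-shim-comp} to obtain an isomorphism of complete local $W$-algebras
\[
i_x\colon R_{\mc{G}^c,\mu_h^c}\isomto \wh{\mc{O}}_{\ms{S}_{\mathsf{K}^p},x},
\]
together with an isomorphism of prismatic $\mc{G}^c$-torsors with $F$-structure $i_x^\ast(\omega_{b_x}^\mr{univ})\simeq \omega_{\mathsf{K}^p,\smallprism}|_{\wh{\mc{O}}_{\ms{S}_{\mathsf{K}^p},x}}$. The subtle point is that Theorem~\ref{thm:ito-shim-comp} only produces an identification at the level of prismatic $F$-crystals, whereas the morphism $\rho_{\mathsf{K}^p,x}$ is defined using the syntomic (i.e., prismatic $F$-gauge) refinement. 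To promote the identification, I would apply Proposition~\ref{prop: F gauge type mu equals F crystal type mu}: since $\wh{\mc{O}}_{\ms{S}_{\mathsf{K}^p},x}$ is a base formal $W$-algebra and both objects are of type $\mu_h^c$ (Corollary~\ref{cor:prismatic-realization-lff}, respectively the construction of $\omega_{b_x}^\mr{univ}$), the forgetful functor $\mathrm{R}$ to prismatic $F$-crystals is an equivalence on the subcategories of type $\mu_h^c$. Thus the $F$-crystal isomorphism upgrades uniquely to an isomorphism of prismatic $F$-gauges with $\mc{G}^c$-structure of type $-\mu_h^c$ (keeping track of the sign convention in Definition~\ref{defn: F-gauge of G mu structure}, as in the statement of $\mr{BT}^{\mc{G}^c,-\mu_h^c}_\infty$).

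With that upgrade in hand, I would translate the isomorphism of universal $F$-gauges into the statement that the triangle
\[
\begin{tikzcd}[sep=small]
\mr{Def}_x \ar[rr,"\rho_{\mathsf{K}^p,x}"] \ar[rd,"\Spf(i_x)^{-1}"'] && \mr{Def}_{b_x} \\
& \Spf(R_{\mc{G}^c,\mu_h^c}) \ar[ru,"\rho_{b_x}"'] &
\end{tikzcd}
\]
commutes. Indeed, both composites classify the same deformation of $\mc{P}_{b_x}$ over $\wh{\mc{O}}_{\ms{S}_{\mathsf{K}^p},x}$ by the identification above, and since $\mr{Def}_{b_x}$ is (pro)representable by Proposition~\ref{prop:prorepresentable}, any two such classifying maps from $\Spf(\wh{\mc{O}}_{\ms{S}_{\mathsf{K}^p},x})$ must agree.

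To conclude, I would note that $\Spf(i_x)$ is an isomorphism by Theorem~\ref{thm:ito-shim-comp}, and $\rho_{b_x}$ is an isomorphism by Proposition~\ref{prop:prorepresentable}. The composition $\rho_{\mathsf{K}^p,x}=\rho_{b_x}\circ\Spf(i_x)^{-1}$ is therefore an isomorphism, completing the proof. The main (and essentially only) subtlety in the argument is the upgrade from $F$-crystals to $F$-gauges via Proposition~\ref{prop: F gauge type mu equals F crystal type mu}; everything else is formal. This is not expected to be a serious obstacle since $\wh{\mc{O}}_{\ms{S}_{\mathsf{K}^p},x}$ is of the required form (a complete regular local ring over $W$, hence a base formal $W$-algebra in the sense of Notation~\ref{nota:OK}) and the type~$\mu_h^c$ hypothesis is already verified at each of the constituents.
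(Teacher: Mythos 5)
Your argument is correct and matches the paper's proof: both invoke Theorem~\ref{thm:ito-shim-comp} for the local isomorphism $i_x$ and the $F$-crystal identification, promote it to $F$-gauges via Proposition~\ref{prop: F gauge type mu equals F crystal type mu}, and then conclude $\rho_{\mathsf{K}^p,x}$ factors as $\rho_{b_x}$ (an isomorphism by Proposition~\ref{prop:prorepresentable}) precomposed with the isomorphism of representing formal spectra. The only quibble is a direction slip in your triangle: $i_x\colon R_{\mc{G}^c,\mu_h^c}\isomto\wh{\mc{O}}_{\ms{S}_{\mathsf{K}^p},x}$ induces $\Spf(i_x)\colon\mr{Def}_x\to\Spf(R_{\mc{G}^c,\mu_h^c})$, so the arrow should be $\Spf(i_x)$ rather than $\Spf(i_x)^{-1}$; this does not affect the conclusion.
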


\begin{proof}[Proof of Theorem \ref{thm:Drinfeld-conjecture}] By inducting on the minimal $n\geqslant 2$ such that $I^n=(0)$, we may assume without loss of generality that $n=2$. 

So, suppose that $R$ is a $p$-nilpotent ring and $I\subseteq R$ is square-zero ideal, then we must show that for any $R/I$-point $x$ of $\ms{S}_{\mathsf{K}^p}$ that the natural map
\begin{equation}\label{eq:serre-tate}
    \mr{fib}\left(\ms{S}_{\mathsf{K}^p}(R)\to \ms{S}_{\mathsf{K}^p}(R/I);x\right)\to \mr{fib}\left(\mr{BT}^{\mc{G}^c,-\mu_h^c}_\infty(R)\to \mr{BT}^{\mc{G}^c,-\mu_h^c}_\infty(R/I);\rho_{\mathsf{K}^p}(x)\right),
\end{equation}
is a bijection of sets (where the right-hand is discrete by Proposition \ref{prop:disc-def-theory}). The proof will proceed in several steps. But, before we start those steps, we make the following observation.

\begin{lem}\label{lem:BT-commutes-with-colimts} Suppose that $(\mc{G},\mu)$ is as in Notation \ref{nota:deformation}. Then if $R=\colim_i R_i$ is a filtered colimit of rings and $I=\colim I_i$ an ideal with $I$ and each $I_i$ square-zero. Then the commutative diagram 
\bx{\colim_i \mr{BT}^{\mc G,\mu}_\infty(R_i)\ar[r]\ar[d]
&\mr{BT}^{\mc G,\mu}_\infty(R)\ar[d]
\\ \colim_i\mr{BT}^{\mc G,\mu}_\infty(R_i/I_i)\ar[r]
& \mr{BT}^{\mc G,\mu}_\infty(R/I)
}\ex
is Cartesian.
\end{lem}
\begin{proof} The natural map from $\colim_i \mr{BT}^{\mc{G},\mu}_\infty(R_i)$ to the fiber product is evidently injectivity. To show surjectivity, fix an object $\mc{P}$ of $\mr{BT}^{\mc{G},\mu}_\infty(R)$ and consider the $\mc{G}$-torsor $x=x_{\dR,R}^\ast\mc{P}$ which is the image of some $x_i$ in $B\mc G(R_i)$ for sufficiently large $i$ (see \cite[Lemma 2.1]{CesnaviciusPT}). Consider 
\begin{equation*}
    \begin{tikzcd}[sep=2.25em]
	{\displaystyle \twocolim_{j\geqslant i}\mr{BT}^{\mc{G},\mu}_\infty(R_j)_{x_i}} & {\mr{BT}^{\mc{G},\mu}_\infty(R)_x} & {(\mc{Q}_x/P_{-\mu})(R)} \\
	{\displaystyle \twocolim_{j\geqslant i}\mr{BT}^{\mc{G},\mu}_\infty(R_j/I_j)_{x_i}} & {\mr{BT}^{\mc{G},\mu}_\infty(R/I)_x} & {(\mc{Q}_x/P_{-\mu})(R/I).}
	\arrow[from=1-1, to=1-2]
	\arrow[from=1-1, to=2-1]
	\arrow[from=1-2, to=1-3]
	\arrow[from=1-2, to=2-2]
	\arrow[from=1-3, to=2-3]
	\arrow[from=2-1, to=2-2]
	\arrow[from=2-2, to=2-3]
\end{tikzcd}
\end{equation*}
Here the left-hand square is the obvious ones, and the right-hand square is the Cartesian square as in \eqref{eq:GM-reintrepret}. Observe that $\mc{Q}_x/P_{-\mu}=(\mc{Q}_{x_i}/P_{-\mu})_R$ and $\mc{Q}_{x_i}/P_{-\mu}$ is of finite presentation over $R_i$, we have that the outer rectangular diagram (ignoring the central nodes) is obtained by passing to the $2$-colimit over $j\geqslant i$ of the Cartesian diagram as in \eqref{eq:GM-reintrepret} applied for $R_j\twoheadrightarrow R_j/I_j$. As $2$-colimits of Cartesian diagrams of groupoids is Cartesian, we deduce the outer rectangle is also Cartesian. Thus, we deduce the left-hand square is Cartesian as desired.
\end{proof}

\paragraph*{Step 1: restriction to $\breve{\Z}_p$-algebras} Observe that it suffices to show that $(\wh{\ms{S}}_{\mathsf{K}^p})_{\breve{\Z}_p}\to\mr{BT}^{\mc{G}^c,-\mu_h}_\infty$ is formally \'etale. Indeed, this follows from the following general observation  
as well as the fact that $\Z_p\to \breve{\Z}_p$ is ind-\'etale modulo $p^n$ for all $n$. 

\begin{lem}\label{lem:formally-etale-triangle} 
Let 
\begin{equation*}
    \begin{tikzcd}[sep=2.25em]
	X \\
	Y & {\mathcal{F}}
	\arrow["f"', from=1-1, to=2-1]
	\arrow["p", from=1-1, to=2-2]
	\arrow["q"', from=2-1, to=2-2]
\end{tikzcd}
\end{equation*}
be a diagram of fpqc-stacks, where $X$ and $Y$ are schemes. If $f$ is surjective and pro-\'etale and $p$ is formally \'etale, then $q$ is formally \'etale.
\end{lem}
\begin{proof} We begin by making the following observation. Consider a commutative diagram
\begin{equation}\label{eq:formally-etale-diag-1}
    \begin{tikzcd}
	Y & {\mc{F}} \\
	{\Spec(R/I)} & {\Spec(R).}
	\arrow["q", from=1-1, to=1-2]
	\arrow["a",from=2-1, to=1-1]
	\arrow["i",from=2-1, to=2-2]
	\arrow["b",from=2-2, to=1-2]
\end{tikzcd}
\end{equation}
As $f$ is surjective and pro-\'etale, there exists an ind-\'etale cover $\ov{c}\colon R/I\to \ov{C}$ and a map $\gamma\colon \Spec(\ov{C})\to X$ such that $f\circ\gamma=a\circ\ov{c}$. By \stacks{097P} there exists an ind-\'etale $R$-algebra $C$ such that $\ov{C}=C/IC$, which necessarily induces a surjective map $c\colon \Spec(C)\to\Spec(R)$. We thus obtain the following commutative diagram
\begin{equation}\label{eq:formally-etale-diag-2}
    \begin{tikzcd}
	X \\
	Y & {\mc{F}} \\
	{\Spec(R/I)} & {\Spec(R)} \\
	{\Spec(C/IC)} & {\Spec(C).}
	\arrow["f"', from=1-1, to=2-1]
	\arrow["p", from=1-1, to=2-2]
	\arrow["q", from=2-1, to=2-2]
	\arrow["a",from=3-1, to=2-1]
	\arrow["i",from=3-1, to=3-2]
	\arrow["b",from=3-2, to=2-2]
	\arrow["\gamma",curve={height=-40pt}, from=4-1, to=1-1]
	\arrow["\ov{c}",from=4-1, to=3-1]
	\arrow["i'",from=4-1, to=4-2]
	\arrow["c",from=4-2, to=3-2]
\end{tikzcd}
\end{equation} 
We use of this diagram twice below.

We first show that $q$ is formally unramified. Suppose that $\alpha,\beta\colon \Spec(R)\to Y$ are two morphisms whose addition to \eqref{eq:formally-etale-diag-2} preserves commutativity. Using the fact that $f$ is pro-\'etale and surjective we may (after possibly enlarging $C$) produce $\alpha',\beta'\colon \Spec(C)\to X$ whose addition to \eqref{eq:formally-etale-diag-2} preserves commutativity. But, by the formal unramifiedness of $p$ we deduce that $\alpha'$ equals $\beta'$, and thus that 
\begin{equation*}
    \alpha\circ c=f\circ \alpha'=f\circ\beta'=\beta\circ c.
\end{equation*} 
As $c$ is an epimorphism of schemes, we deduce that $\alpha$ equals $\beta$ as desired.

We now show that $q$ is formally \'etale. As we know that $q$ is formally unramified, it suffices to show that there exists a map $\alpha\colon \Spec(R)\to Y$ such that $\alpha\circ i=a$ and $q\circ \alpha=b$. 
Now, as $p$ is formally \'etale there exists a unique morphism $\delta\colon \Spec(C)\to X$ whose addition to \eqref{eq:formally-etale-diag-2} preserves commutativity. By a simple diagram chase, it suffices to show that the map $f\circ\delta\colon \Spec(C)\to Y$ descends to a morphism $\Spec(R)\to Y$. As $\Spec(C)\to \Spec(R)$ is an fpqc cover and $Y$ is a sheaf for the fpqc topology on $S$, it suffices to show that $f\circ\delta$ equalizes the two natural maps $\pi_1,\pi_2\colon\Spec(C\otimes_R C)\to \Spec(C)$. But, reducing $f\circ \delta\circ \pi_i$ modulo $I$ gives the same map, as the map $a\circ\ov{c}\colon \Spec(C/IC)\to Y$ does descend to a map $\Spec(R/I)\to Y$. Thus, by the formal unramifiedness of $Y\to \mc{F}$ it suffices to show that $q\circ f\circ \delta\circ \pi_1=q\circ f\circ \delta\circ \pi_2$. But, this is clear as the map $q\circ f\circ \delta$ does descend to a map $\Spec(R)\to \mc{F}$ (namely $b$).
\end{proof}

Moreover, let us observe that if $\Spec(R/I)\to(\wh{\ms{S}}_{\mathsf{K}^p})_{\breve{\Z}_p}$ is a morphism, then we have a map $\Spec(R/I)\to \Spec(\breve{\Z}_p)$, and by the formal \'etaleness of $\Z_p/p^n\to\breve{\Z}_p/p^n$ for all $n$ there exists a unique extension to a map $\Spec(R)\to\Spec(\breve{\Z}_p)$. Thus, we see that without loss of generality, we may assume that $R$ is a $\breve{\Z}_p$-algebra.

\medskip

\paragraph*{Step 2: reduction to finitely generated $\breve{\Z}_p$-algebras} Suppose that $R$ is an arbitrary $\breve{\Z}_p$-algebra and $I$ is a square-zero ideal. Observe that we may write $R$ as a filtered colimit $R=\colim_i R_i$ where $R_i$ ranges over all finitely generated $\breve{\Z}_p$-subalgebras of $R$. If $I_i\defeq I\cap R_i$ then $I=\colim_i I_i$. Moreover, $I_i^2\subseteq I^2=(0)$, so $I_i^2$ is also square-zero. Suppose that we have shown that the map in \eqref{eq:serre-tate} is a bijection for all finite-type $\breve{\Z}_p$-algebras. Then, in particular
\begin{equation*}
    \colim_i \mr{fib}\left(\ms{S}_{\mathsf{K}^p}(R_i)\to \ms{S}_{\mathsf{K}^p}(R_i/I_i);x\right)\to \colim_i \mr{fib}\left(\mr{BT}^{\mc{G}^c,-\mu_h^c}_\infty(R_i)\to \mr{BT}^{\mc{G}^c,-\mu_h^c}_\infty(R_i/I_i);\rho_{\mathsf{K}^p}(x)\right)
\end{equation*}
is a bijection. But, the source is $\mr{fib}\left(\ms{S}_{\mathsf{K}^p}(R)\to \ms{S}_{\mathsf{K}^p}(R/I);x\right)$ by the finite presentation of $\ms{S}_{\mathsf{K}^p}$ and the latter is $\mr{fib}\left(\mr{BT}^{\mc{G}^c,-\mu_h^c}_\infty(R)\to \mr{BT}^{\mc{G},-\mu_h^c}_\infty(R/I);\rho_{\mathsf{K}^p}(x)\right)$ by Lemma \ref{lem:BT-commutes-with-colimts}. The claim follows.

\medskip

\paragraph*{Step 3: the Hodge-type case} We now assume that $(\mb{G},\mb{X},\mc{G})$ is of Hodge type. We will then apply the following lemma for an integral Hodge embedding.

\begin{lem}\label{lem:closed-emb-lifting} Let $\mf{X}\hookrightarrow \mf{Y}$ be a closed embedding of topologically of finite type formal $\Z_p$-schemes and let $R$ be a $p$-nilpotent finite type $\breve{\Z}_p$-algebra. Then a morphism $\Spec(R)\to \mf{Y}$ factorizes through $\mf{X}$ if and only if for every maximal ideal $\mf{m}$ of $R$ the composition $\Spf(\wh{R}_{\mf{m}})\to\Spec(R)\to\mf{Y}$ factorizes through $\mf{X}$.
\end{lem}
\begin{proof} Let $\mc{J}\subseteq \mc{O}_{\mf{Y}}$ be the ideal sheaf corresponding to $\mf{X}$. It suffices to show that $\mc{J}\mc{O}_{\Spec(R)}$ is zero. But, for this it suffices to check this vanishing on each $R_\mf{m}$. Moreover, as $R_\mf{m}\to \wh{R}_\mf{m}$ is faithfully flat (see \stacks{00MC}) it further suffices to check that $\mc{J}\mc{O}_{\Spf(\wh{R}_\mf{m})}$ is zero. But, this follows from the existence of a factorization of $\Spf(\wh{R}_{\mf{m}})\to\Spec(R)\to\mf{Y}$ through $\mf{X}$.
\end{proof}

Choose now an integral Hodge embedding $\iota\colon (\mb{G},\mb{X},\mc{G})\hookrightarrow (\mb{H},\mf{h}^{\pm},\mc{H})$ and let $\mu_h^\mr{s}$ be a (choice of) integral Hodge cocharacter for the Siegel-type Shimura datum base changed to $W$. By \cite[Theorem 1.1.1]{Xu} this integral Hodge embedding induces a closed embedding 
\begin{equation*}
     \wh{\ms{S}}_{\mathsf{K}^p}(\mb{G},\mb{X})\hookrightarrow \wh{\ms{S}}_{\mathsf{L}^p}(\mb{H},\mf{h}^{\pm})
\end{equation*}
for an appropriately chosen neat compact open subgroup $\mathsf{L}^p\subseteq \mb{H}(\bb{A}_f^p)$. By \textbf{Step 2} it suffices to prove \eqref{eq:serre-tate} for $R$ finite type over $\breve{\Z}_p$. But using Example \ref{eg:Siegel} we may interpret the composition 
\begin{equation*}
    \Spec(R)\to\mr{BT}^{\mc{G},-\mu_h}_\infty\to\mr{BT}^{\mc{H},-\mu_h^\mr{s}}_\infty
\end{equation*}
as a quasi-polarized $p$-divisible group $(H,L,\lambda)$ over $R$ deforming $(A_0[p^\infty],L_0,\lambda_0)$, where we write $(A_0,L_0,\lambda_0,\alpha_0)$ for the point of $\wh{\ms{S}}_{\mathsf{L}^p}(\mb{H},\mf{h}^{\pm})(R/I)$ corresponding to the composition
\begin{equation*}
    \Spec(R/I)\to \wh{\ms{S}}_{\mathsf{K}^p}(\mb{G},\mb{X})\hookrightarrow \wh{\ms{S}}_{\mathsf{L}^p}(\mb{H},\mf{h}^{\pm}),
\end{equation*}
where $\alpha_0$ denotes the $\mathsf{K}^p$-level structure. By the classical version of Serre--Tate theory (e.g., see \cite[\S1]{Katz}) there exists a unique deformation $(A,L,\lambda,\alpha)$ in $\wh{\ms{S}}_{\mathsf{L}^p}(\mb{H},\mf{h}^{\pm})(R)$ of $(A_0,\lambda_0,L_0,\alpha_0)$ such that $(A[p^\infty],L,\lambda)=(H,L,\lambda)$. 
By Lemma \ref{lem:closed-emb-lifting} we will be done if we can show that for any maximal ideal $\mf{m}$ of $R$ the restriction of $(A[p^\infty],L,\lambda)$ is in the image of 
\begin{equation*}
    \wh{\ms{S}}_{\mathsf{K}^p}(\mb{G},\mb{X})(\wh{R}_\mf{m}) \hookrightarrow \wh{\ms{S}}_{\mathsf{L}^p}(\mb{H},\mf{h}^{\pm})(\wh{R}_\mf{m})
\end{equation*}
Note that $\wh{R}_{\mf{m}}$ is an object of $\mc{C}_{\breve{\Z}_p}$. As we have a morphism $\Spf(\wh{R}_{\mf{m}})\to \mr{BT}^{\mc{G},-\mu_h}_\infty$ extending the map $\Spec(R/\mf{m})\to \wh{\ms{S}}_{\mathsf{K}^p}$ we deduce from Lemma \ref{lem:def-isom} that such an extension exists.

\medskip

\paragraph*{Step 4: the special type case} Suppose now that $(\mb{G},\mb{X},\mc{G})$ is of special type. Then the bijectivity of \eqref{eq:serre-tate} may be checked by hand. Indeed, in this case $\ms{S}_{\mathsf{K}^p}(\mb{G},\mb{X})$ is isomorphic to a disjoint union of the form $\Spec(\mc{O}_F)$, where $F$ is an an unramified extension of $\Q_p$ (e.g., see \cite[Proposition 3.22]{DanielsYoucis}). Working component by component, we may replace $\wh{\ms{S}}_{\mathsf{K}^p}$ with $\Spf(\mc{O}_F)$. As $\Spf(\mc{O}_F)\to\Spf(\Z_p)$ is formally \'etale, we see that the morphism $\Spec(R/I)\to \Spf(\mc{O}_F)$ for the $\breve{\Z}_p$-algebra $R$, uniquely lifts to a morphism $\Spec(R)\to\Spf(\mc{O}_F)$. We will thus be done if we can show that the composition $\Spec(R)\to\Spf(\mc{O}_F)\to \mr{BT}^{\mc{G}^c,-\mu_h^c}_\infty$ agrees with the given one from \eqref{eq:serre-tate}. But, it's clear that the two maps induce the same map $\Spec(R/I)\to \mr{BT}^{\mc{G}^c,-\mu_h^c}_\infty$. So, the claim follows from \eqref{eq:GR-2} as, in this case, we have that $\mc{Q}_{x'}/P_{\mu_h^c}$ is trivial, as $\mc{G}^c$ is a torus, and so there is a unique lift of this $\Spec(R/I)\to\mr{BT}^{\mc{G}^c,-\mu_h^c}_\infty$ to a point $\Spec(R)\to \mr{BT}^{\mc{G}^c,-\mu_h^c}_\infty$.

\medskip

\paragraph*{Step 5: the abelian type case} Suppose now that $(\mb{G},\mb{X},\mc{G})$ is of abelian type. Consider the objects as in Lemma \ref{lem:Lovering-lem}. For an appropriate neat compact open subgroup and $\mathsf{M}^p\subseteq \mb{G}_1(\A_f^p)\times \mb{T}(\A_f^p)$ the map $\alpha$ induces a closed embedding 
\begin{equation}\label{eq:shim-closed-emb}
    \alpha\colon \ms{S}_{\mathsf{K}^p_2}(\mb{G}_2,\mb{X}_2)\hookrightarrow \ms{S}_{\mathsf{M}^p}(\mb{G}_1\times\mb{T},\mb{X}_1\times\{h\}).
\end{equation}
To prove this it suffices to pass to $\breve{\Z}_p$. Now, it's evident that $(\mathbf{G}_1,\mb{X}_1,\mc{G}_1)$ is an adapted Hodge type datum for both $(\mb{G}_2,\mb{X}_2,\mc{G}_2)$ and $(\mb{G}_1\times\mb{T},\mb{X}_1\times\{h\},\mc{G}_1\times\mc{T})$. Thus, by construction of integral canonical models as in \cite[(3.4.11)]{KisIntShab}, we have that at infinite level, the choice of a connected component $\ms{S}^+$ of $(\ms{S}_{K_{p,1}})_{\breve{\Z}_p}$ gives an identification of the map $\ms{S}_{K_{p,2}}\to \ms{S}_{K_{p,1}\times L_p}$ (base changed to $\breve{\Z}_p$) with the map
\begin{equation*}
    [\ms{S}^+\times \ms{A}(\mc{G}_2)]/\ms{A}(\mc{G}_2)^\circ \to [\ms{S}^+\times \ms{A}(\mc{G}_1\times\mc{T})]/\ms{A}(\mc{G}_1\times\mc{T})^\circ,
\end{equation*}
with notation as in loc.\@ cit. We claim this map is a closed embedding. Indeed, from the fact that $\mc{G}_2\to \mc{G}_1\times\mc{T}$ and $\mb{G}_2\to \mb{G}_1\times \mb{T}$ are closed embeddings inducing isomorphisms on derived subgroups (and so induce isomorphisms on adjoint groups and has the property that $Z(\mb{G}_1)= Z(\mb{G}_1\times \mb{T})\cap \mb{G}_1$ and the integral analogue) we see that the map $\ms{A}(\mc{G}_2)\to \ms{A}(\mc{G}_1\times\mc{T})$ is a closed embedding. We are then done as the group $\ms{A}(\mc{B})^\circ$ only depends on $\mc{B}^\mr{der}$ (e.g., see \cite[Lemma 4.6.4 (2)]{KisinPappas}), so that $\ms{A}(\mc{G}_2)^\circ=\ms{A}(\mc{G}_1\times\mc{T})^\circ$. We then deduce the existence of the desired $\mathsf{M}^p$ by arguing as in \cite[Proposition 1.15]{DelTS}. 

Fix a finite index subgroup of $\mathsf{M}^p$ of the form $\mathsf{K}_1^p\times\mathsf{L}^p$, and so we obtain a finite \'etale morphism 
\begin{equation*}
    \ms{S}_{\mathsf{K}^p_1}(\mb{G}_1,\mathbf{X}_1)\times\ms{S}_{\mathsf{L}^p}(\mb{T},\{h\})\simeq \ms{S}_{\mathsf{K}_1^p\times\mathsf{L}^p}(\mb{G}_1\times\mb{T},\mb{X}_1\times\{h\})\to \ms{S}_{\mathsf{M}^p}(\mb{G}_1\times\mb{T},\mb{X}_1\times\{h\}).
\end{equation*}
As we have already verified formal \'etaleness for Hodge and special type, it follows that formal \'etaleness holds also for the source. Applying Lemma \ref{lem:formally-etale-triangle} we deduce also that formal \'etaleness holds at level $\mathsf{M}^p$. One may then apply the same argument as in \textbf{Step 3} to the closed embedding in \eqref{eq:shim-closed-emb} to deduce the formal \'etaleness result holds for $(\mb{G}_2,\mb{X}_2,\mc{G}_2)$.

Finally, to deduce the result for $\ms{S}_{\mathsf{K}^p}(\mb{G},\mb{X})$, we observe that we have a finite \'etale morphism
\begin{equation*}
    \beta\colon \ms{S}_{\mathsf{K}^p_2}(\mb{G}_2,\mb{X}_2)\to \ms{S}_{\mathsf{K}^p}(\mb{G},\mb{X}),
\end{equation*}
for an appropriate choice of neat compact open subgroup $\mathsf{K}^p_2\subseteq \mb{G}_2(\A_f^p)$. Moreover, we know that $\rho_{\mathsf{K}^p}\circ\beta$ agrees with $\pi\circ\rho_{\mathsf{K}^p_2}$ where $\pi\colon \mr{BT}^{\mc{G}_2^c,-\mu_{h2}^c}_\infty\to \mr{BT}^{\mc{G}^c,-\mu_h^c}_\infty$ is the natural map. We already know that $\rho_{\mathsf{K}^p_2}$ is formally \'etale, and we claim that $\pi$ is as well. Indeed, this follows from the identification in \eqref{eq:GR-2} as $\mc{G}_2^c\to\mc{G}^c$ is a central isogeny, and so the natural morphism $\mc{Q}_{x'_2}/P_{\mu_{h2}^c}\to \mc{Q}_{x'}/P_{\mu_h^c}$, where the notation has the obvious meaning is an isomorphism. Indeed, this may be checked over an \'etale cover, in which case we may assume that $\mc{Q}_{x'_2}$ and $\mc{Q}_{x'}$ are trivial, so that this is the natural map $\mc{G}_2^c/P_{\mu_{h2}^c}\to \mc{G}^c/P_{\mu_h^c}$. But, this is obviously an isomorphism as the flag variety does not change under central isogenies.\footnote{Indeed, let $f\colon \mc{G}\to \mc{H}$ be a central isogeny of reductive group schemes over a base scheme $S$ and let $Z\subseteq Z(\mc{G})$ be its kernel. Then, for a cocharacter $\mu$ of $\mc{G}$ the morphism $P_{f\circ \mu}\to \mc{H}$ is the result of quotienting $P_{\mu}\to \mc{G}$ by $Z$. Thus, as $\mc{G}/P_{\mu}=(\mc{G}/Z)/(P_{\mu}/Z)$ this implies the desired claim.}

We deduce that $\pi\circ \rho_{\mathsf{K}^p_2}$ is formally \'etale, and so $\rho_{\mathsf{K}^p}$ is formally \'etale by Lemma \ref{lem:formally-etale-triangle} on the connected components lying in $\mr{im}(\beta)$. But, we may then deduce the formal \'etaleness of $\rho_{\mathsf{K}^p}$ over all connected components, using a translation argument (see the proof of Theorem \ref{thm:main-Shimura-theorem-abelian-type-case}).
\end{proof}

\medskip

\paragraph{Syntomic characterization}

We now formulate a more conceptual, but a priori stronger, version of the notion of prismatic integral model from Definition \ref{defn:prismatic-integral-canonical-model}. We use the notion of a prismatic $F$-gauge with $\mc{G}$-structure of type $\bm{\mu}$ modeling a $\mc{G}$-object in de Rham local systems on $\ms{X}_E$, for a (formal) scheme $\ms{X}$ over $\mc{O}_E$. This is a straightforward generalization of that in Definition \ref{defn:prismatic-integral-canonical-model}, and we follow similar conventions as in the prismatic $F$-crystal setting.

\begin{defn}\label{defn:syntomic-integral-canonical-model} A smooth separated model $\mf{X}_{\mathsf{K}^p}$ (resp.\@ $\ms{X}_{\mathsf{K}^p}$) of $U_{\mathsf{K}^p}$ (resp.\@ $\Sh_{\mathsf{K}_0\mathsf{K}^p}$) is called a \emph{syntomic integral canonical model} if there exists a prismatic $F$-gauge with $\mc{G}^c$-structure of type $-\mu_h^c$ modeling $\omega_{\mathsf{K}^p,\mr{an}}$ (resp.\@ $\omega_{\mathsf{K}^p,\et}$) such that the induced map
\begin{equation*}
    \mf{X}_{\mathsf{K}^p}\to\mr{BT}^{\mc{G}^c,-\mu_h^c}_\infty,\quad\bigg(\text{resp. }\wh{\ms{X}}_{\mathsf{K}^p}\to\mr{BT}^{\mc{G}^c,-\mu_h^c}_\infty\bigg)
\end{equation*}
is formally \'etale (resp.\@ and is also a model of $(\Sh_{\mathsf{K}_0\mathsf{K}^p},U_{\mathsf{K}^p})$ in the sense of Definition \ref{defn:prismatic-model-local-system}).
\end{defn}

From Proposition \ref{prop: F gauge type mu equals F crystal type mu} and Proposition \ref{prop:prorepresentable} any syntomic integral canonical model is a prismatic integral canonical model. So, the following is a consequence of Theorem \ref{thm:prismatic-characterization-completion} and Theorem \ref{thm:prismatic-characterization}.

\begin{thm}\label{thm:syntomic-characterization} Let $(\mb{G},\mb{X},\mc{G})$ be an unramified Shimura datum of abelian type and $\mathsf{K}^p$ a neat-compact open subgroup of $\mb{G}(\A_f^p)$. Then $\ms{S}_{\mathsf{K}^p}$ (resp.\@ $\wh{\ms{S}}_{\mathsf{K}^p}$) is the unique syntomic integral canonical model of $\Sh_{\mathsf{K}_0\mathsf{K}^p}$ (resp.\@ $U_{\mathsf{K}^p}$).
\end{thm}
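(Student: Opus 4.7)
The plan is to deduce both existence and uniqueness from the already-established prismatic analogues, by observing that the notion of a syntomic integral canonical model automatically refines that of a prismatic integral canonical model (Definition \ref{defn:integral-canonical-model} for $U_{\mathsf{K}^p}$, resp.\@ Definition \ref{defn:prismatic-integral-canonical-model} for $\Sh_{\mathsf{K}_0\mathsf{K}^p}$). Once this observation is in place, Theorem \ref{thm:prismatic-characterization-completion} and Corollary \ref{cor:scheme-int-can-model-charac} finish the job.

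For existence, I would take the syntomic realization functor $\omega_{\mathsf{K}^p,\mr{syn}}$ furnished by Theorem \ref{thm:prismatic-F-gauge-realization}, which is of type $-\mu_h^c$ by virtue of Corollary \ref{cor:prismatic-realization-lff} together with Proposition \ref{prop: F gauge type mu equals F crystal type mu}. Since $(\wh{\ms{S}}_{\mathsf{K}^p})_\eta = U_{\mathsf{K}^p}$ by Proposition \ref{prop:crystalline-locus}, both $\ms{S}_{\mathsf{K}^p}$ and $\wh{\ms{S}}_{\mathsf{K}^p}$ are models in the appropriate sense; the formal \'etaleness of
\begin{equation*}
    \rho_{\mathsf{K}^p}\colon \wh{\ms{S}}_{\mathsf{K}^p}\to \mr{BT}^{\mc{G}^c,-\mu_h^c}_\infty
\end{equation*}
is then precisely the content of Theorem \ref{thm:Drinfeld-conjecture}.

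For uniqueness, I would show that any syntomic integral canonical model $\mf{X}_{\mathsf{K}^p}$ (resp.\@ $\ms{X}_{\mathsf{K}^p}$), equipped with a prismatic $F$-gauge model $\mc{V}_{\mathsf{K}^p}$ of $\omega_{\mathsf{K}^p,\an}$ (resp.\@ $\omega_{\mathsf{K}^p,\et}$) of type $-\mu_h^c$ whose classifying map is formally \'etale, is automatically a prismatic integral canonical model. Applying the forgetful equivalence $\mathrm{R}_{\mf{X}_{\mathsf{K}^p}}$ of Proposition \ref{prop:F-gauge-lff-equiv} yields a prismatic $F$-crystal $\zeta_{\mathsf{K}^p}$ with $\mc{G}^c$-structure, which remains of type $-\mu_h^c$ by Proposition \ref{prop: F gauge type mu equals F crystal type mu}. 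For each $x \in \mf{X}_{\mathsf{K}^p}(\ov{\bb{F}}_p)$, the formal \'etaleness of $\mf{X}_{\mathsf{K}^p}\to \mr{BT}^{\mc{G}^c,-\mu_h^c}_\infty$ at $x$ identifies $\mathrm{Spf}(\wh{\mc{O}}_{\mf{X}_{\mathsf{K}^p},x})$ with the deformation functor $\mr{Def}_{b_x}$, which by Proposition \ref{prop:prorepresentable} is (pro)represented by $\mathrm{Spf}(R_{\mc{G}^c,\mu_h^c})$ together with the universal object $\omega^\mr{univ}_{b_x}$. The resulting isomorphism $\Theta_x^\Prism\colon R_{\mc{G}^c,\mu_h^c}\isomto \wh{\mc{O}}_{\mf{X}_{\mathsf{K}^p},x}$ then matches $(\Theta_x^\Prism)^\ast(\omega_{b_x}^\univ)$ with the restriction of $\mc{V}_{\mathsf{K}^p}$, and hence of $\zeta_{\mathsf{K}^p}$, verifying the defining condition of Definition \ref{defn:integral-canonical-model} (resp.\@ Definition \ref{defn:prismatic-integral-canonical-model}). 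The conclusion now follows from Theorem \ref{thm:prismatic-characterization-completion} (resp.\@ Corollary \ref{cor:scheme-int-can-model-charac}).

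Because the substantive analytic and deformation-theoretic work has already been discharged in the prismatic formulation, there is no genuinely new obstacle here; the only subtlety worth flagging is that the universal property of $R_{\mc{G}^c,\mu_h^c}$ required in the uniqueness step must hold over all of $\mc{C}_W$ and not merely $\mc{C}_W^\mr{reg}$ (as was the case in the work of Ito), which is exactly the strengthening achieved in Proposition \ref{prop:prorepresentable} and which makes the reduction work at an individual formal completion.
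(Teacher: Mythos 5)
Your proposal is correct and follows essentially the same route as the paper: the paper likewise observes that Proposition \ref{prop: F gauge type mu equals F crystal type mu} together with Proposition \ref{prop:prorepresentable} shows any syntomic integral canonical model is a prismatic one, and then invokes Theorem \ref{thm:prismatic-characterization-completion} (resp.\@ the schematic version) for uniqueness, with existence supplied by Theorems \ref{thm:prismatic-F-gauge-realization} and \ref{thm:Drinfeld-conjecture}. Your write-up simply makes explicit the deformation-theoretic identification that the paper compresses into ``it is evident.''
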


\subsection{Applications to the theory of \texorpdfstring{$\mc{G}$-zips}{G-zips}}

In \cite{OortStratifications}, Oort defined stratifications of the special fiber of integral canonical models of Siegel-type Shimura varieties in terms of the group-theoretic properties of the $p$-torsion of the universal abelian scheme. To generalize this idea to integral canonical models $\ms{S}_{\mathsf{K}^p}$ for other unramified Shimura data, in \cite{PWZ}, the authors defined the Artin stack $\mc{H}\text{-}\mathsf{Zip}^{\mu}$ of \emph{$(\mc{H},\mu)$-zips} for a reductive group $\mc{H}/\mathbb{F}_q$ with cocharacter $\mu$.

The generalization of Oort's ideas should take the form of a map $\zeta_{\mathsf{K}^p}\colon \overline{\ms{S}}_{\mathsf{K}^p}\to \mc{G}^c\text{-}\mathsf{Zip}^{-\mu_h^c}$ where $\overline{\ms{S}}_{\mathsf{K}^p}$ is the reduction modulo $p$ of $\ms{S}_{\mathsf{K}^p}$. Such a map was constructed by Viehmann--Wedhorn in \cite{ViehmannWedhorn} for PEL-type Shimura varieties, and by Zhang in \cite{ZhangEO} for Hodge-type Shimura varieties. In fact, Zhang was able to show that the map $\zeta_{\mathsf{K}^p}$ is smooth.

One can use the syntomic realization functor $\omega_{\mathsf{K}^p,\mr{syn}}$ to generalize such results to the abelian-type setting. Namely, by reducing modulo $p$ we obtain a formally \'etale
\begin{equation}\label{eq:Gzip-1}
\overline{\omega}_{\mathsf{K}^p,\mr{syn}}\colon \ov{\ms{S}}_{\mathsf{K}^p}\to \overline{\mr{BT}}^{\mc{G}^c,-\mu_h^c}_\infty,
\end{equation}
where again we are using $\overline{\mr{BT}}^{\mc{G}^c,-\mu_h^c}_\infty$ to denote the reduction of ${\mr{BT}}^{\mc{G}^c,-\mu_h^c}_\infty$ modulo $p$. By passing to the $1$-truncation (i.e., pulling back a $\mc{G}$-bundle along $R^\mr{syn}\otimes\bb{F}_p\to R^\mr{syn}$), we obtain a morphism
\begin{equation}\label{eq:Gzip-2}
\overline{\mr{BT}}^{\mc{G}^c,-\mu_h^c}_\infty\to \overline{\mr{BT}}_1^{\mc{G}^c,-\mu_h^c}
\end{equation}
which is formally smooth by \cite[Theorem D]{GMM}. By \cite[Theorem E]{GMM} there is a morphism
\begin{equation}\label{eq:Gzip-3}
\overline{\mr{BT}}_1^{\mc{G}^c,-\mu_h^c}\to \mc{G}^c\text{-}\mathsf{Zip}^{-\mu_h^c},
\end{equation}
which, in fact, is a gerbe for an explicit group scheme (and so smooth). Composing the above maps, we arrive at the following result.

\begin{thm}\label{thm:GZip-map} Suppose that $p>2$ and $(\mc{G},\mb{G},\mb{X})$ is an unramified Shimura datum of abelian type. Then, composing \eqref{eq:Gzip-1}, \eqref{eq:Gzip-2}, and \eqref{eq:Gzip-3} gives a smooth morphism
\begin{equation*}
 \zeta_{\mathsf{K}^p}\colon \overline{\ms{S}}_{\mathsf{K}^p}\to \mc{G}^c\text{-}\mathsf{Zip}^{-\mu_h^c}.
\end{equation*}
\end{thm}


\begin{thebibliography}{DvHKZ24}
	\providecommand{\url}[1]{\texttt{#1}}
	\providecommand{\urlprefix}{URL }
	\providecommand{\eprint}[2][]{\url{#2}}
	
	\bibitem[AHR23]{AlperHallRydh}
	J.~Alper, J.~Hall and D.~Rydh, The \'etale local structure of algebraic stacks,
	2023, \eprint{1912.06162}.
	
	\bibitem[ALB23]{AnschutzLeBrasDD}
	J.~Ansch\"{u}tz and A.-C. Le~Bras, Prismatic {D}ieudonn\'{e} {T}heory, Forum
	Math. Pi 11 (2023), Paper No. e2.
	
	\bibitem[ALY22]{ALYSpecialization}
	P.~Achinger, M.~Lara and A.~Youcis, Specialization for the pro-\'{e}tale
	fundamental group, Compos. Math. 158 (2022), no.~8, 1713--1745.
	
	\bibitem[BBM82]{BBMDieuII}
	P.~Berthelot, L.~Breen and W.~Messing, Th\'{e}orie de {D}ieudonn\'{e}
	cristalline. {II}, vol. 930 of Lecture Notes in Mathematics, Springer-Verlag,
	Berlin, 1982.
	
	\bibitem[Bha23]{BhattNotes}
	B.~Bhatt, Prismatic $F$-gauges, 2023, unpublished course notes
	\href{https://www.math.ias.edu/~bhatt/teaching/mat549f22/lectures.pdf}.
	
	\bibitem[BL22a]{BhattLurieAbsolute}
	B.~Bhatt and J.~Lurie, Absolute prismatic cohomology, 2022,
	\eprint{2201.06120}.
	
	\bibitem[BL22b]{BhattLuriePrismatization}
	B.~Bhatt and J.~Lurie, The prismatization of $p$-adic formal schemes, 2022,
	\eprint{2201.06124}.
	
	\bibitem[BMS19]{BMS-THH}
	B.~Bhatt, M.~Morrow and P.~Scholze, Topological {H}ochschild homology and
	integral {$p$}-adic {H}odge theory, Publ. Math. Inst. Hautes \'{E}tudes Sci.
	129 (2019), 199--310.
	
	\bibitem[BO78]{BerthelotOgus}
	P.~Berthelot and A.~Ogus, Notes on crystalline cohomology, Princeton University
	Press, Princeton, N.J.; University of Tokyo Press, Tokyo, 1978.
	
	\bibitem[BS22]{BhattScholzePrisms}
	B.~Bhatt and P.~Scholze, Prisms and prismatic cohomology, Ann. of Math. (2) 196
	(2022), no.~3, 1135--1275.
	
	\bibitem[BS23]{BhattScholzeCrystals}
	B.~Bhatt and P.~Scholze, Prismatic {$F$}-crystals and crystalline {G}alois
	representations, Camb. J. Math. 11 (2023), no.~2, 507--562.
	
	\bibitem[BW04]{BurgosWildehaus}
	J.~I. Burgos and J.~Wildeshaus, Hodge modules on {S}himura varieties and their
	higher direct images in the {B}aily-{B}orel compactification, Ann. Sci.
	\'{E}cole Norm. Sup. (4) 37 (2004), no.~3, 363--413.
	
	\bibitem[{\v{C}}es15]{CesnaviciusPT}
	K.~{\v{C}}esnavi\v{c}ius, Poitou-{T}ate without restrictions on the order,
	Math. Res. Lett. 22 (2015), no.~6, 1621--1666.
	
	\bibitem[Con14]{ConradReductive}
	B.~Conrad, Reductive group schemes, in Autour des sch\'{e}mas en groupes.
	{V}ol. {I}, vol. 42/43 of Panor. Synth\`eses, pp. 93--444, Soc. Math. France,
	Paris, 2014.
	
	\bibitem[{\v{C}}S24]{CesnaviciusScholze}
	K.~{\v{C}}esnavi\v{c}ius and P.~Scholze, Purity for flat cohomology, Ann. of
	Math. (2) 199 (2024), no.~1, 51--180.
	
	\bibitem[Dan22]{Daniels}
	P.~Daniels, Canonical integral models for Shimura varieties defined by tori,
	2022, \eprint{2207.09513}.
	
	\bibitem[Del71]{DelTS}
	P.~Deligne, Travaux de {S}himura  (1971), 123--165. Lecture Notes in Math.,
	Vol. 244.
	
	\bibitem[Del79]{DeligneModulaire}
	P.~Deligne, Vari\'{e}t\'{e}s de {S}himura: interpr\'{e}tation modulaire, et
	techniques de construction de mod\`eles canoniques, in Automorphic forms,
	representations and {$L$}-functions ({P}roc. {S}ympos. {P}ure {M}ath.,
	{O}regon {S}tate {U}niv., {C}orvallis, {O}re., 1977), {P}art 2, Proc. Sympos.
	Pure Math., XXXIII, Amer. Math. Soc., Providence, R.I., 1979 pp. 247--289.
	
	\bibitem[dJ95]{deJongCrystalline}
	A.~J. de~Jong, Crystalline {D}ieudonn\'{e} module theory via formal and rigid
	geometry, Inst. Hautes \'{E}tudes Sci. Publ. Math.  (1995), no.~82, 5--96
	(1996).
	
	\bibitem[dJ96]{deJongAlteration}
	A.~J. de~Jong, Smoothness, semi-stability and alterations, Inst. Hautes
	\'{E}tudes Sci. Publ. Math.  (1996), no.~83, 51--93.
	
	\bibitem[DLMS24]{DLMS}
	H.~Du, T.~Liu, Y.~S. Moon and K.~Shimizu, Completed prismatic {$F$}-crystals
	and crystalline {$Z_p$}-local systems, Compos. Math. 160 (2024), no.~5,
	1101--1166.
	
	\bibitem[Dri24a]{Drinfeld}
	V.~Drinfeld, Prismatization, 2024, \eprint{2005.04746}.
	
	\bibitem[Dri24b]{DrinfeldTowardsShimurian}
	V.~Drinfeld, Toward Shimurian analogs of Barsotti-Tate groups, 2024,
	\eprint{2309.02346}.
	
	\bibitem[DvHKZ24]{DvHKZ2}
	P.~Daniels, P.~van Hoften, D.~Kim and M.~Zhang, Igusa Stacks and the Cohomology
	of {S}himura Varieties, 2024, \eprint{2408.01348}.
	
	\bibitem[DY24]{DanielsYoucis}
	P.~Daniels and A.~Youcis, Canonical Integral Models of Shimura Varieties of
	Abelian Type, 2024, \eprint{2402.05727}.
	
	\bibitem[Elk73]{Elkik}
	R.~Elkik, Solutions d'\'{e}quations \`a coefficients dans un anneau
	hens\'{e}lien, Ann. Sci. \'{E}cole Norm. Sup. (4) 6 (1973), 553--603 (1974).
	
	\bibitem[FK18]{FujiwaraKato}
	K.~Fujiwara and F.~Kato, Foundations of rigid geometry. {I}, EMS Monographs in
	Mathematics, European Mathematical Society (EMS), Z\"{u}rich, 2018.
	
	\bibitem[GD71]{EGASpringer}
	A.~Grothendieck and J.~A. Dieudonn\'{e}, \'{E}l\'{e}ments de g\'{e}om\'{e}trie
	alg\'{e}brique. {I}, vol. 166 of Grundlehren der mathematischen
	Wissenschaften [Fundamental Principles of Mathematical Sciences],
	Springer-Verlag, Berlin, 1971.
	
	\bibitem[GL23]{GuoLi}
	H.~Guo and S.~Li, Frobenius height of prismatic cohomology with coefficients,
	2023, \eprint{2309.06663}.
	
	\bibitem[Gle22]{GleasonSpecialization}
	I.~Gleason, Specialization maps for Scholze's category of diamonds, 2022,
	\eprint{2012.05483}.
	
	\bibitem[GM24]{GMM}
	Z.~Gardner and K.~Madapusi, An algebraicity conjecture of Drinfeld and the
	moduli of $p$-divisible groups, 2024, \eprint{2412.10226}.
	
	\bibitem[GR24]{GuoReinecke}
	H.~Guo and E.~Reinecke, A prismatic approach to crystalline local systems,
	Invent. Math. 236 (2024), no.~1, 17--164.
	
	\bibitem[HLP23]{HLP}
	D.~Halpern-Leistner and A.~Preygel, Mapping stacks and categorical notions of
	properness, Compos. Math. 159 (2023), no.~3, 530--589.
	
	\bibitem[Hub93]{HuberCV}
	R.~Huber, Continuous valuations, Math. Z. 212 (1993), no.~3, 455--477.
	
	\bibitem[Hub94]{HuberGen}
	R.~Huber, A generalization of formal schemes and rigid analytic varieties,
	Math. Z. 217 (1994), no.~4, 513--551.
	
	\bibitem[Hub96]{HuberEC}
	R.~Huber, \'{E}tale cohomology of rigid analytic varieties and adic spaces,
	Aspects of Mathematics, E30, Friedr. Vieweg \& Sohn, Braunschweig, 1996.
	
	\bibitem[IKY24]{IKY1}
	N.~Imai, H.~Kato and A.~Youcis, A Tannakian framework for prismatic
	$F$-crystals, 2024, \eprint{2406.08259}.
	
	\bibitem[IKY25]{IKY3}
	N.~Imai, H.~Kato and A.~Youcis, An integral analogue of Fontaine's crystalline
	functor, 2025, \eprint{2504.16282}.
	
	\bibitem[IM13]{ImaiMiedaRIMS}
	N.~Imai and Y.~Mieda, Toroidal compactifications of {S}himura varieties of
	{PEL} type and its applications, in Algebraic number theory and related
	topics 2011, RIMS K\^{o}ky\^{u}roku Bessatsu, B44, pp. 3--24, Res. Inst.
	Math. Sci. (RIMS), Kyoto, 2013.
	
	\bibitem[IM20]{ImaiMieda}
	N.~Imai and Y.~Mieda, Potentially good reduction loci of {S}himura varieties,
	Tunis. J. Math. 2 (2020), no.~2, 399--454.
	
	\bibitem[Ino25]{Inoue}
	K.~Inoue, Log Prismatic Dieudonn\'{e} theory and its application to Shimura
	varieties, 2025, \eprint{2503.07379}.
	
	\bibitem[Ito23]{Ito1}
	K.~Ito, Prismatic $G$-display and descent theory, 2023, \eprint{2303.15814}.
	
	\bibitem[Ito25]{Ito2}
	K.~Ito, Deformation theory for prismatic G-displays, Forum of Mathematics,
	Sigma 13 (2025), e61.
	
	\bibitem[Kat81]{Katz}
	N.~Katz, Serre-{T}ate local moduli, in Algebraic surfaces ({O}rsay, 1976--78),
	vol. 868 of Lecture Notes in Math., pp. 138--202, Springer, Berlin, 1981.
	
	\bibitem[Kim18a]{KimRZ}
	W.~Kim, Rapoport-{Z}ink spaces of {H}odge type, Forum Math. Sigma 6 (2018),
	Paper No. e8, 110.
	
	\bibitem[Kim18b]{KimUnif}
	W.~Kim, Rapoport-{Z}ink uniformization of {H}odge-type {S}himura varieties,
	Forum Math. Sigma 6 (2018), Paper No. e16, 36.
	
	\bibitem[Kis10]{KisIntShab}
	M.~Kisin, Integral models for {S}himura varieties of abelian type, J. Amer.
	Math. Soc. 23 (2010), no.~4, 967--1012.
	
	\bibitem[Kis17]{KisinModp}
	M.~Kisin, {${\rm mod}\,p$} points on {S}himura varieties of abelian type, J.
	Amer. Math. Soc. 30 (2017), no.~3, 819--914.
	
	\bibitem[Kot84]{KotShtw}
	R.~E. Kottwitz, Shimura varieties and twisted orbital integrals, Math. Ann. 269
	(1984), no.~3, 287--300.
	
	\bibitem[Kot90]{KottwitzAnnArbor}
	R.~E. Kottwitz, Shimura varieties and {$\lambda$}-adic representations, in
	Automorphic forms, {S}himura varieties, and {$L$}-functions, {V}ol. {I}
	({A}nn {A}rbor, {MI}, 1988), vol.~10 of Perspect. Math., pp. 161--209,
	Academic Press, Boston, MA, 1990.
	
	\bibitem[KP18]{KisinPappas}
	M.~Kisin and G.~Pappas, Integral models of {S}himura varieties with parahoric
	level structure, Publ. Math. Inst. Hautes \'{E}tudes Sci. 128 (2018),
	121--218.
	
	\bibitem[KSZ21]{KSZ}
	M.~Kisin, S.~W. Shin and Y.~Zhu, The stable trace formula for Shimura varieties
	of abelian type, 2021, \eprint{2110.05381}.
	
	\bibitem[Lau21]{LauHigher}
	E.~Lau, Higher frames and {$G$}-displays, Algebra Number Theory 15 (2021),
	no.~9, 2315--2355.
	
	\bibitem[Lee21]{Lee}
	S.~Y. Lee, Eichler-Shimura Relations for Shimura Varieties of Hodge Type, 2021,
	\eprint{2006.11745}.
	
	\bibitem[LM24]{MadapusiLee}
	S.~Y. Lee and K.~Madapusi, {$p$}-isogenies with {$G$}-structure and their
	applications, 2024, in preparation.
	
	\bibitem[Lov17a]{LoveringFCrystals}
	T.~Lovering, Filtered F-crystals on Shimura varieties of abelian type, 2017,
	\eprint{1702.06611}.
	
	\bibitem[Lov17b]{LoveringModels}
	T.~Lovering, Integral canonical models for automorphic vector bundles of
	abelian type, Algebra Number Theory 11 (2017), no.~8, 1837--1890.
	
	\bibitem[LS18]{LanStrohII}
	K.-W. Lan and B.~Stroh, Nearby cycles of automorphic \'{e}tale sheaves, {II},
	in Cohomology of arithmetic groups, vol. 245 of Springer Proc. Math. Stat.,
	pp. 83--106, Springer, Cham, 2018.
	
	\bibitem[LvO96]{LvO}
	H.~Li and F.~van Oystaeyen, Zariskian filtrations, vol.~2 of $K$-Monographs in
	Mathematics, Kluwer Academic Publishers, Dordrecht, 1996.
	
	\bibitem[LZ17]{LiuZhu}
	R.~Liu and X.~Zhu, Rigidity and a {R}iemann-{H}ilbert correspondence for
	{$p$}-adic local systems, Invent. Math. 207 (2017), no.~1, 291--343.
	
	\bibitem[Mad24]{MadapusiDerivedCycles}
	K.~Madapusi, Derived special cycles on {S}himura varieties, 2024, in
	preparation.
	
	\bibitem[Mat80]{MatsumuraCommAlg}
	H.~Matsumura, Commutative algebra, vol.~56 of Mathematics Lecture Note Series,
	Benjamin/Cummings Publishing Co., Inc., Reading, Mass., second edn., 1980.
	
	\bibitem[Mil94]{MilShmot}
	J.~S. Milne, Shimura varieties and motives, in Motives ({S}eattle, {WA}, 1991),
	vol.~55 of Proc. Sympos. Pure Math., pp. 447--523, Amer. Math. Soc.,
	Providence, RI, 1994.
	
	\bibitem[Mil05]{MilneShimura}
	J.~S. Milne, Introduction to {S}himura varieties, in Harmonic analysis, the
	trace formula, and {S}himura varieties, vol.~4 of Clay Math. Proc., pp.
	265--378, Amer. Math. Soc., Providence, RI, 2005.
	
	\bibitem[Mon24]{MondalClassification}
	S.~Mondal, Dieudonn\'e theory via cohomology of classifying stacks II, 2024,
	\eprint{2405.12967}.
	
	\bibitem[Moo98]{Moonen}
	B.~Moonen, Models of {S}himura varieties in mixed characteristics, in Galois
	representations in arithmetic algebraic geometry ({D}urham, 1996), vol. 254
	of London Math. Soc. Lecture Note Ser., pp. 267--350, Cambridge Univ. Press,
	Cambridge, 1998.
	
	\bibitem[MP19]{MadTorHod}
	K.~Madapusi~Pera, Toroidal compactifications of integral models of {S}himura
	varieties of {H}odge type, Ann. Sci. \'{E}c. Norm. Sup\'{e}r. (4) 52 (2019),
	no.~2, 393--514.
	
	\bibitem[Nie21]{NieThesis}
	T.~Nie, Absolute Hodge Cycles in Prismatic Cohomology, 2021.
	
	\bibitem[Oor01]{OortStratifications}
	F.~Oort, A stratification of a moduli space of abelian varieties, in Moduli of
	abelian varieties ({T}exel {I}sland, 1999), vol. 195 of Progr. Math., pp.
	345--416, Birkh\"{a}user, Basel, 2001.
	
	\bibitem[Pap23]{Pappas}
	G.~Pappas, On integral models of {S}himura varieties, Math. Ann. 385 (2023),
	no. 3-4, 2037--2097.
	
	\bibitem[Pau04]{Paugam}
	F.~Paugam, Galois representations, {M}umford-{T}ate groups and good reduction
	of abelian varieties, Math. Ann. 329 (2004), no.~1, 119--160.
	
	\bibitem[PR24]{PappasRapoportI}
	G.~Pappas and M.~Rapoport, {$p$}-adic shtukas and the theory of global and
	local {S}himura varieties, Camb. J. Math. 12 (2024), no.~1, 1--164.
	
	\bibitem[PWZ15]{PWZ}
	R.~Pink, T.~Wedhorn and P.~Ziegler, {$F$}-zips with additional structure,
	Pacific J. Math. 274 (2015), no.~1, 183--236.
	
	\bibitem[Rep24]{Reppen}
	S.~Reppen, Systems of Hecke eigenvalues on subschemes of Shimura varieties,
	arXiv preprint arXiv:2409.11720  (2024).
	
	\bibitem[Sai90]{Saito}
	M.~Saito, Mixed {H}odge modules, Publ. Res. Inst. Math. Sci. 26 (1990), no.~2,
	221--333.
	
	\bibitem[She17]{ShenPerfectoid}
	X.~Shen, Perfectoid {S}himura varieties of abelian type, Int. Math. Res. Not.
	IMRN  (2017), no.~21, 6599--6653.
	
	\bibitem[She24]{ShendeRham}
	X.~Shen, De Rham $F$-gauges and Shimura varieties, 2024, \eprint{2403.01899}.
	
	\bibitem[SP]{StacksProject}
	{Stacks Project Authors}, \textit{Stacks Project},
	\url{http://stacks.math.columbia.edu}, 2023.
	
	\bibitem[SW20]{ScholzeBerkeley}
	P.~Scholze and J.~Weinstein, Berkeley lectures on {$p$}-adic geometry, vol. 207
	of Annals of Mathematics Studies, Princeton University Press, Princeton, NJ,
	2020.
	
	\bibitem[SZ22]{ShenZhang}
	X.~Shen and C.~Zhang, Stratifications in good reductions of {S}himura varieties
	of abelian type, Asian J. Math. 26 (2022), no.~2, 167--226.
	
	\bibitem[Tsu20]{Tsu20}
	T.~Tsuji, Crystalline-Representations and-Representations with Frobenius, in
	p-adic Hodge Theory, pp. 161--319, Springer, 2020.
	
	\bibitem[VW13]{ViehmannWedhorn}
	E.~Viehmann and T.~Wedhorn, Ekedahl-{O}ort and {N}ewton strata for {S}himura
	varieties of {PEL} type, Math. Ann. 356 (2013), no.~4, 1493--1550.
	
	\bibitem[Wed01]{WedhornBT}
	T.~Wedhorn, The dimension of {O}ort strata of {S}himura varieties of
	{PEL}-type, in Moduli of abelian varieties ({T}exel {I}sland, 1999), vol. 195
	of Progr. Math., pp. 441--471, Birkh\"{a}user, Basel, 2001.
	
	\bibitem[Xu20]{Xu}
	Y.~Xu, Normalization in integral models of Shimura varieties of Hodge type,
	arXiv preprint arXiv:2007.01275  (2020).
	
	\bibitem[Yan25]{Yan}
	Q.~Yan, On certain integral Frobenius period maps for Shimura varieties and
	their reductions, 2025, \eprint{2501.06601}.
	
	\bibitem[Zha18]{ZhangEO}
	C.~Zhang, Ekedahl-{O}ort strata for good reductions of {S}himura varieties of
	{H}odge type, Canad. J. Math. 70 (2018), no.~2, 451--480.
	
	\bibitem[Zha23]{ZhangThesis}
	M.~Zhang, A {PEL}-type {I}gusa stack and the $p$-adic Geometry of {S}himura
	varieties, 2023, \eprint{2309.05152}.
	
\end{thebibliography}
\end{document}